\title{Well-posedness, magnetic helicity conservation, inviscid limit and asymptotic stability for the generalized Navier-Stokes-Maxwell equations}
\author{Kyungkeun Kang\thanks{
		Yonsei University. E-mail address: \url{kkang@yonsei.ac.kr}}
	\and
	Jihoon Lee\thanks{
		Chung-Ang University. E-mail address: \url{jhleepde@cau.ac.kr}}
	\and
	Dinh Duong Nguyen\thanks{
		Yonsei University and Chung-Ang University. E-mail address:  \url{dinhduongnguyen.math@gmail.com}
	}
}
\pgfplotsset{compat=newest}
\numberwithin{equation}{section}
\DeclareMathOperator*{\esssup}{ess\,sup}
\newtheorem{theorem}{Theorem}[section]
\newtheorem{lemma}{Lemma}[section]
\newtheorem{proposition}{Proposition}[section]
\theoremstyle{definition}
\newtheorem{remark}{Remark}[section]
\newenvironment{AMS}{}{}
\newenvironment{keywords}{}{}
\begin{document}
	\maketitle
	
	\begin{abstract}
		This paper is devoted to studying the well-posedness, conservation of magnetic helicity, inviscid limit and asymptotic stability of the generalized Navier-Stokes-Maxwell (NSM) equations with the standard Ohm's law in $\mathbb{R}^d$ for $d \in \{2,3\}$. More precisely, the global well-posedness is established in case of fractional Laplacian velocity $(-\Delta)^\alpha v$ with $\alpha = \frac{d}{2}$ for suitable data. In addition, the local well-posedness in the inviscid case is also provided for sufficient smooth data, which allows us to study the inviscid limit of associated positive viscosity solutions in the case $\alpha = 1$, where an explicit bound on the difference is given. Furthermore, in three dimensions if the initial data satisfies futher suitable conditions then magnetic helicity is conserved as the electric conductivity goes to infinity. On the other hand, in the case $\alpha = 0$ the stability near a magnetohydrostatic equilibrium with a constant (or equivalently bounded) magnetic field  is also obtained in which nonhomogeneous Sobolev norms of the velocity and electric fields, and for $p \in (2,\infty]$ the $L^p$ norm of the magnetic field converge to zero as time goes to infinity with an implicit rate. In this velocity damping case, the situation is different both in case of the two and a half, and three-dimensional (Hall)-magnetohydrodynamics ((H)-MHD) system, where an explicit rate of convergence in infinite time is computed for both the velocity and magnetic fields in nonhomogeneous Sobolev norms. Therefore, it seems that there is a gap between NSM and MHD in terms of the norm convergence of the magnetic field and the rate of decaying in time, even the latter equations can be proved as a limiting system of the former one in the sense of distributions as the speed of light tends to infinity.
	\end{abstract}
	
	\begin{keywords}
		\textbf{Keywords:} Navier-Stokes-Maxwell, Magnetohydrodynamics, well-posedness, magnetic helicity conservation, inviscid limit, asymtotic stability. 
	\end{keywords}
	
	\begin{AMS}
		\textbf{Mathematics Subject Classification:} 35Q35, 35Q60, 76D03, 76W05, 78A25.
	\end{AMS}
	
	\tableofcontents
	\allowdisplaybreaks 
	
	%
	\section{Introduction}
	%
	
	%
	\subsection{The systems}
	%
	
	Let us consider the one fluid incompressible Navier-Stokes-Maxwell equations with the standard Ohm's law, which are given in the following form\footnote{Here, the usual fractional Laplacian operator is defined in terms of Fourier transform, i.e., for $\alpha \in \mathbb{R}$
	\begin{equation*}
		\mathcal{F}((-\Delta)^\alpha(f))(\xi) := |\xi|^{2\alpha}\mathcal{F}(f)(\xi) \qquad \text{where}\qquad \mathcal{F}(f)(\xi) := \int_{\mathbb{R}^d} \exp\{-i\xi\cdot x\} f(x) \,dx \quad \text{for } \xi \in \mathbb{R}^d.
	\end{equation*}
	In the case $\alpha = 0$, we use the standard convention that $(-\Delta)^0$ is the identity operator.}
	\begin{equation} \label{NSM} \tag{NSM}
		\left\{
		\begin{aligned}
			\partial_t v + v \cdot \nabla v + \nabla \pi &= -\nu(-\Delta)^\alpha v + j \times B, 
			\\
			\frac{1}{c}\partial_t E - \nabla \times B &= -j,
			\\
			\frac{1}{c}\partial_t B + \nabla \times E &= 0,
			\\
			\sigma(cE + v \times B) &= j,
			\\
			\textnormal{div}\, v = \textnormal{div}\, B &= 0,
		\end{aligned}
		\right.
	\end{equation}
	where $\alpha \geq 0$, for $d \in \{2,3\}$, $(v,E,B,j) : \mathbb{R}^d \times (0,\infty) \to \mathbb{R}^3$ and $\pi : \mathbb{R}^d \times (0,\infty) \to \mathbb{R}$ are the velocity, electric, magnetic and electric current fields,  and scalar pressure of the fluid, respectively. The positive constants $\nu,\sigma$ and $c$ denote in order the viscosity, electric conductivity and  speed of light. We will denote the initial data by $(v,E,B)_{|_{t=0}} = (v_0,E_0,B_0)$. We note that the case $d = 2$ is also known as the $2\frac{1}{2}$-dimensional version. Let us also quickly recall the standard meaning of the above system. In \eqref{NSM}, through the Lorentz force $j \times B$ (under quasi-neutrality assumptions) and the electric current field $j$ the Navier-Stokes equations (the first line) are coupled to the Maxwell system, where the latter one consists of the Ampere's equations with Maxwell's correction (the second line) and the Faraday's law (the third line). In addition, the fourth line is the usual Ohm's law and the last one stands for the incompressiblity of the velocity and magnetic fields. It can be seen that if the term $\frac{1}{c}\partial_t E$ is neglected formally for either large $c$ or time-independent $E$, then \eqref{NSM} reduces to the usual $2\frac{1}{2}$-dimensional fractional magnetohydrodynamics (MHD) equations, i.e., \eqref{HMHD} with $\beta = 1$ and $\kappa = 0$ (for more physical introduction to the magnetohydrodynamics, see \cite{Biskamp_1993,Davidson_2017}). Therefore,  \eqref{NSM} with $\alpha = 1$ is also known as the full MHD system.
	
	In fact, by ignoring thermal effects, \eqref{NSM} with $\alpha = 1$ can be derived from kinetic equations (see \cite{Jang-Masmoudi_2012}). By considering a solenoidal Ohm’s law\footnote{In this case, $j =  \sigma(-\nabla \bar{\pi} + cE + v \times B)$ with $\textnormal{div}\, j = \textnormal{div}\, E = 0$ and for some additional electromagnetic pressure $\bar{\pi}$, see \eqref{NSM-SO}.} instead, it also can be formally obtained as a limiting system of a two-fluid incompressible Navier–Stokes–Maxwell system by taking the momentum transfer coefficient $\epsilon > 0$ tends to zero (see \cite{Arsenio-Ibrahim-Masmoudi_2015}). More precisely, if $v^+$ and $v^-$ denote the cations and anions velocities, respectively, with the same viscosity $\mu > 0$ and the corresponding thermal pressures $\pi^+$ and $\pi^-$, then the scaled two-fluid incompressible Navier–Stokes–Maxwell equations were proposed in \cite{Giga-Yoshida_1984} and will be written in the following form\footnote{In fact, the authors in \cite{Giga-Yoshida_1984} suggested a more general model with different coefficients appearing in the equations.} (we use the same notation for the electric and magnetic fields as previously)
	\begin{equation*} \tag{2-NSM} \label{2NSM}
		\left\{
		\begin{aligned}
			\partial_t v^+ + v^+ \cdot \nabla v^+ +  \frac{1}{2\sigma\epsilon^2}(v^+-v^-) &= \mu \Delta v^+ - \nabla \pi^+ + \frac{1}{\epsilon}(cE + v^+ \times B),
			\\
			\partial_t v^- + v^- \cdot \nabla v^- - \frac{1}{2\sigma\epsilon^2}(v^+-v^-) &= \mu \Delta v^- - \nabla \pi^- - \frac{1}{\epsilon}(cE + v^- \times B),
			\\
			\frac{1}{c}\partial_t E - \nabla \times B &= -\frac{1}{2\epsilon}(v^+-v^-),
			\\
			\frac{1}{c}\partial_t B + \nabla \times E &= 0,
			\\
			\textnormal{div}\, v^+ = \textnormal{div}\, v^- = \textnormal{div}\, E = \textnormal{div}\, B &= 0,
		\end{aligned}
		\right.
	\end{equation*}
	which models the motion of a plasma of positively (cations) and negatively (anions) charged
	particles under the assumption of equal masses. In the above system, the condition $\textnormal{div}\, E = 0$, which is known as a degenerate Gauss’s law (see \cite{Arsenio-Ibrahim-Masmoudi_2015}) and follows from the charge neutrality and the incompressibility of the plasma (see \cite{Giga-Yoshida_1984}), and the third term on the right-hand side of the second equation presents the momentum transfer between the two fluids. The existence and uniqueness of global energy solutions to \eqref{2NSM} (for more general coefficients) have recently obtained in \cite{Giga-Ibrahim-Shen-Yoneda_2018} in two dimensions. In the three-dimensional case, they also showed the existence of global energy solutions and local well-posedness (LWP) for initial data $(v^{\pm}_0,E_0,B_0) \in H^\frac{1}{2} \times L^2 \times L^2$, and this local solution can be globally extended for small $v^{\pm}_0$ in the $\dot{H}^\frac{1}{2}$ norm. It can be seen that the energy equality to \eqref{2NSM} formally reads\footnote{The notation $\|(f_1,...,f_n)\|^m_X := \sum^n_{i=1}\|f_i\|^m_X$ will be used throughout the paper for $n,m \in \mathbb{N}$ and some functional space $X$.}
	\begin{equation*}
		\frac{1}{2} \frac{d}{dt}\left\|\left(\frac{v^{\pm}}{\sqrt{2}},E,B\right)\right\|^2_{L^2} + \mu\left\|\frac{1}{\sqrt{2}} \nabla v^{\pm}\right\|^2_{L^2} + \frac{1}{\sigma}\left\|\frac{1}{2\epsilon}(v^+-v^-)\right\|^2_{L^2} = 0,
	\end{equation*}
	which suggests us to consider a system, which is satisfied by the following quantities
	\begin{equation*}
		k := \frac{1}{2\epsilon}(v^+-v^-),\quad u := \frac{1}{2}(v^++v^-),\quad p := \frac{1}{2}(\pi^++\pi^-) \quad \text{and} \quad  \bar{p} := \frac{\epsilon}{2}(\pi^+-\pi^-)
	\end{equation*}
	and is given by rewriting \eqref{2NSM} as follows
	\begin{equation*} 
		\left\{
		\begin{aligned}
			\partial_t u + u \cdot \nabla u + \epsilon^2 k \cdot \nabla k  &= \mu\Delta u - \nabla p + k \times B, 
			\\
			\epsilon^2(\partial_t k + u \cdot \nabla k + k \cdot \nabla u) + \frac{1}{\sigma} k &= \epsilon^2 \mu \Delta k - \nabla \bar{p} + cE + u \times B, 
			\\	
			\frac{1}{c}\partial_t E - \nabla \times B &= -k,
			\\
			\frac{1}{c}\partial_t B + \nabla \times E &= 0,
			\\
			\textnormal{div}\, u = \textnormal{div}\, k = \textnormal{div}\, E = \textnormal{div}\, B &= 0.
		\end{aligned}
		\right.
	\end{equation*}
	As $\epsilon \to 0$, the above system formally converges to \eqref{NSM} with solenoidal Ohm’s law
	instead of the usual one (see \cite{Arsenio-Ibrahim-Masmoudi_2015,Arsenio-SaintRaymond_2016}), i.e., the following system (for simplicity we will replace $u,k,p,\bar{p}$ and $\mu$ by $v,j,\pi,\bar{\pi}$ and $\nu$, respectively)
	\begin{equation*} \label{NSM-SO} \tag{NSM-SO}
		\left\{
		\begin{aligned}
			\partial_t v + v \cdot \nabla v + \nabla \pi &= \nu\Delta v + j \times B, 
			\\	
			\frac{1}{c}\partial_t E - \nabla \times B &= -j,
			\\
			\frac{1}{c}\partial_t B + \nabla \times E &= 0,
			\\
			\sigma(-\nabla \bar{\pi} + cE + v \times B) &= j, 
			\\
			\textnormal{div}\, v = \textnormal{div}\, j = \textnormal{div}\, E &= \textnormal{div}\, B = 0,
		\end{aligned}
		\right.
	\end{equation*}
	that shares a similar structure and mathematical difficulties to those of \eqref{NSM} with $\alpha = 1$. In fact, we will list known results to \eqref{NSM} and it is possible to obtain similar ones to \eqref{NSM-SO}. The rigorous proof of the convergence from \eqref{2NSM} to \eqref{NSM-SO} as $\epsilon \to 0$ does not seem to be known for $L^2$ initial data. In \cite{Arsenio-Ibrahim-Masmoudi_2015}, the authors established the limit as first $c \to \infty$ and then $\epsilon \to 0$, where \eqref{2NSM} converges weakly to the standard $2\frac{1}{2}$-dimensional MHD system, i.e., \eqref{HMHD} with $\alpha = 1$ and $\kappa = 0$. They also pointed out that the same result also holds in the case $c \to \infty$ and $\epsilon \to 0$ at the same time, but with additional conditions on the relation between $\epsilon$ and $c$ in which $\epsilon$ is considered as a function of  $c$ satisfying further assumptions. However, the other order of taking limit has not been confirmed yet, i.e., the limit as $\epsilon \to 0$ first and then $c \to \infty$, where the limiting system is the same as the previous case. In addition, it is also very interested and much more complicated to consider  \eqref{NSM} with a generalized Ohm's law, which can be derived from either the two-fluid Navier-Stokes-Maxwell equations or kinetic models with different masses (see \cite{Acheritogaray-Degond-Frouvelle-Liu_2011,Jang-Masmoudi_2012,Peng-Wang-Xu_2022}) for $\alpha = 1$, in particular, the new system takes a more general form as follows 
	\begin{equation*} \label{NSM-GO} \tag{NSM-GO}
		\left\{
		\begin{aligned}
			\partial_t v + v \cdot \nabla v + \nabla \pi &= -\nu(-\Delta)^\alpha v + j \times B, 
			\\	
			\frac{1}{c}\partial_t E - \nabla \times B &= -j,
			\\
			\frac{1}{c}\partial_t B + \nabla \times E &= 0,
			\\
			\sigma(-\nabla \bar{\pi} + cE + v \times B)  &= j + \kappa j \times B, 
			\\
			\textnormal{div}\, v = \textnormal{div}\, j = \textnormal{div}\, E &= \textnormal{div}\, B = 0,
		\end{aligned}
		\right.
	\end{equation*}
	which takes into account of the Hall effect for some nonnegative constant $\kappa$. This new constant is corresponding to the magnitude of the Hall effect
	compared to the typical fluid length scale. Furthermore, by taking the limit as $c \to \infty$ formally or ignoring the term $\frac{1}{c}\partial_t E$ (for example, $E$ is time-independent), \eqref{NSM-GO} reduces to the following fractional Hall-magnetohydrodynamics equations (with $\beta = 1$), i.e.,
	\begin{equation} \label{HMHD} \tag{H-MHD}
		\left\{
		\begin{aligned}
			\partial_t v + v \cdot \nabla v + \nabla \pi &= -\nu(-\Delta)^\alpha v + (\nabla \times B) \times B, 
			\\
			\partial_t B - \nabla \times (v \times B) &= -\frac{1}{\sigma}(-\Delta)^\beta B - \frac{\kappa}{\sigma} \nabla \times ((\nabla \times B) \times B),
			\\
			\textnormal{div}\, v = \textnormal{div}\, B &= 0.
		\end{aligned}
		\right.
	\end{equation}
	Indeed, the Hall term in \eqref{HMHD} plays an important role in magnetic reconnection, which can not be explained by using \eqref{MHD}, and is derived from either two-fluid models or kinetic equations in \cite{Acheritogaray-Degond-Frouvelle-Liu_2011}. The systematical study of the above equations was initiated in \cite{Lighthill_1960} long time ago. However, even in the case that $d = 2$ and $\alpha = 1$, the global regularity issue of \eqref{HMHD} has not been fully established for general initial data. In this case, the existence of global energy solutions has been provided in \cite{Chae-Degond-Liu_2014} in both two and three dimensions, but it is not the case for \eqref{NSM} and \eqref{NSM-GO} as mentioned before. In addition, by using the convex integration framework, the author in \cite{Dai_2021} proved the nonuniqueness of weak solutions in the Leray-Hopf class for $d = 3$. In the case of without the resistivity, illposedness results around shear-type flows are also obtained in \cite{Jeong-Oh_2022}. Furthermore, global small initial data solutions in both cases $d = 2$ and $d = 3$ have been provided in \cite{Bae-Kang_2023,Chae-Degond-Liu_2014,Chae-Lee_2014,Danchin-Tan_2021,Danchin-Tan_2022,Liu-Tan_2021,Tan_2022,Wan_2015,Wan-Zhou_2019,Wan-Zhou_20191,Ye_2022}. In the stationary case, regularity and partial regularity of weak solutions can be found in \cite{Chae-Wolf_2017} and \cite{Chae-Wolf_2015} in the two and three-dimensional cases, respectively. As mentioned previously, \eqref{HMHD} can be obtained formally from \eqref{NSM-GO}.  Therefore, it is reasonable to consider the conditional global well-posedness (GWP) for \eqref{NSM-GO}, for instance, under smallness assumptions of initial data. This issue will be considered in \cite{KLN_2024_2}, which will allows us to have a connection between \eqref{NSM-GO} and \eqref{HMHD}.
	
	Finally, it is also convenient to write down the standard fractional MHD system as follows
	\begin{equation} \label{MHD} \tag{MHD}
		\left\{
		\begin{aligned}
			\partial_t v + v \cdot \nabla v + \nabla \pi &= -\nu (-\Delta)^\alpha v + B \cdot \nabla B, 
			\\
			\partial_t B + v \cdot \nabla B &= -\frac{1}{\sigma} (-\Delta)^\beta B + B \cdot \nabla v,
			\\
			\textnormal{div}\, v = \textnormal{div}\, B &= 0,
		\end{aligned}
		\right.
	\end{equation}
	where $(v,B) : \mathbb{R}^d \times (0,\infty) \to \mathbb{R}^d$ and $\pi : \mathbb{R}^d \times (0,\infty) \to \mathbb{R}$ for $d \in \{2,3\}$ and the magnetic resistivity constant $\mu > 0$. In three dimensions, it is well-known that by using vector identities,  \eqref{HMHD} with $\kappa = 0$ and \eqref{MHD} are equivalent to each other up to a modified pressure.
	
	%
	\subsection{The state of the art}
	%
	
	\textbf{A. The case $d = 2$.} Let us give a quick review on the study of \eqref{NSM} in two dimensions with $\alpha = 1$. Formally, its energy balance is given by (the same for \eqref{NSM-SO} and \eqref{NSM-GO} in both cases $d = 2$ and $d = 3$)
	\begin{equation*}
		\frac{1}{2} \frac{d}{dt}\|(v,E,B)\|^2_{L^2} + \nu\|\nabla v\|^2_{L^2} + \frac{1}{\sigma}\|j\|^2_{L^2} = 0.
	\end{equation*}
	Thus, similar to in the case of the usual Navier-Stokes equations, we could expect the existence of global energy solutions (see \cite{Leray_1933,Leray_1934}). However, it seems that this energy equality is not enough to obtain the existence of $L^2$ weak solutions, which is different to that of \eqref{2NSM} in the two and three-dimentional cases as mentioned previously. The main difficulty is the lack of compactness, due to the hyperbolicity 
	of the Maxwell equations, which is needed to pass to the limit as $n \to \infty$ of the term $j^n \times B^n$, especially for the one $E^n \times B^n$, where $n$ is the regularization parameter of a standard approximate system to \eqref{NSM} (for example, see the proof of Theorem \ref{theo1}). Therefore, higher regular data should be considered on the GWP issue. The first GWP result to \eqref{NSM} was obtained in \cite{Masmoudi_2010} in the case where
	\begin{equation*}
		(v_0,E_0,B_0) \in L^2 \times H^s \times H^s \quad \text{ for } s \in (0,1).
	\end{equation*}
	In addition, higher regular estimates are also provided in \cite{Masmoudi_2010} in the case where $(v_0,E_0,B_0) \in H^\delta \times H^s \times H^s$ for $\delta \geq 0$, $s \geq 1$, $s - 2 < \delta \leq s$\footnote{In the statement, the author assumed that $\delta \geq 0$ and $s \geq 1$. However, it seems to us that he used conditions $\delta > 0$ and $s > 1$ during the proof.}, see also \cite{Kang-Lee_2013} for another proof\footnote{By using the standard Brezis-Gallouet inequality, the authors in \cite{Kang-Lee_2013} considered the case where $\delta = s = 2$ and  all the third components is assumed to be zero, i.e., $v,E,B,j: \mathbb{R}^2 \times (0,\infty) \to \mathbb{R}^2$. However, the pure 2D flow assumption can be removed and the assumption on the initial data can be improved when we consider \eqref{NSM}, see Theorem \ref{theo1} below.}
	and \cite{Fan-Ozawa_2020} for the case of bounded domains. The GWP is also obtained in \cite{Ibrahim-Keraani_2012} for small initial  data satisfying\footnote{The space $\dot{B}^0_{2,1}$ is the usual homogeneous Besov space (see Appendix A) and $L^2_{\textnormal{log}}$ is the set of tempered distributions $f$ satisfying
	\begin{equation*}
		\|f\|^2_{L^2_{\textnormal{log}}} := \sum_{q \in \mathbb{Z}, q \leq 0} \|\dot{\Delta}_q f\|^2_{L^2} + \sum_{q \in \mathbb{Z}, q > 0} q \|\dot{\Delta}_q f\|^2_{L^2} < \infty,
	\end{equation*}
	where for each $q \in \mathbb{Z}$, $\dot{\Delta}_q$ is the homogeneous dyadic block (see  Appendix A).} 
	\begin{equation*}
		(v_0,E_0,B_0) \in \dot{B}^0_{2,1} \times L^2_{\textnormal{log}} \times L^2_{\textnormal{log}},
	\end{equation*}
	where we have the following relations $\dot{B}^0_{2,1} \subset L^2$ and $\cup_{s>0} H^s \subset L^2_{\textnormal{log}} \subset L^2$. However, the LWP has not been contructed for the above arbitrary large initial data. After that, the authors in \cite{Germain-Ibrahim-Masmoudi_2014} have been considered mild solutions to \eqref{NSM} and they obtained the LWP 
	for possibly large initial data and the GWP for small initial data under the assumption
	\begin{equation*}
		(v_0,E_0,B_0) \in L^2 \times L^2_{\textnormal{log}} \times L^2_{\textnormal{log}}.
	\end{equation*}
	Here, in the two previous results,  in order to estimate the term $E \times B$ coming from $j \times B$ in some homogeneous Besov spaces, the authors used the paraproduct estimate \eqref{paraproduct}, and it is critical in two dimentions, thus the extra logarithmic regularity of $(E_0,B_0)$ is needed. Recently, the authors in \cite{Arsenio-Gallagher_2020} revisited \eqref{NSM} in the case where $(v_0,E_0,B_0) \in L^2 \times H^s \times H^s$ for $s \in (0,1)$, as considered previously in \cite{Masmoudi_2010}, with providing further improvements, which include some $c$-independent estimates of $(v,E,B)$. That allowed them to investigate the asymptotic behavior as $c \to \infty$ by proving the convergence of solutions to \eqref{NSM} to that of the standard $2\frac{1}{2}$-dimensional MHD equations, i.e., \eqref{HMHD} with $\alpha = \beta = 1$ and $\kappa = 0$, in the sense of distributions.
	\\
	
	\noindent
	\textbf{B. The case $d = 3$.} As mentioned previously, the existence of energy solutions is unknown so far. We will shortly recall some results to \eqref{NSM} in the three-dimensional case with $\alpha = 1$. One of the first results was given in \cite{Ibrahim-Keraani_2012}, where the authors constructed global small solutions with initial data
	\begin{equation*}
		(v_0,E_0,B_0) \in \dot{B}^\frac{1}{2}_{2,1} \times \dot{H}^\frac{1}{2} \times \dot{H}^\frac{1}{2}.
	\end{equation*}
	For large initial data in some $\ell^1$ weighted space in Fourier side, the authors in \cite{Ibrahim-Yoneda_2012} have been provided the local in time existence of mild solutions. Moreover, by using the fact that the damped-wave operator does not have any smoothing
	effect, they also showed these local solutions lost regularity in some finite time. Later on, the above result in \cite{Ibrahim-Keraani_2012} was extended in \cite{Germain-Ibrahim-Masmoudi_2014} in which either local large initial data solutions or global small intial data ones was provided for initial data in the following space 
	\begin{align*}
		(v_0,E_0,B_0) \in \dot{H}^\frac{1}{2} \times \dot{H}^\frac{1}{2} \times \dot{H}^\frac{1}{2}.
	\end{align*}
	Recently,  the existence of weak solutions was built in \cite{Arsenio-Gallagher_2020} for the initial data in the form of 
	\begin{equation*}
		(v_0,E_0,B_0) \in L^2 \times H^s \times H^s \qquad \text{for} \quad s \in \left[\frac{1}{2},\frac{3}{2}\right),
	\end{equation*} 
	under the smallness assumption of $(E_0,B_0)$ in the $\dot{H}^s$ norm (the smallness assumption is related to only the $L^2$ norm of $(v_0,E_0,B_0)$ and $\dot{H}^s$ norm of $(E_0,B_0)$).
	
	We also note that time-periodic small solutions and their asymptotic stability were investiagted in \cite{Ibrahim-Lemarie-Rieusset-Masmoudi_2018}. For further results to \eqref{NSM} (and also to \eqref{NSM-SO}) such as GWP for small data and LWP for possibly large data, loss of regularity, asymptotic behaviors, existence of global weak solutions with small data, global regularity criteria, time periodic solutions and so on, we refer the reader to \cite{Arsenio-Gallagher_2020,Arsenio-Ibrahim-Masmoudi_2015,Germain-Ibrahim-Masmoudi_2014,Ibrahim-Keraani_2012,Ibrahim-Lemarie-Rieusset-Masmoudi_2018,Ibrahim-Yoneda_2012,Jiang-Luo-Tang_2020,Kang-Lee_2013,Peng-Wang-Xu_2022,Wen-Ye_2020,Yue-Zhong_2016}.
	
	%
	\subsection{Main results}
	%
	
	For the reader's convenience, before going to the detailed statements, let us first summarize the main results in the present paper as follows:
	\begin{enumerate}
		\item[1.] The GWP of \eqref{NSM} for $\nu > 0$, $\alpha = \frac{d}{2}$ with $d = 2$ and $d = 3$ in Theorems \ref{theo1} and \ref{theo1-3d};
		
		\item[2.] The conservation of magnetic helicity of \eqref{NSM} as $\sigma \to \infty$ with $\nu > 0$, $\alpha = \frac{3}{2}$ and $d = 3$ in Theorem \ref{theo-MH};
		
		\item[3.] The LWP for $\nu = 0$ and the inviscid limit of \eqref{NSM} for $d \in \{2,3\}$ in Theorem \ref{theo-inviscid};
		
		\item[4.] The stability near a magnetohydrostatic equilibrium with a constant (or equivalently bounded) magnetic field of \eqref{NSM} and \eqref{HMHD} for $\alpha = 0, \beta = 1, \nu > 0, \kappa \geq 0$ and $d \in \{2,3\}$ in Theorems \ref{theo3} and \ref{pro-HMHD};
	\end{enumerate}
	which will be precisely presented in the following subsubsections, respectively.
	
	%
	\subsubsection{Global well-posedness}
	%
	
	Our first result is aiming to obtain higher regular solutions to \eqref{NSM} in two dimensions compared to those of in \cite{Arsenio-Gallagher_2020,Masmoudi_2010} with a direct and sightly different proof, which is stated as follows.
	
	\begin{theorem}[Higher regular solutions in two dimensions] \label{theo1}
		Let $d = 2$, $\alpha = 1$, $c,\nu,\sigma > 0$ and $(v_0,E_0,B_0) \in (H^\delta \times H^s \times H^s)(\mathbb{R}^2)$ with $\textnormal{div}\, v_0 = \textnormal{div}\, B_0 = 0$ and $\delta, s \in [0,\infty)$. 
		\begin{enumerate}
			
			\item[(i).] If $(\delta,s)$ satisfies one of the following assumptions 
			\begin{enumerate}
				\item $0 < \delta \leq s \leq 1$;
				
				\item $0 < s < 1$ and $s \leq \delta \leq 2s$;
				
				\item $s = 1$ and $1 \leq \delta < 2$;
				
				\item $s > 1$ and $s \leq \delta \leq s + 1$;
				
				\item $s > 1$ and $s - 1 \leq \delta < s$;
				
				\item $s \in (0,1)$ and $\delta = 0$;
			\end{enumerate}
			 then there exists a unique global solution $(v,E,B)$ to \eqref{NSM} satisfying for any $T \in (0,\infty)$ 
			\begin{equation*}
				v \in L^\infty(0,T;H^\delta) \cap L^2(0,T;H^{\delta+1}) \cap L^2(0,T;L^\infty) \quad \text{and} \quad (E,B) \in L^\infty(0,T;H^s),
			\end{equation*}
			and for $t \in (0,T)$
			\begin{align*}
				\|v(t)\|^2_{H^\delta} + \|(E,B)(t)\|^2_{H^s} + \int^t_0 \|v\|^2_{H^{\delta+1}} + \|v\|^2_{L^\infty} + \|j\|^2_{H^s} \,d\tau &\leq C(T,\delta,\nu,\sigma,s,v_0,E_0,B_0).
			\end{align*}
			
			\item[(ii).] If $\delta = 0$ and $s \in (0,1)$ then for any $t \in (0,T)$, in addition to Part (i)-(f), there holds
			\begin{align*}
				&v \in L^\infty(t,T;H^{\delta'}) \cap L^2(t,T;H^{\delta'+1}) \qquad\text{for} \quad \delta' \in 
				\begin{cases}
					[0,s] &\text{if} \quad s \in (0,1),
					\\
					[0,1] &\text{if} \quad s \in [\frac{1}{2},1).
				\end{cases}
			\end{align*}
			\item[(iii).] If $\delta = 0$ and $s = 1$ then we have the same properties as Part (i)-(f) and for any $t \in (0,T)$ and $\delta' \in [0,1]$
			\begin{align*}
				v \in L^\infty(t,T;H^{\delta'}) \cap L^2(t,T;H^{\delta'+1}) \qquad \text{and}\qquad (E,B) \in L^\infty(t,T;H^1).
			\end{align*}
		\end{enumerate}
		Furthermore, $v \in C([0,T];H^\delta)$ and $(E,B) \in C_{\textnormal{weak}}([0,T];H^s)$.
	\end{theorem}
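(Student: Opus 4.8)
The plan is to follow the classical scheme for coupled parabolic--hyperbolic systems: construct regularized approximate solutions, derive uniform a priori bounds in the stated norms, extract a limit by compactness, and finally establish uniqueness. For the approximation I would apply a Fourier frequency truncation (projection onto $|\xi|\le n$) to every nonlinear term, turning \eqref{NSM} into a globally solvable ODE in $L^2$ that preserves the divergence constraints and the energy cancellation; local existence and continuation for the truncated system are standard, and all estimates below are carried out on the approximate solutions uniformly in $n$.

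The heart of the matter is the a priori estimate. I would start from the $L^2$ energy balance recalled in the text,
\begin{equation*}
\tfrac12\tfrac{d}{dt}\|(v,E,B)\|_{L^2}^2 + \nu\|\nabla v\|_{L^2}^2 + \tfrac1\sigma\|j\|_{L^2}^2 = 0,
\end{equation*}
which controls $v$ in $L^\infty_tL^2\cap L^2_t\dot H^1$, $(E,B)$ in $L^\infty_tL^2$, and $j$ in $L^2_tL^2$. I would then apply $(-\Delta)^{\delta/2}$ to the velocity equation and test against $(-\Delta)^{\delta/2}v$, and apply $(-\Delta)^{s/2}$ to the Amp\`ere and Faraday equations and test against $(-\Delta)^{s/2}E$ and $(-\Delta)^{s/2}B$. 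The two curl contributions cancel because $\langle\nabla\times B,E\rangle=\langle B,\nabla\times E\rangle$, while the Ohmic term $j=\sigma(cE+v\times B)$ produces the favorable damping $\sigma c^2\|E\|_{\dot H^s}^2$. Writing $j\times B=\sigma c\,E\times B+\sigma(v\times B)\times B$ this yields, schematically,
\begin{align*}
\tfrac12\tfrac{d}{dt}\|v\|_{\dot H^\delta}^2 + \nu\|v\|_{\dot H^{\delta+1}}^2 &= -\langle(-\Delta)^{\delta/2}(v\cdot\nabla v),(-\Delta)^{\delta/2}v\rangle + \sigma c\,\langle(-\Delta)^{\delta/2}(E\times B),(-\Delta)^{\delta/2}v\rangle \\
&\quad + \sigma\,\langle(-\Delta)^{\delta/2}((v\times B)\times B),(-\Delta)^{\delta/2}v\rangle,
\end{align*}
\begin{equation*}
\tfrac12\tfrac{d}{dt}\|(E,B)\|_{\dot H^s}^2 + \sigma c^2\|E\|_{\dot H^s}^2 = -\sigma c\,\langle(-\Delta)^{s/2}(v\times B),(-\Delta)^{s/2}E\rangle.
\end{equation*}

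The \emph{main obstacle}, and the source of the six admissible regimes (a)--(f), is the nonlinear coupling through $E\times B$, $v\times B$ and $(v\times B)\times B$: since the Maxwell block is hyperbolic the fields $(E,B)$ enjoy no smoothing, so I cannot buy derivatives for them. The strategy is to push every extra derivative onto $v$ and pay for it with the parabolic gain $\nu\|v\|_{\dot H^{\delta+1}}^2$ and the Ohmic damping $\sigma c^2\|E\|_{\dot H^s}^2$. Concretely I would integrate by parts, $\langle(-\Delta)^{\delta/2}(E\times B),(-\Delta)^{\delta/2}v\rangle=\langle(-\Delta)^{(\delta-1)/2}(E\times B),(-\Delta)^{(\delta+1)/2}v\rangle$, and bound the first factor by a two-dimensional fractional product estimate of the form $\|fg\|_{\dot H^{\delta-1}}\lesssim\|f\|_{H^s}\|g\|_{H^s}$, which is exactly the endpoint product law; it holds precisely under the arithmetic constraints $\delta\le 2s$, $s\le\delta\le s+1$, and the like. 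Similarly $\|v\times B\|_{\dot H^s}$ is controlled by $\|v\|_{H^\delta\cap H^{\delta+1}}\|B\|_{H^s}$. Each of (a)--(f) is exactly the parameter region in which these two product laws, together with the Moser estimate for $(v\times B)\times B$ and the usual estimate for $v\cdot\nabla v$, close after Young's inequality absorbs the top-order pieces into the dissipation. Because $c,\nu,\sigma$ are fixed I may be generous with $c$-dependent constants. Summing the two differential inequalities and applying Gr\"onwall (using the $L^2$ energy to absorb low-order factors) gives the bound $C(T,\delta,\nu,\sigma,s,v_0,E_0,B_0)$; the $L^2_tL^\infty$ control of $v$ then follows from $H^{\delta+1}(\mathbb R^2)\hookrightarrow L^\infty$ when $\delta>0$, and in the borderline case $\delta=0$ from the instantaneous smoothing recorded in parts (ii)--(iii) on $(t,T)$ combined with a logarithmic Brezis--Gallouet bound near $t=0$.

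With the uniform bounds in hand I would pass to the limit. The velocity has a parabolic gain, so $v^n$ is compact in $L^2_tL^2_{\mathrm{loc}}$ by Aubin--Lions (bounded in $L^2_tH^{\delta+1}$ with $\partial_tv^n$ bounded in a negative-order space); crucially, because here $s>0$, the fields $(E^n,B^n)$ are \emph{also} locally compact, since they are bounded in $L^\infty_tH^s$ while Faraday's and Amp\`ere's laws bound $\partial_t(E^n,B^n)$ in $L^\infty_tH^{s-1}$, so Aubin--Lions yields strong convergence in $C_tH^{s'}_{\mathrm{loc}}$ for every $s'<s$. This strong convergence is exactly what identifies the quadratic limits $E\times B$, $v\times B$ and $(v\times B)\times B$, circumventing the compactness failure that obstructs the pure $L^2$ theory. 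Uniqueness I would obtain by an energy estimate on the difference of two solutions in the lowest available norm, again transferring derivatives onto the parabolic variable and closing by Gr\"onwall. Finally, $v\in C([0,T];H^\delta)$ follows from $v\in L^\infty_tH^\delta\cap L^2_tH^{\delta+1}$ with $\partial_tv\in L^2_tH^{\delta-1}$ (weak continuity upgraded to strong by the energy equality), while $(E,B)\in C_{\mathrm{weak}}([0,T];H^s)$ follows from the uniform $H^s$ bound together with $\partial_t(E,B)\in L^\infty_tH^{s-1}$.
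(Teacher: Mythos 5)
Your overall scaffolding (Fourier truncation, a priori bounds, Aubin--Lions compactness exploiting $s>0$ to restore compactness of $(E^n,B^n)$, continuity statements) matches the paper, but the two steps that carry the actual difficulty of this theorem have genuine gaps. First, your a priori estimates do not close \emph{globally} for large data. Writing the Lorentz force as $\sigma c\,E\times B+\sigma(v\times B)\times B$ and bounding $\|E\times B\|_{\dot H^{\delta-1}}\lesssim\|E\|_{H^s}\|B\|_{H^s}$ makes the velocity inequality quadratic in the unknown $Y:=\|v\|^2_{H^\delta}+\|(E,B)\|^2_{H^s}$; the alternative of absorbing via the Ohmic damping $\sigma c^2\|E\|^2_{\dot H^s}$ is circular, since the summed inequality then has the form $\frac{d}{dt}Y+D\lesssim D\,Y$ with $D$ exactly the dissipation whose time integral you are trying to bound — Gr\"onwall closes only if $Y$ stays small, i.e.\ you get local-in-time or small-data bounds, not the claimed global ones. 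The paper's estimates are structured differently: the force is kept as $j\times B$ and estimated through $\|j\|_{L^2}$, whose time integral is controlled \emph{unconditionally} by the energy identity, and the remaining factor $\|v\|_{L^\infty}$ is handled by the Brezis--Gallouet--Wainger inequality \eqref{BGW} together with the logarithmic Gr\"onwall Lemma \ref{lem-gronwall}. This logarithmic mechanism, absent from your scheme, is also what rescues the endpoint cases where your ``arithmetic'' product laws fail (e.g.\ $\delta=s=1$: $H^1(\mathbb{R}^2)$ is not an algebra, and the paper needs Ladyzhenskaya-type interpolation there).

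Second, case (i)-(f) ($\delta=0$, $s\in(0,1)$) — the hardest case — is treated circularly in your proposal: the instantaneous smoothing of parts (ii)--(iii) is \emph{derived from} part (i)-(f) in the paper, so it cannot be used to prove it, and Brezis--Gallouet gives nothing when only $v\in L^\infty_tL^2\cap L^2_t H^1$ is known, since there is no higher norm to place inside the logarithm (this is exactly the critical failure of $L^2_tH^1\hookrightarrow L^2_tL^\infty$ in two dimensions). The paper closes this case with the decomposition $v=v^1+v^2$ (heat flows forced by $v\cdot\nabla v$ and by $j\times B$ respectively), Littlewood--Paley estimates, and the Ars\'enio--Gallagher iteration that couples $\|v^2\|_{L^2_tL^\infty}$ to $\|(E,B)\|_{L^\infty_t\dot H^s}$; your proposal has no substitute for this. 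Relatedly, your uniqueness sketch (``difference estimate in the lowest norm, close by Gr\"onwall'') does not work: as the paper notes, the usual energy method fails because the term $\int \bar j\times(B-\bar B)\cdot(v-\bar v)\,dx$ requires control of $\|B-\bar B\|_{\dot H^{s'}}$, which the hyperbolic Maxwell block does not dissipate. One must instead bound the Maxwell difference in $L^\infty_tH^{s'}$, $s'<s$, through the linear Duhamel estimate of Lemma \ref{lem_M0} (an $L^1_t$ bound on $j-\bar j$), and close by a smallness-in-time contraction iterated over the whole interval.
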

	
	\begin{remark} \label{re1} We add some comments to Theorem \ref{theo1}:
		\begin{enumerate}
			\item Strategy of proof: The proof mainly based on the usual energy method with using the Brezis-Gallouet-Wainger inequalty \eqref{BGW}, a logarithmic Gronwall inequality in Lemma \ref{lem-gronwall}, some well-known commutator estimates and a carefully treated in each case. In addition, in the case of $(i)-(f)$, we also borrow the idea from \cite{Arsenio-Gallagher_2020} with a slightly different velocity decomposition. Furthermore, in order to obtain the uniqueness, we use the idea in \cite{Masmoudi_2010} with a slightly different proof. The idea here will also be applied to the three-dimensional case in Theorem \ref{theo1-3d}.
			
			\item The range of the initial data in Theorem \ref{theo1} consists of the dark and darker regions (the region $A$). We also provide a new proof for the region $A \cap B$. The results obtained in \cite{Masmoudi_2010} are the segment from $(0,0)$ to $(1,0)$ without the end points (also in \cite{Arsenio-Gallagher_2020}) and the darker and darkest areas (the region $B$) without the line $\delta = s-2$ and without the end point $(2,0)$ as well (and it seems also without the segments from $(1,0)$ to $(1,1)$ and from $(1,0)$ to $(2,0)$ excluding the end points, as mentioned previously).  
			\begin{center}
				\begin{tikzpicture}  
					\draw[step=1,help lines,black!20] (0,0) grid (5,5);
					
					\draw[very thin, black,->] (-0.5,0) -- (5.5,0) node[below]{$s$};
					\draw[very thin,black,->] (0,-0.5) -- (0,5.5) node[left]{$\delta$};
					
					\foreach \x in {1,2,...,5}
					\draw[shift={(\x,0)}] (0pt,2pt) -- (0pt,-2pt) node[below] {\footnotesize $\x$};
					
					\foreach \y in {1,2,...,5}
					\draw[shift={(0,\y)}] (2pt,0pt) -- (-2pt,0pt) node[left] {\footnotesize $\y$};
					
					\fill[fill=black!15] (0,0) to (1,0) to (1,1) to (0,0);
					
					\fill[fill=black!25] (1,0) to (5,4) to (5,5) to (1,1) to (1,0);
					
					\fill[fill=black!35] (1,0) to (2,0) to (5,3) to (5,4) to (1,0);
					
					\fill[fill=black!15] (0,0) to (5,5) to (4,5) to (1,2) to (0,0);
					
					\draw[fill=black] (0,0)  node[below left] {$0$};
					
					\draw[fill=black] (0.7,0.4)  node[] {$A$};
					\draw[fill=black] (2.7,2.2)  node[rotate=45] {$A \cap B$};
					
					\draw[thick,color=black] (0,0) -- (5,5); 
					\draw[] (2.8,2.8) node[above=0.25cm, left, rotate=45] {$\delta = s$}; 
					
					\draw[dashed,color=black] (2,0) -- (5,3); 
					\draw[] (4.2,2.15) node[below=0.25cm, left, rotate=45] {$\delta = s-2$}; 
					
					\draw[fill=black] (3,1.5)  node[] {$B$};
					
					\draw[thick,color=black] (0,0)--(1,2); 

					\draw[dashed,color=black] (0,1)--(1,2);
					\draw[thick,color=black] (1,2)--(4,5); \draw[] (2.8,4.3) node[below=0.25cm, left, rotate=45] {$\delta = s+1$};
					
					\draw[fill=black] (1.4,1.8)  node[] {$A$};
					\draw[fill=black] (3.4,3.8)  node[] {$A$};
					
					\draw[thick,color=black] (0,0) -- (1,0);
					\draw[dashed,color=black] (1,0) -- (2,0);
					\draw[dashed,color=black] (0,0) -- (0.5,1);
					
					\draw[color=black] (0,0)  circle[radius=1.2pt] node[below left] {};
					\draw[color=black] (2,0)  circle[radius=1.2pt] node[below left] {};
					\draw[color=black] (1,0)  circle[radius=1.2pt] node[below left] {};
					
					\draw[color=black] (1,2)  circle[radius=1.2pt] node[below left] {};
					
					\draw[color=black] (0,1)  circle[radius=1.2pt];
				\end{tikzpicture}
				
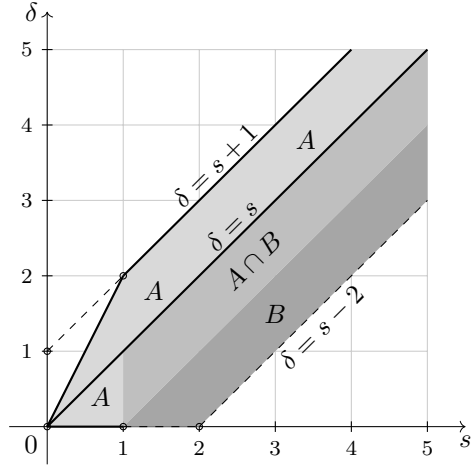
\captionof{figure}{The relation between $s$ and $\delta$ in Theorem \ref{theo1} and \cite{Arsenio-Gallagher_2020,Masmoudi_2010}.}
			\end{center}
			
			\item As mentioned previously in the introduction, similar results as in Theorems \ref{theo1}, \ref{theo1-3d}, \ref{theo-MH}, \ref{theo-inviscid} and \ref{theo3} can be easily obtained to \eqref{NSM-SO} by using mainly the divergence-free condition of $(E,j)$. 

			\item As it will be seen later that the estimates in Theorem \ref{theo1} are $c$-independent, from \cite[Corollary 1.3]{Arsenio-Gallagher_2020}) we can prove that \eqref{NSM} converges to \eqref{HMHD} with $\alpha = \beta = 1$ and $\kappa = 0$ as $c \to \infty$ in the sence of distributions, see also the proof of Theorem \ref{theo1-3d}-$(ii)$. In addition, Theorem \ref{theo1} also holds in the case that $-\nu \Delta v$ is replaced by $(\nu_2\partial_{22}v_1,\nu_1\partial_{11}v_2,\nu_3\Delta v_3)$ in \eqref{NSM} for any positive constants $\nu_1,\nu_2$ and $\nu_3$, by using the divergence-free condition of $v$, for example see \cite{KLN_2024}.
			
			\item It seems to be not clear to us how to obtain a priori estimates for initial data in the following cases: a) the triangle $(0,0)-(1,2)-(0,1)$ without the segment from $(0,0)$ to $(1,2)$ including the end points; b) the segment from $(1,0)$ to $(2,0)$ including the end points; c) the line $\delta = s-2$; and d) the domain is either above the line $\delta = s+1$ or under the one $\delta = s-2$. 
			
			\item Note that in Part $(iii)$, we are not able to close the estimate of $(v,E,B)$ in the whole time interval $(0,T)$, but only in $(t,T)$ for any $t \in (0,T)$. Furthermore, higher regularity for $(v,E,B)$ after the initial time as Parts $(ii)$ and $(iii)$ can be obtained to the cases from Part $(i)-(a)$ to Part $(i)-(e)$.
		\end{enumerate}	
	\end{remark}
	
	Our second result focuses on the three-dimensional case, where we obtain the GWP of \eqref{NSM} for possibly critical exponent fractional Laplacian. More precisely, it is given as follows.
	
	\begin{theorem}[Possibly critical exponent in three dimensions] \label{theo1-3d}
		Let $d = 3$, $\alpha = \frac{3}{2}$ and $(v_0,E_0,B_0)  \in (H^\delta \times H^s \times H^s)(\mathbb{R}^3)$ with $\delta,s \in [0,\infty)$. 
		\begin{enumerate}
			\item[(i)] \textnormal{(Global well-posedness).} If $(\delta,s)$ satisfies one of the following conditions 
			\begin{enumerate}
				\item $\delta = 0$ and $s \in (0,\frac{3}{2})$;
						
				\item $0 < \delta \leq s \leq \frac{3}{2}$;
			
				\item $0 < s < \frac{3}{2}$ and $s \leq \delta \leq 2s$;
			
				\item $s = \frac{3}{2}$ and $\frac{3}{2} \leq \delta < 3$;
			
				\item $s > \frac{3}{2}$ and $s - \frac{3}{2} \leq \delta \leq s + \frac{3}{2}$;
			\end{enumerate}
			then there exists a unique global solution $(v,E,B)$ to \eqref{NSM} satisfying for any $T \in (0,\infty)$
			\begin{equation*}
				v \in L^\infty(0,T;H^\delta) \cap L^2(0,T;H^{\delta+\frac{3}{2}}) \cap L^2(0,T;L^\infty) \quad\text{and}\quad (E,B) \in L^\infty(0,T;H^s),
			\end{equation*}
			and for $t \in (0,T)$
			\begin{equation*}
				\|v(t)\|^2_{H^\delta} + \|(E,B)(t)\|^2_{H^s} + \int^t_0 \|v\|^2_{H^{\delta+\frac{3}{2}}} + \|v\|^2_{L^\infty} + \|j\|^2_{H^s} \,d\tau \leq C(T,\delta,\nu,\sigma,s,v_0,E_0,B_0).
			\end{equation*}
			In addition, $v \in C([0,T];H^\delta)$ and $(E,B) \in C_{\textnormal{weak}}([0,T];H^s)$.
			
			\item[(ii)] \textnormal{(The limit as $c \to \infty$).} Let $c > 0$ and $(v^c_0,E^c_0,B^c_0) \in L^2 \times H^s \times H^s$ with $s \in (0,\frac{3}{2})$ satisfying $\textnormal{div}\, v^c_0 = \textnormal{div}\, B^c_0 = 0$ and as $c \to \infty$
			\begin{equation*}
				(v^c_0,E^c_0,B^c_0) \rightharpoonup (\bar{v}_0,\bar{E}_0,\bar{B}_0) \qquad \text{in} \quad L^2 \times H^s \times H^s
			\end{equation*}
			for some $(\bar{v}_0,\bar{E}_0,\bar{B}_0)$ with $\textnormal{div}\, \bar{v}_0 = \textnormal{div}\, \bar{B}_0 = 0$. Then, there exists a sequence of global solutions $(v^c,E^c,B^c)$ to \eqref{NSM} with $\alpha = \frac{3}{2}$ and $(v^c,E^c,B^c)_{|_{t=0}} = (v^c_0,E^c_0,B^c_0)$ given as in Part (i). In addition,  up to an extraction of a subsequence, $(v^c,B^c)$ converges to $(v,B)$ in the sense of distributions as $c \to \infty$, where
			$(v,B)$ satisfies \eqref{HMHD} with $\alpha = \frac{3}{2}$, $\beta = 1$, $\kappa = 0$ and $(v,B)_{|_{t=0}} = (\bar{v}_0,\bar{B}_0)$. The same conclusion can be obtained for the initial data given by one of the parts from $(i)-(b)$ to $(i)-(e)$.
		\end{enumerate}
	\end{theorem}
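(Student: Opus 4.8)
The plan is to establish Part (i) by the energy method adapted to the critical fractional dissipation $\alpha=\frac{3}{2}$, and then to exploit the $c$-independence of those estimates to carry out the singular limit in Part (ii). For Part (i) I would first regularize \eqref{NSM} (by Friedrichs mollification or a Galerkin truncation) and produce two families of a priori bounds uniform in the approximation parameter. At the $L^2$ level the energy identity
\begin{equation*}
\tfrac{1}{2}\tfrac{d}{dt}\|(v,E,B)\|^2_{L^2} + \nu\|(-\Delta)^{3/4}v\|^2_{L^2} + \tfrac{1}{\sigma}\|j\|^2_{L^2} = 0
\end{equation*}
controls $\|(v,E,B)\|_{L^\infty_t L^2}$, $\|v\|_{L^2_t \dot H^{3/2}}$ and $\|j\|_{L^2_t L^2}$. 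For the higher regularity I would apply $(-\Delta)^{\delta/2}$ to the velocity equation and $(-\Delta)^{s/2}$ to the Maxwell block, test against $(-\Delta)^{\delta/2}v$ and $(-\Delta)^{s/2}(E,B)$, and add, so that the full dissipation $\nu\|v\|^2_{\dot H^{\delta+3/2}}+\tfrac{1}{\sigma}\|j\|^2_{\dot H^s}$ appears on the left. Here $H^{3/2}$ is precisely the critical space in $\mathbb{R}^3$ in which $\|v\|_{L^\infty}$ is recovered up to a logarithm, which is what makes the argument close.

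The nonlinear terms are treated as follows. The convection term $v\cdot\nabla v$ is estimated by Kato--Ponce commutator inequalities against $\nu\|v\|^2_{\dot H^{\delta+3/2}}$, the residual $L^\infty$-type loss being absorbed through the Brezis--Gallouet--Wainger inequality \eqref{BGW}. Inserting Ohm's law $j=\sigma(cE+v\times B)$ splits the Lorentz force as $j\times B=\sigma c\,(E\times B)+\sigma(v\times B)\times B$; the genuinely delicate piece is $E\times B$, which I would control by the paraproduct estimate \eqref{paraproduct} together with the $H^s$ regularity of $j$ gained from $\tfrac{1}{\sigma}\|j\|^2_{\dot H^s}$. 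The case distinctions (a)--(e) merely record the ranges of $(\delta,s)$ in which the interpolation and commutator exponents are admissible. Collecting the bounds yields a differential inequality of logarithmic type $\tfrac{d}{dt}Y\le C\,Y\log(e+Y)$ modulo dissipation, closed globally in time by the logarithmic Grönwall estimate (Lemma \ref{lem-gronwall}). Passage to the limit is then standard: $v$ is strongly compact by Aubin--Lions thanks to the $(-\Delta)^{3/2}$ smoothing, $(E,B)$ converges weak-$\ast$ in $L^\infty_t H^s$, and uniqueness follows from the energy argument of \cite{Masmoudi_2010} applied to the difference of two solutions.

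For Part (ii) the key point is that the bounds above are independent of $c$, so along a subsequence $(v^c,E^c,B^c)\rightharpoonup(v,E,B)$ in the natural weak topologies and $j^c\rightharpoonup\bar j$ in $L^2_tL^2$. The obstruction posed by the hyperbolicity of the Maxwell system is the convergence of the quadratic terms $j^c\times B^c$ and $\nabla\times(v^c\times B^c)$. I would circumvent it by eliminating $E^c$: combining Faraday's law $\tfrac{1}{c}\partial_t B^c=-\nabla\times E^c$ with $cE^c=\tfrac{1}{\sigma}j^c-v^c\times B^c$ gives the induction equation
\begin{equation*}
\partial_t B^c = -\tfrac{1}{\sigma}\nabla\times j^c + \nabla\times(v^c\times B^c),
\end{equation*}
whose right-hand side is bounded in a fixed negative-order Sobolev space uniformly in $c$. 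Hence Aubin--Lions, applied locally on $\mathbb{R}^3$, upgrades $B^c\rightharpoonup B$ to strong convergence in $L^2_{\mathrm{loc}}(H^{s'}_{\mathrm{loc}})$ for every $s'<s$, while $\tfrac{1}{c}\partial_t E^c\to0$ in the sense of distributions. Ampère's law $\tfrac{1}{c}\partial_t E^c-\nabla\times B^c=-j^c$ then forces $\bar j=\nabla\times B$, and the weak-times-strong structure gives $j^c\times B^c\rightharpoonup(\nabla\times B)\times B$ and $\nabla\times(v^c\times B^c)\rightharpoonup\nabla\times(v\times B)$. Since $\nabla\times(\nabla\times B)=-\Delta B$ for divergence-free $B$, the limit $(v,B)$ solves exactly \eqref{HMHD} with $\alpha=\frac{3}{2}$, $\beta=1$, $\kappa=0$.

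The main obstacle is the compactness of $B^c$ in Part (ii): the induction equation above is what replaces the missing parabolic smoothing of the Maxwell fields, and the crux is to bound its right-hand side $c$-uniformly in the correct negative Sobolev space so that Aubin--Lions applies and the nonlinear products converge. In Part (i) the analogous difficulty is verifying that the $E\times B$ contribution can be absorbed in the critical space $H^{3/2}$ via \eqref{paraproduct} and the gained $H^s$ regularity of $j$, uniformly across all admissible pairs $(\delta,s)$.
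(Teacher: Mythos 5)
Your Part (ii) is essentially the paper's argument (eliminate $E^c$ via Ohm's law to get the induction equation, uniform negative-order bounds on $(\partial_t v^c,\partial_t B^c)$, Aubin--Lions, weak-times-strong passage to the limit, and the identity $\nabla\times(\nabla\times B)=-\Delta B$), so that half is sound. The genuine gap is in Part (i), precisely in case (a), $\delta=0$, $s\in(0,\tfrac32)$ --- which is also the case Part (ii) relies on, since there $v^c_0\in L^2$ only. Your plan closes the $\dot H^s$ estimate of $(E,B)$ by controlling $\|v\|_{L^2_tL^\infty_x}$ through the Brezis--Gallouet--Wainger inequality \eqref{BGW} and a logarithmic Gr\"onwall inequality. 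But \eqref{BGW} in $d=3$ requires a norm $\|v\|_{H^{s_0+1}}$ with $s_0>\tfrac12$, i.e.\ regularity strictly above $H^{3/2}$, inside the logarithm; when $\delta=0$ the only dissipative gain is $v\in L^2_t H^{3/2}$, and $H^{3/2}(\mathbb{R}^3)$ does not embed into $L^\infty$. No norm of $v$ above $H^{3/2}$ is available in any integrability class, so \eqref{BGW} cannot even be invoked, and the absorption trick that converts the logarithm of the high norm into dissipation plus $\log(e+Y)$ (which is what makes the log-Gr\"onwall close for $\delta>0$) has nothing to absorb into. The paper circumvents this by a different mechanism: it splits $v=v^1+v^2$, where $v^1$ solves the fractional heat equation forced by $-\mathbb{P}(v\cdot\nabla v)$ and $v^2$ by $\mathbb{P}(j\times B)$; the term $\|v^2\|_{L^2_tL^\infty}$ is controlled through the Besov maximal-regularity estimate \eqref{Maximal_Regularity} (Proposition \ref{pro-F_heat}) combined with the paraproduct bound \eqref{paraproduct}, giving $v^2\in L^2_t\dot B^{s+3/2}_{2,1}\hookrightarrow L^2_tL^\infty$ in terms of $\|j\|_{L^2_tL^2}\|B\|_{L^\infty_t\dot H^s}$, while $\|v^1\|_{L^2_tL^\infty}$ is handled by a Littlewood--Paley/Bernstein argument; the two bounds are then coupled to the $\dot H^s$ estimate of $(E,B)$ through the iteration of Ars\'enio--Gallagher. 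You cite the right ingredients (\eqref{paraproduct}, the gain on $j$) but without the decomposition and the maximal regularity step they cannot be assembled into a closed estimate.

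Two smaller points. First, for $\delta>0$ your BGW-plus-log-Gr\"onwall scheme does match the paper (Steps 3--12 of its proof), modulo the bookkeeping of the admissible exponent ranges, so cases (b)--(e) are fine in outline. Second, your uniqueness step is understated: for $s\in(0,1)$ the paper stresses that the plain energy method on the difference fails because $(E,B)$ is too rough; the actual argument combines the $L^2$ energy estimate for the velocity difference with the hyperbolic Maxwell estimate of Lemma \ref{lem_M0} in a weaker norm $H^{s'}$, $s'<s$, and a short-time contraction that is then iterated. This is indeed an adaptation of Masmoudi's idea, as you say, but ``energy argument applied to the difference'' alone would not go through in the low-regularity range.
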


	\begin{remark} \label{re2} We add some comments to Theorem \ref{theo1-3d}: 
		\begin{enumerate}
			\item As mentioned previously, strategy of proof is similar to that of Theorem \ref{theo1} with using in addition some homogeneous Besov-type maximal regularity estimate for the fractional heat equation, see Proposition \ref{pro-F_heat} in Appendix D in Section \ref{sec:app}. Similar to Theorem \ref{theo1}, for the reader's convenience, we will summarize the conditions of $(\delta,s)$ as follows:
			\begin{center}
				\begin{tikzpicture}  
					\draw[step=1,help lines,black!20] (0,0) grid (5,5);
					
					\draw[very thin, black,->] (-0.5,0) -- (5.5,0) node[below]{$s$};
					\draw[very thin,black,->] (0,-0.5) -- (0,5.5) node[left]{$\delta$};
					
					\foreach \x in {1,2,...,5}
					\draw[shift={(\x,0)}] (0pt,2pt) -- (0pt,-2pt) node[below] {\footnotesize $\x$};
					
					\foreach \y in {1,2,...,5}
					\draw[shift={(0,\y)}] (2pt,0pt) -- (-2pt,0pt) node[left] {\footnotesize $\y$};

					\fill[fill=black!15] (0,0) to (3/2,0) to (5,7/2) to (5,5) to (7/2,5) to (3/2,3) to  (0,0);

					\draw[fill=black] (0,0)  node[below left] {$0$};
					
					\draw[thick,color=black] (0,0) -- (5,5); 
					\draw[] (2.7,2.7) node[above=0.25cm, left, rotate=45] {$\delta = s$}; 
					
					\draw[thick,color=black] (3/2,0) -- (5,7/2); 
					\draw[] (4.1,2.5) node[below=0.25cm, left, rotate=45] {$\delta = s-3/2$}; 
					
					\draw[thick,color=black] (0,0)--(3/2,3);

					\draw[dashed,color=black] (0,3/2)--(3/2,3);
					\draw[thick,color=black] (3/2,3)--(7/2,5); \draw[] (3,5) node[below=0.25cm, left, rotate=45] {$\delta = s+3/2$};
					
					\draw[thick,color=black] (0,0) -- (3/2,0);

					\draw[color=black] (0,0)  circle[radius=1.2pt] node[below left] {};
					\draw[color=black] (3/2,0)  circle[radius=1.2pt] node[below left] {};

					\draw[color=black] (3/2,3)  circle[radius=1.2pt] node[below left] {};

					\draw[color=black] (0,3/2)  circle[radius=1.2pt] node[left] {$\frac{3}{2}$};

					\draw[color=black] (3/2,0)  circle[radius=1.2pt] node[below] {$\frac{3}{2}$};
					
				\end{tikzpicture}
				
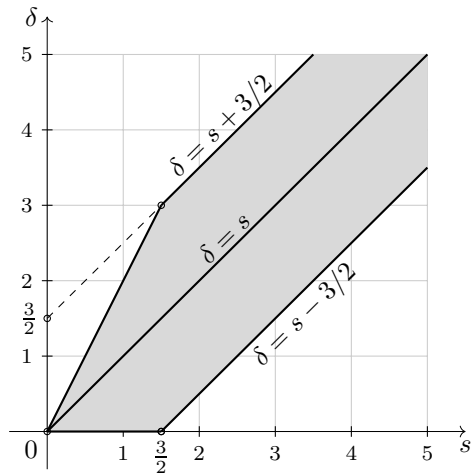
\captionof{figure}{The relation between $s$ and $\delta$ in Theorem \ref{theo1-3d}.}
			\end{center}
			In addition, as it can be seen from the proof below, similar results as in Parts $(i)$ and $(ii)$ also hold in the case $\alpha > \frac{3}{2}$ with a modified range of the initial data, for example in the case of Parts $(i)-(a)$, which should be replaced by $\delta = 0$ and $s \in (0,\alpha)$. Similar notes as Remark \ref{re1}-4 and 5 are also applied here.
			
			\item We first explain why should we choose $\alpha = \frac{3}{2}$ as a critical case. It is well-known that the fractional Navier-Stokes (formally  setting $E = B = 0$ in \eqref{NSM}) and \eqref{MHD} (in the case $\alpha = \beta$ and $\nu = \mu$) equations  have the following scaling property
			\begin{equation*}
				(v_\lambda,B_\lambda,\pi_\lambda)(x,t) \mapsto (\lambda^{2\alpha-1} v, \lambda^{2\alpha-1} B, \lambda^{2(2\alpha-1)}\pi)(\lambda x,\lambda^{2\alpha} t) \qquad \text{for} \quad \lambda > 0.
			\end{equation*}
			In addition, it can be seen formally that from the energy inequality 
			\begin{equation*}
				\mathcal{E}(v,B) := \esssup_{t \in (0, \infty)}\|(v,B)(t)\|^2_{L^2(\mathbb{R}^d)} + 2\nu\|(\Lambda^\alpha v,\Lambda^\alpha B)\|^2_{L^2(0,\infty;L^2(\mathbb{R}^d))} \leq \|(v,B)(0)\|^2_{L^2(\mathbb{R}^d)},
			\end{equation*}
			which yields $\mathcal{E}(v_\lambda,B_\lambda) = \lambda^{4\alpha-2-d}\mathcal{E}(v,B)$. That is why it suggests to take $\alpha = \frac{d+2}{4}$ with $\alpha = 1$ (see \cite{Leray_1933}) and $\alpha = \frac{5}{4}$ (see \cite{Lions_1969,Wu_2003}) in the cases of two and three dimentions, repsectively, to obtain the existence and uniqueness of global in time weak solutions. A similar observation also holds for the Hall equations \eqref{H}, where $\mathcal{E}(B_\lambda) = \lambda^{4\alpha-4-d}\mathcal{E}(B)$ by using the scaling invariance $B_\lambda(x,t) \mapsto \lambda^{2\beta-2} B(\lambda x,\lambda^{2\beta} t)$, and thus it is suggested to take $\beta = \frac{3}{2}$ (see Proposition \ref{pro-H}) and $\beta = \frac{7}{4}$ (see \cite{Wan_2015} and also Proposition \ref{pro-H}) in the cases of $d = 2$ and $d = 3$, respectively. Unfortunately, it does not seem to be the case to \eqref{NSM}, where a similar scaling as above seems does not exist mainly due to the appearance of the electric field. It seems to us that the most difficult term in \eqref{NSM} is the Lorentz force one $j \times B = \sigma cE \times B + \sigma (v \times B) \times B$, which drives the fluid. Compared to the usual fractional Navier-Stokes system, we have two new terms $\sigma c E \times B$ and $\sigma (v \times B) \times B$. While the latter one satisfies the usual scaling property, we have not known any similar thing for the former one, since no scaling information of $E$ has been provided yet. To have a better understanding the situation, it is natural to focus more carefully on the $(E,B)$ system, i.e., the Maxwell equations\footnote{Under suitable assumptions on $v$, the existence and uniqueness of $L^2$ weak solutions $(E,B)$ to \eqref{M} can be established, see Lemma \ref{lem-M}.} 
			\begin{equation} \label{M} \tag{M}
				\left\{
				\begin{aligned}
					\frac{1}{c} \partial_t E - \nabla \times B &= -j = -\sigma (c E + v \times B),
					\\
					\frac{1}{c} \partial_t B + \nabla \times E &= 0, \quad \textnormal{div}\, B = 0.
				\end{aligned}
				\right.
			\end{equation}
			Similar to \eqref{NSM}, we do not have any scaling property to \eqref{M} even in the case $v = 0$. However, if we formally drop out the electric current field, i.e., the term on the right-hand side of the first equation (it can be done formally either by taking $\sigma = 0$ or by setting $v = 0$ and ignoring the electric damping term $-\sigma c E$) then in these cases  
			\eqref{M} is rewritten by 
			\begin{equation} \label{M'} \tag{M'}
				\left\{
				\begin{aligned}
					\frac{1}{c} \partial_t E - \nabla \times B &= 0,
					\\
					\frac{1}{c} \partial_t B + \nabla \times E &= 0, \quad \textnormal{div}\, B = 0,
				\end{aligned}
				\right.
			\end{equation}
			which is invariant under the scaling $(E_\lambda,B_\lambda) \mapsto \lambda^\beta(E,B)(\lambda x,\lambda t)$ for any real number $\beta$. Furthermore, by defining\footnote{The existence and uniqueness of $L^2$ weak solutions $(E,B)$ to \eqref{M'} can be found in Lemma \ref{lem-M}.}
			\begin{equation*}
				\mathcal{E}(E,B) := \esssup_{t \in (0,\infty)} \|(E,B)(t)\|^2_{L^2(\mathbb{R}^d)} \leq \|(E,B)(0)\|^2_{L^2(\mathbb{R}^d)},
			\end{equation*}
			it follows that $\mathcal{E}(E_\lambda,B_\lambda) = \lambda^{2\beta-d} \mathcal{E}(E,B)$, which suggests us to choose $\beta = \frac{d}{2}$ with $\beta = \frac{3}{2}$ in the three-dimensional case. Coming back to the Lorentz force $j \times B$, if we scale $v_\lambda(t,x) \mapsto \lambda^\gamma v(\lambda x,\lambda t)$ for some real number $\gamma$ to be determined later and use the scaling property of \eqref{M'} for $(E,B)$ then this force is invariant under choosing $\gamma = 0$. Thus, in order to control the Lorentz force term by using the fractional Laplacian one with the scaling $(v_\lambda,E_\lambda,B_\lambda)(x,t) \mapsto (v,\lambda^\beta E,\lambda^\beta B)(\lambda x,\lambda t)$, it suggests us to take $\alpha \geq \beta = \frac{d}{2}$. Therefore, in two dimensions this also explains the GWP result given in Theorem \ref{theo1} in the case $\alpha = 1$ with a slightly stronger assumption on the initial data, i.e., $(v_0,E_0,B_0) \in L^2 \times H^s \times H^s$ for any $s \in (0,1)$ and probably raises a difficult problem in the case $\alpha \in (0,1)$. We should mention here that the critical case $\alpha = \frac{3}{2}$ to \eqref{NSM} can be compared to the results in \cite{Wu_2011} in the three-dimensional case, where the author proved the GWP of \eqref{MHD} for $\alpha \geq \frac{5}{4}, \beta > 0$ and $\alpha + \beta \geq \frac{5}{2}$ (in fact the author provided  general results in $d$ dimensions, for similar partial fractional dissipation results, see also  \cite{Yang-Jiu-Wu_2019}), so if we choose $\beta = 1$ then we should take $\alpha \geq \frac{3}{2}$. Moreover, the case $(\alpha,\beta) = (\frac{3}{2},1)$ can be obtained by taking the limit as $c \to \infty$ in which \eqref{NSM} with $\alpha = \frac{3}{2}$ converges to \eqref{HMHD} with $\alpha = \frac{3}{2}$ and $\kappa = 0$, in the sense of distributions, see Theorem \ref{theo1-3d}-$(ii)$. 
			
			\item In the case $\delta = s > 0$, we should remark that a more general result has been obtained in \cite{Wen-Ye_2020}. More precisely, the GWP of \eqref{NSM} is provided with replacing $-(-\Delta)^\alpha v$ by $-L^2 v$, where for some nondecreasing symmetric function $g \geq 1$, the operator $L$ is defined via Fourier transform as follows
			\begin{equation*}
				\mathcal{F}(Lu)(\xi) := \frac{|\xi|^\frac{d}{2}}{g(\xi)} \mathcal{F}(u)(\xi) \qquad \text{with} \qquad \int^\infty_e \frac{1}{s\log(s)g^2(s)} \,ds = \infty.
			\end{equation*}
			The above stronger conditions are inspirited by the similar weaker ones for the supercritical hyperdissipative Navier-Stokes equations given in \cite{Barbato-Morandin-Romito_2014,Tao_2009}, where the first paper did not need the above logarithmic term and improved the result in the second one, which also did not assume the logarithmic term but requiring $g^4$ instead of $g^2$. As mentioned previously, the critical case for the usual fractional Navier-Stokes equations is $\alpha = \frac{d+2}{4}$. By choosing $g = 1$, $\delta = s > 0$ and either $d = 2$ or $d = 3$, the result  in \cite{Wen-Ye_2020} reduces to Theorem \ref{theo1} or Theorem  \ref{theo1-3d}. However, they have not been explained about the choice of the exponent $\frac{d}{2}$ in the definition of $L$ and have not been considered lower regularity data cases, for instance $\delta = 0$ and $s \in (0,\frac{d}{2})$. In addition, it is possible to obtain (at least) the existence of global weak solutions to \eqref{NSM} for $\alpha \in (1,\frac{3}{2})$ and for small data by adapting the technique provided in \cite[Theorem 1.1]{Arsenio-Gallagher_2020}. Furthermore, it is also natural to ask the two following questions: 1) Can the above logarithmic term be dropped out as  in \cite{Barbato-Morandin-Romito_2014,Tao_2009}? and 2) Can the regularity of $v_0$ be reduced in \cite{Wen-Ye_2020}, namely, $(v_0,E_0,B_0) \in L^2 \times H^s \times H^s$ for $s \in (0,\frac{d}{2})$ for $d \in \{2,3\}$?
			
			\item Theorem  \ref{theo1-3d}-$(ii)$ also says that the hyperbolicity of \eqref{NSM} (due to the Maxwell equations) is weakly transferred into the parabolicity of \eqref{HMHD} with $\kappa = 0$ as $c \to \infty$. See also Lemma \ref{lem-M}, where under suitable assumptions on the velocity, a similar result is obtained for the Maxwell equations \eqref{M}, even for $L^2$ initial data. For more general estimates on \eqref{M}, we refer to \cite{Arsenio-Gallagher_2020,Germain-Ibrahim-Masmoudi_2014,Giga-Ibrahim-Shen-Yoneda_2018,Ibrahim-Keraani_2012,Ibrahim-Lemarie-Rieusset-Masmoudi_2018}. 
		\end{enumerate}
	\end{remark}
	
	As mentioned previously in Remark \ref{re2}, for the sake of completeness we will summarize the GWP of the Hall system (i.e., \eqref{HMHD} without the fluid equations) in the two and three-dimensional cases as follows. This system is also known as the electron MHD equations.
	
	\begin{proposition} \label{pro-H} Let $d \in \{2,3\}$, $B_0 \in H^s(\mathbb{R}^d)$ with $s \in [0,\infty)$, $\kappa,\sigma \in (0,\infty)$ and $T \in (0,\infty)$. Assume that $\beta \geq \frac{3}{2}$ if $d = 2$ and $\beta \geq \frac{7}{4}$ if $d = 3$. Then the Hall system
		\begin{equation} \label{H} \tag{H}
			\partial_t B = -\frac{1}{\sigma}(-\Delta)^\beta B - \frac{\kappa}{\sigma} \nabla \times ((\nabla \times B) \times B)
			\qquad \text{and}\qquad
			\textnormal{div}\, B = 0,
		\end{equation}
		has a unique global solution $B \in L^\infty(0,T;H^s) \cap L^2(0,T;H^{s+\alpha})$ satisfying for $t \in (0,T)$
		\begin{equation*}
			\|B(t)\|^2_{H^s} + \int^t_0 \|B\|^2_{H^{s+\beta}} \,d\tau \leq C(T,\beta,\kappa,\sigma,s,B_0).
		\end{equation*} 
	\end{proposition}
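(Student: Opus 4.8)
The plan is to treat \eqref{H} as a quasilinear parabolic equation and to run the classical three-step programme: construct smooth approximate solutions, derive $n$-independent a priori bounds in $L^\infty(0,T;H^s)\cap L^2(0,T;H^{s+\beta})$, and then pass to the limit and establish uniqueness. For the approximations I would use a spectral (Friedrichs) truncation $\mathbb{P}_n$ and solve
\begin{equation*}
\partial_t B^n = -\tfrac{1}{\sigma}(-\Delta)^\beta B^n - \tfrac{\kappa}{\sigma}\mathbb{P}_n\nabla\times\bigl((\nabla\times \mathbb{P}_n B^n)\times \mathbb{P}_n B^n\bigr),
\end{equation*}
which is a locally well-posed ODE in $\mathbb{P}_n L^2$ by Cauchy--Lipschitz and preserves the divergence-free constraint. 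The whole issue is to show that the a priori estimates are global and uniform in $n$, so that these local solutions do not leave $H^s$.

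The estimates rest on two cancellations coming from the cross-product structure. Writing $J:=\nabla\times B$ and testing \eqref{H} with $B$, the Hall term contributes $-\tfrac{\kappa}{\sigma}\langle J\times B,\nabla\times B\rangle = -\tfrac{\kappa}{\sigma}\int (J\times B)\cdot J = 0$ pointwise, so the $L^2$ energy obeys $\|B(t)\|_{L^2}^2+\tfrac{2}{\sigma}\int_0^t\|\Lambda^\beta B\|_{L^2}^2\,d\tau\le\|B_0\|_{L^2}^2$, which in particular gives $\|\Lambda^\beta B\|_{L^2}^2\in L^1(0,T)$. For the $H^s$ bound I would apply $\Lambda^s$, test with $\Lambda^s B$, integrate the outer curl by parts, and use $\nabla\times\Lambda^s B=\Lambda^s J$ to reduce the Hall contribution to $\langle \Lambda^s(J\times B),\Lambda^s J\rangle$. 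Splitting off the top-order piece, the term $\int(\Lambda^sJ\times B)\cdot\Lambda^sJ$ again vanishes pointwise by orthogonality of the cross product, so that only the commutator $\Lambda^s(J\times B)-(\Lambda^sJ)\times B$ survives.

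The hard part is closing this commutator at the critical exponent. The naive estimate placing $L^\infty$ on $\nabla B$ is far too lossy: it would require $s+\beta>1+\tfrac{d}{2}$, which already fails at $\beta=\tfrac{d+4}{4}$ when $s=0$. Instead one must use the sharp fractional-Leibniz/Kato--Ponce commutator estimate with carefully balanced Lebesgue exponents, combined with Gagliardo--Nirenberg interpolation between $\|B\|_{L^2}$ and the dissipative norm $\|\Lambda^{s+\beta}B\|_{L^2}$, arranged so that the total power of $\|\Lambda^{s+\beta}B\|_{L^2}$ is exactly $2$ and hence absorbable by Young's inequality. Precisely the values $\beta=\tfrac{3}{2}$ $(d=2)$ and $\beta=\tfrac{7}{4}$ $(d=3)$ --- the ones making the $L^2$ energy scaling-invariant --- are what bring this power down to (and in the supercritical range $\beta>\tfrac{d+4}{4}$, below) the threshold $2$, the surviving prefactor being controlled by the conserved energy. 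The upshot is a differential inequality
\begin{equation*}
\frac{d}{dt}\|B^n\|_{H^s}^2+\frac{1}{\sigma}\|\Lambda^{s+\beta}B^n\|_{L^2}^2\le C\,\|B^n\|_{H^s}^2\bigl(1+\|\Lambda^\beta B^n\|_{L^2}^2\bigr),
\end{equation*}
whose forcing is integrable in time by the $L^2$ estimate, so Gr\"onwall closes the bound globally and uniformly in $n$.

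With these uniform bounds, $\partial_t B^n$ is controlled in a space such as $L^2(0,T;H^{s-\beta})$, so the Aubin--Lions--Simon lemma yields strong convergence of $B^n$ on compact sets, enough to pass to the limit in the quadratic Hall term and to recover a solution with the stated regularity, the energy bound surviving by weak-$*$ lower semicontinuity. Uniqueness I would obtain by an $L^2$ estimate on the difference $W=B_1-B_2$: its equation reads $\partial_t W+\tfrac{1}{\sigma}(-\Delta)^\beta W=-\tfrac{\kappa}{\sigma}\nabla\times\bigl((\nabla\times W)\times B_1+(\nabla\times B_2)\times W\bigr)$, and testing with $W$ the first nonlinear term drops out by the same pointwise orthogonality, while the second is estimated by interpolation and absorbed into the dissipation, giving $\tfrac{d}{dt}\|W\|_{L^2}^2\le C(t)\|W\|_{L^2}^2$ with $C\in L^1(0,T)$ and hence $W\equiv0$. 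The one genuinely delicate point throughout is the critical $H^s$ commutator estimate; once it closes, the remaining steps are routine.
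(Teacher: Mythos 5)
Your proposal is correct in outline, and its architecture coincides with the paper's: Fourier-truncated approximation, the pointwise vanishing of $\int (j\times B)\cdot j\,dx$ for the $L^2$ energy law, the top-order cancellation $\int (\Lambda^s j\times B)\cdot \Lambda^s j\,dx=0$ so that only a commutator survives, Aubin--Lions compactness for the limit, and $L^2$ uniqueness via the same orthogonality. Where you genuinely diverge is in how the critical commutator is closed. The paper does not use one unified estimate: for low regularity ($s\le \frac d4$, i.e. $s\in(0,\frac12]$ when $d=2$ and $s\in(0,\frac34]$ when $d=3$) it avoids the commutator altogether and bounds $\int (j\times B)\cdot\Lambda^{2s}j\,dx$ directly by H\"older--Sobolev, which yields exactly your displayed differential inequality with forcing $\|B\|^2_{\dot H^\beta}\|B\|^2_{H^s}$; for high regularity ($s>\frac d4$) it uses the $L^\infty$-based Kato--Ponce commutator bound, accepts the lossy factor $\|\nabla B\|^2_{L^\infty}$ that you rightly reject, and rescues it with the Brezis--Gallouet--Wainger inequality \eqref{BGW} applied to $\nabla B$, the logarithmic Gr\"onwall lemma (Lemma \ref{lem-gronwall}), and a bootstrap: the low-$s$ case already gives $B\in L^2_tH^{1+\frac d2}_x$, which is precisely the norm sitting inside the logarithm. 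Your route instead runs fractional Leibniz/commutator estimates with balanced Lebesgue exponents (both factors in $L^p$, $2<p<\infty$) together with Gagliardo--Nirenberg interpolation, and the numerology does close for every $s\ge 0$ at the critical $\beta$ (for $s<1$ you must switch to the Kenig--Ponce--Vega form \eqref{KPV}, since $\Lambda^{s-1}$ then has negative order). Your approach is more unified and dispenses with the logarithmic machinery and the case analysis; the paper's uses more elementary harmonic-analysis inputs at the cost of the two-tier bootstrap.

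One point of bookkeeping deserves care, because it is the crux of why arbitrarily large data is admissible at the critical exponent. As literally written --- interpolate everything between $\|B\|_{L^2}$ and the dissipative norm so that ``the total power of $\|\Lambda^{s+\beta}B\|_{L^2}$ is exactly $2$'', with ``the surviving prefactor controlled by the conserved energy'' --- the argument fails: a bound of the form $C\|B_0\|_{L^2}\|\Lambda^{s+\beta}B\|^2_{L^2}$ cannot be absorbed into $\frac1\sigma\|\Lambda^{s+\beta}B\|^2_{L^2}$ unless $C\kappa\|B_0\|_{L^2}$ is small, i.e. a smallness condition on the data, which the proposition does not assume. What actually works (and what your final differential inequality encodes) is to keep one factor $\|\Lambda^{\beta}B\|_{L^2}$ \emph{un-interpolated}, so that the dissipative norm enters to power exactly one before Young's inequality; Young then puts power two on the absorbed side and squares the prefactor into the time-integrable forcing $\|\Lambda^\beta B\|^2_{L^2}$, after which ordinary Gr\"onwall closes. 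Your choice of exponents must be organized around this constraint, not around ``power two''; with that correction the plan is sound.
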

	
	\begin{remark} We add some remarks to Proposition \ref{pro-H} as follows: It can be seen from the proof given in Appendix F in Section \ref{sec:app} that similar results as Proposition \ref{pro-H} (i.e., for initial data $(v_0,B_0) \in H^s(\mathbb{R}^d)$ with $s \geq 0$) can be obtained when we couple \eqref{H} together with the fractional Navier-Stokes equations for the velocity fractional Laplacian $-(-\Delta)^\alpha v$ for $\alpha \geq \frac{d+2}{4}$ (as  \cite{Wan_2015} for $d = 3$ and for $s > \frac{5}{2}$). It seems to us the technique in the proof of Proposition \ref{pro-H}, which can also be adapted to obtain the GWP of \eqref{MHD} with initial data $(v_0,B_0) \in H^s(\mathbb{R}^d)$ for $s \geq 0$ in the case either $\alpha \geq \frac{d+2}{4}, \beta > 0$ and $\alpha + \beta \geq \frac{d+2}{2}$ (as  \cite{Wu_2011} for $s > \frac{d}{2} + 1$), or $\alpha \geq \frac{d+2}{4}$ and $\beta \geq \frac{d+2}{4}$ (as  \cite{Wu_2003} for $s \geq \max\{2\alpha,2\beta\}$). Furthermore, we do not investigate the large-time behavior here, but it can be easily obtained by adapting the Fourier splitting method provided in \cite{Chae-Schonbek_2013,Schonbek_1985,Schonbek_1986}, see also the proof of Theorem \ref{pro-HMHD}. In addition, we should also remark that the local existence of strong solutions to the inviscid \eqref{HMHD} (i.e., $\nu = 0$) has been provided in \cite{Chae-Wan-Wu_2015} for $\beta > \frac{1}{2}$. Finally, the GWP can be established in the critical case $\beta = \frac{1}{2}$ for small data (see \cite{Jeong-Oh_2022}).
	\end{remark}
	
		%
	\subsubsection{Magnetic helicity conservation}
	%
	
	Our next result visits the issue of conservation of magnetic helicity to \eqref{NSM} in three  dimensions as the electric conductivity goes to infinity as follows.
	
	\begin{theorem}[Magnetic helicity conservation as $\sigma \to \infty$] \label{theo-MH} Let $\alpha = \frac{3}{2}$, $s > \frac{3}{2}$ and $c,\nu,\sigma > 0$. Assume that $(v^{\sigma}_0, E^{\sigma}_0, B^{\sigma}_0) \in H^s(\mathbb{R}^3)$ with its $L^2$ norm is uniformly bounded in terms of $\sigma$ 
	and
	$\textnormal{div}\, v^{\sigma}_0 = \textnormal{div}\, B^{\sigma}_0 = 0$.
	For any $T \in (0,\infty)$, \eqref{NSM} has a unique global solution $(v^{\sigma}, E^{\sigma}, B^{\sigma})$ in $(0,T)$ with $(B^{\sigma},E^{\sigma},B^{\sigma})_{|_{t=0}} = (v^{\sigma}_0,E^{\sigma}_0,B^{\sigma}_0)$  given as in Theorem \ref{theo1-3d}. 
		Furthermore, if $(E^{\sigma}_0,B^{\sigma}_0) \in \dot{H}^{-1}(\mathbb{R}^3)$ and $B^{\sigma}_0 \to B_0$ in $\dot{H}^{-1}(\mathbb{R}^3)$ for some $B_0 \in (H^s \cap \dot{H}^{-1})(\mathbb{R}^3)$ with $\textnormal{div}\, B_0 = 0$ then 
		\begin{equation*}
			\lim_{\sigma \to \infty} \int_{\mathbb{R}^3} A^{\sigma}(t) \cdot B^{\sigma}(t) \,dx 
			=  \int_{\mathbb{R}^3} A_0 \cdot B_0\,dx \quad \text{for a.e. }   t \in (0,T),
		\end{equation*}
		where  $\nabla \times  f = g$ and $\textnormal{div}\,f = 0$ for $f \in \{A^{\sigma},A_0\}$ corresponding to $g \in \{B^{\sigma},B_0\}$. In addition, if the initial magnetic helicity is positive then there exists an absolute positive constant $C$ such that
		\begin{equation*}
			\liminf_{t\to \infty} \liminf_{\sigma \to \infty} \|B^{\sigma}(t)\|^2_{\dot{H}^{-\frac{1}{2}}(\mathbb{R}^3)} \geq  C\int_{\mathbb{R}^3} A_0 \cdot B_0 \,dx > 0.
		\end{equation*}
	\end{theorem}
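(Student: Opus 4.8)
The plan is to derive a closed evolution law for the magnetic helicity
\begin{equation*}
	H^\sigma(t) := \int_{\mathbb{R}^3} A^\sigma(t) \cdot B^\sigma(t)\,dx, \qquad A^\sigma := (-\Delta)^{-1}\nabla \times B^\sigma,
\end{equation*}
the Coulomb-gauge vector potential, so that $\nabla \times A^\sigma = B^\sigma$ and $\textnormal{div}\, A^\sigma = 0$; to show that its time derivative is proportional to $\sigma^{-1}\int j^\sigma \cdot B^\sigma\,dx$; and to control the latter by the energy dissipation. Before this can be written down I must make $H^\sigma(t)$ finite for a.e.\ $t$, i.e.\ $B^\sigma(t) \in \dot H^{-1/2}$, and this is exactly where the hypotheses $(E^\sigma_0,B^\sigma_0) \in \dot H^{-1}$ enter. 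Testing the two Maxwell lines of \eqref{NSM} (with $j^\sigma$ eliminated through Ohm's law) against $(-\Delta)^{-1}(E^\sigma,B^\sigma)$, I would use that curl is self-adjoint and commutes with $(-\Delta)^{-1}$, so that $\langle \nabla \times B^\sigma, E^\sigma\rangle_{\dot H^{-1}} = \langle \nabla \times E^\sigma, B^\sigma\rangle_{\dot H^{-1}}$ and the two curl contributions cancel. What survives is the good damping $-\sigma c^2 \|E^\sigma\|^2_{\dot H^{-1}}$ and the forcing $-\sigma c \langle v^\sigma \times B^\sigma, E^\sigma\rangle_{\dot H^{-1}}$, which I absorb by Young's inequality using $\|v^\sigma \times B^\sigma\|_{\dot H^{-1}} \lesssim \|v^\sigma\|_{L^2}\|B^\sigma\|_{L^3}$ (via $L^{6/5} \hookrightarrow \dot H^{-1}$ in $\mathbb{R}^3$). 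For each fixed $\sigma$ this bounds $(E^\sigma,B^\sigma)$ in $L^\infty(0,T;\dot H^{-1})$, and interpolating with the $H^s$ bound of Theorem \ref{theo1-3d} gives $B^\sigma \in L^\infty(0,T;\dot H^{-1/2})$, so $A^\sigma(t)$ exists and $H^\sigma(t)$ is finite.

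Next I would compute the balance. From Faraday's law $\partial_t B^\sigma = -c\nabla \times E^\sigma$ and the Coulomb gauge one gets $\partial_t A^\sigma = -cE^\sigma + \nabla \phi^\sigma$ with $\phi^\sigma = c\Delta^{-1}\textnormal{div}\, E^\sigma$; since $\textnormal{div}\, B^\sigma = 0$ the gradient part drops out, and integrating $\int A^\sigma \cdot (\nabla \times E^\sigma)\,dx = \int B^\sigma \cdot E^\sigma\,dx$ by parts yields $\frac{d}{dt} H^\sigma(t) = -2c \int_{\mathbb{R}^3} E^\sigma \cdot B^\sigma\,dx$. Inserting $cE^\sigma = \tfrac{1}{\sigma} j^\sigma - v^\sigma \times B^\sigma$ from Ohm's law and using $(v^\sigma \times B^\sigma)\cdot B^\sigma = 0$ collapses this to the clean identity
\begin{equation*}
	H^\sigma(t) - H^\sigma(0) = -\frac{2}{\sigma} \int_0^t \int_{\mathbb{R}^3} j^\sigma \cdot B^\sigma\,dx\,d\tau.
\end{equation*}

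The quantitative heart of the argument is to estimate the right-hand side by the energy balance
\begin{equation*}
	\tfrac{1}{2}\tfrac{d}{dt}\|(v^\sigma,E^\sigma,B^\sigma)\|^2_{L^2} + \nu\|\Lambda^{3/2} v^\sigma\|^2_{L^2} + \tfrac{1}{\sigma}\|j^\sigma\|^2_{L^2} = 0,
\end{equation*}
which, since the $L^2$ data are uniformly bounded, gives $\|B^\sigma\|^2_{L^\infty(0,T;L^2)} \le C_0$ and $\tfrac{1}{\sigma}\|j^\sigma\|^2_{L^2(0,T;L^2)} \le C_0$ with $C_0$ independent of $\sigma$. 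Two applications of Cauchy--Schwarz then give
\begin{equation*}
	|H^\sigma(t) - H^\sigma(0)| \le \frac{2}{\sigma}\,\|B^\sigma\|_{L^\infty_t L^2}\sqrt{T}\,\|j^\sigma\|_{L^2_{t,x}} \le \frac{2\sqrt{T}\,C_0}{\sqrt{2\sigma}} \xrightarrow[\sigma \to \infty]{} 0
\end{equation*}
uniformly in $t \in (0,T)$. In parallel, writing $H^\sigma(0) = Q(B^\sigma_0)$ for the quadratic form $Q(B) = \int |\xi|^{-2}(i\xi \times \hat B)\cdot \overline{\hat B}\,d\xi$, polarization and $|Q(B)| \lesssim \|B\|^2_{\dot H^{-1/2}}$ give $|Q(B^\sigma_0) - Q(B_0)| \lesssim \|B^\sigma_0 - B_0\|_{\dot H^{-1/2}}(\|B^\sigma_0\|_{\dot H^{-1/2}} + \|B_0\|_{\dot H^{-1/2}})$; since $\|\cdot\|_{\dot H^{-1/2}} \le \|\cdot\|^{1/2}_{\dot H^{-1}}\|\cdot\|^{1/2}_{L^2}$, the hypothesis $B^\sigma_0 \to B_0$ in $\dot H^{-1}$ together with the uniform $L^2$ bound forces $H^\sigma(0) \to Q(B_0) = \int A_0 \cdot B_0\,dx$. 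Combining the last two displays proves $\lim_{\sigma \to \infty} H^\sigma(t) = \int A_0 \cdot B_0\,dx$ for a.e.\ $t \in (0,T)$.

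For the lower bound I use the sharp inequality $|H^\sigma(t)| \le C^{-1}\|B^\sigma(t)\|^2_{\dot H^{-1/2}}$, coming again from $|Q(B)| \le \int |\xi|^{-1}|\hat B|^2\,d\xi$ with $C$ an absolute constant fixed by the Plancherel normalization, so that $\|B^\sigma(t)\|^2_{\dot H^{-1/2}} \ge C\,H^\sigma(t)$ for every $\sigma,t$; taking $\liminf_{\sigma \to \infty}$ (which yields $C\int A_0 \cdot B_0\,dx$ by the previous step) and then $\liminf_{t \to \infty}$ of a bound that does not depend on $t$ gives the stated estimate, strictly positive precisely when the initial helicity is. The main obstacle is the first step: propagating the low-frequency $\dot H^{-1}$ information through the hyperbolic Maxwell block so that the helicity is genuinely well defined and the balance above is rigorous. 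The cancellation of the two curl terms in the $\dot H^{-1}$ identity is what makes this possible, and it is the only place where $(E^\sigma_0,B^\sigma_0) \in \dot H^{-1}$ is truly used; everything downstream is careful bookkeeping of the uniform energy dissipation and elementary interpolation.
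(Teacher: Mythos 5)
Your proposal is correct and follows essentially the same route as the paper: propagate the $\dot H^{-1}$ bound through the Maxwell block (curl cancellation plus $L^{6/5}\hookrightarrow \dot H^{-1}$ for the $v\times B$ forcing), interpolate to make the helicity finite, derive the identity $\frac{d}{dt}\int A^{\sigma}\cdot B^{\sigma}\,dx = -\frac{2}{\sigma}\int j^{\sigma}\cdot B^{\sigma}\,dx$ via Ohm's law, get the $O(\sigma^{-1/2})$ bound from the energy dissipation, pass the initial helicities to the limit by polarization, and conclude the $\dot H^{-1/2}$ lower bound. The only differences are cosmetic (Cauchy--Schwarz instead of the paper's weighted Young inequality, and polarizing in $\dot H^{-1/2}$ rather than pairing $L^2$ against $\dot H^{-1}$ for the initial data), plus the paper additionally justifies your a priori computations by building the $\dot H^{-1}$ norm into the Galerkin-type approximation scheme.
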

	
	\begin{remark} We add some comments to Theorem \ref{theo-MH} as follows: Our motivation to investigate the conservation of magnetic helicity is inspired by Theorem \ref{theo1-3d}-$(ii)$, where \eqref{NSM} with $\alpha = \frac{3}{2}$ converges weakly to \eqref{HMHD} with $\alpha = \frac{3}{2}$, $\beta = 1$ and $\kappa = 0$. We note that the additional condition of the initial data in $\dot{H}^{-1}$ will be used to ensure the well-defined for the magnetic helicity on the whole space $\mathbb{R}^3$. If we are either in bounded domains or on the three-dimensional torus, then this assumption is not needed, but we have to study the GWP of \eqref{NSM} in this case, which does not seem to be known. 
	Since we are in the whole space $\mathbb{R}^3$ then the magnetic helicity does not depend on the choice of potential vector $A^\sigma$. We note that Theorem \ref{theo-MH} can be considered as a similar version in $\mathbb{R}^3$ of the Taylor’s conjecture on magnetic helicity conservation, which was solved recently  in \cite{Faraco-Lindberg_2020,Faraco-Lindberg_2022}.
	\end{remark}
	
	%
	\subsubsection{Local well-posedness}
	%
	
	Our next result is concerned the LWP of \eqref{NSM} in the inviscid case. That will allow us to study further either the inviscid limit as $\nu \to 0$ or the limit as $c \to \infty$ in suitable frameworks. More precisely, the statement is given as follows.
	
	\begin{theorem}[Local well-posedness, inviscid limit and the limit as $c \to \infty$] \label{theo-inviscid}
		Let $d \in \{2,3\}$, $c,\sigma > 0$ and  $(v_0,E_0,B_0) \in H^s(\mathbb{R}^d)$ with $\textnormal{div}\, v_0 = \textnormal{div}\, B_0 = 0$ and $s \in \mathbb{R}, s > \frac{d}{2} + 1$. 
		\begin{enumerate}
			\item[(i)] \textnormal{(Local well-posedness).} There exists a unique local solution $(v,E,B)$ to \eqref{NSM} with $\nu = 0$ and $(v,E,B)_{|_{t=0}} = (v_0,E_0,B_0)$ in $(0,T_0)$ for some $T_0 = T_0(d,\sigma,s,v_0,E_0,B_0) > 0$ such that $(v,E,B) \in L^\infty(0,T_0;H^s)$ satisfying for $t \in (0,T_0)$
			\begin{align*}
				\|(v,E,B)(t)\|^2_{H^s} + \int^t_0 \|j\|^2_{H^s}\,d\tau &\leq C(T_0,d,\sigma,s,v_0,E_0,B_0).
			\end{align*}
			
			\item[(ii)] \textnormal{(Inviscid limit).} Let $\alpha = 1$ and $\nu > 0$. Then there exists a sequence of solutions $(v^\nu,E^\nu,B^\nu)$ to \eqref{NSM} with  $(v^\nu,E^\nu,B^\nu)_{|_{t=0}} = (v_0,E_0,B_0)$ given globally as in Theorem \ref{theo1} if $d = 2$ and locally as in Part (i) if $d = 3$, respectively. Moreover, for $t \in (0,T_0)$ and for $s' \in [0,s)$
			\begin{equation*}
				\|(v^\nu-v,E^\nu-E,B^\nu-B)(t)\|_{H^{s'}} \leq \nu^\frac{s-s'}{s}C(T_0,d,\sigma,s,v_0,E_0,B_0),
			\end{equation*}
			where $(v,E,B)$ is the unique local solution to \eqref{NSM} with $\nu = 0$ and $(v,E,B)_{|_{t=0}} = (v_0,E_0,B_0)$ given as in Part (i).
			
			\item[(iii)] \textnormal{(The limit as $c \to \infty$).} Let $c > 0$ and $(v^c_0,E^c_0,B^c_0) \in H^s$ satisfying $\textnormal{div}\, v^c_0 = \textnormal{div}\, B^c_0 = 0$ and as $c \to \infty$
			\begin{equation*}
				(v^c_0,E^c_0,B^c_0) \rightharpoonup (\bar{v}_0,\bar{E}_0,\bar{B}_0) \qquad \text{in} \quad H^s
			\end{equation*}
			for some $(\bar{v}_0,\bar{E}_0,\bar{B}_0)$ with $\textnormal{div}\, \bar{v}_0 = \textnormal{div}\, \bar{B}_0 = 0$. Then there exists a sequence of solutions $(v^c,E^c,B^c)$ to \eqref{NSM} with $\nu = 0$ and $(v^c,E^c,B^c)_{|_{t=0}} = (v^c_0,E^c_0,B^c_0)$ given as in Part (i) in $(0,T_0)$ for some $T_0 > 0$. In addition,  up to an extraction of a subsequence, $(v^c,B^c)$ converges to $(v,B)$ in the sense of distributions as $c \to \infty$, where
			$(v,B)$ satisfies \eqref{HMHD} with $\beta = 1$, $\nu = \kappa = 0$ and $(v,B)_{|_{t=0}} = (\bar{v}_0,\bar{B}_0)$.
		\end{enumerate}
	\end{theorem}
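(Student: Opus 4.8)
The three parts share a common engine: the cancellation structure behind the energy identity $\frac12\frac{d}{dt}\|(v,E,B)\|_{L^2}^2 + \frac1\sigma\|j\|_{L^2}^2 = 0$. This identity is obtained by testing the Ampère and Faraday laws against $cE$ and $cB$ (so that the prefactors $\frac1c$ on the Maxwell time derivatives are absorbed), letting the curl terms cancel after integration by parts, and using Ohm's law $j=\sigma(cE+v\times B)$ to convert the induced work $\int j\cdot E$ together with the Lorentz work $\int(j\times B)\cdot v$ into the single dissipative term $-\frac1\sigma\|j\|_{L^2}^2$. For Part (i) the plan is to run this identity at regularity $H^s$: apply $(-\Delta)^{s/2}$ to each equation and test as above. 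The nonlinear remainders are the transport commutator $[(-\Delta)^{s/2},v\cdot\nabla]v$ and the mismatch between $(-\Delta)^{s/2}(j\times B)$ tested against $(-\Delta)^{s/2}v$ and $(-\Delta)^{s/2}j$ tested against $(-\Delta)^{s/2}(v\times B)$. Since $s>\frac d2+1$ makes $H^s$ an algebra, Kato--Ponce and product estimates bound all of these by $C(\|(v,E,B)\|_{H^s})\|(v,E,B)\|_{H^s}^2$, where the top-order piece carries a factor $\|j\|_{H^s}$ that is absorbed into $\frac1\sigma\|j\|_{H^s}^2$ by Young's inequality. This yields $\frac{d}{dt}\|(v,E,B)\|_{H^s}^2 \le C\,\|(v,E,B)\|_{H^s}^3$, hence a uniform bound on a time $T_0$ depending only on $d,\sigma,s$ and the data, and crucially \emph{not} on $c$. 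Existence then follows by running the same estimate on a regularized system (spectral truncation or Friedrichs mollification preserving the skew-symmetric curl structure) and extracting a limit from the uniform $H^s$ bound together with the $\partial_t$-bound in $H^{s-1}$; uniqueness follows from the analogous $L^2$ estimate for the difference of two solutions.

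For Part (ii) I would fix $c$, set $\alpha=1$, and note that by Part (i) the inviscid solution $(v,E,B)$ lives on $[0,T_0]$, while the viscous term $-\nu(-\Delta)v$ only adds the favorable dissipation $-\nu\|(-\Delta)^{1/2}v\|_{H^s}^2$; hence the same a priori estimate (global by Theorem \ref{theo1} when $d=2$, local when $d=3$) produces viscous solutions $(v^\nu,E^\nu,B^\nu)$ on the same $[0,T_0]$ with $H^s$ bounds uniform in $\nu$. I would then write the system for $(w,F,G):=(v^\nu-v,E^\nu-E,B^\nu-B)$ and test it in $L^2$ using the cancellation above; the only new terms are Lipschitz-type products controlled by $\|(v,E,B,v^\nu,E^\nu,B^\nu)\|_{H^s}\|(w,F,G)\|_{L^2}^2$, and the viscous error $-\nu(-\Delta)v$, bounded by $\nu\|v\|_{\dot{H}^2}\|w\|_{L^2}$. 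Gronwall gives $\|(w,F,G)(t)\|_{L^2}\le C\nu$ on $[0,T_0]$, and interpolating against the uniform bound $\|(w,F,G)\|_{H^s}\le C$ produces exactly $\|(w,F,G)\|_{H^{s'}}\le C\nu^{(s-s')/s}$.

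For Part (iii) the $c$-uniformity of $T_0$ and of the $H^s$ bounds from Part (i) gives a sequence $(v^c,E^c,B^c)$ bounded in $L^\infty(0,T_0;H^s)$, with $j^c=\sigma(cE^c+v^c\times B^c)$ bounded in $L^2(0,T_0;H^s)$ by the dissipation. From Ampère's law $j^c=\nabla\times B^c-\frac1c\partial_t E^c$, and since $\frac1c E^c\to 0$ strongly (the $H^s$ bound on $E^c$ being $c$-independent), I get $\frac1c\partial_t E^c\to 0$ in $\mathcal{D}'$, hence $j^c\rightharpoonup\nabla\times B$ with $B$ a weak limit of $B^c$, and likewise $E^c\to 0$. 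The equations bound $\partial_t v^c$ and $\partial_t B^c$ in a negative-order Sobolev space, so Aubin--Lions--Simon upgrades $v^c,B^c$ to strong convergence in $L^2(0,T_0;H^{s'}_{\mathrm{loc}})$, enough to pass to the limit in $v^c\cdot\nabla v^c$, in $j^c\times B^c\to(\nabla\times B)\times B$, and in $\nabla\times(v^c\times B^c)$; the limit $(v,B)$ then solves \eqref{HMHD} with $\beta=1$, $\nu=\kappa=0$, along the lines of \cite[Corollary 1.3]{Arsenio-Gallagher_2020} and Lemma \ref{lem-M}.

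The hard part is concentrated in Parts (i) and (iii). In (i) the Maxwell block is purely hyperbolic and offers no smoothing, so $E,B$ cannot be treated perturbatively off a parabolic regularization; the entire argument hinges on the exact algebraic cancellation above surviving at order $H^s$ with every commutator remainder absorbable into $\frac1\sigma\|j\|_{H^s}^2$ by a constant \emph{independent of} $c$, which is precisely what makes the singular limit in (iii) meaningful. In (iii) the obstacle is the standard one for such limits — passing to the limit in the quadratic terms despite only weak convergence and the lack of compactness from the hyperbolic part — and it is overcome exactly by the Aubin--Lions strong-convergence step together with the observation that $\frac1c\partial_t E^c$ is the distributional time derivative of a vanishing quantity.
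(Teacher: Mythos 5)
Your proposal is correct and follows essentially the same route as the paper: the $c$-independent $H^s$ energy estimate with Kato--Ponce commutators and Young absorption of $\|j\|_{H^s}$ for Part (i), the $L^2$ difference estimate with viscous error $\nu\|\Delta v\|_{L^2}$ followed by Gronwall and $L^2$--$H^s$ interpolation for Part (ii), and uniform bounds plus Aubin--Lions and passage to the limit in the weak formulation (with $\tfrac{1}{c}\partial_t E^c \to 0$ identifying $j = \nabla \times B$) for Part (iii). The only cosmetic deviations are that the paper passes to the limit in Part (i) via a Cauchy-sequence argument for the Fourier-truncated approximations rather than compactness, and its a priori inequality is $\frac{d}{dt}Y \le CY^2$ for $Y = \|(v,E,B)\|_{H^s}^2 + 1$ (quartic in the norm after Young) rather than your cubic display, neither of which affects the conclusion.
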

	
	\begin{remark} We add some comments to Theorem \ref{theo-inviscid} as follows: The proofs of Parts $(i)$ and $(ii)$ share the same ideas as those of the LWP of Euler equations and the invicid limit from the Navier-Stokes equations to the Euler system. The proof of Part $(iii)$ follows the ideas from \cite{Arsenio-Gallagher_2020} and Theorem \ref{theo1-3d}-$(ii)$.
	\end{remark}
		
	%
	\subsubsection{Stability and large-time behavior}
	%
	
	\textbf{A. The case of \eqref{NSM}.} Let us now focus on the stability issue of \eqref{NSM} around its stationary states. In this case, if we look for a zero-velocity steady solution, i.e., $(v^* = 0,E^*,B^*,\pi^*)$ then it should satisfy
	\begin{equation} \label{S-NSM} \tag{S-NSM}
		\nabla \pi^* = \sigma c E^* \times B^*, 
		\quad
		\nabla \times B^* = j^* = \sigma c E^*,
		\quad
		\nabla \times E^* = 0
		\qquad \text{and}\qquad
		\textnormal{div}\, B^* = 0. 
	\end{equation}
	Indeed, by using the following well-known identity
	\begin{equation*}
		j^* \times B^* = (\nabla \times B^*) \times B^* = B^* \cdot \nabla B^* - \frac{1}{2} \nabla |B^*|^2,
	\end{equation*}
	it follows that $B^*$ also satisfies the following stationary Euler-type equations, which is also known as the magnetohydrostatic system 
	\begin{equation} \label{MHS} \tag{MHS}
		B^* \cdot \nabla B^* + \nabla p^* = 0 \qquad \text{and}\qquad \textnormal{div}\, B^* = 0 \qquad \text{where}\qquad p^* := -\frac{1}{2}|B^*|^2 - \pi^*.
	\end{equation}
	In three dimensions, solutions to \eqref{MHS} either in bounded domains or on the torus are recently constructed in \cite{Constantin-Pasqualotto_2023} as infinite time limits of Voigt approximations\footnote{That means $(\partial_t v,\partial_t B)$ is replaced by $(\partial_t (-\Delta)^{\alpha_0} v, \partial_t (-\Delta)^{\beta_0} B)$ for some $\alpha_0,\beta_0 > 0$.} of viscous and non-resistive \eqref{MHD} (i.e., with $\alpha = 1$ and $\mu = 0$). It is also believed that \eqref{MHS} plays an important role in connection to the design of nuclear fusion devices such as tokamaks and stellarators. There are several examples of $(v^*,E^*,B^*,\pi^*)$ to either \eqref{S-NSM} or \eqref{MHS} such as for $x \in \mathbb{R}^d$
	\begin{align*}
		v^* &= E^* = 0, \quad  B^*(x) = \text{constant vector in } \mathbb{R}^3 \qquad \text{and}\qquad \pi^* = \text{constant};
		\\
		v^* &= E^* = 0, \quad B^*(x) \in \{(-x_1,x_2,0),(x_2,x_1,0),(0,x_3,x_2),...\} \qquad \text{and}\qquad \pi^* = \text{constant}.
	\end{align*}
	By setting
	\begin{equation*}
		\bar{v} := v + v^* = v, \quad \bar{E} := E + E^*, \quad \bar{B} := B + B^*  \qquad \text{and} \qquad \bar{\pi} := \pi + \pi^*,
	\end{equation*}
	it can be seen from \eqref{NSM} for $(\bar{v},\bar{E},\bar{B},\bar{\pi},\bar{j})$ and \eqref{S-NSM} that the perturbation $(v,E,B,\pi)$ satisfies
	\begin{equation} \label{NSM*} \tag{NSM*}
		\left\{
		\begin{aligned}
			\partial_t v + v \cdot \nabla v + \nabla (\pi + \pi^*) &= -\nu (-\Delta)^\alpha v + (j + j_*) \times (B + B^*), 
			\\
			\frac{1}{c}\partial_t E - \nabla \times (B + B^*)  &= -(j + j_*),
			\\
			\frac{1}{c}\partial_t B + \nabla \times E &= 0,
			\\
			\sigma(c(E + E^*) + v \times (B + B^*)) &= j + j_* = \bar{j}, 
			\\
			\sigma(cE + v \times B) &= j,
			\\
			\sigma(cE^* + v \times B^*) &= j_*,
			\\
			\textnormal{div}\, v = \textnormal{div}\, B &= 0,
		\end{aligned}
		\right.
	\end{equation}
	with the initial data is denoted by $(v,E,B)_{|_{t=0}} = (v_0,E_0,B_0)$. We are now going to the statement, which is given as follows.
	
	\begin{theorem}[Velocity damping effect on the stability near a constant magnetic field $B^*$] \label{theo3} Let $d \in \{2,3\}$, $\alpha = 0$ and $c,\nu,\sigma > 0$. Assume that $(v_0,E_0,B_0) \in H^s(\mathbb{R}^d)$ with $\textnormal{div}\, v_0 = \textnormal{div}\, B_0 = 0$ and $s \in \mathbb{R},s > \frac{d}{2} + 1$. Suppose that $B^*$ is a constant vector in $\mathbb{R}^3$ with $\epsilon_* := \|B^*\|_{L^\infty}$. Then the following properties hold. 
	\begin{enumerate}
		\item[(i)]\textnormal{(Stability around a constant magnetic field $B^*$).}  There exists a constant $\epsilon_0 = \epsilon_0(\nu,\sigma,s) > 0$ such that if $\|(v_0,E_0,B_0)\|^2_{H^s} \leq \epsilon^2_0$ then there is a unique global solution $(v,E,B)$ to \eqref{NSM*} satisfying for $t > 0$
		\begin{align*}
			&\|(v,E,B)(t)\|^2_{H^s} + \int^t_0 \nu\|v\|^2_{H^s} + \frac{1}{\sigma}\|\bar{j}\|^2_{H^s} \,d\tau \leq 2\epsilon^2_0,
			\\
			&\int^t_0 \|E\|^2_{\dot{H}^{s''}} + \|B\|^2_{\dot{H}^{s'}} \,d\tau \leq \epsilon^2_0
			\begin{cases}
				C(c,\nu,\sigma,s) &\text{if} \quad c \in (0,1),
				\\
				C(\nu,\sigma,s) &\text{if} \quad c \geq 1.
			\end{cases}
		\end{align*}
		In addition, for any $s' \in [0,s)$, $s'' \in [1,s)$, $p \in (2,\infty]$, $q \in [1,\infty]$ and for some constant $b_0 \in [0,\epsilon_0)$ as $t \to \infty$
		\begin{equation*}
			\|(v,E,\bar{j},j)(t)\|_{H^{s'}},  \|B(t)\|_{L^p}, \|B(t)\|_{\dot{H}^{s''}}, \|B(t)\|_{L^q_{\textnormal{loc}}} \to 0
			\qquad \text{and}\qquad 
			\|B(t)\|_{L^2} \to b_0.
		\end{equation*}

		\item[(ii)] \textnormal{(The limit as $c \to \infty$).} 
		Let $c > 0$ and $(v^c_0,E^c_0,B^c_0) \in H^s$ satisfying $\textnormal{div}\, v^c_0 =  \textnormal{div}\, B^c_0 = 0$, 
		\begin{equation*}
				\|(v^c_0,E^c_0,B^c_0)\|^2_{H^s} \leq \epsilon^2_1 \qquad\text{and}\qquad (v^c_0,E^c_0,B^c_0) \rightharpoonup (\bar{v}_0,\bar{E}_0,\bar{B}_0) \quad \text{in } \, H^s \text{ as } c \to \infty
		\end{equation*}
		for some small $\epsilon_1 = \epsilon_1(\nu,\sigma,s) > 0$ and  for some $(\bar{v}_0,\bar{E}_0,\bar{B}_0) \in H^s$ with $\textnormal{div}\, \bar{v}_0 = \textnormal{div}\, \bar{B}_0 = 0$. Then, there exists a sequence of global solutions $(v^c,E^c,B^c)$ to \eqref{NSM*} with $\alpha = 0$ and $(v^c,E^c,B^c)_{|_{t=0}} = (v^c_0,E^c_0,B^c_0)$ given as in Part (i). In addition,  up to an extraction of a subsequence, $(v^c,B^c)$ converges to $(v,B)$ in the sense of distributions as $c \to \infty$, where
		$(v,B)$ satisfies \eqref{HMHD*} with $\kappa = 0$ and $(v,B)_{|_{t=0}} = (\bar{v}_0,\bar{B}_0)$.
		\end{enumerate}
	\end{theorem}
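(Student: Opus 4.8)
The plan is to handle Part (i) by an energy method tuned to the damped structure of \eqref{NSM*}, and Part (ii) by a compactness argument parallel to Theorem \ref{theo1-3d}-(ii). First I record the simplifications forced by $B^*$ being constant: since $\nabla \times B^* = 0$, the steady relations \eqref{S-NSM} give $E^* = 0$, $j^* = 0$ and $\pi^*$ constant, so in \eqref{NSM*} the extra current reduces to $j_* = \sigma\, v \times B^*$, the Lorentz forcing is $\bar j \times (B+B^*)$ with $\bar j = \sigma(cE + v\times(B+B^*))$, and the Faraday/Ampère block carries no source from $B^*$. Local existence and uniqueness of an $H^s$ solution for $s > \tfrac d2 + 1$ follow exactly as in Theorem \ref{theo-inviscid}-(i), the damping $-\nu v$ being a benign lower-order term; so the whole task is a global-in-time a priori bound under smallness, and the extraction of the long-time behavior.

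The core is an $H^s$ energy identity exhibiting the hidden coercivity. Applying $\Lambda^s$ to the three evolution equations and pairing with $(\Lambda^s v, \Lambda^s E, \Lambda^s B)$, the Faraday/Ampère pairings cancel after integration by parts, while in the Lorentz pairing one uses that $B^*$ commutes with $\Lambda^s$ and substitutes $v\times(B+B^*) = \sigma^{-1}\bar j - cE$ in the top-order piece; this produces precisely the damping $-\tfrac1\sigma\|\bar j\|_{\dot H^s}^2$ together with a term $c\langle \Lambda^s \bar j, \Lambda^s E\rangle$ that is annihilated by the $c$-weighted Maxwell pairing. The result is
\begin{equation*}
	\tfrac12 \tfrac{d}{dt}\|(v,E,B)\|_{H^s}^2 + \nu\|v\|_{H^s}^2 + \tfrac1\sigma\|\bar j\|_{H^s}^2 = \mathcal R,
\end{equation*}
where $\mathcal R$ collects the transport commutator $[\Lambda^s, v\cdot\nabla]v$ and the two Lorentz commutators $[\Lambda^s,\times B]v$, $[\Lambda^s,\times B]\bar j$. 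Since $s > \tfrac d2 + 1$, Kato--Ponce and the product laws give $|\mathcal R| \le C\|(v,B)\|_{H^s}\big(\|v\|_{H^s}^2 + \|\bar j\|_{H^s}^2\big)$, i.e. the dissipation times a small norm. A continuity/bootstrap argument then closes the estimate: choosing $\epsilon_0 = \epsilon_0(\nu,\sigma,s)$ small enough that $C\epsilon_0 \le \tfrac14\min\{\nu,\sigma^{-1}\}$ absorbs $\mathcal R$, one obtains the stated bound together with $\int_0^\infty(\nu\|v\|_{H^s}^2 + \tfrac1\sigma\|\bar j\|_{H^s}^2)\,d\tau \le \epsilon_0^2$, hence global existence by continuation.

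The decay of the \emph{active} fields is then soft. Integrability $\int_0^\infty\|v\|_{H^s}^2 < \infty$ and the uniform bound $|\tfrac{d}{dt}\|v\|_{L^2}^2|\le C$ (read off the momentum equation) force $\|v(t)\|_{L^2}\to 0$; interpolation against the bounded $H^s$ norm upgrades this to $\|v(t)\|_{H^{s'}}\to 0$ for every $s'<s$, and the same applies to $\bar j$. Ohm's law $cE = \sigma^{-1}\bar j - v\times(B+B^*)$ and $j = \bar j - \sigma v\times B^*$ transfer the decay to $E$ and $j$ in $H^{s'}$. For the magnetic field one uses the monotone total energy $\tfrac12\tfrac{d}{dt}\|(v,E,B)\|_{L^2}^2 = -\nu\|v\|_{L^2}^2 - \tfrac1\sigma\|\bar j\|_{L^2}^2 \le 0$: it decreases to a limit, and since $\|v\|_{L^2},\|E\|_{L^2}\to 0$ the residual is carried entirely by $B$, so $\|B(t)\|_{L^2}^2 \to b_0^2\in[0,\epsilon_0^2)$.

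The genuine obstacle --- and the source of the NSM/MHD gap noted in the abstract --- is that $B$ must nevertheless decay in every norm above $L^2$, despite its equation carrying no dissipation. The key is that eliminating $E$ from Maxwell turns the $B$-equation into a damped wave equation
\begin{equation*}
	\tfrac1{c^2}\partial_{tt} B + \sigma\,\partial_t B - \Delta B = \sigma\,\nabla\times\big(v\times(B+B^*)\big),
\end{equation*}
whose restoring term $-\Delta B$ supplies coercivity at every nonzero frequency while leaving $\xi=0$ undamped --- exactly the surviving mode $b_0$. Applying $\Lambda^{s''-1}$ and running a hypocoercive Lyapunov estimate with functional $\tfrac1{c^2}\|\partial_t B\|_{\dot H^{s''-1}}^2 + \|B\|_{\dot H^{s''}}^2 + \eta\,\langle \partial_t B, B\rangle_{\dot H^{s''-1}}$ for small $\eta<\sigma c^2$ yields, after absorbing the source by the established decay of $v$, the bound $\int_0^\infty\|B\|_{\dot H^{s''}}^2\,d\tau<\infty$ and, via uniform continuity, $\|B(t)\|_{\dot H^{s''}}\to 0$ for $s''\in[1,s)$ (the $\tfrac1{c^2}$ weights explaining the $C(c,\nu,\sigma,s)$ constant for $c<1$). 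The remaining convergences are elementary: Gagliardo--Nirenberg between the bounded $\|B\|_{L^2}$ and the vanishing $\|B\|_{\dot H^{s''}}$ gives $\|B(t)\|_{L^p}\to 0$ for all $p\in(2,\infty]$, and Hölder on any bounded set upgrades this to $\|B(t)\|_{L^q_{\mathrm{loc}}}\to 0$ for every $q\in[1,\infty]$. Finally, Part (ii) follows the scheme of Theorem \ref{theo1-3d}-(ii): the Part (i) bounds are uniform in $c$, so along a subsequence $(v^c,E^c,B^c)\rightharpoonup(v,E,B)$; the terms $\tfrac1c\partial_t E,\tfrac1c\partial_t B\to 0$ in $\mathcal D'$, the velocity damping furnishes the compactness needed to pass to the limit in $v^c\cdot\nabla v^c$ and in the Lorentz term, and the Maxwell relations collapse to $\bar j = \nabla\times B$, $\nabla\times E = 0$, identifying the limit with \eqref{HMHD*} for $\kappa=0$.
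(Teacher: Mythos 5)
Your proposal is correct and, for most of its length, runs parallel to the paper's proof: the same Fourier-truncated approximation for local existence, the same key cancellation $\int J^s(\bar j\times B^*)\cdot J^s v\,dx + \int J^s\bar j\cdot J^s(v\times B^*)\,dx = 0$ exploiting that $B^*$ is constant, the same bootstrap closure under smallness, the same monotone-energy argument for $\|B(t)\|_{L^2}\to b_0$, the same interpolation/Agmon step for $L^p$ and $L^q_{\mathrm{loc}}$ decay, and the same Aubin--Lions compactness scheme for Part (ii). Where you genuinely diverge is the decay of $B$ above $L^2$: the paper never writes the damped wave equation for $B$; instead it proves $\|\partial_t E(t)\|_{L^2}\to 0$ by a dedicated differential inequality for $\|\partial_t E\|_{L^2}^2$ (with damping $c^2\sigma$, a prescription of $\partial_t E_{|_{t=0}}$, and an estimate of $\partial_{tt}E$), and then reads $\nabla\times B = \tfrac1c\partial_t E + \bar j \to 0$ off Amp\`ere's law, while the space-time bound $\int_0^t\|B\|^2_{\dot H^{s'}}\,d\tau \lesssim \epsilon_0^2$ is obtained by testing the curled Maxwell system and exploiting the cross term $\tfrac1c\tfrac{d}{dt}\int \Lambda^{s'-1}E\cdot\Lambda^{s'-1}\nabla\times B\,dx$. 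Your route --- eliminate $E$, get $\tfrac1{c^2}\partial_{tt}B+\sigma\partial_t B-\Delta B = \sigma\nabla\times(v\times(B+B^*))$, and run the Lyapunov functional with cross term $\eta\langle\partial_t B,B\rangle_{\dot H^{s''-1}}$ --- is the same mechanism in disguise (since $\partial_t B=-c\nabla\times E$, your cross term is $-c\langle E,\nabla\times B\rangle_{\dot H^{s''-1}}$, exactly the paper's), but your passage from time-integrability to pointwise decay via uniform continuity of $t\mapsto\|B(t)\|^2_{\dot H^{s''}}$ is more streamlined, avoiding the $\partial_t E$ analysis entirely. Two small points to tighten: (a) the uniform-continuity bound $\big|\tfrac{d}{dt}\|B\|^2_{\dot H^{s''}}\big| = 2c\,|\langle\nabla\times E,B\rangle_{\dot H^{s''}}| \le C\|E\|_{H^{s''+1}}\|B\|_{H^{s''}}$ requires $s''+1\le s$, so you should establish decay at $s''=1$ only and then interpolate against the bounded $H^s$ norm to cover all $s''\in[1,s)$ (this is also how the paper proceeds); (b) note that your Lyapunov functional need not be equivalent to $\tfrac1{c^2}\|\partial_t B\|^2_{\dot H^{s''-1}}+\|B\|^2_{\dot H^{s''}}$ at low frequencies, but boundedness below (from the global $H^s$ bounds) suffices for the integrability conclusion, and the theorem's companion bound $\int_0^t\|E\|^2_{\dot H^{s''}}\,d\tau\lesssim\epsilon_0^2$ should be recorded explicitly via Ohm's law $cE=\sigma^{-1}\bar j - v\times(B+B^*)$ and the integrability of $\bar j$ and $v$ in $H^s$.
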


	\begin{remark} We add some comments to Theorem \ref{theo3}:
		\begin{enumerate}
			\item Strategy of proof: The proof is based on the energy method with using some nice cancellation properties, which related to the constant vector $B^*$, which allow us to define a suitable energy form. We then obtain a bound for this energy form locally in time in which by using the smallness of the initial data and a bootstrap argument, the global in time estimate can be established. Then, the large-time behavior can be ontained by using the damping structure of the system.
						
			\item It seems to us that Theorem \ref{theo3} is the first stability result to \eqref{NSM}. The case $\alpha = 0$ means that we have a velocity damping term. Moreover, it can be seen from the last three relations in \eqref{S-NSM} that $\Delta B^* = 0$, and furthermore by Liouville's theorem (see \cite{Evans_2010}), if $B^*$ is bounded then $B^*$ is a constant vector. Thus, the boundedness of $B^*$ is equivalent to the constant one. Note that if $B^*$ is a constant vector in $\mathbb{R}^3$ then $j^* = E^* = \nabla \pi^* = 0$.  If we choose $\epsilon_0$ even smaller then the upper bound on the right-hand side of the main inequality in Part $(i)$ can be replaced by $\epsilon^2_0$. 
			
			\item The case that $B^*$ is not a constant (unbounded) vector given  in the previous examples, which is much more complicated and will be considered in a forthcoming work. Similar things happen in the case $\alpha = 1$ even in the case that $B^*$ is a constant vector in $\mathbb{R}^3$. The main difficulty in these cases is the control of either the weighted term $j \times B^*$ or $\|v\|_{L^2_tL^\infty_x}$ in which at the moment it seems not clear to us.
			
			\item How to obtain an explicit rate of convergence as $t \to \infty$ is not clear to us in this case, which is different to the case of \eqref{HMHD} in which under additional assumptions on the initial data, a logarithmic rate is obtained, see Theorem \ref{pro-HMHD} below.
			
			\item As it can be seen from the proof of Theorem \ref{theo3} that we also obtain a similar bound  in Part $(i)$ as replacing $\bar{j}$ by $j$ with a slight different unper bound such as $C(c,\sigma,\epsilon_*)\epsilon^2_0$ instead of $2\epsilon^2_0$. In addition, for $r \in [0,s-1)$
			\begin{equation*}
				\|(\partial_t v,\partial_t E,\partial_tB)(t)\|_{H^r} \to 0 \quad \text{as} \quad t \to \infty.
			\end{equation*}
			Moreover, if $\partial_t B$ decays sufficiently fast in $H^r$ (with an explicit rate of convergence, for example $t^{-\gamma}$ for some $\gamma > 1$) then we can conclude by using the fundamental theorem of calculus
			in time that $B \to b$ strongly in $H^r$ as $t \to \infty$ for some $b$, see \cite{Beekie-Friedlander-Vicol_2022}.
		\end{enumerate}
	\end{remark}
	
	\noindent
	\textbf{B. The case of \eqref{HMHD}.} Next, we will study the stability of \eqref{HMHD} around its zero-velocity stationary solutions with a constant magnetic field $B^*$. In addition, we also provide the large-time behavior of the corresponding perturbation $(v,B)$ in $L^2$ norm under suitable assumptions on the intial data. It is inspired by Theorem \ref{theo3} and also by the so-called magnetic relaxation phenomena of the non-resistive \eqref{MHD} system (i.e., with $\mu = 0$). Indeed, it is given formally in \cite{Moffat_1985} as follows: \textit{If $(v,B)$ is a smooth solution to \eqref{HMHD} without magnetic diffusion and with $\kappa = 0$ then $\|v(t)\|_{L^2} \to 0$ and $B$ converges to a stationary Euler flow as $t \to \infty$}. Recent related results in this direction are obtained either on $d$-dimensional torus  or in bounded domains in \cite{Beekie-Friedlander-Vicol_2022,Constantin-Pasqualotto_2023}. It can be compared to \eqref{NSM} in Theorem \ref{theo3}, where the time limit of the perturbation $B$ in $L^2$ norm as $t \to \infty$ is given by a constant $b_0 \in [0,\epsilon_0)$. It is not clear to us, even in addition $(v_0,E_0,B_0) \in L^1$, that whether or not $b_0 = 0$. However, the $L^\infty$ norm of $B$ converges to zero at infinite time, but without an explicit rate of decaying. Therefore, it seems to us that there is a gap between the "magnetic relaxation" of \eqref{NSM} and that of \eqref{HMHD} in the case $\alpha = 0$, even in the latter case we should assume an extra condition $(v_0,B_0) \in L^1$, but with an explicit asymptotic behavior. If $(v^*,B^*,p^*)$ is a stationary solution to \eqref{HMHD} with $\beta = 1$ and $v^* = 0$ then for $j^* := \nabla \times B^*$
	\begin{equation} \label{S-H-MHD} \tag{S-H-MHD}
		\nabla \pi^* = j^* \times B^*,
		\quad
		\frac{1}{\sigma}\Delta B^* = \frac{\kappa}{\sigma} \nabla \times (j^* \times B^*)
		\qquad \text{and}\qquad
		\textnormal{div}\, B^* = 0.
	\end{equation}
	As mentioned previously, if $B^*$ is a solution to \eqref{S-H-MHD} then $B^*$ also satisfies \eqref{MHS}. Note that the examples in Case A also satisfy \eqref{S-H-MHD}. Moreover, it follows from \eqref{HMHD} for $(\bar{v},\bar{B},\bar{\pi},\bar{j})$ with $\alpha = 0$ and $\beta = 1$, and \eqref{S-H-MHD} that the perturbation $(v := \bar{v}-v^*,B := \bar{B}-B^*,\pi := \bar{\pi}-\pi^*,j := \bar{j}-j^*)$ with $j := \nabla \times B$ satisfies
	\begin{equation} \label{HMHD*} \tag{H-MHD*}
		\left\{
		\begin{aligned}
			\partial_t v + v \cdot \nabla v + \nabla \pi &= -\nu v + j \times (B+B^*) + j^* \times B,
			\\
			\partial_t B - \nabla \times (v \times (B+B^*)) &= \frac{1}{\sigma}\Delta B - \frac{\kappa}{\sigma} \nabla \times (j \times (B+B^*)) - \frac{\kappa}{\sigma} \nabla \times (j^* \times B),
			\\
			\textnormal{div}\, v = \textnormal{div}\, B &= 0,
		\end{aligned}
		\right.
	\end{equation}
	in which the initial data is given by $(v,B)_{|_{t=0}} = (v_0,B_0)$. In Theorem \ref{theo3}-$(ii)$, we prove that \eqref{NSM*} converges to \eqref{HMHD*} with $k = 0$ as $c \to \infty$ in the sense of distributions. However, we are not able to prove the convergence of $B$ to zero in $L^2$ norm, but in $L^\infty$ one, and the rate of decaying in time of $(v,B)$ is implicit. The next result shows that we can obtain an explicit rate of convergence as $t \to \infty$ for $(v,B)$, which satisfies \eqref{HMHD*}, under an additional assumption of initial data, even in the case $\kappa \geq 0$.
	
	\begin{theorem}[A counterpart of Theorem \ref{theo3}] \label{pro-HMHD}		
		Let $d \in \{2,3\}$, $\alpha = 0$, $\kappa \geq 0$, $\nu,\sigma > 0$ and $(v_0,B_0) \in H^s(\mathbb{R}^d)$ with $s \in \mathbb{R}$,  $s > \frac{d}{2} + 1$. 
		Assume that $B^*$ is a constant verctor in $\mathbb{R}^3$ with $\epsilon_* := \|B^*\|_{L^\infty}$. 
		There exists a constant $\epsilon_0 = \epsilon_0(\kappa,\nu,\sigma,s) > 0$ such that if $\|(v_0,B_0)\|^2_{H^s} \leq \epsilon^2_0$ then there is a unique global solution $(v,B)$ to \eqref{HMHD*} such that for $t > 0$
		\begin{equation*}
			\|(v,B)(t)\|^2_{H^s} + \int^t_0 \nu\|v\|^2_{H^s} + \frac{1}{\sigma}\|\nabla B\|^2_{H^s} \,d\tau \leq 2 \epsilon^2_0,
		\end{equation*}
		and for $s' \in [0,s)$, $s'' \in [1,s)$, $s''' \in [0,s-2)$, $p \in (2,\infty]$ and $q \in [1,\infty]$
		\begin{equation*}
			\|v(t)\|_{H^{s'}}, \|B(t)\|_{L^p},  \|B(t)\|_{L^q_{\textnormal{loc}}}, \|B(t)\|_{\dot{H}^{s''}}, \|(\partial_t v, \partial_t B)(t)\|_{H^{s'''}} \to 0 \quad \text{as} \quad t \to \infty.
		\end{equation*}
		In addition, if $(v_0,B_0) \in L^1$ then for $t > 0$, $s' \in [0,s)$ and for each $m \in \mathbb{N}$ with $m \geq 2$
		\begin{equation*}
			\|(v,B)(t)\|_{H^{s'}} \leq C(\epsilon_0,\epsilon_*,\kappa,\nu,m,\sigma,s,v_0,B_0) 	\log^{-\frac{(m-1)(s-s')}{2s}}(e+m^{-1}\nu t). 
		\end{equation*}		
	\end{theorem}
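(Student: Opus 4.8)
The plan is to begin by recording that, since $B^*$ is a constant vector, $j^*=\nabla\times B^*=0$, so the two forcing terms $j^*\times B$ and $-\tfrac{\kappa}{\sigma}\nabla\times(j^*\times B)$ in \eqref{HMHD*} drop out; what remains is a velocity equation damped by $-\nu v$ and a magnetic equation parabolic in $B$ through $\tfrac1\sigma\Delta B$, coupled through $j\times(B+B^*)$, $\nabla\times(v\times(B+B^*))$ and the Hall term $-\tfrac\kappa\sigma\nabla\times(j\times(B+B^*))$ with $j=\nabla\times B$. Local existence and uniqueness of an $H^s$ solution on a maximal interval follow from a standard mollification/fixed-point scheme together with an energy estimate for the difference of two solutions, exactly as for damped MHD; the real content is therefore the global a priori bound, which I would obtain by a continuity/bootstrap argument from a single differential inequality for $\mathcal E(t):=\|(v,B)(t)\|_{H^s}^2$.

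To produce that inequality I would apply $\Lambda^s:=(-\Delta)^{s/2}$ to both equations and pair with $(\Lambda^s v,\Lambda^s B)$, adding the $L^2$ and homogeneous $\dot H^s$ levels. At the $L^2$ level the coupling is exactly skew: writing $\tilde B:=B+B^*$ and using $a\cdot(b\times c)=-\,b\cdot(a\times c)$, one checks that $\int (j\times\tilde B)\cdot v+\int(v\times\tilde B)\cdot j=0$, that the Hall contribution $-\tfrac\kappa\sigma\int(j\times\tilde B)\cdot j=0$ since $j\times\tilde B\perp j$, and that transport and pressure vanish by incompressibility, leaving $\tfrac12\tfrac{d}{dt}(\|v\|_{L^2}^2+\|B\|_{L^2}^2)+\nu\|v\|_{L^2}^2+\tfrac1\sigma\|\nabla B\|_{L^2}^2=0$. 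At the $\dot H^s$ level the same algebra makes all $B^*$-linear terms cancel identically, because $\Lambda^s$ commutes with the constant $B^*$, so only genuinely nonlinear remainders survive. These I would control by Kato--Ponce product and commutator estimates: the transport term is $\lesssim\|\nabla v\|_{L^\infty}\|v\|_{H^s}^2\lesssim\sqrt{\mathcal E}\,\|v\|_{H^s}^2$, absorbed by the velocity damping, while every Lorentz/induction and Hall remainder carries a factor $\Lambda^s j\sim\Lambda^{s+1}B$ or $\|\nabla B\|_{L^\infty}$, which after interpolation (using $s>\tfrac d2+1$, so that $\|\nabla B\|_{L^\infty}\lesssim\|\nabla B\|_{H^s}$) and the bounds $\|v\|_{L^\infty},\|B\|_{L^\infty}\lesssim\sqrt{\mathcal E}$ are all $\lesssim\sqrt{\mathcal E}\,(\|v\|_{H^s}^2+\|\nabla B\|_{H^s}^2)$ and absorbed by the magnetic diffusion. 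The resulting inequality $\tfrac{d}{dt}\mathcal E+\nu\|v\|_{H^s}^2+\tfrac1\sigma\|\nabla B\|_{H^s}^2\le C\sqrt{\mathcal E}\,(\|v\|_{H^s}^2+\|\nabla B\|_{H^s}^2)$ closes once $\sqrt{\mathcal E}$ is small, i.e. for $\epsilon_0$ small relative to $\nu,\sigma,\kappa$, yielding the stated bound $\mathcal E(t)+\int_0^t(\nu\|v\|_{H^s}^2+\tfrac1\sigma\|\nabla B\|_{H^s}^2)\le2\epsilon_0^2$ and global existence by bootstrap. The key structural point is that, unlike the velocity (which only has damping), every magnetic nonlinearity comes with at least one derivative of $B$, so the diffusion suffices.

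For the qualitative decay (no rate) I would exploit that the global bound forces $\int_0^\infty(\|v\|_{H^s}^2+\|\nabla B\|_{H^s}^2)\,dt<\infty$. Since the equations keep $\tfrac{d}{dt}\|v\|_{L^2}^2$ and $\tfrac{d}{dt}\|\nabla B\|_{L^2}^2$ bounded, Barbalat's lemma gives $\|v(t)\|_{L^2}\to0$ and $\|\nabla B(t)\|_{L^2}\to0$. Interpolating against the uniform bound $\|(v,B)\|_{H^s}\le\sqrt2\,\epsilon_0$ then yields $\|v\|_{H^{s'}}\to0$ for $s'<s$, $\|B\|_{\dot H^{s''}}\to0$ for $s''\in[1,s)$, and via Gagliardo--Nirenberg $\|B\|_{L^p}\to0$ for $p>2$, whence $\|B\|_{L^q_{\mathrm{loc}}}\to0$ for every $q$ by H\"older on bounded sets; note that $\|B\|_{L^2}$ itself is not asserted to vanish. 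The time-derivative statement follows by reading $\partial_t v,\partial_t B$ off the equations, the restriction $s'''<s-2$ coming precisely from the term $\tfrac1\sigma\Delta B$, which costs two derivatives.

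The explicit logarithmic rate under $(v_0,B_0)\in L^1$ is where I expect the genuine difficulty. I would run a Fourier-splitting (Schonbek/Nash) argument on the clean $L^2$ identity: splitting frequencies at $|\xi|\le g(t)$, the diffusion controls $B$ above $g$ and, for $g^2\le\sigma\nu$, the damping controls $v$ above $g$, reducing matters to the low-frequency mass $\int_{|\xi|\le g}(|\hat v|^2+|\hat B|^2)\,d\xi$. The obstruction is that the Duhamel representation of these low modes contains the background-field forcing $v\times B^*$ and $j\times B^*=(\nabla\times B)\times B^*$, whose $L^1$-in-$x$ control would require propagating $L^1$ (indeed $\dot{W}^{1,1}$) regularity that the $H^s$ theory does not supply; this is exactly what degrades the usual algebraic heat-type decay to a logarithmic one. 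I would therefore first close a weak bound $\|(v,B)(t)\|_{L^2}^2\lesssim\log^{-1}(e+m^{-1}\nu t)$ and then iterate, feeding the $(m-1)$-st bound back into the low-frequency estimate to gain one power of $\log$ at each step, finally interpolating with $\|(v,B)\|_{H^s}\le\sqrt2\,\epsilon_0$ to pass from the $L^2$ rate to the $H^{s'}$ rate $\log^{-(m-1)(s-s')/(2s)}$. The main obstacle, then, is precisely the interaction of the constant background field $B^*$ with the low frequencies: it both breaks the scaling used in standard decay proofs and forces the iterative logarithmic scheme, and verifying that each iteration genuinely gains a power of $\log$, with constants depending on $m$, is the delicate point.
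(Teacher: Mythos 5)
Your proposal is correct and follows essentially the same route as the paper's proof: the same cancellation of all $B^*$-linear terms in the $H^s$ energy estimate closed by a smallness bootstrap, the same integrability-plus-bounded-derivative (Barbalat-type) argument with interpolation and Agmon/Gagliardo--Nirenberg for the implicit decay, and the same Fourier-splitting scheme for the explicit rate, namely an initial $\log^{-1}$ bound, an iteration gaining one power of $\log$ per step by feeding the improved $L^2$ decay back into the low-frequency terms generated by $B^*$ (the paper's $S_3,S_6,S_9$), and a final interpolation against the uniform $H^s$ bound. In particular, your diagnosis that the interaction of $B^*$ with low frequencies is precisely what blocks the usual algebraic Schonbek decay and forces the logarithmic iteration coincides with the paper's own remark preceding its Step 4b.
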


	\begin{remark} We add some comments to Theorem \ref{pro-HMHD}:
		\begin{enumerate}
			\item Strategy of proof: The proof is similar to that of Theorem \ref{theo3}. In addition, to obtain explicit rate of decaying in time, we can apply the Fourier splitting method provided in \cite{Chae-Schonbek_2013,Schonbek_1985,Schonbek_1986} with an additional assumption on the initial data. However, there are new terms retaled to $B^*$, which should be controlled in a different way.
			
			\item In two dimensions, it is well-known that for either \eqref{MHD} with $\alpha = 0$ and $\beta = 1$ or \eqref{HMHD*} with $B^* = 0$ and $\kappa = 0$, the GWP for large initial data has not been established yet. For large initial data and $\nu = 0$, the authors in \cite{Cao-Wu-Yuan_2014} have been provided the existence and uniqueness of global solutions in $H^s(\mathbb{R}^2)$ for $s \in \mathbb{R}, s > 2$, to \eqref{MHD} for $\beta > 1$. Their idea can be adapted to \eqref{HMHD*} in the case $B^* = 0$, $\nu = \kappa = 0$, $d = 2$, and with replacing $\Delta B$ by $-(-\Delta)^\beta B$ for $\beta > 1$.
			
			\item There are also several stability results to \eqref{MHD} in two dimentions. In this case, the authors in \cite{Lin-Xu-Zhang_2015,Wu-Wu-Xu_2015} studied the stability without the magetic diffusion term, and with either Laplacian or damping velocity. In these two papers, the authors considered the system of $(v,\phi)$ instead of $(v,B)$, where $B = (\partial_2 \phi,-\partial_1\phi)$, and they investigated the stability of $(v,\phi)$ around $(v^*,\phi^*) = (0,x_2)$ or equivalently of $(v,B)$ near $(v^*=0,B^* = (1,0))$. Recently, the authors in \cite{Jo-Kim-Lee_2022} improved the result in \cite{Wu-Wu-Xu_2015} by considering lower regular data. More precisely, they proved the stability near $B^* = (1,0)$ for the initial data in a rougher space than $H^4 \cap L^1$ and large-time behavior in $L^2$ norm with an optimal decay rate for $H^5 \cap W^{2,1}$ initial data.
			 
			\item Note that the authors in \cite{Chae-Schonbek_2013} have been provided the temporal decay in time of energy solutions and also of higher regular ones to \eqref{HMHD} with $\alpha = 1$ and $d = 3$. Here, since we do not focus on obtaining an optimal decay then the rate of decay can be improved in one way or another. In addition, in Remark \ref{re-3D} we point out that it is difficult to obtain polynomial decay rate when using the Fourier splitting method to \eqref{HMHD*} in three dimensions.
		\end{enumerate}
	\end{remark}

	The rest of the paper is organized as follows: The proofs of Theorems \ref{theo1}, \ref{theo1-3d}, \ref{theo-inviscid}, \ref{theo3} and \ref{pro-HMHD}, and Proposition \ref{pro-H} will be provided in Sections \ref{sec:theo1}, \ref{sec:theo1-3d}, \ref{sec:theo-MH}, \ref{sec:inv}, \ref{sec:theo3}, \ref{sec:HMHD} and \ref{sec:app}, respectively. Some technical tools are also recalled and proved in the appendix given in Section \ref{sec:app}.
	
	%
	\section{Proof of Theorem \ref{theo1}} \label{sec:theo1}
	%
	
	In this section, we will provide a quite simple proof of Theorem \ref{theo1}, which is mainly relied on the standard energy method with using the usual Brezis-Gallouet-Wainger inequality in the case $\delta > 0$ to bound the norm $\|v\|_{L^2_tL^\infty_x}$. We will also revisit the case $\delta = 0$, i.e., $(v_0,E_0,B_0) \in L^2 \times H^s \times H^s$ for $s \in (0,1)$ by taking the idea from \cite{Arsenio-Gallagher_2020} with using a slightly different decomposition of the velocity to obtain a bound on $\|v\|_{L^2_tL^\infty_x}$.
	
	\begin{proof}[Proof of Theorem \ref{theo1}-(i)] The proof consists of three parts with several substeps in each part as follows.
		
		\textbf{Part I: Approximate system and local existence.} Let us fix $n \in \mathbb{N}$. Assume that $(v_0,E_0,B_0) \in H^\delta \times H^s \times H^s$ with $\text{div}\,v_0 = \text{div}\,B_0 = 0$ and $\delta,s \geq 0$. An approximate system of \eqref{NSM} is taken by the following form
		\begin{equation} \label{NSM_app}  
			\left\{
			\begin{aligned}
				\frac{d}{dt} \left(v^n,E^n,B^n\right) &=
				(F^n_1,F^n_2,F^n_3)(v^n,E^n,B^n),
				\\
				\text{div}\,v^n = \text{div}\,B^n &= 0,
				\\
				(v^n,E^n,B^n)_{|_{t=0}} &= T_n(v_0,E_0,B_0),
			\end{aligned}
			\right.
		\end{equation}
		where for $j^n = \sigma(cE^n + T_n(v^n \times B^n))$ and $i\in \{1,2,3\}$, $F^n_i$ are given by
		\begin{align*}
			F^n_1 &:= \nu\Delta v^n -T_n(\mathbb{P}(v^n \cdot \nabla v^n)) + T_n(\mathbb{P}(j^n \times B^n)), \quad F^n_2 := c(\nabla \times B^n -j^n) \qquad \text{and}\qquad F^n_3 := - c\nabla \times E^n.
		\end{align*}	
		Here, $T_n$ and $\mathbb{P}$ are the usual Fourier truncation operator and Leray projection\footnote{As usual, the operators $T_n$ and $\mathbb{P}$ are defined by
		\begin{align*}
			\mathcal{F}(T_n(f))(\xi) &:= \mathbbm{1}_{B_n}(\xi) \mathcal{F}(f)(\xi) \quad \text{for } n \in \mathbb{R}, n > 0,\xi \in \mathbb{R}^d,
			\\
			\mathbb{P}(f) &:= f + \nabla(-\Delta)^{-1} \text{div} \,f.
		\end{align*} 
		Here, $\mathbbm{1}_{B_n}$ is the characteristic function of $B_n$, where $B_n$ is the ball of radius $n$ centered at the origin.}, respectively. 
		For $\delta, s \in \mathbb{R}$ with $\delta, s \geq 0$, we define the following functional spaces
		\begin{align*}
			H^s_n &:= \left\{h \in H^s : \text{supp}(\mathcal{F}(h)) \subseteq B_n\right\}, \\ 
			V^s_n &:= \left\{h \in H^s_n : \text{div}\, h = 0\right\},
		\end{align*}
		and the mapping
		\begin{align*}
			F^n : V^\delta_n \times H^s_n \times V^s_n &\to V^\delta_n \times H^s_n \times V^s_n
			\\
			(v^n,E^n,B^n) &\mapsto F^n(v^n,E^n,B^n) := (F^n_1,F^n_2,F^n_3).
		\end{align*}
		The space $V^\delta_n \times H^s_n \times V^s_n$ is equipped with the following norm\footnote{For $s \in \mathbb{R}$ and $\xi \in \mathbb{R}^d$, $\mathcal{F}(J^s(f))(\xi) := (1 + |\xi|^2)^{\frac{s}{2}} \mathcal{F}(f)(\xi)$ and $\|f\|_{H^s} := \|J^s f\|_{L^2}$ with $H^0 \equiv L^2$.}
		\begin{equation*}
			\|(v^n,E^n,B^n)\|^2_{\delta,s} := \|v^n\|^2_{H^\delta} + \|(E^n,B^n)\|^2_{H^s}. 
		\end{equation*}
		It can be checked that $F^n$ is well-defined and is locally Lipschitz continuous as well. Then the Picard theorem (see \cite[Theorem 3.1]{Majda_Bertozzi_2002}) implies that there exists a unique solution $(v^n,E^n,B^n) \in C^1([0,T^n_*),V^\delta_n \times H^s_n \times V^s_n)$ to \eqref{NSM_app} for some $T^n_* > 0$. In addition, if $T^n_* < \infty$ then (see \cite[Theorem 3.3]{Majda_Bertozzi_2002})
		\begin{equation*}
			\lim_{t\to T^n_*} \left(\|v^n(t)\|^2_{H^\delta} + \|(E^n,B^n)(t)\|^2_{H^s}\right) = \infty.
		\end{equation*}
		
		\textbf{Part II: Global existence and uniform bound.} In the following steps (from Step 1 to Step 13), in order to verify that $T^n_* = \infty$, we will assume that $T^n_* < \infty$ and prove the following inequality
		\begin{equation*}
			\esssup_{t \in (0,T^n_*)} \left(\|v^n(t)\|^2_{H^\delta} + \|(E^n,B^n)(t)\|^2_{H^s}\right) < \infty,
		\end{equation*}
		which leads to a contradiction with the analysis in Part I. 
		
		\textbf{Step 1: The case $0 < \delta \leq s < 1$.} It can be checked that if $(v^n,E^n,B^n) \in V^\delta_n \times H^s_n \times V^s_n$ then $T_n(v^n,E^n,B^n,j^n) = (v^n,E^n,B^n,j^n)$ in the $L^2$ sense. In the sequel, we will write only $(v,E,B,j)$ instead of $(v^n,E^n,B^n,j^n)$ for simplicity. The standard energy inequality to \eqref{NSM_app} is given by
		\begin{equation*}
			\frac{1}{2} \frac{d}{dt}\|(v,E,B)\|^2_{L^2} + \nu\|\nabla v\|^2_{L^2} + \frac{1}{\sigma}\|j\|^2_{L^2} = 0,
		\end{equation*}
		which yields for $t \in (0,T^n_*)$
		\begin{equation*}
			\|(v,E,B)(t)\|^2_{L^2} + 2\int^t_0 \nu\|\nabla v\|^2_{L^2} + \frac{1}{\sigma} \|j\|^2_{L^2} \,d\tau  \leq \|T_n(v_0,E_0,B_0)\|^2_{L^2} \leq \|(v_0,E_0,B_0)\|^2_{L^2} =: \mathcal{E}^2_0.
		\end{equation*}
		Moreover, the $\dot{H}^\delta$-$\dot{H}^s$ estimate is given by\footnote{The usual fractional derivative operator is given by $\mathcal{F}(\Lambda^s(f))(\xi) := |\xi|^s \mathcal{F}(f)(\xi)$ for $\xi \in \mathbb{R}^2, s \in \mathbb{R}$. Recall that  $\|f\|_{\dot{H}^s} := \|\Lambda^s f\|_{L^2}$ and for $s \geq 0$, $\|f\|^2_{H^s} \approx \|\Lambda^s f\|^2_{L^2} + \|f\|^2_{L^2}$.}
		\begin{align*}
			\frac{1}{2} \frac{d}{dt} \left(\|v\|^2_{\dot{H}^\delta} + \|(E,B)\|^2_{\dot{H}^s}\right) + \nu\|v\|^2_{\dot{H}^{\delta+1}} + \frac{1}{\sigma}\|j\|^2_{\dot{H}^s} 
			=: \sum^3_{k=1} I_k,
		\end{align*}
		where for some $\epsilon \in (0,1)$, since $s,\delta \in (0,1)$ with $\delta \leq s$ 
		\begin{align*}
			I_1 &= \int_{\mathbb{R}^2} (j \times B) \cdot \Lambda^{2\delta} v \,dx 
			\leq \frac{\epsilon\nu}{2}\|v\|^2_{\dot{H}^{2\delta + 1-s}} + C(\epsilon,\nu,s)\|j\|^2_{L^2}\|B\|^2_{\dot{H}^s} ;
			\\
			I_2 &= - \int_{\mathbb{R}^2} v \cdot \nabla v \cdot \Lambda^{2\delta} v \,dx \leq 			 \frac{\epsilon\nu}{2}\|v\|^2_{\dot{H}^{\delta+1}} + C(\epsilon,\delta,\nu)\|\nabla v\|^2_{L^2}\|v\|^2_{\dot{H}^\delta};
			\\
			I_3 &= \int_{\mathbb{R}^2} \Lambda^s j \cdot \Lambda^s (v \times B) \,dx
			\\
			&\leq C(s)\|j\|_{\dot{H}^s}\|B\|_{\dot{H}^s}\left(\|\nabla v\|_{L^2} + \|v\|_{L^\infty}\right)
			\\
			&\leq \frac{\epsilon}{\sigma}\|j\|^2_{\dot{H}^s} +  C(\epsilon,\sigma,s)\left(\|\nabla v\|^2_{L^2} + \|v\|^2_{L^\infty}\right)\|B\|^2_{\dot{H}^s},
		\end{align*}
		where we used the well-known inequalities (see \cite{Bahouri-Chemin-Danchin_2011})
		\begin{align*}
			&&\|f\|_{L^{p_0}} &\leq C(p_0,s_0) \|f\|_{\dot{H}^{s_0}}  & &\text{for}\quad s_0 \in [0,1), p_0 = \frac{2}{1-s_0},&&
			\\
			&&\|f\|_{\dot{H}^{s_1}} &\leq C(s_1,s_2) \|f\|^{\alpha_0}_{L^2} \|f\|^{1-\alpha_0}_{\dot{H}^{s_2}} &
			&\text{for}\quad s_1,s_2 \in (0,\infty), s_1 < s_2, \alpha_0 = 1 - \frac{s_1}{s_2},&&
		\end{align*}
		and the following homogeneous Kato-Ponce type inequality (see \cite{Grafakos-Oh_2014})
		for $1 < p_i,q_i \leq \infty$, $i \in \{1,2\}$, $s_0 > 0$ and  $\frac{1}{p_i} + \frac{1}{q_i} = \frac{1}{2}$
		\begin{equation*} 
			\|\Lambda^{s_0}(fg)\|_{L^2} \leq C(s_0,p_i,q_i)\left(\|\Lambda^{s_0} f\|_{L^{p_1}}\|g\|_{L^{q_1}} + \|f\|_{L^{p_2}}\|\Lambda^{s_0} g\|_{L^{q_2}} \right).
		\end{equation*}
		Therefore, since $2\delta + 1- s \leq \delta + 1$ and by choosing $\epsilon = \frac{1}{2}$
		\begin{align*}
			\frac{d}{dt} \left(\|v\|^2_{\dot{H}^\delta} + \|(E,B)\|^2_{\dot{H}^s}\right) + \nu\|v\|^2_{\dot{H}^{\delta+1}} + \frac{1}{\sigma}\|j\|^2_{\dot{H}^s} &\leq C(\sigma,s)\left(\|\nabla v\|^2_{L^2} + \|v\|^2_{L^\infty} + \|j\|^2_{L^2}\right)\|B\|^2_{\dot{H}^s} 
			\\
			&\quad+ \nu\|v\|^2_{L^2} + C(\delta,\nu)\|\nabla v\|^2_{L^2}\|v\|^2_{\dot{H}^\delta}.
		\end{align*}
		
		\textbf{Step 2: The case $\delta = s = 1$.} Similar to the previous case, we obtain
		\begin{align*}
			\frac{1}{2} \frac{d}{dt} \|(v,E,B)\|^2_{\dot{H}^1} + \nu\|v\|^2_{\dot{H}^2} + \frac{1}{\sigma}\|j\|^2_{\dot{H}^1} =: \sum^3_{k=1} I_k,
		\end{align*}
		where $I_2 = 0$, $I_1$ and $I_3$ are estimated as follows\footnote{Here, $A:B := \sum_{1 \leq i,j \leq 3} a_{ij}b_{ij}$ for two matrices $A = a_{ij}$ and $B = b_{ij}$.}
		\begin{align*}
			I_1 &= -\int_{\mathbb{R}^2} (j \times B) \cdot \Delta v \,dx
			\\
			&\leq C\|j\|^\frac{1}{2}_{L^2}\|\nabla j\|^\frac{1}{2}_{L^2}\|B\|^\frac{1}{2}_{L^2}\|\nabla B\|^\frac{1}{2}_{L^2}\|\Delta v\|_{L^2}
			\\
			&\leq \frac{\epsilon}{\sigma}\|j\|^2_{\dot{H}^1} + \epsilon \nu\|v\|^2_{\dot{H}^2} +  C(\epsilon,\nu,\sigma)\|j\|^2_{L^2}\|B\|^2_{L^2}\|\nabla B\|^2_{L^2};
			\\
			I_3 &= \int_{\mathbb{R}^2} \nabla j : \nabla (v \times B) \,dx
			\\
			&\leq C \|\nabla j\|_{L^2}\left(\|\nabla v\|^\frac{1}{2}_{L^2}\|\Delta v\|^\frac{1}{2}_{L^2}\|B\|^\frac{1}{2}_{L^2}\|\nabla B\|^\frac{1}{2}_{L^2} + \|v\|_{L^\infty}\|\nabla B\|_{L^2}\right)
			\\
			&\leq \frac{\epsilon}{\sigma}\|j\|^2_{\dot{H}^1} +  \epsilon \nu \|v\|^2_{\dot{H}^2} +  C(\epsilon,\nu,\sigma)\left(\|\nabla v\|^2_{L^2}\|B\|^2_{L^2} + \|v\|^2_{L^\infty}\right)\|B\|^2_{\dot{H}^1},
		\end{align*}
		which yields by choosing $\epsilon = \frac{1}{4}$
		\begin{align*}
			\frac{d}{dt} \|(v,E,B)\|^2_{\dot{H}^1} + \nu\|v\|^2_{\dot{H}^2} + \frac{1}{\sigma}\|j\|^2_{\dot{H}^1} \leq C(\nu,\sigma)\left(1 + \|B\|^2_{L^2}\right)\left(\|j\|^2_{L^2} + \|\nabla v\|^2_{L^2} + \|v\|^2_{L^\infty}\right)\|B\|^2_{\dot{H}^1}.
		\end{align*}
		
		\textbf{Step 3: Conclusion of Steps 1 and 2.} Since $0 < \delta \leq s \leq 1$, by using the energy inequality, we collect the main estimates in the two previous steps as follows 
		\begin{align*}
			\frac{d}{dt} Y_{\delta,s} + \nu\|v\|^2_{H^{\delta+1}} + \frac{1}{\sigma}\|j\|^2_{H^s} \leq C_1G Y_{\delta,s} + \left[\frac{1}{2}C_2\|B\|_{H^s}\|v\|_{H^1} \left(1 + \log^\frac{1}{2}\left(\frac{\|v\|_{H^{\delta+1}}}{\|v\|_{H^1}}\right)\right)\right]^2,
		\end{align*}
		where $C_1(\delta,\nu,\sigma,s), C_2(\delta,\sigma,s) > 0$, and for $t \in (0,T^n_*)$
		\begin{equation*}
			Y_{\delta,s}(t) := \|v(t)\|^2_{H^\delta} + \|(E,B)(t)\|^2_{H^s}
			\quad \text{and} \quad
			G(t) := \left(1 + \|B(t)\|^2_{L^2} \right)\left(1 + \|j(t)\|^2_{L^2} + \|\nabla v(t)\|^2_{L^2}\right).
		\end{equation*}
		Here, in order to to bound the norm $\|v\|_{L^\infty}$, we also used the well-known  Brezis-Gallouet-Wainger inequality in the following form
		(for example, see \cite{Brezis-Gallouet_1980} for $s_0 = 1$ and $d = 2$; see \cite{Brezis-Wainger_1980} for $s_0 > \max\{\frac{d}{2} - 1,0\}$ and $d \geq 1$, and see \cite{Hu-Kukavica-Ziane_2015} for $s_0 \in (0,1)$ and $d = 2$) with $s_0 = \delta$ and $d = 2$
		\begin{equation} \label{BGW}
			\|f\|_{L^\infty(\mathbb{R}^d)} \leq C(s_0)\|f\|_{H^\frac{d}{2}(\mathbb{R}^d)}\left(1 + \log^\frac{1}{2}\left(1+\frac{\|f\|_{H^{s_0+1}(\mathbb{R}^d)}}{\|f\|_{H^\frac{d}{2}(\mathbb{R}^d)}}\right)\right)\qquad f \neq 0.
		\end{equation}
		By applying the following inequality (see \cite{Hu-Kukavica-Ziane_2015}) for $\alpha,\beta,\gamma > 0$ and $\log_{+}(a) := \max\{\log(a),0\}, a > 0$
		\begin{equation*}
			\beta(1 + \log_{+}(\gamma))^\frac{1}{2} \leq \alpha\gamma + \beta\left(1 + \log\left(1 + \frac{\beta}{\alpha}\right)\right)^\frac{1}{2} 
		\end{equation*}
		to the case where
		\begin{align*}
			\alpha = \frac{\sqrt{\nu}}{2}\|v\|_{H^1},\quad \beta = C_2\|B\|_{H^s}\|v\|_{H^1} \quad \text{and} \quad \gamma = \frac{\|v\|_{H^{\delta+1}}}{\|v\|_{H^1}},
		\end{align*}
		we find that 
		\begin{align*}
			R &:= \frac{C_2}{2}\|B\|_{H^s}\|v\|_{H^1}\left(1 + \log^\frac{1}{2}\left(\frac{\|v\|_{H^{\delta+1}}}{\|v\|_{H^1}}\right)\right) 
			\\
			&\leq C_2\|B\|_{H^s}\|v\|_{H^1}\left(1+ \log\left(\frac{\|v\|_{H^{\delta+1}}}{\|v\|_{H^1}}\right)\right)^\frac{1}{2}
			\\
			&\leq \frac{\sqrt{\nu}}{2}\|v\|_{H^{\delta+1}} + C_2\|B\|_{H^s}\|v\|_{H^1}\left(1 + \log\left(1 + \frac{2C_2}{\sqrt{\nu}}\|B\|_{H^s}\right)\right)^\frac{1}{2},
		\end{align*}
		which yields
		\begin{align*}
			R^2 \leq \frac{\nu}{2}\|v\|^2_{H^{\delta+1}} + 2C_2^2\|B\|^2_{H^s}\|v\|^2_{H^1}\left(1 + \log\left(1 + \frac{C_2\nu}{\sqrt{2}}\|B\|_{H^s}\right)\right)
		\end{align*}
		and 
		\begin{align*}
			\frac{d}{dt} Y_{\delta,s} + \frac{\nu}{2}\|v\|^2_{H^{\delta+1}} + \frac{1}{\sigma}\|j\|^2_{H^s} \leq C_1G Y_{\delta,s} + C(\delta,\nu,\sigma,s)\|v\|^2_{H^1}\left(1 + \log\left(1 + Y_{\delta,s}\right)\right)Y_{\delta,s}.
		\end{align*}
		Therefore, it follows from Lemma \ref{lem-gronwall} that
		\begin{equation*}
			Y_{\delta,s}(t) \leq \exp\{(\log(e+Y_{\delta,s}(0)) + (1+T^n_*)C(\mathcal{E}_0,\nu,\sigma,s))\exp\{(1+T^n_*)C(\mathcal{E}_0,\delta,\nu,\sigma,s)\}\},
		\end{equation*}
		which gives us the conclusion in Steps 1 and 2. In addition, since $v \in L^2_tH^{\delta+1}_x$ for $\delta > 0$, it implies that $v \in L^2_tL^\infty_x$. 
		We should remark here that in Steps 1 and 2, we obtain the double exponential bound in time, i.e., in the form of $C\exp\{CT^n_*\exp\{CT^n_*\}\}$ for some constant $C$ depending on the parameters and the intial data. However, if we use directly Step 14 below in these two steps then the bound can be given in the form of either $C\exp\{CT^n_*\}$ or $C(T^n_*)^C$. 
				
		\textbf{Step 4: The case $\delta \in (0,1)$ and $s = 1$.} By applying Step 1 (the case $\delta = s$), we are able to close the $H^\delta$ estimate of $(v,E,B)$, in particular 
		\begin{equation*}
			\|v\|_{L^2_tH^{\delta+1}_x} \leq C(T^n_*,\delta,\nu,\sigma,v_0,E_0,B_0).
		\end{equation*}
		It remains to obtain the $H^1$ estimate of $(E,B)$. It can be seen that
		\begin{equation*}
			\frac{1}{2} \frac{d}{dt}  \|(E,B)\|^2_{H^1} + \frac{1}{\sigma}\|j\|^2_{H^1} =: I_{31} + I_{32},
		\end{equation*}
		where for some $\epsilon \in (0,1)$, since $\delta \in (0,1)$
		\begin{align*}
			I_{31} &= \int_{\mathbb{R}^2} j \cdot (v \times B) \,dx \leq \frac{\epsilon}{\sigma}\|j\|^2_{L^2} + C(\epsilon,\sigma)\|B\|^2_{L^2}\|\nabla B\|^2_{L^2} +  C(\epsilon,\sigma)\|v\|^2_{L^2}\|\nabla v\|^2_{L^2};
			\\
			I_{32} &= \int_{\mathbb{R}^2} \nabla j : \nabla (v \times B) \,dx
			\\
			&\leq C(\delta)\|\nabla j\|_{L^2}\left(\|\nabla v\|_{L^\frac{2}{1-\delta}}\|B\|_{L^\frac{2}{\delta}} + \|v\|_{L^\infty}\|\nabla B\|_{L^2}\right)
			\\
			&\leq C(\delta)\|\nabla j\|_{L^2}\left(\|\Lambda^{\delta+1} v\|_{L^2}\|\Lambda^{1-\delta} B\|_{L^2} + \|v\|_{H^{\delta+1}}\|\nabla B\|_{L^2}\right) 
			\\
			&\leq \frac{\epsilon}{\sigma}\|\nabla j\|^2_{L^2} + C(\epsilon,\sigma)\|v\|^2_{H^{\delta+1}} \|B\|^2_{H^1}.
		\end{align*}
		By choosing $\epsilon = \frac{1}{2}$, it follows that
		\begin{equation*}
			\frac{d}{dt}  \|(E,B)\|^2_{H^1} + \frac{1}{\sigma}\|j\|^2_{H^1} \leq  C(\sigma)\|v\|^2_{L^2}\|\nabla v\|^2_{L^2} + C(\sigma)\left(\|B\|^2_{L^2} + \|v\|^2_{H^{\delta+1}}\right)\|B\|^2_{H^1},
		\end{equation*}
		which is closable. Thus, the conclusion follows.
		
		\textbf{Step 5: The case $\frac{1}{2} \leq s < 1$ and $s < \delta \leq 1$.} We first focus on obtaining the $H^s$ estimate for $(E,B)$. Since $\delta > s$, as in Step 1 (for the case $\delta = s$) we are able to bound the norms
		\begin{equation*}
			\|v\|_{L^\infty_tH^s_x \cap L^2_tH^{s+1}_x}, \quad \|(E,B)\|_{L^\infty_tH^s_x}  \quad \text{and}\quad  \|j\|_{L^2_tH^s_x}.
		\end{equation*} 
		It remains to bound the norm $\|v\|_{L^\infty_tH^\delta_x \cap L^2_tH^{\delta+1}_x}$. It can be seen that
		\begin{equation*}
			\frac{1}{2} \frac{d}{dt} \|v\|^2_{\dot{H}^\delta}  + \nu\|v\|^2_{\dot{H}^{\delta+1}} =: I_1 + I_2,
		\end{equation*}
		where $I_2$ is bounded as in Step 1 (for $\delta \in (0,1)$) and $I_2 = 0$ (for $\delta = 1$), and since $s \in [\frac{1}{2},1)$ and $\delta \leq 1$, for some $\epsilon \in (0,1)$
		\begin{align*}
			I_1 &= \int_{\mathbb{R}^2} (j \times B) \cdot \Lambda^{2\delta} v \,dx 
			\\
			&\leq C(s) \|j\|_{\dot{H}^{1-s}} \|B\|_{\dot{H}^s}\|\Lambda^{2\delta} v\|_{L^2}
			\\
			&\leq C(\epsilon,\nu,s) \|j\|^2_{H^{1-s}} \|B\|^2_{H^s} + \epsilon\nu\left(\|v\|^2_{\dot{H}^{\delta+1}} + \|v\|^2_{L^2}\right)
			\\
			&\leq C(\epsilon,\nu,s) \|j\|^2_{H^s} \|B\|^2_{H^s} + \epsilon\nu\left(\|v\|^2_{\dot{H}^{\delta+1}} + \|v\|^2_{L^2}\right).
		\end{align*}
		It implies the closable of the $H^\delta$ estimate of $v$ by choosing $\epsilon = \frac{1}{4}$.
		
		\textbf{Step 6a: The case $\frac{1}{2} < s < 1$ and $1 < \delta \leq 2s$.} In this case, we can estimate $(E,B)$ exactly as in Step 5. We now focus on the estimates of $I_1$ and $I_2$. Firstly, since $s \in (\frac{1}{2},1)$ and $\delta \in (1,2s]$  
		\begin{align*}
			I_1 &= \int_{\mathbb{R}^2} \Lambda^{\delta-1}(j \times B) \cdot \Lambda^{\delta+1} v \,dx
			\\
			&\leq C(\delta)\left(\|j\|_{\dot{H}^{\delta-s}}\|B\|_{\dot{H}^s} + \|j\|_{\dot{H}^s}\|B\|_{\dot{H}^{\delta-s}}\right)\|v\|_{\dot{H}^{\delta+1}}
			\\
			&\leq \epsilon\nu \|v\|^2_{\dot{H}^{\delta+1}} + C(\epsilon,\delta,\nu) \|j\|^2_{H^s}\|B\|^2_{H^s}.
		\end{align*}
		Secondly, for $\sigma' := \delta -1 \in (0,1)$ we find that 
		\begin{align*}
			I_2 &= -\int_{\mathbb{R}^2} \Lambda^{\sigma'}(v \cdot \nabla v) \cdot \Lambda^{\delta+1} v \,dx
			\\
			&\leq C(\delta) \|\Lambda^{\delta+1}v\|_{L^2} \times
			\begin{cases}
				\|\Lambda^{\sigma'} v\|_{L^4}\|\nabla v\|_{L^4} + \|v\|_{L^\frac{4}{1-2\sigma'}}\|\Lambda^{\sigma'}\nabla v\|_{L^\frac{4}{1+2\sigma'}} \quad &\text{if} \quad \sigma' \in (0,\frac{1}{2}),
				\\
				\|\Lambda^{\sigma'} v\|_{L^4}\|\nabla v\|_{L^4} + \|v\|_{L^6}\|\Lambda^{\sigma'} \nabla v\|_{L^3} \quad &\text{if}\quad \sigma' \in [\frac{1}{2},1).
			\end{cases}
		\end{align*}
		Moreover, 
		\begin{align*}
			\|\Lambda^{\sigma'} v\|_{L^4}, \|v\|_{L^\frac{4}{1-2\sigma'}} &\leq C(\delta)\|\Lambda^{\sigma' + \frac{1}{2}} v\|_{L^2} \leq C(\delta)\|v\|^\frac{3}{2(\delta+1)}_{L^2} \|\Lambda^{\delta+1} v\|^\frac{2s-1}{2(\delta+1)}_{L^2},
			\\
			\|\nabla v\|_{L^4}, \|\Lambda^{\sigma'} \nabla v\|_{L^\frac{4}{1+2\sigma'}} &\leq C(\delta)\|\Lambda^\frac{1}{2} \nabla v\|_{L^2} \leq C(\delta)\|\nabla v\|^\frac{2\delta-1}{2\delta}_{L^2} \|\Lambda^{\delta+1} v\|^\frac{1}{2\delta}_{L^2},
			\\
			\|v\|_{L^6} &\leq C\|v\|^\frac{1}{3}_{L^2} \|\nabla v\|^\frac{2}{3}_{L^2}, 
			\\
			\|\Lambda^{\sigma'} \nabla v\|_{L^3} &\leq C(\delta)\|\Lambda^{\sigma' + \frac{1}{3}} \nabla v\|_{L^2} \leq  C(\delta)\|\nabla v\|^\frac{2}{3\delta}_{L^2} \|\Lambda^{\delta+1} v\|^\frac{3\delta-2}{3\delta}_{L^2},
		\end{align*} 
		which yields
		\begin{align*}
			I_2 &\leq C(\delta)\|v\|^\frac{3}{2(\delta+1)}_{L^2} \|\nabla v\|^\frac{2\delta-1}{2\delta}_{L^2} \|\Lambda^{\delta+1} v\|^\frac{4\delta^2+2\delta+1}{2(\delta+1)\delta}_{L^2} + C(\delta)\|v\|^\frac{1}{3}_{L^2} \|\nabla v\|^\frac{2\delta+2}{3\delta}_{L^2} \|\Lambda^{\delta+1} v\|^\frac{6\delta-2}{3\delta}_{L^2}.
		\end{align*}
		In addition, since $1 < \delta$ then an application of Step 5 (with $\delta = 1$) gives us the bound on $\|\nabla v\|_{L^\infty_tL^2_x}$. It can be seen that since $\delta > 1$
		\begin{equation*}
			\frac{4\delta^2+2\delta+1}{2(\delta+1)\delta}, \frac{6\delta-2}{3\delta} < 2,
		\end{equation*}
		which implies the closable of the $H^\delta$ estimate as in Step 5 by using Young inequality with $\epsilon = \frac{1}{6}$.
		
		\textbf{Step 6b: The case $0 < s < \frac{1}{2}$ and $s < \delta \leq 2s$.} Similar to the previous case, we only need to focus on the estimate of $v$. Indeed, $I_2$ can be bounded as in Step 1 since $\delta \in (0,1)$. In addition, for some $\epsilon \in (0,1)$, since $s \in (0,1)$, $s < \delta \leq 2s$ and $1-(\delta-s), \frac{\delta-s}{2} \in (0,1)$ 
		\begin{align*}
			I_1 &= \int_{\mathbb{R}^2} [(\Lambda^s(j \times B) - \Lambda^s j \times B - j \times \Lambda^s B) + \Lambda^s j \times B + j \times \Lambda^s B] \cdot \Lambda^{2\delta-s} v\,dx =: \sum^3_{k=1} I_{1k},
			\\
			I_{11} &\leq \|\Lambda^s(j \times B) - \Lambda^s j \times B - j \times \Lambda^s B\|_{L^\frac{2}{2+s-\delta}} \|\Lambda^{2\delta-s} v\|_{L^\frac{2}{\delta-s}}
			\\
			&\leq C(\delta,s)\|\Lambda^{\frac{4s-\delta}{4}} j\|_{L^\frac{4}{2+s-\delta}}\|\Lambda^{\frac{\delta}{4}} B\|_{L^\frac{4}{2+s-\delta}} \|v\|_{\dot{H}^{\delta+1}}
			\\
			&\leq \epsilon\nu \|v\|^2_{\dot{H}^{\delta+1}} + C(\epsilon,\delta,\nu,s)\|\Lambda^\frac{2s+\delta}{4}j\|^2_{L^2}\|\Lambda^\frac{3\delta-2s}{4}B\|^2_{L^2}
			\\
			&\leq \epsilon\nu \|v\|^2_{\dot{H}^{\delta+1}} + C(\epsilon,\delta,\nu,s)\|j\|^2_{H^s}\|B\|^2_{H^s},
			\\
			I_{12} &\leq \|j\|_{\dot{H}^s}\|B\|_{L^\frac{2}{1-(\delta-s)}}\|\Lambda^{2\delta-s} v\|_{L^\frac{2}{\delta-s}} 
			\\
			&\leq \epsilon\nu \|v\|^2_{\dot{H}^{\delta+1}} + C(\epsilon,\delta,s)\|j\|^2_{\dot{H}^s}\|B\|^2_{\dot{H}^s},
			\\
			I_{13} &\leq \epsilon\nu \|v\|^2_{\dot{H}^{\delta+1}} + C(\epsilon,\delta,s)\|j\|^2_{\dot{H}^s}\|B\|^2_{\dot{H}^s},
		\end{align*}
		where we used the well-known Kenig-Ponce-Vega commutator estimate (see \cite{Kenig-Ponce-Vega_1993})
		\begin{equation} \label{KPV}
			\|\Lambda^{s_0}(fg) - g\Lambda^{s_0} f  - f\Lambda^{s_0} g\|_{L^{p_0}(\mathbb{R}^d)} \leq C(d,p_0,p_{01},p_{02},s_0,s_{01},s_{02})\|\Lambda^{s_{01}} f\|_{L^{p_{01}}(\mathbb{R}^d)} \|\Lambda^{s_{02}} g\|_{L^{p_{02}}(\mathbb{R}^d)},
		\end{equation}
		for $0 < s_0,s_{01},s_{02} < 1$ and $p_0,p_{01},p_{02} \in (1,\infty)$ satisfying $s_0 = s_{01} + s_{02}$ and $\frac{1}{p_0} = \frac{1}{p_{01}} + \frac{1}{p_{02}}$.
		
		\textbf{Step 7: The case $s = 1$ and $1 < \delta < 2$.} In this case, we only need to bound $I_1$, other terms can be done exactly as in Step 5 (the estimate of $(E,B)$) and Step 6a (the estimate of $I_2$). Indeed, similar to Step 6a, since $\delta \in (1,2)$ with $\delta - 1 \in (0,1)$ for some $\epsilon \in (0,1)$
		\begin{align*}
			I_1 &= \int_{\mathbb{R}^2} \Lambda^{\delta-1}(j \times B) \cdot \Lambda^{\delta+1} v \,dx
			\\
			&\leq C(\delta)\left(\|\Lambda^{\delta-1}j\|_{L^{\frac{2}{\delta-1}}}\|B\|_{L^{\frac{2}{2-\delta}}} + \|j\|_{L^{\frac{2}{\delta-1}}}\|\Lambda^{\delta-1}B\|_{L^{\frac{2}{2-\delta}}}\right)\|v\|_{\dot{H}^{\delta+1}}
			\\
			&\leq C(\delta)\left(\|j\|_{\dot{H}^1}\|B\|_{\dot{H}^{\delta-1}} + \|j\|_{\dot{H}^{\delta-1}}\|B\|_{\dot{H}^1}\right)\|v\|_{\dot{H}^{\delta+1}}
			\\
			&\leq \epsilon\nu \|v\|^2_{\dot{H}^{\delta+1}} + C(\epsilon,\delta,\nu) \|j\|^2_{H^1}\|B\|^2_{H^1}.
		\end{align*}
		
		\textbf{Step 8: The case $\delta = s > 1$ (revisited).} This case has been treated in \cite{Masmoudi_2010} with a different proof, but to make the present work self-contained, we revisit this case with providing our simple proof. The proof of this case is also useful for later use, for instance in Steps 9, 10, 12 and 13 below, which are also included in the main aims of this paper. It can be seen that
		\begin{equation*}
			\frac{1}{2} \frac{d}{dt} \|(v,E,B)\|^2_{\dot{H}^s}  + \nu\|v\|^2_{\dot{H}^{s+1}} + \frac{1}{\sigma}\|j\|^2_{\dot{H}^s} =: \sum^3_{k=1} I_k,
		\end{equation*}
		where for some $\epsilon \in (0,1)$, since $s > 1$
		\begin{align*}
			I_1 &= \int_{\mathbb{R}^2} \Lambda^s(j \times B) \cdot \Lambda^s v \,dx
			\\
			&\leq C(s)\left(\|j\|_{L^\infty}\|B\|_{\dot{H}^s} + \|j\|_{\dot{H}^s}\|B\|_{L^\infty}\right)\|v\|_{\dot{H}^s}
			\\
			&\leq C(s)\left(\|j\|^\frac{s-1}{s}_{L^2}\|j\|^\frac{1}{s}_{\dot{H}^s}\|B\|_{\dot{H}^s} + \|j\|_{\dot{H}^s}\|B\|^\frac{s-1}{s}_{L^2}\|B\|^\frac{1}{s}_{\dot{H}^s}\right)\|\nabla v\|^\frac{1}{s}_{L^2}\|v\|^\frac{s-1}{s}_{\dot{H}^{s+1}}
			\\
			&\leq \frac{\epsilon}{\sigma}\|j\|^2_{H^s} + \frac{2\epsilon\nu}{3} \|v\|^2_{\dot{H}^{s+1}} + C(\epsilon,\nu,\sigma,s)\left(\|j\|^2_{L^2} + \|\nabla v\|^2_{L^2} + \|B\|^{2(s-1)}_{L^2}\|\nabla v\|^2_{L^2}\right)\|B\|^2_{H^s};
			\\
			I_2 &= -\int_{\mathbb{R}^2} \Lambda^s(v \cdot \nabla v) \cdot \Lambda^s v \,dx
			\\
			&\leq C(s)\left(\|\Lambda^s v\|_{L^4}\|\nabla v\|_{L^4} + \|v\|_{L^\infty}\|\Lambda^{s+1}v\|_{L^2}\right)\|\Lambda^s v\|_{L^2}
			\\
			&\leq C(s)\left(\|\Lambda^{s-\frac{1}{2}} \nabla v\|_{L^2}\|\Lambda^\frac{1}{2}\nabla v\|_{L^2} + \|v\|_{L^\infty}\|\Lambda^{s+1}v\|_{L^2}\right)\|\Lambda^s v\|_{L^2}
			\\
			&\leq C(s)\left(\|\Lambda^{s+1} v\|_{L^2}\|\nabla v\|_{L^2} + \|v\|_{L^\infty}\|\Lambda^{s+1}v\|_{L^2}\right)\|\Lambda^s v\|_{L^2}
			\\
			&\leq \frac{2\epsilon\nu}{3}\|v\|^2_{\dot{H}^{s+1}} + C(\epsilon,s,\nu)\left(\|\nabla v\|^2_{L^2} + \|v\|^2_{L^\infty}\right)\|v\|^2_{\dot{H}^s};
			\\
			I_3 &= \int_{\mathbb{R}^2} \Lambda^s j \cdot \Lambda^s (v \times B) \,dx
			\\
			&\leq C(s)\|j\|_{\dot{H}^s}\left(\|B\|_{\dot{H}^s}\|v\|_{L^\infty} + \|B\|_{L^\infty}\|v\|_{\dot{H}^s}\right)
			\\
			&\leq C(s)\|j\|_{\dot{H}^s}\|v\|_{L^\infty}\|B\|_{\dot{H}^s} + C(s)\|j\|_{\dot{H}^s}\|B\|^\frac{s-1}{s}_{L^2}\|B\|^\frac{1}{s}_{\dot{H}^s}\|\nabla v\|^\frac{1}{s}_{L^2}\|v\|^\frac{s-1}{s}_{\dot{H}^{s+1}}
			\\
			&\leq \frac{\epsilon}{\sigma}\|j\|^2_{H^s} + \frac{2\epsilon\nu}{3}\|v\|^2_{\dot{H}^{s+1}} +  C(\epsilon,\sigma,s)\| v\|^2_{L^\infty}\|B\|^2_{H^s} + C(\epsilon,\sigma,s)\|B\|^{2(s-1)}_{L^2}\|\nabla v\|^2_{L^2}\|B\|^2_{H^s},
		\end{align*}
		here we used the following Agmon-type inequality (its simple proof can be found in Appendix B in Section \ref{sec:app})
		\begin{equation} \label{L-infty}
			\|f\|_{L^\infty} \leq C(s_0)\|f\|^\frac{s_0-1}{s_0}_{L^2}\|f\|^\frac{1}{s_0}_{\dot{H}^{s_0}} \qquad \text{for}\quad s_0 > 1.
		\end{equation}
		By choosing $\epsilon = \frac{1}{4}$ and using the energy estimate, it follows that 
		\begin{equation*}
			\frac{d}{dt} Y_s  + \nu\|v\|^2_{H^{s+1}} + \frac{1}{\sigma}\|j\|^2_{H^s} 
			\leq C(\nu,\sigma,s)G_s Y_s + C(\nu,\sigma,s)\|v\|^2_{H^1}\left(1 + \log\left(1 + \frac{Y_s}{\|v\|^2_{H^1}}\right)\right)Y_s,
		\end{equation*}
		where for $t \in (0,T^n_*)$
		\begin{equation*}
			Y_s(t) := \|(v,E,B)(t)\|^2_{H^s}
			\quad \text{and}\quad
			G_s(t) :=  \left(1+\|B(t)\|^{2(s-1)}_{L^2}\right)\left(1 + \|j(t)\|^2_{L^2} + \|\nabla v(t)\|^2_{L^2}\right),
		\end{equation*}
		and here in order to bound $\|v\|_{L^\infty}$, we also used \eqref{BGW} with $s_0 = s-1 > 0$ and $d = 2$. It can be seen from the above estimate of $Y_s$ that
		\begin{align*}
			\frac{d}{dt}Y_s &\leq C(\nu,\sigma,s)G_sY_s + C(\nu,\sigma,s)\|v\|^2_{H^1}Y_s\left(1 + \log(\|v\|^2_{H^1}+Y_s) - \log(\|v\|^2_{H^1})\right)
			\\
			&\leq C(\nu,\sigma,s)G_sY_s + C(\nu,\sigma,s)\|v\|^2_{H^1}Y_s(1 + \log(1+Y_s)),
		\end{align*}
		where we used the fact that $|x\log(x)| \leq \exp\{-1\}$ for $x \in (0,1)$. Therefore, for $t \in (0,T^n_*)$ Lemma \ref{lem-gronwall} gives us
		\begin{equation*}
			Y_s(t) \leq \exp\{(\log(e+Y_s(0)) + (1+T^n_*)C(\mathcal{E}_0,\nu,\sigma,s))\exp\{(1+T^n_*)C(\mathcal{E}_0,\nu,\sigma,s)\}\}.
		\end{equation*}
		
		\textbf{Step 9: The case $s > 1$ and $s < \delta < s+1$.} Since $\delta > s$, we are able to close the $H^s$ estimate of $(v,E,B)$ as in Step 8. It remains to focus on the $H^\delta$ estimate of $v$. Moreover, it can be seen that $I_2$ can be bounded exactly as  the previous step with replacing $s$ by $\delta$, i.e., 
		\begin{equation*}
			I_2 = -\int_{\mathbb{R}^2} \Lambda^\delta(v \cdot \nabla v) \cdot \Lambda^\delta v \,dx
			\leq \epsilon\nu\|v\|^2_{\dot{H}^{\delta+1}} + C(\epsilon,\delta,\nu)\left(\|\nabla v\|^2_{L^2} + \|v\|^2_{L^\infty}\right)\|v\|^2_{\dot{H}^\delta}.
		\end{equation*}
		We continue with the bound $I_1$ as follows. Since $\delta \in (s,s+1)$, we can write $\delta = s + \epsilon_0$ for $\epsilon_0 \in (0,1)$ and find that since $\delta > 1$
		\begin{align*}
			I_1 &= \int_{\mathbb{R}^2} \Lambda^{\delta-1}(j \times B) \cdot \Lambda^{\delta+1} v \,dx
			\\
			&\leq C(\epsilon_0)\left(\|\Lambda^{\delta-1}j\|_{L^\frac{2}{\epsilon_0}}\|B\|_{L^\frac{2}{1-\epsilon_0}} + \|j\|_{L^\frac{2}{1-\epsilon_0}}\|\Lambda^{\delta-1}B\|_{L^\frac{2}{\epsilon_0}}\right)\|\Lambda^{\delta+1}v\|_{L^2}
			\\
			&\leq C(\epsilon_0)\left(\|\Lambda^{\delta-\epsilon_0}j\|_{L^2}\|\Lambda^{\epsilon_0}B\|_{L^2} + \|\Lambda^{\epsilon_0}j\|_{L^2}\|\Lambda^{\delta-\epsilon_0}B\|_{L^2}\right)\|\Lambda^{\delta+1}v\|_{L^2}
			\\
			&\leq C(\epsilon_0)\left(\|j\|_{\dot{H}^{\delta-\epsilon_0}}\|B\|_{\dot{H}^{\epsilon_0}} + \|j\|_{\dot{H}^{\epsilon_0}}\|B\|_{\dot{H}^{\delta-\epsilon_0}}\right)\|v\|_{\dot{H}^{\delta+1}}
			\\
			&\leq \epsilon\nu\|v\|^2_{\dot{H}^{\delta+1}} + C(\epsilon_0,\epsilon,\nu) \|j\|^2_{H^s}\|B\|^2_{H^s}.
		\end{align*}
		Therefore, by choosing $\epsilon = \frac{1}{4}$ as  in Step 5, the conclusion follows.
		
		\textbf{Step 10: The case $s > 1$ and $\delta = s+1$.} This case is very similar to Step 9. We only need to bound $I_1$ as follows
		\begin{align*}
			I_1 &= \int_{\mathbb{R}^2} \Lambda^s(j \times B) \cdot \Lambda^{\delta+1} v \,dx
			\\
			&\leq C(s)\left(\|\Lambda^s j\|_{L^2}\|B\|_{L^\infty} + \|j\|_{L^\infty}\|\Lambda^s B\|_{L^2} \right)\|\Lambda^{\delta+1}v\|_{L^2}
			\\
			&\leq \epsilon\nu \|v\|^2_{\dot{H}^{\delta+1}} + C(\epsilon,\nu,s) \|j\|^2_{H^s}\|B\|^2_{H^s}.
		\end{align*}
		Thus, the conclusion follows. 
		
		In the next few steps, we will consider a domain for $(\delta,s)$ with $s > 1$ and $s - 1 \leq \delta < s$, which has been provided in \cite{Masmoudi_2010} with a different proof. We aim to revisit this domain of initial data with a new proof.
		
		\textbf{Step 11: The case $s \in (1,2]$ and $s - 1 \leq \delta \leq 1$ (revisited).} Similar to the previous case, we find that
		\begin{align*}
			\frac{1}{2} \frac{d}{dt}\left(\|v\|^2_{\dot{H}^\delta} + \|(E,B)\|^2_{\dot{H}^s}\right) + \nu\|v\|^2_{\dot{H}^{\delta+1}} + \frac{1}{\sigma}\|j\|^2_{\dot{H}^s}
			&=: \sum^3_{k=1} I_k,
		\end{align*}
		where for some $\epsilon \in (0,1)$, since $\delta \in [s-1,1]$ and $s \in (1,2]$
		\begin{align*}
			I_1 &= \int_{\mathbb{R}^2} (j \times B) \cdot \Lambda^{2\delta} v \,dx 
			\\
			&\leq C(\delta)
			\begin{cases}
				\|j\|_{L^2}\|B\|_{\dot{H}^\delta}\|\Lambda^{2\delta}v\|_{L^\frac{2}{1-\delta}} &\text{if} \quad \delta \in [s-1,1),
				\\
				\|j\|^\frac{1}{2}_{L^2}\|\nabla j\|^\frac{1}{2}_{L^2}\|B\|^\frac{1}{2}_{L^2}\|\nabla B\|^\frac{1}{2}_{L^2}\|\Delta v\|_{L^2} &\text{if} \quad \delta = 1,
			\end{cases}
			\\
			&\leq 
			\begin{cases}
				\epsilon\nu \|v\|^2_{\dot{H}^{\delta+1}} + C(\epsilon,\delta,\nu)\|j\|^2_{L^2}\|B\|^2_{\dot{H}^\delta}  &\text{if} \quad \delta \in [s-1,1),
				\\
				\frac{\epsilon}{\sigma}\|j\|^2_{\dot{H}^1} + \epsilon \nu\|v\|^2_{\dot{H}^2} +  C(\epsilon,\nu,\sigma)\|j\|^2_{L^2}\|B\|^2_{L^2}\|B\|^2_{\dot{H}^1} &\text{if} \quad \delta = 1;
			\end{cases}
			\\
			I_2 &= - \int_{\mathbb{R}^2} (v \cdot \nabla v) \cdot \Lambda^{2\delta} v \,dx 
			\leq
			\begin{cases}
				\epsilon\nu\|v\|^2_{\dot{H}^{\delta+1}} + C(\epsilon,\delta,\nu)\|\nabla v\|^2_{L^2}\|v\|^2_{\dot{H}^\delta} &\text{if} \quad \delta \in [s-1,1),
				\\
				0 &\text{if} \quad \delta = 1;
			\end{cases}
			\\
			I_3 &= \int_{\mathbb{R}^2} \Lambda^s j \cdot \Lambda^s (v \times B) \,dx
			\\
			&\leq 
			\begin{cases}
				C(s)\|j\|_{\dot{H}^s}\left(\|\Lambda^s v\|_{L^\frac{2}{1-(\delta+1-s)}}\|B\|_{L^\frac{2}{\delta+1-s}} + \|v\|_{L^\infty}\|B\|_{\dot{H}^s}\right) &\text{if} \quad \delta > s - 1,
				\\
				C(s)\|j\|_{\dot{H}^s}\left(\|\Lambda^{\delta+1} v\|_{L^2}\|B\|_{L^\infty} + \|v\|_{L^\infty}\|B\|_{\dot{H}^s}\right) &\text{if} \quad \delta = s - 1,
			\end{cases}
			\\
			&\leq
			\begin{cases}
				C(s)\|j\|_{\dot{H}^s}\left(\|\Lambda^{\delta+1} v\|_{L^2}\|\Lambda^{s-\delta}B\|_{L^2} + \|v\|_{L^\infty}\|B\|_{\dot{H}^s}\right) &\text{if} \quad \delta > s - 1,
				\\
				C(s)\|j\|_{\dot{H}^s}\left(\|\Lambda^{\delta+1} v\|_{L^2}\|B\|_{L^\infty} + \|v\|_{L^\infty}\|B\|_{\dot{H}^s}\right) &\text{if} \quad \delta = s - 1,
			\end{cases}
			\\
			&\leq \frac{\epsilon}{\sigma}\|j\|^2_{\dot{H}^s} + \epsilon\nu \|v\|^2_{\dot{H}^{\delta+1}} + C(\epsilon,\nu,s)\|j\|^2_{\dot{H}^s}\left(\|B\|^2_{\dot{H}^{s-\delta}} + \|B\|^2_{L^\infty}\right) + C(\epsilon,\sigma,s)\|v\|^2_{L^\infty}\|B\|^2_{\dot{H}^s}.
		\end{align*}
		Therefore, an application of Step 3 gives us the conclusion.
		
		\textbf{Step 12: The case $s \in (1,2]$ and $1 < \delta < s$ (revisited).} In this case, we apply Step 8 to obtain $(v,E,B) \in L^\infty_tH^\delta_x \cap L^2_tH^{\delta+1}_x \times L^\infty_tH^\delta_x \times L^\infty_tH^\delta_x$. It remains to obtain the $H^s$ estimate of $(E,B)$. We only need to bound $I_3$. Since $s \in (1,2]$ and $1 < \delta < s$ with $\delta + 1 - s \in (0,1)$, $I_3$ is given and bounded as the previous step.
		
		\textbf{Step 13: The case $s > 2$ and $s-1 \leq \delta < s$ (revisited).} Similar to the previous step, an application of Step 8 gives us $v \in  L^\infty_tH^\delta_x \cap L^2_tH^{\delta+1}_x$. We now focus on $I_3$ by using $s \leq \delta + 1$
		\begin{align*}
			I_3 &= \int_{\mathbb{R}^2} \Lambda^s j \cdot \Lambda^s (v \times B) \,dx 
			\\
			&\leq C(s)\|j\|_{H^s}\|v\|_{H^{\delta+1}}\|B\|_{H^s} 
			\\
			&\leq \frac{\epsilon}{\sigma} \|j\|^2_{H^s} + C(\epsilon,\sigma,s)\|v\|^2_{H^{\delta+1}}\|B\|^2_{H^s}.
		\end{align*}
		In addition, from Step 8 to Step 13, it can be seen that since $\delta,s > 1$, for $t \in (0,T^n_*)$
		\begin{equation*}
			\|j(t)\|_{H^{\min\{\delta,s\}}} \leq C(c,\sigma)\left(\|E(t)\|_{H^s} + \|v(t)\|_{H^\delta}\|B(t)\|_{H^s}\right).
		\end{equation*} 
		
		\textbf{Step 14: The bound of $\|v\|_{L^2_tL^\infty_x}$ (revisited).} We revisit this case with a slightly different decomposition as it has been considered in \cite{Arsenio-Gallagher_2020,Masmoudi_2010} before. In addition, the idea here will be applied to the proof of Theorem \ref{theo1-3d} below. Assume that $v_0 \in L^2$ and $(E_0,B_0) \in H^s$ for some $s \in (0,1)$. In the previous steps, to close the main estimates, we need to assume that $v_0 \in H^\delta$ for some $\delta > 0$. Thus, the same argument might not work in the case $\delta = 0$ with $v \in L^\infty_tL^2_x \cap L^2_tH^1_x$, which is not enough to bound the norm $\|v\|_{L^2_tL^\infty_x}$. In the two-dimensional case, there is a way to overcome this difficulty in which the idea comes from the recent result in \cite{Arsenio-Gallagher_2020}, where the authors have introduced a suitable decomposition of the velocity, which is useful for obtaining first an estimate of $\|v\|_{L^2_tL^\infty_x}$ in terms of  $\|B\|_{L^\infty_t\dot{H}^s_x}$ for $s \in (0,1)$ and then closing the estimate of $\|(E,B)\|_{L^\infty_t\dot{H}^s_x}$. As a consequence, they are able to bound the norm $\|v\|_{L^2_tL^\infty_x}$. More precisely, they decomposed the velocity and pressure by $v = \bar{v}^1 + \bar{v}^2 + \bar{v}^3$ and $\pi = \bar{\pi}_1 + \bar{\pi}_2$, where $\bar{v}^1$, $\bar{v}^2$ and $\bar{v}^3$ are solutions of the following heat equation and Stokes systems
		\begin{align*}
			&&\partial_t \bar{v}^1 -\nu \Delta \bar{v}^1 &= 0, 
			&&\textnormal{div}\, \bar{v}^1 = 0,
			&&\bar{v}^1_{|_{t=0}} = v_0,&&
			\\
			&&\partial_t \bar{v}^2 -\nu \Delta \bar{v}^2 + \nabla \bar{\pi}_1 &= -v \cdot \nabla v,
			&&\textnormal{div}\, \bar{v}^2 = 0,
			&&\bar{v}^2_{|_{t=0}} = 0,&&
			\\
			&&\partial_t \bar{v}^3 -\nu \Delta \bar{v}^3 + \nabla \bar{\pi}_2 &= j \times B,
			&&\textnormal{div}\, \bar{v}^3 = 0,
			&&\bar{v}^3_{|_{t=0}} = 0.&&
		\end{align*}
		That allows them to study each part of the decomposition separately, where the most difficult part is dealing with $\bar{v}^3$ in which they overcomed this issue by introducing a suitable iteration. In fact, it is possible to combine $\bar{v}^1$ and $\bar{v}^2$ parts together by decomposing $v^n = v^{n,1} + v^{n,2}$ in such a way that $v^{n,i} \in L^2_n$ for $i \in \{1,2\}$. In the sequel, we will write $(v^1,v^2)$ instead of $(v^{n,1},v^{n,2})$ for simplicity. Indeed, we first define $v^1$ be a divergence-free vector with $\text{supp}(\mathcal{F}(v^1)) \subseteq B_n$ and be a solution of the first equation below. It can be seen that from the properties of $v$ that such a $v^1 \in L^2$ exists (see the estimate below). Then, we set $v^2 = v-v^1$, which leads to $v^2 \in L^2$, $\text{supp}(\mathcal{F}(v^2)) \subseteq B_n$ and $\text{div}\,v^2 = 0$. It follows from \eqref{NSM_app} that 
		\begin{align*}
			&&\partial_t v^1 -\nu \Delta v^1 
			&= -\mathbb{P}(T_n(v \cdot \nabla v)),
			&&\textnormal{div}\, v^1 = 0,
			&&v^1_{|_{t=0}} = T_n(v_0),&&
			\\
			&&\partial_t v^2 -\nu \Delta v^2 
			&= \mathbb{P}(T_n(j \times B)),
			&&\textnormal{div}\, v^2 = 0,
			&&v^2_{|_{t=0}} = 0.&&
		\end{align*}
		In the sequel, we firstly explain how $\|v\|_{L^2_tL^\infty_x}$ can be controlled only in terms of $\|v^2\|_{L^2_tL^\infty_x}$ and secondly use the technique in \cite{Arsenio-Gallagher_2020} to bound $\|v^2\|_{L^2_tL^\infty_x}$. We first focus on obtaining estimates of $v^1$. Its energy estimate is given by
		\begin{align*}
			\frac{d}{dt}\|v^1\|^2_{L^2} + \nu\|\nabla v^1\|^2_{L^2} 
			&\leq C(\nu) \|v\|^2_{L^2}\|\nabla v\|^2_{L^2},
		\end{align*}
		which implies for $t \in (0,T^n_*)$ by using the energy estimate of $v$
		\begin{equation*}
			\|v^1(t)\|^2_{L^2} + \nu\int^t_0 \|\nabla v^1\|^2_{L^2} \,d\tau \leq \|v^1(0)\|^2_{L^2} + C(\nu) \|v\|^2_{L^\infty_tL^2_x} \int^t_0 \|\nabla v\|^2_{L^2} \,d\tau \leq C(\mathcal{E}_0,\nu).
		\end{equation*}
		Moreover, it follows that for $t \in (0,T^n_*)$ and $q \in \mathbb{Z}$
		\begin{equation*}
			\frac{1}{2}\frac{d}{dt}\|\Delta_q v^1\|^2_{L^2} + \nu \|\Delta_q \nabla v^1\|^2_{L^2} 
			\leq \|\Delta_q v^1\|_{L^2}\|\Delta_q (v \cdot \nabla v)\|_{L^2} .
		\end{equation*}
		It can be seen from the definition of nonhomogeneous dyadic blocks (see Appendix A in Section \ref{sec:app}) that for $q \in \mathbb{Z}$ with $q \geq 0$ 
		\begin{equation*}
			\|\Delta_q \nabla v^1\|^2_{L^2} 
			\geq C 2^{2q}\|\Delta_q v^1\|^2_{L^2},
		\end{equation*}
		which yields 
		\begin{align*}
			\esssup_{t \in (0,T^n_*)} \|\Delta_q v^1(t)\|^2_{L^2} + C(\nu) \left(\int^{T^n_*}_0 2^{2q}\|\Delta_q v^1\|_{L^2} \,d\tau\right)^2
			&\leq  R^2_q \qquad \text{if} \quad q \geq 0,
			\\
			\esssup_{t \in (0,T^n_*)} \|\Delta_q v^1(t)\|^2_{L^2}  
			&\leq R^2_q \qquad \text{if} \quad q \geq -1,
		\end{align*}
		where
		\begin{equation*}
			R_q := \int^{T^n_*}_0 \|\Delta_q (v \cdot \nabla v)\|_{L^2}  \,d\tau + \|\Delta_q v(0)\|_{L^2}.
		\end{equation*}
		Furthermore,
		\begin{equation*}
			\sum_{q \geq -1} \esssup_{t \in (0,T^n_*)} \|\Delta_q v^1(t)\|^2_{L^2} + C(\nu) \sum_{q \geq 0} \left(\int^{T^n_*}_0 2^{2q}\|\Delta_q v^1\|_{L^2} \,d\tau\right)^2
			\leq  2\sum_{q \geq -1} R^2_q.
		\end{equation*}
		We now estimate the right-hand side as follows
		\begin{align*}
			\sum_{q \geq -1} R^2_q &\leq C \sum_{q \geq -1} \left(\int^{T^n_*}_0 \|\Delta_q (v \cdot \nabla v)\|_{L^2} \,d\tau\right)^2 + C\|v_0\|^2_{L^2}
			\\
			&\leq C\left(\int^{T^n_*}_0 \left(\sum_{q \geq -1} \|\Delta_q (v \cdot \nabla v)\|^2_{L^2} \right)^\frac{1}{2} \,d\tau\right)^2 + C\|v_0\|^2_{L^2}
			\\
			&\leq C\left(\|v^1\|^2_{L^2_tL^\infty_x} + \|v^2\|^2_{L^2_tL^\infty_x}\right) \|\nabla v\|^2_{L^2_tL^2_x} + C\|v_0\|^2_{L^2},
		\end{align*}
		where we used the fact that $L^1(0,T^n_*;B^0_{2,2}(\mathbb{R}^2)) \subset \tilde{L}^1(0,T^n_*;B^0_{2,2}(\mathbb{R}^2))$, see the definition of this functional space in Appendix A in Section \ref{sec:app}. We first focus on obtaining a bound on the norm $\|v^1\|_{L^2_tL^\infty_x}$. The Littlewood–Paley decomposition gives us
		\begin{align*}
			\int^{T^n_*}_0 \|v^1\|^2_{L^\infty} \,d\tau 
			&\leq C \int^{T^n_*}_0 \sum_{q \geq -1} 2^q \|\Delta_q v^1\|_{L^2}  \sum_{k \geq -1}  2^k \|\Delta_k v^1\|_{L^2} \,d\tau,
			\\
			&\leq C \int^{T^n_*}_0 \left(\sum_{|q-k| \leq N} + \sum_{|q-k| > N}\right) 2^q \|\Delta_q v^1\|_{L^2}   2^k \|\Delta_k v^1\|_{L^2} \,d\tau =: \bar{R}_1 + \bar{R}_2,
		\end{align*}
		where $N \in \mathbb{N}$ to be determined later and we used the following Bernstein-type estimate (see \cite{Bahouri-Chemin-Danchin_2011}) for $1 \leq q_0 \leq p_0 \leq \infty$ 
		\begin{equation*}
			\|f\|_{L^{p_0}(\mathbb{R}^d)} \leq C(p_0,q_0,d) \lambda_0^{d\left(\frac{1}{q_0}-\frac{1}{p_0}\right)} \|f\|_{L^{q_0}(\mathbb{R}^d)} \qquad \text{if} \quad \text{supp}(\mathcal{F}(f)) \subset \left\{\xi \in \mathbb{R}^d : |\xi| \leq \lambda_0 \right\}.
		\end{equation*}		
		The terms on the right-hand side can be bounded as follows
		\begin{align*}
			\bar{R}_1 &= C\int^{T^n_*}_0 \sum_{q \geq -1} 2^q \|\Delta_q v^1\|_{L^2}   \sum_{q-N \leq k \leq q+N} 2^k \|\Delta_k v^1\|_{L^2} \,d\tau
			\\
			&\leq C\int^{T^n_*}_0 \sum_{q \geq -1}  \left(2^{2q}\|\Delta_q v^1\|^2_{L^2}  + \sum_{q-N \leq k \leq q+N} 2^{2k} \|\Delta_k v^1\|^2_{L^2} \right) \,d\tau
			\\
			&\leq CN \left(\|\nabla v^1\|^2_{L^2_tL^2_x} + \|v^1\|^2_{L^2_tL^2_x}\right),
		\end{align*}
		and by using Young inequality for sequences
		\begin{align*}
			\bar{R}_2 &= C2^{1-N} \sum^\infty_{q = N} \int^{T^n_*}_0  2^{2q} \|\Delta_q v^1\|_{L^2}  \sum_{k = -1} ^{q-N-1} 2^{k-(q-N)} \|\Delta_k v^1\|_{L^2} \,d\tau
			\\
			&\leq C2^{1-N} \left(\sum^\infty_{q = N} \left(\int^{T^n_*}_0  2^{2q} \|\Delta_q v^1\|_{L^2} \,d\tau \right)^2\right)^\frac{1}{2}   \left(\sum^\infty_{k_0=0} \left(\sum_{k = -1} ^{k_0-1} 2^{-(k_0-k)} \esssup_{t \in (0,{T^n_*})} \|\Delta_k v^1\|_{L^2} \right)^2\right)^\frac{1}{2}
			\\
			&\leq C2^{1-N} \left(\sum^\infty_{q = 0} \left(\int^{T^n_*}_0  2^{2q} \|\Delta_q v^1\|_{L^2} \,d\tau \right)^2\right)^\frac{1}{2}   \left(\sum^\infty_{k=-1} \esssup_{t \in (0,{T^n_*})} \|\Delta_k v^1\|^2_{L^2} \right)^\frac{1}{2} \sum^\infty_{k = -1} 2^{-k}.
		\end{align*}
		Therefore,
		\begin{align*}
			\|v^1\|^2_{L^2_tL^\infty_x}  &\leq C(\nu) 2^{-N} \left(\|v^1\|^2_{L^2_tL^\infty_x} + \|v^2\|^2_{L^2_tL^\infty_x}\right) \|\nabla v\|^2_{L^2_tL^2_x} 
			+ CN\left(\|\nabla v^1\|^2_{L^2_tL^2_x} + \|v^1\|^2_{L^2_tL^2_x}\right)
			+ C \|v_0\|^2_{L^2},
		\end{align*}
		which by choosing\footnote{Here $\lceil \cdot \rceil$ denotes the usual ceiling function.}
		\begin{equation*}
			N = \left\lceil\log_2\left(4 + 2C(\nu)\|\nabla v\|^2_{L^2_tL^2_x}\right)\right\rceil
		\end{equation*}
		and using the energy estimates of $v^1$ and $v$ yields
		\begin{align*}
			\|v^1\|^2_{L^2_tL^\infty_x} &\leq 
			C(\mathcal{E}_0,\nu)\left((\|\nabla v^1\|^2_{L^2_tL^2_x} + \|v^1\|^2_{L^2_tL^2_x})(1+\|\nabla v\|^2_{L^2_tL^2_x}) + \|v^2\|^2_{L^2_tL^\infty_x} + 1\right)
			\\
			&\leq 
			C(\mathcal{E}_0,\nu)(1 + T^n_*)\left(\|v^2\|^2_{L^2_tL^\infty_x} + 1\right).
		\end{align*}
		In addition,
		\begin{equation*}
			\|v\|^2_{L^2_tL^\infty_x} \leq C\left(\|v^1\|^2_{L^2_tL^\infty_x} + \|v^2\|^2_{L^2_tL^\infty_x}\right) \leq C(\mathcal{E}_0,\nu)(1+T^n_*)\left(\|v^2\|^2_{L^2_tL^\infty_x} + 1\right).
		\end{equation*}
		It remains to bound the term $\|v^2\|_{L^2_tL^\infty_x}$. It can be seen that
		\begin{equation*}
			\|v^2\|^2_{L^2_t\dot{H}^1_x} \leq C\left(\|v\|^2_{L^2_t\dot{H}^1_x} + \|v^1\|^2_{L^2_t\dot{H}^1_x}\right) \leq C(\mathcal{E}_0,\nu).
		\end{equation*}
		Let us now summarize how the authors in \cite{Arsenio-Gallagher_2020} obtained a bound on $\|v^2\|_{L^2_tL^\infty_x}$ in terms of $\|B\|_{L^2_t\dot{H}^s_x}$ and use this relation to close the estimate of $\|(E,B)\|_{L^\infty_t\dot{H}^s_x}$ itself. By decomposing $v^2$ into high and low frequencies in \cite[Lemmas 7.3 and 7.4]{Arsenio-Gallagher_2020}, respectively, they proved that for $0 \leq t_0 < t < {T^n_*}$
		\begin{equation*}
			\|v^2\|^2_{L^2(t_0,t;L^\infty)} \leq C(\mathcal{E}_0,\nu)\log(e+t-t_0) +  C(\sigma,s)\|v^2\|^2_{L^2(t_0,t;\dot{H}^1)}\log\left(e + \frac{\mathcal{E}^2_0\|B\|^2_{L^\infty(t_0,t;\dot{H}^s)}}{\|v^2\|^2_{L^2(t_0,t;\dot{H}^1)}}\right).
		\end{equation*}
		Moreover, as the estimate of $I_3$ in Step 1 (consider only the equations of $(E,B)$), we find that\footnote{There is a slightly different here, where the constant $C(\sigma,s)$ does not depend on $c$. However, it seems not to be the case as in \cite{Arsenio-Gallagher_2020}, where the authors used the relation $j = \sigma(cE + v \times B)$ and considered $\sigma cE$ as a damping term on the left-hand side in the equation of $E$.}
		\begin{align*}
			\frac{1}{2}\frac{d}{dt}\|(E,B)\|^2_{\dot{H}^s} + \frac{1}{\sigma}\|j\|^2_{\dot{H}^s} 
			\leq \frac{\epsilon}{\sigma}\|j\|^2_{\dot{H}^s} + C(\epsilon,\sigma,s)\left(\|\nabla v\|^2_{L^2} + \|v\|^2_{L^\infty}\right)\|B\|^2_{\dot{H}^s}
		\end{align*}
		and then by choosing $\epsilon = \frac{1}{2}$ for $0 \leq t_0 < t < {T^n_*}$
		\begin{equation*}
			\|(E,B)(t)\|^2_{\dot{H}^s} 
			\leq \|(E,B)(t_0)\|^2_{\dot{H}^s} \exp\left\{C(\sigma,s)\int^t_{t_0} \|\nabla v\|^2_{L^2} + \|v\|^2_{L^\infty} \,d\tau\right\},
		\end{equation*}
		and by using the bound on $\|v\|_{L^2_tL^\infty_x}$ in terms of $\|v^2\|_{L^2(t_0,t;L^\infty)}$ and those of $\|(v,v^2)\|_{L^2_t\dot{H}^1_x}$, an iteration as in \cite[The proof of Theorem 1.2]{Arsenio-Gallagher_2020} can be applied to the above bound of $(E,B)$, which gives us\footnote{Here, the authors in \cite{Arsenio-Gallagher_2020} used a suitable time decomposition of the whole time interval $(0,\infty)$ based on the fact that $\|v^2\|_{L^2_t\dot{H}^1_x} < \infty$, which allowed them to set up an iteration and obtain the bound of $\|(E,B)\|_{L^\infty_t\dot{H}^s_x}$ on each small time interval, then they obtained the bound on the whole time interval by using the continuous in time of regularized solutions. Here, we only need to change $(0,\infty)$ into $(0,T^n_*)$.}
		\begin{equation*}
			\mathcal{E}^2_0\|(E,B)\|^2_{L^\infty(0,t;\dot{H}^s)} \leq \left(e + \mathcal{E}^2_0\|(E_0,B_0)\|^2_{\dot{H}^s} + 
			t\right)^{C(T^n_*,\mathcal{E}_0,\nu,\sigma,s)} \qquad\text{for} \quad t \in (0,T^n_*), 
		\end{equation*}
		which by using the increasing of the function $z \mapsto z\log(e+\frac{C}{z})$ for $z > 0$, implies that
		\begin{equation*}
			\|v^2\|^2_{L^2(0,t;L^\infty)} +
			\|v\|^2_{L^2(0,t;L^\infty)} 
			\leq C(T^n_*,\mathcal{E}_0,\nu,\sigma,s)\left(\mathcal{E}^2_0 + \left(e + \mathcal{E}^2_0\|(E_0,B_0)\|^2_{\dot{H}^s} + t\right)\right)^{C(T^n_*,\mathcal{E}_0,\nu,\sigma,s)},
		\end{equation*} 
		where $C(T^n_*,\mathcal{E}_0,\nu,\sigma,s) = (1+T^n_*)C(\mathcal{E}_0,\nu,\sigma,s)$.
		
		\textbf{Step 15: Conclusion of Part II.} Collecting the main estimates from Step 1 to Step 14, we find that $T^n_* = \infty$. Moreover, by replacing $T^n_*$ by any given (does not depend on $n$) $T \in (0,\infty)$ and repeating  the above calculations, it follows that for $t \in (0,T)$ and for the same range of $(s,\delta)$ from Step 1 to Step 14
		\begin{equation*}
			\|v^n(t)\|^2_{H^\delta} + \|(E^n,B^n)(t)\|^2_{H^s} + \int^t_0 \|v^n\|^2_{H^{\delta+1}} + \|v^n\|^2_{L^\infty} + \|j^n\|^2_{H^s} \,d\tau \leq C(T,\delta,\nu,\sigma,s,v_0,E_0,B_0).
		\end{equation*} 
		
		\textbf{Part III: Pass to the limit and uniqueness.} Although this part is quite standard, it has not been given in details in \cite{Arsenio-Gallagher_2020,Masmoudi_2010}. Our aim in this part is to fulfill this gap for the sake of completeness and also for later use in the proofs of next theorems. Firstly, by using the ideas in \cite{Fefferman-McCormick-Robinson-Rodrigo_2014,KLN_2024,Majda_Bertozzi_2002}, we prove that $(v^n,E^n,B^n)$ and $(\nabla v^n,j^n)$ are Cauchy sequences in $L^\infty(0,T;L^2(\mathbb{R}^2))$ and $L^2(0,T;L^2(\mathbb{R}^2))$, respectively, for any $T \in (0,\infty)$ and $\delta,s > 1$, which allows us to pass to the limit from \eqref{NSM_app} in a stronger sense than the usual one (the sense of distributions) in the case either $\delta \in [0,1]$ or $s \in (0,1]$. Secondly, the uniqueness can be obtained by a carefully analysis. 
		
		\textbf{Step 16: Cauchy sequence.} Assume that $(v^n,E^n,B^n)$ and $(v^m,E^m,B^m)$ for $m,n \in \mathbb{N}$ with $m > n$ are two solutions to \eqref{NSM_app} with the same initial data. It follows that 
		\begin{equation*}
			\frac{1}{2}\frac{d}{dt} \|(v^n-v^m,E^n-E^m,B^n-B^m)\|^2_{L^2} + \nu\|\nabla (v^n-v^m)\|^2_{L^2} + \frac{1}{\sigma}\|j^n-j^m\|^2_{L^2} =: \sum^{8}_{k=4} I_k,
		\end{equation*}
		where for some $\epsilon \in (0,1)$, since $\delta,s > 1$
		\begin{align*}
			I_4 &= \int_{\mathbb{R}^2} \mathbb{P}(-T_n(v^n \cdot \nabla v^n) + T_m(v^m \cdot \nabla v^m)) \cdot (v^n-v^m)\,dx =: \sum^{3}_{k=1} I_{4k},
			\\
			I_{41} &= -\int_{\mathbb{R}^2} (T_n-T_m)(v^n \cdot \nabla v^n) \cdot (v^n-v^m)\,dx
			\leq C(\delta)n^{-(\delta-1)}\|v^n\|^2_{H^\delta}\|v^n-v^m\|_{L^2},
			\\
			I_{42} &= -\int_{\mathbb{R}^2} T_m((v^n-v^m) \cdot \nabla v^n) \cdot (v^n-v^m)\,dx
			\leq C(\epsilon,\nu)\|\nabla v^n\|^2_{L^2}\|v^n-v^m\|^2_{L^2} + 
			\epsilon\nu\|\nabla(v^n-v^m)\|^2_{L^2},
			\\
			I_{43} &= -\int_{\mathbb{R}^2} T_m(v^m \cdot \nabla (v^n-v^m)) \cdot (v^n-v^m)\,dx = 0;
			\\
			I_5 &= \int_{\mathbb{R}^2} \mathbb{P}(T_n(j^n \times B^n) - T_m(j^m \times B^m)) \cdot (v^n-v^m)\,dx =: \sum^{3}_{k=1} I_{5k},
			\\
			I_{51} &= \int_{\mathbb{R}^2} (T_n-T_m)(j^n\times B^n) \cdot (v^n-v^m)\,dx  
			\leq C(s)n^{-2s}\|j^n\|^2_{H^s} + \|B^n\|^2_{H^s}\|v^n-v^m\|^2_{L^2},
			\\
			I_{52} &= \int_{\mathbb{R}^2} T_m((j^n-j^m)\times B^n) \cdot (v^n-v^m)\,dx,
			\\
			I_{53} &= \int_{\mathbb{R}^2} T_m(j^m \times (B^n-B^m)) \cdot (v^n-v^m)\,dx
			\\
			&\leq C(\epsilon,\nu)\|j^m\|^2_{H^s} \|B^n-B^m\|^2_{L^2} + 
			\epsilon\nu \left(\|v^n-v^m\|^2_{L^2} + \|\nabla(v^n-v^m)\|^2_{L^2}\right);
			\\
			I_6 &= -\int_{\mathbb{R}^2} (j^n - j^m) \cdot (-T_n(v^n \times B^n) + T_m(v^m \times B^m))\,dx =: \sum^{3}_{k=1} I_{6k},
			\\
			I_{61} &= \int_{\mathbb{R}^2} (j^n-j^m) \cdot (T_n-T_m)(v^n \times B^n)\,dx 
			\leq \frac{\epsilon}{\sigma}\|j^n-j^m\|^2_{L^2} + C(\epsilon,\delta,\sigma,s) n^{-2\min\{\delta,s\}}\|v^n\|^2_{H^\delta}\|B^n\|^2_{H^s},
			\\
			I_{62} &= \int_{\mathbb{R}^2} (j^n-j^m) \cdot T_m((v^n-v^m) \times B^n)\,dx = - I_{52},
			\\
			I_{63} &= \int_{\mathbb{R}^2} (j^n-j^m) \cdot T_m(v^m \times (B^n-B^m))\,dx \leq \frac{\epsilon}{\sigma}\|j^n-j^m\|^2_{L^2} + C(\epsilon,\sigma)\|v^n\|^2_{H^\delta}\|B^n-B^m\|^2_{L^2};
			\\
			I_7 &= \int_{\mathbb{R}^2} \nabla \times (B^n-B^m) \cdot (c(E^n-E^m)) \,dx;
			\\
			I_8 &= -\int_{\mathbb{R}^2} \nabla \times (E^n-E^m) \cdot (c(B^n-B^m)) \,dx = - I_7;
		\end{align*}
		here we used the fact that $H^{s_0}(\mathbb{R}^2)$ is an algebra for $s_0 > 1$ and the following inequality (see \cite{Fefferman-McCormick-Robinson-Rodrigo_2014})
		\begin{align*}
			\|T_n(f) - f\|_{H^{s_1}} &\leq n^{-s_2} \|f\|_{H^{s_1+s_2}} \qquad \text{for} \quad s_1,s_2 \in \mathbb{R}, s_2 \geq 0.
		\end{align*}
		Therefore, by choosing $\epsilon = \frac{1}{4}$
		\begin{multline*}
			\frac{d}{dt}E^{nm} + \nu\|\nabla(v^n-v^m)\|^2_{L^2} + \frac{1}{\sigma}\|j^n-j^m\|^2_{L^2} \leq C(\nu,\sigma)\left(1 +
			\|v^n\|^2_{H^\delta} + \|j^m\|^2_{H^s} + \|B^n\|^2_{H^s}\right)E^{nm}
			\\
			+ C(\delta,\sigma,s)\left(n^{-(\delta-1)}\|v^n\|^2_{H^\delta}\|v^n-v^m\|_{L^2} + n^{-2s}\|j^n\|^2_{H^s} +  n^{-2\min\{\delta,s\}}\|v^n\|^2_{H^\delta}\|B^n\|^2_{H^s}\right).
		\end{multline*}
		By denoting for $t \in (0,T)$ 
		\begin{equation*}
			E^{nm}(t) := \|(v^n-v^m,E^n-E^m,B^n-B^m)(t)\|^2_{L^2}
		\end{equation*}
		and using $E^{nm}(0) = 0$ and Step 15, it follows that for $C = C(T,\delta,\nu,\sigma,s,v_0,E_0,B_0)$
		\begin{equation*}
			E^{nm}(t) + \int^t_0 \|\nabla(v^n-v^m)\|^2_{L^2} + \|j^n-j^m\|^2_{L^2} \,d\tau \leq C\max\left\{n^{-(\delta-1)},n^{-2s},n^{-2\min\{\delta,s\}}\right\},
		\end{equation*}
		which ends the proof by letting $n \to \infty$.

		\textbf{Step 17: Pass to the limit.} There are two substeps in this step as follows.
		
		\textbf{Step 17a: The case $\delta,s > 1$.} We use the notation $\to$, $\rightharpoonup$ and $\overset{\ast}{\rightharpoonup}$ to denote the usual strong, weak and weak-star convergences, respectively. From the previous step, there exists $(v,E,B,j)$ such that as $n \to \infty$
		\begin{align*}
			&&(v^n,E^n,B^n) &\to (v,E,B) &&\text{in} \quad L^\infty(0,T;L^2(\mathbb{R}^2)),&&
			\\
			&&(\nabla v^n,j^n) &\to (\nabla v,j)  &&\text{in} \quad L^2(0,T;L^2(\mathbb{R}^2)),&&
		\end{align*}
		which implies by using interpolation inequalities and Step 15 that for all $s' \in (1,\min\{\delta,s\})$ as $n \to \infty$
		\begin{align*}
			&&(v^n,E^n,B^n) &\to (v,E,B) &&\text{in} \quad L^\infty(0,T;H^{s'}(\mathbb{R}^2)),&&
			\\
			&&(\nabla v^n,j^n) &\to (\nabla v,j)  &&\text{in} \quad L^2(0,T;H^{s'}(\mathbb{R}^2)),&&
			\\
			&&(\Delta v^n, \nabla \times E^n,\nabla \times B^n) &\to (\Delta v, \nabla \times E, \nabla \times B)  &&\text{in} \quad L^2(0,T;H^{s'-1}(\mathbb{R}^2)).&&
		\end{align*}
		Moreover, for the nonlinear terms as $n \to \infty$ 
		\begin{align*}
			&&T_n(v^n \cdot \nabla v^n,v^n \times B^n) &\to (v \cdot \nabla v,v \times B) &&\text{in} \quad L^\infty(0,T;H^{s'-1}(\mathbb{R}^2) \times H^{s'}(\mathbb{R}^2)),&&
			\\
			&&T_n(j^n \times B^n) &\to j \times B &&\text{in} \quad L^2(0,T;H^{s'}(\mathbb{R}^2)),&&
		\end{align*}
		since  
		\begin{align*}
			\textnormal{N}_1 &:= \|T_n(v^n \cdot \nabla v^n) - v \cdot \nabla v\|_{H^{s'-1}} 
			\leq n^{s'-\delta} \|v^n\|^2_{H^\delta}  + \|v^n-v\|_{H^{s'}} \left(\|v^n\|_{H^\delta} + \|v\|_{H^\delta}\right);
			\\
			\textnormal{N}_2 &:= \|T_n(v^n \times B^n) - v \times B\|_{H^{s'}} 
			\\
			&\leq n^{s'-\min\{\delta,s\}}\|v^n\|_{H^\delta}\|B^n\|_{H^s} + \|v^n-v\|_{H^{s'}}\|B^n\|_{H^s} + \|v\|_{H^\delta}\|B^n-B\|_{H^{s'}};
			\\
			\textnormal{N}_3 &:= \int^T_0 \|T_n(j^n \times B^n) - j \times B\|^2_{H^{s'}} \,dt
			\\
			&\leq  C\int^T_0 n^{s'-s}\|j^n\|^2_{H^s}\|B^n\|^2_{H^s} + \|j^n-j\|^2_{H^{s'}}\|B^n\|^2_{H^s}  + \|B^n-B\|^2_{H^{s'}}\|j\|^2_{H^s} \,dt.
		\end{align*}
		In addition, \eqref{NSM_app} gives us for $t \in (0,T)$, $\sigma \in \{s'-1,\delta-1\}$ and $\sigma' \in \{s'-1,s-1\}$
		\begin{align*}
			\int^t_0\|\partial_t v^n\|^2_{H^\sigma} \,d\tau &\leq 
			C\int^t_0 \|(v^n \cdot \nabla v^n,  \nu \Delta v^n,j^n \times B^n)\|^2_{H^\sigma} \,d\tau,
			\\
			\int^t_0\left\|\frac{1}{c}(\partial_t E^n,\partial_t B^n)\right\|^2_{H^{\sigma'}} \,d\tau &\leq 
			C\int^t_0 \|(\nabla \times E^n,\nabla \times B^n,j^n)\|^2_{H^{\sigma'}} \,d\tau,
		\end{align*}
		which together with Step 15 and the above strong convergences leads to there exists a subsequence denoted by $(v^{n_k},E^{n_k},B^{n_k})$ such that as $n_k \to \infty$
		\begin{align*}
			&&(\partial_t v^{n_k},\frac{1}{c}\partial_t E^{n_k},\frac{1}{c}\partial_t B^{n_k}) &\rightharpoonup (\partial_t v,\frac{1}{c}\partial_t E,\frac{1}{c}\partial_t B) &&\text{in} \quad L^2(0,T;H^{\delta-1}(\mathbb{R}^2) \times H^{s-1}(\mathbb{R}^2) \times H^{s-1}(\mathbb{R}^2)),&&
			\\
			&&(\partial_t v^{n_k},\frac{1}{c}\partial_t E^{n_k},\frac{1}{c}\partial_t B^{n_k}) &\to (\partial_t v,\frac{1}{c}\partial_t E,\frac{1}{c}\partial_t B) &&\text{in} \quad L^2(0,T;H^{s'-1}(\mathbb{R}^2)).&&
		\end{align*}
		In addition, the above strong convergences and \eqref{NSM_app} imply that in $L^2(0,T;H^{s'-1}(\mathbb{R}^2))$
		\begin{equation} \label{NSM_P}
			\left\{
			\begin{aligned}
				\partial_t v + \mathbb{P}(v \cdot \nabla v) &= \nu \Delta v +  \mathbb{P}(j \times B), 
				\\
				\frac{1}{c}\partial_t E - \nabla \times B  &=  -j
				\\
				\frac{1}{c}\partial_t B + \nabla \times E &= 0
				\\
				\sigma(cE + v \times B) &= j,
				\\
				\textnormal{div}\, v = \textnormal{div}\, B &= 0.
			\end{aligned}
			\right.
		\end{equation}
		Indeed, it can be checked that as $n \to \infty$
		\begin{align*}
			\textnormal{div}\, v^n &\to \textnormal{div}\, v  &&\text{in}\quad L^2(0,T;H^{s'}(\mathbb{R}^2)),
			\\
			\textnormal{div}\, B^n &\to \textnormal{div}\, B  &&\text{in}\quad L^2(0,T;H^{s'-1}(\mathbb{R}^2)),
			\\
			(v^n,E^n,B^n)_{|_{t=0}} = T_n(v_0,E_0,B_0) &\to (v_0,E_0,B_0) & &\text{in}\quad H^\delta(\mathbb{R}^2) \times H^{s}(\mathbb{R}^2) \times H^{s}(\mathbb{R}^2),
		\end{align*}
		which leads to $\textnormal{div}\, v = \textnormal{div}\, B = 0$ and $(v,E,B)_{|_{t=0}} = (v_0,E_0,B_0)$. Then, the theorem de Rham (see \cite{Temam_2001}) ensures the existence of a scalar function $\pi$ such that $(v,E,B,\pi)$ satisfies  \eqref{NSM} at least in the sense of distributions. From the uniform bound in Step 15, we also have as $n_k \to \infty$ 
		\begin{align*}
			&&(v^{n_k},E^{n_k},B^{n_k}) &\overset{\ast}{\rightharpoonup} (v,E,B)  &&\text{in} \quad L^\infty(0,T;H^\delta(\mathbb{R}^2) \times H^{s}(\mathbb{R}^2) \times H^{s}(\mathbb{R}^2)), &&
			\\
			&&(\nabla v^{n_k},j^{n_k}) &\rightharpoonup  (\nabla v,j)  &&\text{in} \quad L^2(0,T;H^\delta(\mathbb{R}^2) \times H^s(\mathbb{R}^2)), &&
		\end{align*}
		which implies that for $\delta, s > 1$, 
		\begin{equation*}
			(v,E,B) \in L^\infty(0,T;H^\delta \times H^{s} \times H^s) \quad \text{and} \quad (v,j) \in  L^2(0,T;H^{\delta+1} \times H^{s})
		\end{equation*}
		satisfying for $t \in (0,T)$
		\begin{equation*}
			\|v(t)\|^2_{H^\delta} + \|(E,B)(t)\|^2_{H^{s}} + \int^t_0 \|v\|^2_{H^{\delta+1}} + \|j\|^2_{H^s} \,d\tau \leq C(T,\delta,\nu,\sigma,s,v_0,E_0,B_0).
		\end{equation*}
		In fact, after possibly being redefined on a set of measure zero,  $v \in C([0,T];H^\delta(\mathbb{R}^2))$  (see \cite{Evans_2010,Temam_2001}) since $v \in  L^2(0,T;H^{\delta+1}(\mathbb{R}^2))$ and $\partial_t v \in L^2(0,T;H^{\delta-1}(\mathbb{R}^2))$. Furthermore, we find that from the uniform bound in Step 15, $(E,B)$ is weak continuous in time with values in $H^{s}(\mathbb{R}^2)$. 
		
		\textbf{Step 17b: The case either $\delta \in [0,1]$ or $s \in (0,1]$.} It is enough to focus on the case where $\delta = 0$ and $s \in (0,1)$, which is considered in Step 14 above. Other cases follow as consequences. It follows from the uniform bound in Step 15 that 
		\begin{equation*}
			(v^n,E^n,B^n) \quad \text{is uniformly bounded in} \quad L^\infty(0,T;L^2(\mathbb{R}^2) \times H^s(\mathbb{R}^2) \times H^s(\mathbb{R}^2))
		\end{equation*}
		satisfying for $t \in (0,T)$
		\begin{align*}
			\|v^n(t)\|^2_{L^2} + \|(E^n,B^n)(t)\|^2_{H^s} + \int^t_0 \|v^n\|^2_{H^1} + \|j^n\|^2_{H^s}\,d\tau &\leq C(T,\nu,\sigma,s,v_0,E_0,B_0).
		\end{align*}
		In addition, for $\phi \in H^1(K;\mathbb{R}^3)$ for any compact set $K \subset \mathbb{R}^2$ with $\|\phi\|_{H^1(K)} \leq 1$, it yields for $\tau \in (0,T)$\footnote{Here, $(\cdot,\cdot)$ is the standard $L^2$ inner product.}
		\begin{align*}
			&\int^{\tau}_0 |(\partial_tv^n,\phi)|^2 \,dt 
			\leq C(\nu)\int^{\tau}_0 \left(1+\|v^n\|^2_{L^2}\right)\|\nabla v^n\|^2_{L^2} + \|j^n\|^2_{L^2} \|B^n\|^2_{H^s}\,dt 
			\leq C(T,\nu,\sigma,s,v_0,E_0,B_0),
			\\
			&\frac{1}{c}\int^{\tau}_0 \left|(\partial_tE^n,\phi)\right|^2 + \left|(\partial_tB^n,\phi)\right|^2 \,dt \leq \int^{\tau}_0 \|E^n\|^2_{L^2} + \|B^n\|^2_{L^2} + \|j^n\|^2_{L^2}\,dt 			\leq C(T,\nu,\sigma,v_0,E_0,B_0),
		\end{align*}
		which implies that\footnote{As usual, for $s \in \mathbb{R}$ with $s > 0$, the space $H^{-s}(\mathbb{R}^2)$ can be considered as the dual space of $H^s(\mathbb{R}^2)$, see \cite{Bahouri-Chemin-Danchin_2011}.}
		\begin{equation*}
			(\partial_tv^n,\partial_t E^n,\partial_t B^n) \quad \text{is uniformly bounded in} \quad L^2(0,T;H^{-1}(K)).
		\end{equation*}
		Therefore, there exists a subsequence (still denoted by) $(v^n,E^n,B^n,j^n)$ and $(v,E,B,j)$ such that as $n \to \infty$
		\begin{align*}
			&&(v^n,E^n,B^n) &\overset{\ast}{\rightharpoonup} (v,E,B)  &&\text{in} \quad L^\infty(0,T;L^2(\mathbb{R}^2) \times H^s(\mathbb{R}^2) \times H^s(\mathbb{R}^2)), &&
			\\
			&&(v^n,j^n) &\rightharpoonup  (v,j)  &&\text{in} \quad L^2(0,T;H^1(\mathbb{R}^2) \times H^s(\mathbb{R}^2)), &&
			\\
			&&(\partial_tv^n,\partial_t E^n,\partial_t B^n) &\rightharpoonup  (\partial_t v,\partial_t E, \partial_t B) &&\text{in} \quad L^2(0,T;H^{-1}(K)). &&
		\end{align*}
		Recall that the injections $H^1 \hookrightarrow L^2 \hookrightarrow H^{-1}$ and $H^s \hookrightarrow L^2 \hookrightarrow H^{-1}$ for $s \in (0,1)$ are locally compact by using the Rellich–Kondrachov and Schauder theorems (see \cite{Brezis_2011,Leoni_2023}) then an application of the Aubin-Lions lemma (see \cite{Boyer-Fabrie_2013}) implies that as $n \to \infty$
		\begin{align*}
			(v^n,E^n,B^n) \to (v,E,B) \qquad \text{ in} \quad L^2(0,T;L^2_{\textnormal{loc}}(\mathbb{R}^2)).
		\end{align*}
		Furthermore, it can be seen from \eqref{NSM_app} that $(v^n,E^n,B^n)$ satisfies
		\begin{align*}
			a)\quad &\int^T_0 \int_{\mathbb{R}^2} v^n \cdot \partial_t\phi - \mathbb{P}(T_n(v^n \cdot \nabla v^n)) \cdot \phi - \nu \nabla v^n : \nabla \phi + \mathbb{P}(T_n(j^n \times B^n)) \cdot \phi \,dxdt = -\int_{\mathbb{R}^2} v^n(0) \cdot \phi(0) \,dx,
			\\
			b)\quad &\int^T_0 \int_{\mathbb{R}^2} \frac{1}{c}E^n \cdot \partial_t\varphi + B^n \cdot (\nabla \times \varphi) - j^n \cdot \varphi \,dxdt = -\int_{\mathbb{R}^2} \frac{1}{c}E^n(0) \cdot \varphi(0) \,dx,
			\\
			c)\quad &\int^T_0 \int_{\mathbb{R}^2} \frac{1}{c}B^n \cdot \partial_t\varphi - E^n \cdot (\nabla \times \varphi) \,dxdt  = -\int_{\mathbb{R}^2} \frac{1}{c}B^n(0) \cdot \varphi(0) \,dx,
		\end{align*}
		where $\phi,\varphi \in C^\infty_0([0,T) \times \mathbb{R}^2;\mathbb{R}^3)$ with $\textnormal{div}\,\phi = 0$. By using the above weak and strong convergences as $n \to \infty$, we can pass to the limit for the linear terms easily. It remains to check the convergence of the nonlinear terms. Moreover, we find that\footnote{Here, $v \otimes u := (v_iu_j)_{1\leq i,j\leq 3}$ for $v = (v_1,v_2,v_3)$ and $u = (u_1,u_2,u_3)$.}
		\begin{align*}
			\textnormal{NL}_1 &:= \left|\int^T_0 \int_{\mathbb{R}^2} \mathbb{P}(T_n(v^n \cdot \nabla v^n)- v \cdot \nabla v) \cdot \phi \,dxdt\right| 
			\\
			&\leq \left|\int^T_0 \int_{\mathbb{R}^2} (T_n(v^n \otimes v^n) - v^n \otimes v^n) : \nabla \phi \,dxdt\right|  
			+ \left|\int^T_0 \int_{\mathbb{R}^2} ((v^n-v) \otimes v^n) : \nabla \phi \,dxdt\right| 
			\\
			&\quad + \left|\int^T_0 \int_{\mathbb{R}^2} (v \otimes (v^n-v)) : \nabla \phi \,dxdt\right|
			\\
			&\leq \|T_n(v^n \otimes v^n) - v^n \otimes v^n\|_{L^2_tH^{-1}_x} \|\nabla \phi\|_{L^2_tH^1_x}
			+\|v^n-v\|_{L^2_{t,x}(\textnormal{supp}(\phi))} \|v^n\|_{L^\infty_tL^2_x}  \|\nabla \phi\|_{L^2_tL^\infty_x}
			\\
			&\quad + \|v^n-v\|_{L^2_{t,x}(\textnormal{supp}(\phi))} \|v\|_{L^\infty_tL^2_x} \|\nabla \phi\|_{L^2_tL^\infty_x} 
			\\
			& \to 0 \quad \text{as} \quad n \to \infty
		\end{align*}
		by using the strong convergence of $v^n$, and Steps 14 and 15 with
		\begin{equation*}
			\|T_n(v^n \otimes v^n) - v^n \otimes v^n\|_{L^2_tH^{-1}_x}  \leq 
			\frac{1}{n}\|v^n \otimes v^n\|_{L^2_tL^2_x} \leq \frac{1}{n} \|v^n\|_{L^\infty_tL^2_x} \|v^n\|_{L^2_tL^\infty_x}.
		\end{equation*}
		Similarly,
		\begin{align*}
			\textnormal{NL}_2 &:= \left|\int^T_0 \int_{\mathbb{R}^2} \mathbb{P}(T_n(j^n \times B^n) - j \times B) \cdot \phi \,dxdt\right| 
			\\
			&\leq \left|\int^T_0 \int_{\mathbb{R}^2} (T_n(j^n \times B^n) - j^n \times B^n) \cdot \phi \,dxdt\right|
			+ \left|\int^T_0 \int_{\mathbb{R}^2} (j \times (B^n-B)) \cdot \phi \,dxdt\right| 
			\\
			&\quad +  \left|\int^T_0 \int_{\mathbb{R}^2} ((j^n-j) \times B^n) \cdot \phi \,dxdt\right| 
			\\
			&\leq \|T_n(j^n \times B^n) - j^n \times B^n\|_{L^2_tH^{-1}_x}  \|\phi\|_{L^2_tH^1_x} +  \|B^n-B\|_{L^2_{t,x}(\textnormal{supp}(\phi))} \|j\|_{L^2_{t,x}} \|\phi\|_{L^\infty_{t,x}}
			\\
			&\quad + \|B^n-B\|_{L^2_{t,x}(\textnormal{supp}(\phi))} \|j^n-j\|_{L^2_{t,x}} \|\phi\|_{L^\infty_{t,x}} + \left|\int^T_0 \int_{\mathbb{R}^2} ((j^n-j) \times B) \cdot \phi \,dxdt\right| 
			\\
			& \to 0 \quad \text{as} \quad n \to \infty
		\end{align*}
		by using the strong and weak convergences of $B^n$ and $j^n$, Steps 14 and 15, and \eqref{So-paraproduct} with
		\begin{equation*}
			\|T_n(j^n \times B^n) - j^n \times B^n\|_{L^2_tH^{-1}_x}  \leq \frac{1}{n^{2s}} \|j^n \times B^n\|_{L^2_tH^{2s-1}_x} \leq \frac{1}{n^{2s}} \|B^n\|_{L^\infty_tH^s_x} \|j^n\|_{L^2_tH^s_x}.
		\end{equation*}
		Furthermore, we also use $j^n = \sigma(cE^n + T_n(v^n \times B^n))$ with
		\begin{align*}
			\textnormal{NL}_3 &:= \left|\int^T_0 \int_{\mathbb{R}^2} (T_n(v^n \times B^n) - v \times B) \cdot \varphi \,dxdt\right| 
			\\
			&\leq \left|\int^T_0 \int_{\mathbb{R}^2} (T_n(v^n \times B^n) - v^n \times B^n) \cdot \varphi \,dxdt\right|
			+ \left|\int^T_0 \int_{\mathbb{R}^2} ((v^n-v) \times B^n) \cdot \varphi \,dxdt\right| 
			\\
			&\quad + \left|\int^T_0 \int_{\mathbb{R}^2} (v \times (B^n-B)) \cdot \varphi \,dxdt\right| 
			\\
			&\leq \|T_n(v^n \times B^n) - v^n \times B^n\|_{L^2_tH^{-1}_x}  \|\varphi\|_{L^2_tH^1_x} +  \|v^n-v\|_{L^2_{t,x}(\textnormal{supp}(\varphi))} \|B^n\|_{L^\infty_tL^2_x} \|\varphi\|_{L^2_tL^\infty_x}
			\\
			&\quad + \|B^n-B\|_{L^2_{t,x}(\textnormal{supp}(\varphi))} \|v\|_{L^\infty_tL^2_x}  \|\varphi\|_{L^2_tL^\infty_x} 
			\\
			&\to 0 \quad \text{as} \quad n \to \infty.
		\end{align*}
		It can be seen that $\textnormal{div}\, v = \textnormal{div}\, B = 0$ in the sense of distributions and $(v,E,B)_{|_{t=0}} = (v_0,E_0,B_0)$. It shows that $(v,E,B,j)$ satisfies \eqref{NSM_P} in the sense of distributions (similar to those of $a)$, $b)$ and $c)$ without $n$) with $(v,E,B)_{|_{t=0}} = (v_0,E_0,B_0)$. In addition, $v \in C([0,T];L^2)$, $(E,B)$ is weak continuous in time with values in $H^s$, and $(v,E,B)$ shares the same bounds in the case either $\delta \in [0,1]$ or $s \in (0,1]$ as that of $(v^n,E^n,B^n)$ given in Step 15. As mentioned in the previous case, a scalar pressure $\pi$ can be recovered such that $(v,E,B,\pi)$ satisfies \eqref{NSM} in the sense of distributions.

		\textbf{Step 18: Uniqueness.} Although the uniqueness has been considered in \cite{Masmoudi_2010} 
		with a different functional space and has not been mentioned in \cite{Arsenio-Gallagher_2020}, we adapt the idea in \cite{Masmoudi_2010} by providing a slightly different proof, which will take the advantage of the bound $\|v\|_{L^2_tL^\infty_x}$ given in Step 14 compared to \cite{Masmoudi_2010}. We note that our modified proof can also be useful in the three-dimensional case in Theorem \ref{theo1-3d}. Assume that $(v,E,B,j,\pi)$ and $(\bar{v},\bar{E},\bar{B},\bar{j},\bar{\pi})$ are two solutions to \eqref{NSM} with the same initial data $(v_0,E_0,B_0) \in L^2 \times H^s \times H^s$ for $s \in (0,1)$. It is worth mentioning that we can not use the usual energy method here due to the lack of smoothness of $(E,B)$. It can be seen that the difference satisfies
		\begin{equation*} 
			\left\{
			\begin{aligned}
				\partial_t (v-\bar{v}) + (v-\bar{v}) \cdot \nabla v + \bar{v} \cdot \nabla (v-\bar{v}) + \nabla (\pi - \bar{\pi}) &=  \nu\Delta(v-\bar{v}) + (j-\bar{j}) \times B + \bar{j} \times (B-\bar{B}), 
				\\
				\frac{1}{c}\partial_t (E-\bar{E}) - \nabla \times (B-\bar{B}) &= -(j-\bar{j}),
				\\
				\frac{1}{c}\partial_t (B-\bar{B}) + \nabla \times (E-\bar{E}) &= 0,
				\\
				\sigma(cE + v \times B) &= j,
				\\
				\sigma(c\bar{E} + \bar{v} \times \bar{B}) &= \bar{j},
				\\
				\frac{1}{\sigma}(j-\bar{j}) - (v \times B) + (\bar{v} \times \bar{B}) &= c(E-\bar{E}) ,
				\\
				\textnormal{div}\, v = \textnormal{div}\, B = \textnormal{div}\, \bar{v} = \textnormal{div}\, \bar{B} &= 0,
			\end{aligned}
			\right.
		\end{equation*}
		which gives us by using the continuity in time of both $v$ and $\bar{v}$ that
		\begin{equation*}
			\frac{1}{2}\frac{d}{dt}\|v-\bar{v}\|^2_{L^2} + \nu \|v-\bar{v}\|^2_{\dot{H}^1} =: \sum^3_{k=1} \bar{I}_k,
		\end{equation*}
		where for some $\epsilon \in (0,1)$ and for any $s' \in (0,s]$ 
		\begin{align*}
			\bar{I}_1 &= - \int_{\mathbb{R}^2} (v-\bar{v}) \cdot \nabla v \cdot (v-\bar{v}) \,dx  \leq  \epsilon\nu \|v-\bar{v}\|^2_{\dot{H}^1} +  C(\epsilon,\nu)\|\nabla v\|^2_{L^2}\|v-\bar{v}\|^2_{L^2}; 
			\\
			\bar{I}_2 &= \int_{\mathbb{R}^2} (j-\bar{j}) \times B \cdot (v-\bar{v}) \,dx 
			\leq C(s')\|j-\bar{j}\|_{L^2}\|B\|_{\dot{H}^{s'}}\|v-\bar{v}\|_{\dot{H}^{1-s'}}
			\\
			&\leq C(s')\|j-\bar{j}\|_{L^2}\|B\|_{\dot{H}^{s'}}\|v-\bar{v}\|^{s'}_{L^2}\|v-\bar{v}\|^{1-s'}_{\dot{H}^1}
			\\
			&\leq \epsilon\nu \|v-\bar{v}\|^2_{\dot{H}^1} + C(\epsilon,\nu,s') \|B\|^\frac{2}{s'+1}_{\dot{H}^{s'}} \|j-\bar{j}\|^\frac{2}{s'+1}_{L^2} \|v-\bar{v}\|^\frac{2s'}{s'+1}_{L^2};
			\\
			\bar{I}_3 &= \int_{\mathbb{R}^2} \bar{j} \times (B-\bar{B}) \cdot (v-\bar{v})\,dx \leq C(s')\|\bar{j}\|_{L^2}\|B-\bar{B}\|_{\dot{H}^{s'}}\|v-\bar{v}\|_{\dot{H}^{1-s'}}
			\\
			&\leq C(s')\|\bar{j}\|_{L^2}\|B-\bar{B}\|_{\dot{H}^{s'}}\|v-\bar{v}\|^{s'}_{L^2}\|v-\bar{v}\|^{1-s'}_{\dot{H}^1}
			\\
			&\leq \epsilon\nu \|v-\bar{v}\|^2_{\dot{H}^1} +  C(\epsilon,\nu,s')\|\bar{j}\|^\frac{2}{s'+1}_{L^2}\left(\|B-\bar{B}\|^2_{\dot{H}^{s'}} + \|v-\bar{v}\|^2_{L^2}\right).
		\end{align*}
		Therefore, by choosing $\epsilon = \frac{1}{6}$ and taking $T_* \in (0,T]$
		\begin{equation*}
			\|v-\bar{v}\|^2_{L^\infty(0,T_*;L^2)} + \nu \int^{T_*}_0  \|v-\bar{v}\|^2_{\dot{H}^1} \,d\tau \,d\tau \leq \sum^3_{k=1} \bar{J}_k,
		\end{equation*}
		where 
		\begin{align*}
			\bar{J}_1 &:= C(\nu) \int^{T_*}_0 \|\nabla v\|^2_{L^2}\|v-\bar{v}\|^2_{L^2} \,d\tau \leq C(\nu) \|v\|^2_{L^2(0,T_*;\dot{H^1})} \|v-\bar{v}\|^2_{L^\infty(0,T_*;L^2)};
			\\
			\bar{J}_2 &:= C(\nu,s') \int^{T_*}_0  \|B\|^\frac{2}{s'+1}_{\dot{H}^{s'}} \|j-\bar{j}\|^\frac{2}{s'+1}_{L^2} \|v-\bar{v}\|^\frac{2s'}{s'+1}_{L^2} \,d\tau
			\\
			&\leq C(c,\nu,\sigma,s') \int^{T_*}_0  \|B\|^\frac{2}{s'+1}_{\dot{H}^{s'}} \left(\|E-\bar{E}\|^\frac{2}{s'+1}_{L^2} + \|(v-\bar{v}) \times B\|^\frac{2}{s'+1}_{L^2} + \|\bar{v} \times (B-\bar{B})\|^\frac{2}{s'+1}_{L^2}\right) \|v-\bar{v}\|^\frac{2s'}{s'+1}_{L^2} \,d\tau
			\\
			&=: \sum^3_{k=1} \bar{J}_{2k},
			\\
			\bar{J}_{21} &= C(c,\nu,\sigma,s') \int^{T_*}_0  \|B\|^\frac{2}{s'+1}_{\dot{H}^{s'}} \|E-\bar{E}\|^\frac{2}{s'+1}_{L^2}  \|v-\bar{v}\|^\frac{2s'}{s'+1}_{L^2} \,d\tau
			\\
			&\leq C(c,\nu,\sigma,s')T_*\|B\|^\frac{2}{s'+1}_{L^\infty(0,T_*;\dot{H}^{s'})}\left(\|E-\bar{E}\|^2_{L^\infty(0,T_*;L^2)} + \|v-\bar{v}\|^2_{L^\infty(0,T_*;L^2)}\right),
			\\
			\bar{J}_{22} &= C(c,\nu,\sigma,s') \int^{T_*}_0  \|B\|^\frac{2}{s'+1}_{\dot{H}^{s'}}  \|(v-\bar{v}) \times B\|^\frac{2}{s'+1}_{L^2}  \|v-\bar{v}\|^\frac{2s'}{s'+1}_{L^2} \,d\tau
			\\
			&\leq C(c,\nu,\sigma,s') \int^{T_*}_0  \|B\|^\frac{2}{s'+1}_{\dot{H}^{s'}}  \|v-\bar{v}\|^\frac{2}{s'+1}_{L^\frac{2}{s'}}  \|B\|^\frac{2}{s'+1}_{L^\frac{2}{1-s'}}  \|v-\bar{v}\|^\frac{2s'}{s'+1}_{L^2} \,d\tau
			\\
			&\leq C(c,\nu,\sigma,s') \int^{T_*}_0  \|B\|^\frac{2}{s'+1}_{\dot{H}^{s'}}  \|v-\bar{v}\|^\frac{2}{s'+1}_{\dot{H}^{1-s'}}  \|B\|^\frac{2}{s'+1}_{\dot{H}^{s'}}  \|v-\bar{v}\|^\frac{2s'}{s'+1}_{L^2} \,d\tau
			\\
			&\leq C(c,\nu,\sigma,s') \int^{T_*}_0  \|B\|^\frac{4}{s'+1}_{\dot{H}^{s'}}  \|v-\bar{v}\|^\frac{4s'}{s'+1}_{L^2} \|v-\bar{v}\|^\frac{2(1-s')}{s'+1}_{\dot{H}^1}   \,d\tau
			\\
			&\leq C(c,\nu,\sigma,s') T^\frac{2s'}{s'+1}_* \|B\|^\frac{4}{s'+1}_{L^\infty(0,T_*;\dot{H}^{s'})}\left(\|v-\bar{v}\|^2_{L^\infty(0,T_*;L^2)} +  \nu\|v-\bar{v}\|^2_{L^2(0,T_*\dot{H}^1)}\right),
			\\
			\bar{J}_{23} &= C(c,\nu,\sigma,s') \int^{T_*}_0  \|B\|^\frac{2}{s'+1}_{\dot{H}^{s'}}  \|\bar{v} \times (B-\bar{B})\|^\frac{2}{s'+1}_{L^2} \|v-\bar{v}\|^\frac{2s'}{s'+1}_{L^2} \,d\tau
			\\
			&\leq  C(c,\nu,\sigma,s') \int^{T_*}_0  \|B\|^\frac{2}{s'+1}_{\dot{H}^{s'}}  \|\bar{v}\|^\frac{2s'}{s'+1}_{L^2}\|\bar{v}\|^\frac{2(1-s')}{s'+1}_{\dot{H}^1} \|B-\bar{B}\|^\frac{2}{s'+1}_{\dot{H}^{s'}} \|v-\bar{v}\|^\frac{2s'}{s'+1}_{L^2} \,d\tau
			\\
			&\leq C(c,\nu,\sigma,s')T^\frac{2s'}{s'+1}_*\|B\|^\frac{2}{s'+1}_{L^\infty(0,T_*;\dot{H}^{s'})}\|\bar{v}\|^\frac{2s'}{s'+1}_{L^\infty(0,T_*;L^2)} \|\bar{v}\|^\frac{2(1-s')}{s'+1}_{L^2(0,T_*;\dot{H}^1)}
			\\
			&\quad \times\left(\|B-\bar{B}\|^2_{L^\infty(0,T_*;\dot{H}^{s'})} + \|v-\bar{v}\|^2_{L^\infty(0,T_*;L^2)}\right);
			\\
			\bar{J}_3 &:= C(\nu,s') \int^{T_*}_0 \|\bar{j}\|^\frac{2}{s'+1}_{L^2}\left(\|B-\bar{B}\|^2_{\dot{H}^{s'}} + \|v-\bar{v}\|^2_{L^2}\right) \,d\tau 
			\\
			&\leq C(\nu,s')T^\frac{s'}{s'+1}_* \|\bar{j}\|^\frac{2}{s'+1}_{L^2(0,T_*;L^2)} \left(\|B-\bar{B}\|^2_{L^\infty(0,T_*;\dot{H}^{s'})} + \|v-\bar{v}\|^2_{L^\infty(0,T_*;L^2)}\right).
		\end{align*}
		In addition, by using Lemma \ref{lem_M0}, it follows that
		\begin{align*}
			\|(E-\bar{E},B-\bar{B})\|^2_{L^\infty(0,T_*;H^{s'})} \leq C(c)\|j-\bar{j}\|^2_{L^1(0,T_*;H^{s'})} 
			=: \sum^6_{k=4}\bar{J}_k,
		\end{align*}
		where for any $s' \in (0,s)$
		\begin{align*}
			\bar{J}_4 &= C(c,\sigma)\|E-\bar{E}\|^2_{L^1(0,T_*;H^{s'})} \leq  C(c,\sigma)T^2_*\|E-\bar{E}\|^2_{L^\infty(0,T_*;H^{s'})};
			\\
			\bar{J}_5 &=  C(c,\sigma)\|(v-\bar{v}) \times B\|^2_{L^1(0,T_*;H^{s'})} \leq \bar{J}_{51} + \bar{J}_{52},
			\\
			\bar{J}_{51} &:= C(c,\sigma) \|(v-\bar{v}) \times B\|^2_{L^1(0,T_*;L^2)} 
			\\
			&\leq C(c,\sigma,s')T^{s'+1}_*\|B\|^2_{L^\infty(0,T_*;\dot{H}^{s'})}\|v-\bar{v}\|^{2s'}_{L^\infty(0,T_*;L^2)}\|v-\bar{v}\|^{2(1-s')}_{L^2(0,T_*;\dot{H}^1)},
			\\
			&\leq C(c,\nu,\sigma,s')T^{s'+1}_*\|B\|^2_{L^\infty(0,T_*;\dot{H}^{s'})} \left(\|v-\bar{v}\|^2_{L^\infty(0,T_*;L^2)} + \nu\|v-\bar{v}\|^2_{L^2(0,T_*;\dot{H}^1)}\right),
			\\
			\bar{J}_{52} &:= C(c,\sigma) \|(v-\bar{v}) \times B\|^2_{L^1(0,T_*;\dot{H}^{s'})} 
			\\
			&\leq C(c,\nu,\sigma,s')T_*\|B\|^2_{L^\infty(0,T_*;\dot{H}^{s'})} \nu\|v-\bar{v}\|^2_{L^2(0,T_*;\dot{H}^1)} 
			\\
			&\quad+ C(c,\sigma,s,s')T^{s-s'+1}_* \|B\|^2_{L^\infty(0,T_*;\dot{H}^s)}\|v-\bar{v}\|^{2(s-s')}_{L^\infty(0,T_*;L^2)}\|v-\bar{v}\|^{2(1-(s-s'))}_{L^2(0,T_*;\dot{H}^1)}
			\\
			&\leq C(c,\nu,\sigma,s')T_*\|B\|^2_{L^\infty(0,T_*;\dot{H}^{s'})} \nu\|v-\bar{v}\|^2_{L^2(0,T_*;\dot{H}^1)} 
			\\
			&\quad+ C(c,\nu,\sigma,s,s')T^{s-s'+1}_* \|B\|^2_{L^\infty(0,T_*;\dot{H}^s)} \left(\|v-\bar{v}\|^2_{L^\infty(0,T_*;L^2)} +  \nu\|v-\bar{v}\|^2_{L^2(0,T_*;\dot{H}^1)}\right);
			\\
			\bar{J}_6 &= C(c,\sigma)\|\bar{v} \times (B-\bar{B})\|^2_{L^1(0,T_*;H^{s'})} \leq \bar{J}_{61} + \bar{J}_{62},
			\\
			\bar{J}_{61} &:= C(c,\sigma)\|\bar{v} \times (B-\bar{B})\|^2_{L^1(0,T_*;L^2)}
			\\
			&\leq C(c,\sigma,s') T^{s'+1}_* \|\bar{v}\|^{2s'}_{L^\infty(0,T_*;L^2)} \|\bar{v}\|^{2(1-s')}_{L^2(0,T_*;\dot{H}^1)}  \|B-\bar{B}\|^2_{L^\infty(0,T_*;\dot{H}^{s'})},
			\\
			\bar{J}_{62} &:= C(c,\sigma)\|\bar{v} \times (B-\bar{B})\|^2_{L^1(0,T_*;\dot{H}^{s'})}
			\\
			&\leq C(c,\sigma,s')T_* \left(\|\bar{v}\|^2_{L^2(0,T_*;\dot{H}^1)} + \|\bar{v}\|^2_{L^2(0,T_*;L^\infty)}\right) \|B-\bar{B}\|^2_{L^\infty(0,T_*;H^{s'})}.
		\end{align*}
		Combining all the above estimates and using Step 18, we find that for sufficiently small $T_*$ (depending on the parameters and initial data)
		\begin{align*}
			A(v-\bar{v},E-\bar{E},B-\bar{B}) &:= \|v-\bar{v}\|^2_{L^\infty(0,T_*;L^2)} + \nu \|v-\bar{v}\|^2_{L^2(0,T_*;\dot{H}^1)} +	\|(E-\bar{E},B-\bar{B})\|^2_{L^\infty(0,T_*;H^{s'})} \\
			&\leq \frac{1}{2} A(v-\bar{v},E-\bar{E},B-\bar{B}),
		\end{align*}
		which yields $v = \bar{v},E = \bar{E}$ and $B = \bar{B}$ in $(0,T_*)$. By repeating this process, we obtain the conclusion in the whole time interval $(0,T)$. Finally, we note that only the estimate of $\bar{J}_{52}$ needs $s' < s$ and other ones hold for $s' = s$ as well.
	\end{proof}

	\begin{proof}[Proof of Theorem \ref{theo1}-(ii) and (iii)] 
		In this part, by applying the previous one, we obtain more regularity for $(v,E,B)$.
		
		\textbf{Step 19: Higher regularity.} In the case $\delta = 0$ and $s \in (0,1)$, an application of Step 14, which allows us to bound $\|v^n\|_{L^2_tL^\infty_x}$ and $\|(E^n,B^n)\|_{L^\infty_tH^s_x}$. In addition, it follows from the energy estimate that  $v^n(t') \in H^1$ for a.e $t' \in (0,T)$ and for any $T \in (0,\infty)$. Thus, for any $t' \in (0,T)$ there exists $t_* \in (0,t')$ such that $v^n(t_*) \in H^1$. By fixing $t_*$, we define for $t \in [0,T-t_*)$, $u^n(t) := v^n(t_*+t)$ with $u^n_{|_{t=0}} := T_n(v^n(t_*)) \in H^{\delta'}$ for $\delta' \in (0,1]$ and consider \eqref{NSM_app} in $\mathbb{R}^2 \times (0,T-t_*)$ by replacing $u^n$ by $v^n$ with the initial data $(u^n,E^n,B^n)_{|_{t=0}} \in H^{\delta'} \times H^s \times H^s$ for $\delta' \in (0,1]$ and $s \in (0,1)$. An application of Steps 1, 5 and 17b in the proof of Part $(i)$, which allows us to bound $\|u^n\|_{L^\infty_tH^{\delta'}_x \cap L^2_tH^{\delta'+1}_x}$ and pass to the limit as $n \to \infty$. Furthermore, a similar argument can be applied to the case where $\delta = 0$ and $s = 1$ by using in addition Steps 2, 4 and 17b in the proof of Part $(i)$, we skip further details. We note that this step can be applied to other cases to gain more regularity for $(v,E,B)$ after the initial time, but we will not investigate here.  Thus, the proof of this part is complete.
	\end{proof}

	%
	\section{Proof of Theorem \ref{theo1-3d}} \label{sec:theo1-3d}
	%
	
	In this section, we will provide a standard proof of Theorem \ref{theo1-3d}, which follows the idea as that of Theorem \ref{theo1}.
	
	\begin{proof}[Proof of Theorem \ref{theo1-3d}-$(i)$] The proof is divided into several steps as follows.
		
		\textbf{Step 1: Local existence.} We will use exactly the approximate system \eqref{NSM_app} with replacing $\nu \Delta v^n$ by $-\nu(-\Delta)^\frac{3}{2} v^n$. Then there exists a unique solution $(v^n,E^n,B^n) \in C^1([0,T^n_*);V^\delta_n \times H^s_n \times V^s_n)$ for some $T^n_* > 0$. In what follows, we will assume that $T^n_* < \infty$. In the sequel, it is sufficient to focus on the case $\alpha = \frac{3}{2}$. The case $\alpha > \frac{3}{2}$ can be done similarly for more general initial data. 
		 
		\textbf{Step 2: The case $\delta = 0$ and $s \in (0,\frac{3}{2})$.} Similar to the two-dimensional case, the  energy estimate is given for $t \in (0,T^n_*)$ by
		\begin{equation*}
			\|(v^n,E^n,B^n)(t)\|^2_{L^2} + \int^t_0  \nu\|v^n\|^2_{\dot{H}^\frac{3}{2}} + \frac{1}{\sigma}\|j^n\|^2_{L^2} \,d\tau \leq \|(v_0,E_0,B_0)\|^2_{L^2} =: \mathcal{E}^2_0.
		\end{equation*}
		In addition, the $\dot{H}^s$ estimate of $(E^n,B^n)$ reads
		\begin{equation*}
			\frac{1}{2}\frac{d}{dt}\|(E^n,B^n)\|^2_{\dot{H}^s} + \frac{1}{\sigma} \|j^n\|^2_{\dot{H}^s} = \int_{\mathbb{R}^3} \Lambda^s(v^n \times B^n) \cdot \Lambda^s j^n \,dx,
		\end{equation*}
		where the right-hand side can be bounded as follows
		\begin{align*}
			\textnormal{RHS} &\leq C(s)\left(\|\Lambda^s v^n\|_{L^\frac{6}{2s}}\|B^n\|_{L^\frac{6}{3-2s}} + \|v^n\|_{L^\infty}\|\Lambda^s B^n\|_{L^2}\right)\|j^n\|_{\dot{H}^s}
			\\
			&\leq \frac{\epsilon}{\sigma}\|j^n\|^2_{\dot{H}^s} + C(\epsilon,\sigma,s)\left(\|v^n\|^2_{\dot{H}^\frac{3}{2}} + \|v^n\|^2_{L^\infty}\right)\|B^n\|^2_{\dot{H}^s},
		\end{align*}
		here we used the following homogeneous Sobolev inequality (see \cite{Bahouri-Chemin-Danchin_2011})
		\begin{equation*}
			\|f\|_{L^{p_0}} \leq C(s_0,p_0)\|f\|_{\dot{H}^{s_0}} \qquad \text{for} \quad s_0 \in \left[0,\frac{3}{2}\right), p_0 = \frac{6}{3-2s_0}.
		\end{equation*}
		Therefore, by choosing $\epsilon = \frac{1}{2}$, we obtain for $0 \leq t_0 < t \leq T^n_*$
		\begin{equation*}
			\|(E^n,B^n)(t)\|^2_{\dot{H}^s} \leq \|(E^n,B^n)(t_0)\|^2_{\dot{H}^s} \exp\left\{C(\sigma,s)\int^t_{t_0} \|v^n\|^2_{\dot{H}^\frac{3}{2}} + \|v^n\|^2_{L^\infty} \,d\tau \right\}.
		\end{equation*}
		It remains to control $\|v^n\|_{L^2_tL^\infty_x}$.  Defining $v^1$ and $v^2$ (we only write $(v^1,v^2)$ instead of $(v^{n,1},v^{n,2})$ for simplicity) as Step 14 in the proof of Theorem \ref{theo1} with
		\begin{align*}
			&&\partial_t v^1 + \nu (-\Delta)^\frac{3}{2} v^1 
			&= -\mathbb{P}(T_n(v^n \cdot \nabla v^n)),
			&&\textnormal{div}\, v^1 = 0,
			&&v^1_{|_{t=0}} = T_n(v_0),&&
			\\
			&&\partial_t v^2 + \nu (-\Delta)^\frac{3}{2} v^2 
			&= \mathbb{P}(T_n(j^n \times B^n)),
			&&\textnormal{div}\, v^2 = 0,
			&&v^2_{|_{t=0}} = 0.&&
		\end{align*}
		It follows that for $t \in (0,T^n_*)$
		\begin{equation*}
			\|v^1(t)\|^2_{L^2} + \nu\int^t_0 \|v^1\|^2_{\dot{H}^\frac{3}{2}} \,d\tau \leq \|v^1(0)\|^2_{L^2} + C(\nu)t^\frac{1}{3} \|v^n\|^\frac{8}{3}_{L^\infty(0,t;L^2)} \|v^n\|^\frac{4}{3}_{L^2(0,t;\dot{H}^\frac{3}{2})}  \leq C(\mathcal{E}_0,\nu)\left(1 +  (T^n_*)^\frac{1}{3}\right).
		\end{equation*}
		Moreover, for $t \in (0,T^n_*)$ and $q \in \mathbb{Z}$
		\begin{equation*}
			\frac{1}{2}\frac{d}{dt}\|\Delta_q v^1\|^2_{L^2} + \nu \|\Delta_q (-\Delta)^\frac{3}{4} v^1\|^2_{L^2} 
			\leq \|\Delta_q v^1\|_{L^2}\|\Delta_q (v^n \cdot \nabla v^n)\|_{L^2},
		\end{equation*}
		and by using for $q \geq 0$ 
		\begin{equation*}
			\|\Delta_q (-\Delta)^\frac{3}{4} v^1\|^2_{L^2} 
			\geq C 2^{3q}\|\Delta_q v^1\|^2_{L^2},
		\end{equation*}
		which yields (we use the same notation as the two-dimensional case)
		\begin{equation*}
			\sum_{q \geq -1} \esssup_{t \in (0,T^n_*)} \|\Delta_q v^1(t)\|^2_{L^2} + C(\nu) \sum_{q \geq 0} \left(\int^{T^n_*}_0 2^{3q}\|\Delta_q v^1\|_{L^2} \,d\tau\right)^2
			\leq  2\sum_{q \geq -1} R^2_q,
		\end{equation*}
		and
		\begin{align*}
			\sum_{q \geq -1} R^2_q 
			\leq C\left(\|v^1\|^2_{L^2_tL^\infty_x} + \|v^2\|^2_{L^2_tL^\infty_x}\right) \|\nabla v\|^2_{L^2_tL^2_x} + C\|v_0\|^2_{L^2}.
		\end{align*}
		The Littlewood–Paley decomposition and Bernstein-type estimate give us
		\begin{align*}
			\int^{T^n_*}_0 \|v^1\|^2_{L^\infty} \,d\tau 
			\leq C \int^{T^n_*}_0 \left(\sum_{|q-k| \leq N} + \sum_{|q-k| > N}\right) 2^\frac{3q}{2} \|\Delta_q v^1\|_{L^2}   2^\frac{3k}{2} \|\Delta_k v^1\|_{L^2} \,d\tau =: \bar{R}_1 + \bar{R}_2.
		\end{align*}
		The terms on the right-hand side can be bounded as follows
		\begin{align*}
			\bar{R}_1 &= C\int^{T^n_*}_0 \sum_{q \geq -1} 2^\frac{3q}{2} \|\Delta_q v^1\|_{L^2}   \sum_{q-N \leq k \leq q+N} 2^\frac{3k}{2} \|\Delta_k v^1\|_{L^2} \,d\tau
			\\
			&\leq C\int^{T^n_*}_0 \sum_{q \geq -1}  \left(2^{3q}\|\Delta_q v^1\|^2_{L^2}  + \sum_{q-N \leq k \leq q+N} 2^{3k} \|\Delta_k v^1\|^2_{L^2} \right) \,d\tau
			\\
			&\leq CN \left(\|v^1\|^2_{L^2_t\dot{H}^\frac{3}{2}_x} + \|v^1\|^2_{L^2_tL^2_x}\right);
			\\
			\bar{R}_2 &= C2^{1-\frac{3}{2}N} \sum^\infty_{q = N} \int^{T^n_*}_0  2^{3q} \|\Delta_q v^1\|_{L^2}  \sum_{k = -1} ^{q-N-1} 2^{\frac{3}{2}(k-(q-N))} \|\Delta_k v^1\|_{L^2} \,d\tau
			\\
			&\leq C2^{1-\frac{3}{2}N} \left(\sum^\infty_{q = N} \left(\int^{T^n_*}_0  2^{3q} \|\Delta_q v^1\|_{L^2} \,d\tau \right)^2\right)^\frac{1}{2}   \left(\sum^\infty_{k_0=0} \left(\sum_{k = -1} ^{k_0-1} 2^{-\frac{3}{2}(k_0-k)} \esssup_{t \in (0,{T^n_*})} \|\Delta_k v^1\|_{L^2} \right)^2\right)^\frac{1}{2}
			\\
			&\leq C2^{1-\frac{3}{2}N} \left(\sum^\infty_{q = 0} \left(\int^{T^n_*}_0  2^{3q} \|\Delta_q v^1\|_{L^2} \,d\tau \right)^2\right)^\frac{1}{2}   \left(\sum^\infty_{k=-1} \esssup_{t \in (0,{T^n_*})} \|\Delta_k v^1\|^2_{L^2} \right)^\frac{1}{2} \sum^\infty_{k = -1} 2^{-\frac{3}{2}k}.
		\end{align*}
		Therefore,
		\begin{align*}
			\|v^1\|^2_{L^2_tL^\infty_x}  &\leq C(\nu) 2^{-N} \left(\|v^1\|^2_{L^2_tL^\infty_x} + \|v^2\|^2_{L^2_tL^\infty_x}\right) \|\nabla v^n\|^2_{L^2_tL^2_x} 
			+ CN \left(\|v^1\|^2_{L^2_t\dot{H}^\frac{3}{2}_x} + \|v^1\|^2_{L^2_tL^2_x}\right)
			+ C \|v_0\|^2_{L^2},
		\end{align*}
		which by choosing 
		\begin{equation*}
			N = \left\lceil\log_2\left(4 + 2C(\nu)\|\nabla v^n\|^2_{L^2_tL^2_x}\right)\right\rceil
		\end{equation*}
		and using the energy estimates of $v^1$ and $v^n$ yields
		\begin{align*}
			\|v^1\|^2_{L^2_tL^\infty_x} &\leq 
			C(\nu)\left(\left(\|v^1\|^2_{L^2_t\dot{H}^\frac{3}{2}_x} + \|v^1\|^2_{L^2_tL^2_x}\right)(1+\|\nabla v^n\|^2_{L^2_tL^2_x}) + \|v^2\|^2_{L^2_tL^\infty_x} + \mathcal{E}^2_0\right)
			\\
			&\leq C(\mathcal{E}_0,\nu)\left(1 + (T^n_*)^\frac{1}{3} + (T^n_*)^\frac{5}{3}\right) + C(\nu)\|v^2\|^2_{L^2_tL^\infty_x}.
		\end{align*}
		In addition,
		\begin{equation*}
			\|v\|^2_{L^2_tL^\infty_x} \leq \|v^1\|^2_{L^2_tL^\infty_x} + \|v^2\|^2_{L^2_tL^\infty_x} \leq
			C(\mathcal{E}_0,\nu)\left(1 + (T^n_*)^\frac{1}{3} + (T^n_*)^\frac{5}{3}\right) + C(\nu)\|v^2\|^2_{L^2_tL^\infty_x}.
		\end{equation*}
		It remains to bound the term $\|v^2\|_{L^2_tL^\infty_x}$. It can be seen that for $0 \leq t_0 < t \leq T^n_*$
		\begin{align*}
			\|v^2\|^2_{L^\infty(t_0,t;L^2)} + \|v^2\|^2_{L^2(t_0,t;\dot{H}^\frac{3}{2})} 
			&\leq \|(v^n,v^1)\|^2_{L^\infty(t_0,t;L^2)} 
			+ \|(v^n,v^1)\|^2_{L^2(t_0,t;\dot{H}^\frac{3}{2})}  \leq C(\mathcal{E}_0,\nu)\left(1 + (T^n_*)^\frac{1}{3}\right).
		\end{align*}
		We will use the following Besov-type maximal regularity estimate for the forced fractional heat equation of $v^2$ in which its proof will be provided in Appendix D (see Section \ref{sec:app}) for $t \in (0,T^n_*]$
		\begin{equation} \label{Maximal_Regularity}
			\|v^2\|_{L^2(0,t;\dot{B}^{s+\frac{3}{2}}_{2,1})} \leq C(s)\|j^n \times B^n\|_{L^2(0,t;\dot{B}^{s-\frac{3}{2}}_{2,1})} \leq C(s) \|j^n\|_{L^2(0,t;L^2)}\|B^n\|_{L^\infty(0,t;\dot{H}^s)},
		\end{equation}
		where we used the paraproduct rule \eqref{paraproduct} in the second inequality. An application of \cite[Lemmas 7.3 and 7.4]{Arsenio-Gallagher_2020} yields for $s' > \frac{3}{2}$ and $0 \leq t_0 < t \leq T^n_*$
		\begin{align*}
			\|(\textnormal{Id}-\dot{S}_0)v^2\|_{L^2(t_0,t;L^\infty)} 
			&\leq C(s')\|v^2\|_{L^2(t_0,t;\dot{H}^\frac{3}{2})} \log^\frac{1}{2}\left(e + \frac{\|v^2\|_{L^2(t_0,t;\dot{B}^{s'}_{2,1})}}{\|v^2\|_{L^2(t_0,t;\dot{H}^\frac{3}{2})}}\right),
			\\
			\|\dot{S}_0v^2\|_{L^2(t_0,t;L^\infty)} 
			&\leq C\|v^2\|_{L^2(t_0,t;\dot{H}^\frac{3}{2})} \log^\frac{1}{2}\left(e + \frac{\|v^2\|_{L^2(t_0,t;L^2)}}{\|v^2\|_{L^2(t_0,t;\dot{H}^\frac{3}{2})}}\right).
		\end{align*}
		By choosing $s' = s+\frac{3}{2}$, using \eqref{Maximal_Regularity}, the estimate of $v^2$ and the increasing of the function $z \mapsto z\log(e+\frac{C}{z})$ for $z > 0$, we find that
		\begin{align*}
			\|v^2\|^2_{L^2(t_0,t;L^\infty)}  
			\leq C(\mathcal{E}_0,\nu,s)\left(1 + (T^n_*)^\frac{1}{3}\right) \log(e + t-t_0) + 
			C(\sigma,s)\|v^2\|^2_{L^2(t_0,t;\dot{H}^\frac{3}{2})} \log\left(e + \frac{\mathcal{E}^2_0\|B^n\|^2_{L^2(t_0,t;\dot{H}^s)}}{\|v^2\|^2_{L^2(t_0,t;\dot{H}^\frac{3}{2})}}\right).
		\end{align*}
		By using an upper bound on $\|v\|_{L^2_tL^\infty_x}$ in terms of $\|v^2\|_{L^2_tL^\infty_x}$ and that of
		$\|v^2\|_{L^2_t\dot{H}^\frac{3}{2}_x}$, 
		an iteration as \cite[The proof of Theorem 1.2]{Arsenio-Gallagher_2020} can be applied (replacing $\|u_e\|_{L^2_t\dot{H}^1_x}$ by $\|v^2\|_{L^2_t\dot{H}^\frac{3}{2}_x}$) to the $\dot{H}^s$ estimate of $(E^n,B^n)$, which yields for $t \in (0,T^n_*]$
		\begin{align*}
			\mathcal{E}^2_0\|(E^n,B^n)\|^2_{L^\infty(0,t;\dot{H}^s)} &\leq \left(e + \mathcal{E}^2_0\|(E_0,B_0)\|^2_{\dot{H}^s} +
			t\right)^{C(\mathcal{E}_0,\nu,\sigma,s)\left(1 + (T^n_*)^\frac{1}{3} + (T^n_*)^\frac{5}{3}\right)},
		\end{align*}
		and for some $C(\mathcal{E}_0,\nu,\sigma,s) > 1$ 
		\begin{equation*}
			\|v^n\|^2_{L^2(0,t;L^\infty)} \leq C(\mathcal{E}_0,\nu,\sigma,s)\left(1 + (T^n_*)^\frac{1}{3} + (T^n_*)^\frac{5}{3}\right)\left(e + \mathcal{E}^2_0\|(E_0,B_0)\|^2_{\dot{H}^s} + t\right)^{C(\mathcal{E}_0,\nu,\sigma,s)\left(1 + (T^n_*)^\frac{1}{3} + (T^n_*)^\frac{5}{3}\right)}.
		\end{equation*}
		
		\textbf{Step 3: The case $\delta = s$.} This step includes three substeps as follows.
		
		\textbf{Step 3a: The case $\delta = s \in (0,\frac{3}{2})$.} Similarly, it can be seen that
		\begin{equation*}
			\frac{1}{2}\frac{d}{dt}\left(\|v^n\|^2_{\dot{H}^\delta} + \|(E^n,B^n)\|^2_{\dot{H}^s}\right) + \nu \|v^n\|^2_{\dot{H}^{\delta+\frac{3}{2}}} + \frac{1}{\sigma} \|j^n\|^2_{\dot{H}^s} =: \sum^3_{k=1} I_k,
		\end{equation*}
		where for some $\epsilon \in (0,1)$, since $\delta = s$
		\begin{align*}
			I_1 &= \int_{\mathbb{R}^3} \Lambda^\delta(j^n \times B^n) \cdot \Lambda^\delta v^n\,dx
			\\
			&\leq \|j^n\|_{L^2}\|B^n\|_{L^\frac{6}{3-2\delta}}\|\Lambda^{2\delta} v^n\|_{L^\frac{6}{2\delta}}
			\\
			&\leq \frac{\epsilon\nu}{2} \|v^n\|^2_{\dot{H}^{\delta+\frac{3}{2}}} + C(\epsilon,\delta,\nu)\|j^n\|^2_{L^2}\|B^n\|^2_{\dot{H}^\delta};
			\\
			I_2 &= -\int_{\mathbb{R}^3} \Lambda^\delta(v \cdot \nabla v^n) \cdot \Lambda^\delta v^n \,dx 
			\\
			&\leq \|v^n\|_{L^\frac{6}{3-2\delta}}\|\nabla v^n\|_{L^2}\|\Lambda^{2\delta} v^n\|_{L^\frac{6}{2\delta}} 
			\\
			&\leq \frac{\epsilon \nu}{2} \|v^n\|^2_{\dot{H}^{\delta+\frac{3}{2}}} + C(\epsilon,\delta,\nu)\left(\|v^n\|^2_{L^2} + \|v^n\|^2_{\dot{H}^\frac{3}{2}}\right) \|v^n\|^2_{\dot{H}^\delta};
			\\
			I_3 &= \int_{\mathbb{R}^3} \Lambda^s(v \times B^n) \cdot \Lambda^s j^n \,dx 
			\\
			&\leq C(s)\left(\|\Lambda^s v^n\|_{L^\frac{6}{2s}}\|B^n\|_{L^\frac{6}{3-2s}} + \|v^n\|_{L^\infty}\|\Lambda^s B^n\|_{L^2}\right)\|j^n\|_{\dot{H}^s}
			\\
			&\leq C(s)\left(\|v^n\|_{\dot{H}^\frac{3}{2}}\|B^n\|_{\dot{H}^s} + \|v^n\|_{L^\infty}\|B^n\|_{\dot{H}^s}\right)\|j^n\|_{\dot{H}^s}
			\\
			&\leq \frac{\epsilon}{\sigma}\|j^n\|^2_{\dot{H}^s} + C(\epsilon,\sigma,s)\|v^n\|^2_{\dot{H}^\frac{3}{2}}\|B^n\|^2_{\dot{H}^s} + C(\epsilon,\sigma,s)\|v^n\|^2_{L^\infty}\|B^n\|^2_{\dot{H}^s}.
		\end{align*}
		Therefore, by choosing $\epsilon = \frac{1}{2}$ and using \eqref{BGW} with $d = 3$ and $s_0 = s + \frac{1}{2}$
		\begin{multline*}
			\frac{d}{dt}\left(\|v^n\|^2_{H^\delta} + \|(E^n,B^n)\|^2_{H^s}\right) + \nu \|v^n\|^2_{H^{\delta+\frac{3}{2}}} + \frac{1}{\sigma} \|j^n\|^2_{H^s} 
			\leq C(\delta,\nu,\sigma)G\|(v^n,B^n)\|^2_{H^s} 
			+ \nu \|v^n\|^2_{L^2} 
			\\+ 
			\left[\frac{1}{2}C(\sigma,s)\|B^n\|_{H^s}\|v^n\|_{H^\frac{3}{2}}\left(1 + \log^\frac{1}{2}\left(\frac{\|v^n\|_{H^{s+\frac{3}{2}}}}{\|v^n\|_{H^\frac{3}{2}}}\right)\right)\right]^2,
		\end{multline*}
		where 
		\begin{equation*}
			G(t) := \|v^n\|^2_{L^2} + \|v^n\|^2_{\dot{H}^\frac{3}{2}} + \|j^n\|^2_{L^2},
		\end{equation*}
		which yields the conclusion as that of Step 3 in the proof of Theorem \ref{theo1}.
		
		\textbf{Step 3b: The case $\delta = s = \frac{3}{2}$.}
		In this case, we find that for some $\epsilon_0 \in (0,\frac{3}{2})$
		\begin{align*}
			I_1 &\leq \|j^n\|_{L^6}\|B^n\|_{L^3}\|\Lambda^{2\delta} v^n\|_{L^2} \leq \epsilon\nu \|v^n\|^2_{\dot{H}^{\delta+\frac{3}{2}}} + C(\epsilon,\delta,\nu)\|j^n\|^2_{H^1}\|B^n\|^2_{H^1};
			\\
			I_2 &\leq \|v^n\|_{L^6}\|\nabla v^n\|_{L^3}\|\Lambda^{2\delta} v^n\|_{L^2} \leq \epsilon\nu\|v^n\|^2_{\dot{H}^\frac{3}{2}} + C(\epsilon,\nu,\delta)\left(\|v^n\|^2_{L^2} + \|v^n\|^2_{\dot{H}^\frac{3}{2}}\right)\|v^n\|^2_{\dot{H}^\frac{3}{2}};
			\\
			I_3 &\leq C(s)\left(\|\Lambda^s v^n\|_{L^\frac{6}{2\epsilon_0}}\|B^n\|_{L^\frac{6}{3-2\epsilon_0}} + \|v^n\|_{L^\infty}\|\Lambda^s B^n\|_{L^2}\right)\|j^n\|_{\dot{H}^s}
			\\
			&\leq \frac{\epsilon}{\sigma}\|j^n\|^2_{\dot{H}^s} + C(\epsilon,\sigma,s)\|v^n\|^2_{\dot{H}^{s -\epsilon_0 + \frac{3}{2}}}\|B^n\|^2_{\dot{H}^{\epsilon_0}} + C(\epsilon,\sigma,s)\|v^n\|^2_{L^\infty}\|B^n\|^2_{\dot{H}^s},
		\end{align*}
		which together with Step 3a (to bound $\|v^n\|_{L^2_tH^{s-\epsilon_0 + \frac{3}{2}}_x}, \|v^n\|_{L^2_tL^\infty_x}$ and $\|j^n\|_{L^2_tH^1_x}$) closes the $H^s$ estimate.
		
		\textbf{Step 3c: The case $\delta = s > \frac{3}{2}$.} In this case, it follows that for some $\epsilon \in (0,1)$
		\begin{align*}
			I_1 &\leq C(\delta)\left(\|j^n\|_{\dot{H}^\delta}\|B^n\|_{L^\infty} + \|j^n\|_{L^\infty}\|B^n\|_{\dot{H}^\delta}\right)\|v^n\|_{\dot{H}^\delta} =: I_{11} + I_{12},
			\\
			I_{11} &\leq C(\delta)\|j^n\|_{\dot{H}^\delta} \|B^n\|^\frac{2\delta-3}{2\delta}_{L^2}\|B^n\|^\frac{3}{2\delta}_{\dot{H}^\delta}\|v^n\|_{\dot{H}^\frac{3}{2}}^\frac{3}{2\delta}\|v^n\|^\frac{2\delta-3}{2\delta}_{\dot{H}^{\delta+\frac{3}{2}}}
			\\
			&\leq \frac{\epsilon}{\sigma}\|j^n\|^2_{\dot{H}^\delta}
			 + C(\epsilon,\delta,\sigma)\|B^n\|^\frac{2\delta-3}{\delta}_{L^2}\|B^n\|^\frac{3}{\delta}_{\dot{H}^\delta}\|v^n\|_{\dot{H}^\frac{3}{2}}^\frac{3}{\delta}\|v^n\|^\frac{2\delta-3}{\delta}_{\dot{H}^{\delta+\frac{3}{2}}}
			\\
			&\leq \frac{\epsilon}{\sigma}\|j^n\|^2_{\dot{H}^\delta} + \epsilon \nu \|v^n\|^2_{\dot{H}^{\delta+\frac{3}{2}}}  + C(\epsilon,\delta,\sigma)\|B^n\|^\frac{2(2\delta-3)}{3}_{L^2}\|B^n\|^2_{\dot{H}^\delta}\|v^n\|^2_{\dot{H}^\frac{3}{2}},
			\\
			I_{12} &\leq C(\delta)\|j^n\|^\frac{2\delta-3}{2\delta}_{L^2}\|j^n\|^\frac{3}{2\delta}_{\dot{H}^\delta}\|B^n\|_{\dot{H}^\delta}\|v^n\|^\frac{3}{2\delta}_{\dot{H}^\frac{3}{2}}\|v^n\|^\frac{2\delta-3}{2\delta}_{\dot{H}^{\delta+\frac{3}{2}}}
			\\ 
			&\leq \frac{\epsilon}{\sigma}\|j^n\|^2_{\dot{H}^\delta} + C(\epsilon,\delta,\sigma)\|j^n\|^\frac{2(2\delta-3)}{4\delta-3}_{L^2}
			\|B^n\|^\frac{4\delta}{4\delta-3}_{\dot{H}^\delta}
			\|v^n\|^\frac{6}{4\delta-3}_{\dot{H}^\frac{3}{2}}
			\|v^n\|^\frac{4\delta-6}{4\delta-3}_{\dot{H}^{\delta+\frac{3}{2}}}
			\\
			&\leq \frac{\epsilon}{\sigma}\|j^n\|^2_{\dot{H}^\delta} + \epsilon\nu\|v^n\|^2_{\dot{H}^{\delta+\frac{3}{2}}} + C(\epsilon,\delta,\sigma)\|j^n\|^\frac{2\delta-3}{\delta}_{L^2}\|v^n\|^\frac{3}{\delta}_{\dot{H}^\frac{3}{2}}\|B^n\|^2_{\dot{H}^\delta}
			\\
			&\leq \frac{\epsilon}{\sigma}\|j^n\|^2_{\dot{H}^\delta} + \epsilon\nu\|v^n\|^2_{\dot{H}^{\delta+\frac{3}{2}}} + C(\epsilon,\delta,\sigma)\left(\|j^n\|^2_{L^2} + \|v^n\|^2_{\dot{H}^\frac{3}{2}}\right)\|B^n\|^2_{\dot{H}^\delta};
			\\
			I_2 &= \int_{\mathbb{R}^3} \Lambda^{\delta-\frac{1}{2}}(v^n \otimes v^n) \cdot \Lambda^{\delta+\frac{3}{2}} v^n \,dx
			\\
			&\leq C(\delta)\|\Lambda^{\delta-\frac{1}{2}} v^n\|_{L^3}\|v^n\|_{L^6}\|\Lambda^{\delta+\frac{3}{2}} v^n\|_{L^2}
			\\
			&\leq \epsilon\nu\|v^n\|^2_{\dot{H}^{\delta+\frac{3}{2}}} + C(\epsilon,\delta,\nu) \left(\|v^n\|^2_{L^2} + \|v^n\|^2_{\dot{H}^\frac{3}{2}}\right)\|v^n\|^2_{\dot{H}^\delta};
			\\
			I_3 &\leq C(\delta)\left(\|v^n\|_{\dot{H}^\delta}\|B^n\|_{L^\infty} + \|v^n\|_{L^\infty}\|B^n\|_{\dot{H}\delta}\right)\|j^n\|_{\dot{H}^\delta} =: I_{31} + I_{32},
			\\
			I_{31} &\leq  \frac{\epsilon}{\sigma}\|j^n\|^2_{\dot{H}^\delta} + \epsilon \nu \|v^n\|^2_{\dot{H}^{\delta+\frac{3}{2}}}  + C(\epsilon,\delta,\sigma)\|B^n\|^\frac{2(2\delta-3)}{3}_{L^2}\|B^n\|^2_{\dot{H}^\delta}\|v^n\|^2_{\dot{H}^\frac{3}{2}},
			\\
			I_{32} &\leq \frac{\epsilon}{\sigma}\|j^n\|^2_{\dot{H}^\delta} + C(\epsilon,\delta,\sigma)\|v^n\|^2_{L^\infty}\|B^n\|^2_{\dot{H}^\delta},
		\end{align*}
		where we used Lemma \ref{lem-Agmon}. Thus, we can use Step 3a to bound $\|v^n\|_{L^2_tL^\infty_x}$, which closes the main estimate.
		
		\textbf{Step 4: The case $0 < \delta \leq s < \frac{3}{2}$.} In this case, by using Step 3a, we need to estimate only $(E^n,B^n)$, with $I_3$ is bounded as follows
		\begin{align*}
			I_3 &\leq C(s)\left(\|\Lambda^s v^n\|_{L^\frac{6}{2s}}\|B^n\|_{L^\frac{6}{3-2s}} + \|v^n\|_{L^\infty}\|B^n\|_{\dot{H}^s}\right)\|j^n\|_{\dot{H}^s}
			\\
			&\leq \frac{\epsilon}{\sigma}\|j^n\|^2_{\dot{H}^s} +  C(\epsilon,\sigma,s)\left(\|v^n\|^2_{\dot{H}^\frac{3}{2}} + \|v^n\|^2_{H^{\delta+\frac{3}{2}}}\right)
			\|B^n\|^2_{\dot{H}^s}.
		\end{align*}
		
		\textbf{Step 5: The case $0 < \delta \leq s = \frac{3}{2}$.} Similar to the previous step, 		
		\begin{align*}
			I_3 &\leq C(s)\left(\|\Lambda^s v^n\|_{L^\frac{6}{3-2\delta}}\|B^n\|_{L^\frac{6}{2\delta}} + \|v^n\|_{L^\infty}\|B^n\|_{\dot{H}^s}\right)\|j^n\|_{\dot{H}^s}
			\\
			&\leq \frac{\epsilon}{\sigma}\|j^n\|^2_{\dot{H}^s} +  C(\epsilon,\delta,\sigma,s)\|v^n\|^2_{\dot{H}^{s+\delta}}\|B^n\|_{\dot{H}^\delta} + C(\epsilon,\delta,\sigma,s)\|v^n\|^2_{H^{\delta+\frac{3}{2}}}
			\|B^n\|^2_{\dot{H}^s}
			\\
			&\leq \frac{\epsilon}{\sigma}\|j^n\|^2_{\dot{H}^s} +  C(\epsilon,\delta,\sigma,s)\|v^n\|^2_{H^{\delta+\frac{3}{2}}} \|B^n\|^2_{H^s}.
		\end{align*}
		
		\textbf{Step 6: The case $s > \frac{3}{2}$ and $s - \frac{3}{2} \leq \delta < s$.} In this case, we find that
		\begin{align*}
			I_3 &\leq C(s) \left(\|v^n\|_{\dot{H}^s}\|B^n\|_{L^\infty} + \|v^n\|_{L^\infty}\|B^n\|_{\dot{H}^s}\right)\|j^n\|_{\dot{H}^s}
			\\
			&\leq \frac{\epsilon}{\sigma}			\|j^n\|^2_{\dot{H}^s} + C(\epsilon,\sigma,s) \|v^n\|^2_{H^{\delta+\frac{3}{2}}}\|B^n\|^2_{H^s}.
		\end{align*}
		
		\textbf{Step 7: The case $s > \frac{3}{2}$ and $s < \delta < s + \frac{3}{2}$.} In this case, by using Step 3c, we need to estimate only $v^n$. We write $\delta = s + \epsilon_0$ for some $\epsilon_0 \in (0,\frac{3}{2})$, and bound $I_2$ as in Step 3c and $I_1$ as follows
		\begin{align*}
			I_1 &= \int_{\mathbb{R}^3} \Lambda^{\delta-\frac{3}{2}} (j^n \times B^n) \cdot \Lambda^{\delta+\frac{3}{2}} v^n \,dx
			\\
			&\leq C(\delta)\left(\|\Lambda^{\delta-\frac{3}{2}} j^n\|_{L^\frac{6}{2\epsilon_0}}\|B^n\|_{L^\frac{6}{3-2\epsilon_0}} + \|j^n\|_{L^\frac{6}{3-2\epsilon_0}} \|B^n\|_{L^\frac{6}{2\epsilon_0}}\right)\|v^n\|_{\dot{H}^{\delta+\frac{3}{2}}}
			\\
			&\leq C(\delta) \left(\|j^n\|_{\dot{H}^s}\|B^n\|_{\dot{H^{\epsilon_0}}} + \|j^n\|_{\dot{H}^{\epsilon_0}} \|B^n\|_{\dot{H}^s}\right)\|v^n\|_{\dot{H}^{\delta+\frac{3}{2}}}
			\\
			&\leq \epsilon\nu \|v^n\|^2_{\dot{H}^{\delta+\frac{3}{2}}} + C(\epsilon,\delta,\nu) \|j^n\|^2_{H^s}\|B^n\|^2_{H^s}.
		\end{align*}
		
		\textbf{Step 8: The case $s > \frac{3}{2}$ and $\delta = s + \frac{3}{2}$.} Similarly, $I_2$ is bounded as in Step 3c and
		\begin{equation*}
			I_1 = \int_{\mathbb{R}^3} \Lambda^s(j^n \times B^n) \cdot \Lambda^{\delta+\frac{3}{2}} v^n \,dx \leq \epsilon\nu \|v^n\|^2_{\dot{H}^{\delta+\frac{3}{2}}} + C(\epsilon,\delta,\nu,s)\|j^n\|^2_{H^s}\|B^n\|^2_{H^s}.
		\end{equation*}
		
		\textbf{Step 9: The case $s = \frac{3}{2}$ and $\frac{3}{2} < \delta < 3$.} Since $\delta-\frac{3}{2} \in (0,\frac{3}{2})$, we bound $I_2$ as in Step 3c and
		\begin{align*}
			I_1 &= \int_{\mathbb{R}^3} \Lambda^{\delta-\frac{3}{2}}(j^n \times B^n) \cdot \Lambda^{\delta+\frac{3}{2}} v^n \,dx 
			\\
			&\leq C(\delta)\left(\|\Lambda^{\delta-\frac{3}{2}} j^n\|_{L^\frac{6}{2\left(\delta-\frac{3}{2}\right)}}\|B^n\|_{L^\frac{6}{3-2\left(\delta-\frac{3}{2}\right)}} + \|j^n\|_{L^\frac{6}{3-2\left(\delta-\frac{3}{2}\right)}}\|\Lambda^{\delta-\frac{3}{2}} B^n\|_{L^\frac{6}{2\left(\delta-\frac{3}{2}\right)}}\right)\|v^n\|_{\dot{H}^{\delta+\frac{3}{2}}}
			\\
			&\leq \epsilon\nu \|v^n\|^2_{\dot{H}^{\delta+\frac{3}{2}}} + C(\epsilon,\delta,\nu)\|j^n\|^2_{\dot{H}^\frac{3}{2}}\|B^n\|^2_{\dot{H}^{\delta-\frac{3}{2}}} + C(\epsilon,\delta,\nu)\|B^n\|^2_{\dot{H}^\frac{3}{2}}\|j^n\|^2_{\dot{H}^{\delta-\frac{3}{2}}}
			\\
			&\leq \epsilon\nu \|v^n\|^2_{\dot{H}^{\delta+\frac{3}{2}}} + C(\epsilon,\delta,\nu)\|j^n\|^2_{H^s}\|B^n\|^2_{H^s}.
		\end{align*}
		
		\textbf{Step 10: The case $\frac{3}{4} < s < \frac{3}{2}$ and $\frac{3}{2} < \delta \leq 2s$.} Similar to the previous case, it can be seen that
		\begin{align*}
			I_1 &\leq C(s)\left(\|\Lambda^{\delta-\frac{3}{2}} j^n\|_{L^\frac{6}{2s}}\|B^n\|_{L^\frac{6}{3-2s}} + \|j^n\|_{L^\frac{6}{3-2s}}\|\Lambda^{\delta-\frac{3}{2}} B^n\|_{L^\frac{6}{2s}}\right)\|v^n\|^2_{\dot{H}^{\delta+\frac{3}{2}}}
			\\
			&\leq C(s) \left(\|j^n\|_{\dot{H}^{\delta-s}}\|B^n\|_{\dot{H}^s} + \|j^n\|_{\dot{H}^s}\|B^n\|_{\dot{H}^{\delta-s}}\right)\|v^n\|^2_{\dot{H}^{\delta+\frac{3}{2}}}
			\\
			&\leq \epsilon\nu \|v^n\|^2_{\dot{H}^{\delta+\frac{3}{2}}} + C(\epsilon,\delta,\nu)\|j^n\|^2_{H^s}\|B^n\|^2_{H^s}.
		\end{align*}
		
		\textbf{Step 11: The case $\frac{3}{4} \leq s < \frac{3}{2}$ and $s < \delta \leq \frac{3}{2}$.} In this case, $I_2$ can bounded as in Steps 3a and 3b. Since $\frac{3}{2} - s \leq s$ and $2\delta \leq \delta + \frac{3}{2}$
		\begin{align*}
			I_1 &= \int_{\mathbb{R}^3} (j^n \times B^n) \cdot \Lambda^{2\delta} v^n \,dx 
			\\
			&\leq C(s)\left(\|j^n\|_{L^\frac{6}{2s}}\|B^n\|_{L^\frac{6}{3-2s}} + \|j^n\|_{L^\frac{6}{3-2s}}\|B^n\|_{L^\frac{6}{2s}}\right)\|v^n\|_{\dot{H}^{2\delta}}
			\\
			&\leq \epsilon\nu\left(\|v^n\|^2_{L^2} +  \|v^n\|^2_{\dot{H}^{\delta+\frac{3}{2}}}\right) + C(\epsilon,\delta,\nu)\|j^n\|^2_{H^s}\|B^n\|^2_{H^s}.
		\end{align*}
		
		\textbf{Step 12: The case $0 < s < \frac{3}{4}$ and $s < \delta \leq 2s$.} In this case, $I_2$ is estimated as in Step 3a. We only need to focus on the estimate of $I_1$. In addition, for some $\epsilon \in (0,1)$, since $s \in (0,\frac{3}{4})$, $s < \delta \leq 2s$ and $\frac{3}{2}-(\delta-s),\frac{\delta-s}{2} \in (0,\frac{3}{2})$, by using \eqref{KPV}
		\begin{align*}
			I_1 &= \int_{\mathbb{R}^3} [(\Lambda^s(j^n \times B^n) - \Lambda^s j^n \times B^n - j^n \times \Lambda^s B^n) + \Lambda^s j^n \times B^n + j^n \times \Lambda^s B^n] \cdot \Lambda^{2\delta-s} v^n \,dx =: \sum^3_{k=1} I_{1k},
			\\
			I_{11} &\leq \|\Lambda^s(j^n \times B^n) - \Lambda^s j^n \times B^n - j^n \times \Lambda^s B^n\|_{L^\frac{6}{3+2\left(\frac{3}{2}-(\delta-s)\right)}} \|\Lambda^{2\delta-s} v^n\|_{L^\frac{6}{3-2\left(\frac{3}{2}-(\delta-s)\right)}}
			\\
			&\leq C(\delta,s)\|\Lambda^{\frac{6s-2\delta}{4}} j^n\|_{L^\frac{12}{3+2\left(\frac{3}{2}-(\delta-s)\right)}}\|\Lambda^{\frac{\delta-s}{2}} B^n\|_{L^\frac{12}{3+2\left(\frac{3}{2}-(\delta-s)\right)}} \|v^n\|_{\dot{H}^{\delta+\frac{3}{2}}}
			\\
			&\leq \epsilon\nu \|v^n\|^2_{\dot{H}^{\delta+\frac{3}{2}}} + C(\epsilon,\delta,\nu,s)\|\Lambda^s j^n\|^2_{L^2}\|\Lambda^{\delta-s}B^n\|^2_{L^2}
			\\
			&\leq \epsilon\nu \|v^n\|^2_{\dot{H}^{\delta+\frac{3}{2}}} + C(\epsilon,\delta,\nu,s)\|j^n\|^2_{H^s}\|B^n\|^2_{H^s},
			\\
			I_{12} &\leq \|j^n\|_{\dot{H}^s}\|B^n\|_{L^\frac{6}{3-2(\delta-s)}}\|\Lambda^{2\delta-s} v^n\|_{L^\frac{6}{2(\delta-s)}} \\
			&\leq \epsilon\nu \|v^n\|^2_{\dot{H}^{\delta+\frac{3}{2}}} + C(\epsilon,\delta,s)\|j^n\|^2_{\dot{H}^s}\|B^n\|^2_{\dot{H}^{\delta-s}},
			\\
			&\leq \epsilon\nu \|v^n\|^2_{\dot{H}^{\delta+\frac{3}{2}}} + C(\epsilon,\delta,\nu,s)\|j^n\|^2_{H^s}\|B^n\|^2_{H^s},
			\\
			I_{13} &\leq \epsilon\nu \|v^n\|^2_{\dot{H}^{\delta+\frac{3}{2}}} + C(\epsilon,\delta,\nu,s)\|j^n\|^2_{H^s}\|B^n\|^2_{H^s}.
		\end{align*}
		
		\textbf{Step 13: Conclusion from Step 2 to Step 12.} From Step 2 to Step 12, we can close the $H^\delta-H^s$ estimate of $(v^n,E^n,B^n)$, which yields $T^n_* = \infty$ and uniform bounds in terms of $n$ with replacing $T^n_*$ by any $T \in (0,\infty)$.
		
		\textbf{Step 14: Pass to the limit.} This step can be done by applying Steps 16 and 17a for $s,\delta > \frac{3}{2}$ and Step 17b for either $s \in (0,\frac{3}{2}]$ or $\delta \in (0,\frac{3}{2}]$, in the proof of Theorem \ref{theo1}. We only mention that in the case $\delta = 0$ and $s \in (0,\frac{3}{2})$, we have $\partial_t v^n$ is uniformly bounded in $L^2_tH^{-\frac{3}{2}}_x$. Thus, we will use the injections $H^1 \hookrightarrow L^2 \hookrightarrow H^{-\frac{3}{2}}$ for $v^n$ instead of the previous one in two dimensions. Therefore, we can pass to the limit in the same way. We omit further details.
		
		\textbf{Step 15: Uniqueness.} It is enough to prove the uniqueness in the case $\delta = 0$ and $s \in (0,\frac{3}{2})$. Similar to Step 18 in the proof of Theorem \ref{theo1}, the usual energy method does not work here for $s \in (0,1)$. Indeed,
		\begin{equation*}
			\frac{1}{2}\frac{d}{dt}\|v-\bar{v}\|^2_{L^2} + \nu\|v-\bar{v}\|^2_{\dot{H}^\frac{3}{2}} 
			=: \sum^3_{k=1} \bar{I}_k,
		\end{equation*}
		where for some $\epsilon \in (0,1)$ and for any $s' \in (0,s]$
		\begin{align*}
			\bar{I}_1 &= -\int_{\mathbb{R}^3} (v-\bar{v}) \cdot \nabla v \cdot (v-\bar{v}) \,dx 
			\\
			&\leq C\|v\|_{\dot{H}^\frac{3}{2}}\|v-\bar{v}\|_{L^2}\|v-\bar{v}\|_{\dot{H}^1}
			\\
			&\leq \epsilon\nu\|v-\bar{v}\|^2_{\dot{H}^\frac{3}{2}} + C(\epsilon,\nu)\|v\|^\frac{3}{2}_{\dot{H}^\frac{3}{2}}\|v-\bar{v}\|^2_{L^2};
			\\
			\bar{I}_2 &= \int_{\mathbb{R}^3} (j-\bar{j}) \times B \cdot (v-\bar{v}))\,dx
			\\
			&\leq C(s')\|j-\bar{j}\|_{L^2}\|B\|_{\dot{H}^{s'}}\|v-\bar{v}\|_{\dot{H}^{\frac{3}{2}-s'}}
			\\
			&\leq C(s')\|j-\bar{j}\|_{L^2}\|B\|_{\dot{H}^{s'}}\|v-\bar{v}\|^\frac{2s'}{3}_{L^2}\|v-\bar{v}\|^\frac{3-2s'}{3}_{\dot{H}^\frac{3}{2}}
			\\
			&\leq \epsilon\nu \|v-\bar{v}\|^2_{\dot{H}^\frac{3}{2}} + C(\epsilon,\nu,s') \|B\|^\frac{6}{2s'+3}_{\dot{H}^{s'}} \|j-\bar{j}\|^\frac{6}{2s'+3}_{L^2} \|v-\bar{v}\|^\frac{4s'}{2s'+3}_{L^2};
			\\
			\bar{I}_3 &= \int_{\mathbb{R}^3} \bar{j} \times (B-\bar{B}) \cdot (v-\bar{v})\,dx
			\\
			&\leq C(s')\|\bar{j}\|_{L^2}\|B-\bar{B}\|_{\dot{H}^{s'}}\|v-\bar{v}\|_{\dot{H}^{\frac{3}{2}-s'}}
			\\
			&\leq \epsilon\nu \|v-\bar{v}\|^2_{\dot{H}^\frac{3}{2}} + C(\epsilon,\nu,s') \|B-\bar{B}\|^\frac{6}{2s'+3}_{\dot{H}^{s'}} \|\bar{j}\|^\frac{6}{2s'+3}_{L^2} \|v-\bar{v}\|^\frac{4s'}{2s'+3}_{L^2}.
		\end{align*}
		Therefore, by choosing $\epsilon = \frac{1}{6}$ and taking $T_* \in (0,T]$
		\begin{equation*}
			\|v-\bar{v}\|^2_{L^\infty(0,T_*;L^2)} + \nu \int^{T_*}_0  \|v-\bar{v}\|^2_{\dot{H}^\frac{3}{2}} \,d\tau \leq \sum^3_{k=1} \bar{J}_k,
		\end{equation*}
		where 
		\begin{align*}
			\bar{J}_1 &:= C(\nu,s') \int^{T_*}_0 \|v\|^\frac{3}{2}_{\dot{H}^\frac{3}{2}}\|v-\bar{v}\|^2_{L^2} \,d\tau \leq C(\nu)T^\frac{1}{4}_* \|v\|^\frac{3}{2}_{L^2(0,T_*;\dot{H}^\frac{3}{2})} \|v-\bar{v}\|^2_{L^\infty(0,T_*;L^2)};
			\\
			\bar{J}_2 &:= C(\nu,s') \int^{T_*}_0  \|B\|^\frac{6}{2s'+3}_{\dot{H}^{s'}} \|j-\bar{j}\|^\frac{6}{2s'+3}_{L^2} \|v-\bar{v}\|^\frac{4s'}{2s'+3}_{L^2} \,d\tau \leq \sum^3_{k=1} \bar{J}_{2k},
			\\
			\bar{J}_{21} &:= C(c,\nu,\sigma,s') \int^{T_*}_0  \|B\|^\frac{6}{2s'+3}_{\dot{H}^{s'}} \|E-\bar{E}\|^\frac{6}{2s'+3}_{L^2} \|v-\bar{v}\|^\frac{4s'}{2s'+3}_{L^2} \,d\tau
			\\
			&\leq C(c,\nu,\sigma,s') T_*  \|B\|^\frac{6}{2s'+3}_{L^\infty(0,T_*;\dot{H}^{s'})} \left(\|E-\bar{E}\|^2_{L^\infty(0,T_*;L^2)} + \|v-\bar{v}\|^2_{L^\infty(0,T_*;L^2)}\right),
			\\
			\bar{J}_{22} &:= C(\nu,\sigma,s') \int^{T_*}_0  \|B\|^\frac{6}{2s'+3}_{\dot{H}^{s'}} \|(v-\bar{v}) \times B\|^\frac{6}{2s'+3}_{L^2} \|v-\bar{v}\|^\frac{4s'}{2s'+3}_{L^2} \,d\tau
			\\
			&\leq C(\nu,\sigma,s') \int^{T_*}_0  \|B\|^\frac{6}{2s'+3}_{\dot{H}^{s'}} \|v-\bar{v}\|^\frac{6}{2s'+3}_{\dot{H}^{\frac{3}{2}-s}} \|B\|^\frac{6}{2s'+3}_{\dot{H}^s} \|v-\bar{v}\|^\frac{4s'}{2s'+3}_{L^2} \,d\tau
			\\
			&\leq C(\nu,\sigma,s') \int^{T_*}_0  \|B\|^\frac{12}{2s'+3}_{\dot{H}^{s'}} \|v-\bar{v}\|^\frac{4s'}{2s'+3}_{L^2} \|v-\bar{v}\|^\frac{2(3-2s')}{2s'+3}_{\dot{H}^\frac{3}{2}}  \|v-\bar{v}\|^\frac{4s'}{2s'+3}_{L^2} \,d\tau
			\\
			&\leq C(\nu,\sigma,s') T^\frac{4s'}{2s'+3}_* \|B\|^\frac{12}{2s'+3}_{L^\infty(0,T_*;\dot{H}^{s'})}\left(\|v-\bar{v}\|^2_{L^2(0,T_*;L^2)} + \nu \|v-\bar{v}\|^2_{L^2(0,T_*;\dot{H}^\frac{3}{2})}\right),
			\\
			\bar{J}_{23} &:= C(\nu,\sigma,s') \int^{T_*}_0  \|B\|^\frac{6}{2s'+3}_{\dot{H}^{s'}} \|\bar{v} \times (B-\bar{B})\|^\frac{6}{2s'+3}_{L^2} \|v-\bar{v}\|^\frac{4s'}{2s'+3}_{L^2} \,d\tau
			\\
			&\leq C(\nu,\sigma,s') \int^{T_*}_0  \|B\|^\frac{6}{2s'+3}_{\dot{H}^{s'}} \|\bar{v}\|^\frac{4s'}{2s'+3}_{L^2} \|\bar{v}\|^\frac{2(3-2s')}{2s'+3}_{\dot{H}^\frac{3}{2}} \|B-\bar{B}\|^\frac{6}{2s'+3}_{\dot{H}^{s'}} \|v-\bar{v}\|^\frac{4s'}{2s'+3}_{L^2} \,d\tau
			\\
			&\leq C(\nu,\sigma,s') T^\frac{4s'}{2s'+3}_* \|B\|^\frac{6}{2s'+3}_{L^\infty(0,T_*;\dot{H}^{s'})} \|\bar{v}\|^\frac{4s'}{2s'+3}_{L^2(0,T_*;L^2)} \|\bar{v}\|^\frac{2(3-2s')}{2s'+3}_{L^2(0,T_*;\dot{H}^\frac{3}{2})}
			\\
			&\quad \times \left(\|B-\bar{B}\|^2_{L^\infty(0,T_*;\dot{H}^{s'})} + \|v-\bar{v}\|^2_{L^\infty(0,T_*;L^2)}\right);
			\\
			\bar{J}_3 &:= C(\nu,s') \int^{T_*}_0 \|B-\bar{B}\|^\frac{6}{2s'+3}_{\dot{H}^{s'}} \|\bar{j}\|^\frac{6}{2s'+3}_{L^2} \|v-\bar{v}\|^\frac{4s'}{2s'+3}_{L^2} \,d\tau 
			\\
			&\leq C(\nu,s')T^\frac{2s'}{2s'+3}_* \|\bar{j}\|^\frac{6}{2s'+3}_{L^2(0,T_*;L^2)} \left(\|B-\bar{B}\|^2_{L^\infty(0,T_*;\dot{H}^{s'})} + \|v-\bar{v}\|^2_{L^\infty(0,T_*;L^2)}\right).
		\end{align*}
		In addition, by using Lemma \ref{lem_M0}, it follows that
		\begin{align*}
			\|(E-\bar{E},B-\bar{B})\|^2_{L^\infty(0,T_*;H^{s'})} \leq C(c)\|j-\bar{j}\|^2_{L^1(0,T_*;H^{s'})} 
			=: \sum^6_{k=4}\bar{J}_k,
		\end{align*}
		where for any $s' \in (0,s)$
		\begin{align*}
			\bar{J}_4 &= C(c,\sigma)\|E-\bar{E}\|^2_{L^1(0,T_*;H^{s'})} \leq  C(c,\sigma)T^2_*\|E-\bar{E}\|^2_{L^\infty(0,T_*;H^{s'})};
			\\
			\bar{J}_5 &=  C(c,\sigma)\|(v-\bar{v}) \times B\|^2_{L^1(0,T_*;H^{s'})} \leq \bar{J}_{51} + \bar{J}_{52},
			\\
			\bar{J}_{51} &:= C(c,\sigma) \|(v-\bar{v}) \times B\|^2_{L^1(0,T_*;L^2)} 
			\\
			&\leq C(c,\sigma,s')T^\frac{3+2s'}{3}_*\|B\|^2_{L^\infty(0,T_*;\dot{H}^{s'})}\|v-\bar{v}\|^\frac{4s'}{3}_{L^\infty(0,T_*;L^2)}\|v-\bar{v}\|^\frac{2(3-2s')}{3}_{L^2(0,T_*;\dot{H}^\frac{3}{2})},
			\\
			&\leq C(c,\nu,\sigma,s')T^\frac{3+2s'}{3}_*\|B\|^2_{L^\infty(0,T_*;\dot{H}^{s'})} \left(\|v-\bar{v}\|^2_{L^\infty(0,T_*;L^2)} + \nu\|v-\bar{v}\|^2_{L^2(0,T_*;\dot{H}^\frac{3}{2})}\right),
			\\
			\bar{J}_{52} &:= C(c,\sigma) \|(v-\bar{v}) \times B\|^2_{L^1(0,T_*;\dot{H}^{s'})} 
			\\
			&\leq C(\nu,c,\sigma,s')T_*\|B\|^2_{L^\infty(0,T_*;\dot{H}^{s'})} \nu\|v-\bar{v}\|^2_{L^2(0,T_*;\dot{H}^\frac{3}{2})} 
			\\
			&\quad+ C(c,\sigma,s,s')T^{3+2(s-s')}_* \|B\|^2_{L^\infty(0,T_*;\dot{H}^s)}\|v-\bar{v}\|^\frac{4(s-s')}{3}_{L^\infty(0,T_*;L^2)}\|v-\bar{v}\|^\frac{2(3-2(s-s'))}{3}_{L^2(0,T_*;\dot{H}^\frac{3}{2})}
			\\
			&\leq C(c,\nu,\sigma,s')T_*\|B\|^2_{L^\infty(0,T_*;\dot{H}^{s'})} \nu\|v-\bar{v}\|^2_{L^2(0,T_*;\dot{H}^\frac{3}{2})} 
			\\
			&\quad+ C(c,\nu,\sigma,s,s')T^{3+2(s-s')}_* \|B\|^2_{L^\infty(0,T_*;\dot{H}^s)} \left(\|v-\bar{v}\|^2_{L^\infty(0,T_*;L^2)} +  \nu\|v-\bar{v}\|^2_{L^2(0,T_*;\dot{H}^\frac{3}{2})}\right);
			\\
			\bar{J}_6 &= C(c,\sigma)\|\bar{v} \times (B-\bar{B})\|^2_{L^1(0,T_*;H^{s'})} \leq \bar{J}_{61} + \bar{J}_{62},
			\\
			\bar{J}_{61} &:= C(c,\sigma)\|\bar{v} \times (B-\bar{B})\|^2_{L^1(0,T_*;L^2)}
			\\
			&\leq C(c,\sigma,s') T^\frac{3+2s'}{3}_* \|\bar{v}\|^\frac{4s'}{3}_{L^\infty(0,T_*;L^2)} \|\bar{v}\|^\frac{2(3-2s')}{3}_{L^2(0,T_*;\dot{H}^\frac{3}{2})}  \|B-\bar{B}\|^2_{L^\infty(0,T_*;\dot{H}^{s'})},
			\\
			\bar{J}_{62} &:= C(c,\sigma)\|\bar{v} \times (B-\bar{B})\|^2_{L^1(0,T_*;\dot{H}^{s'})}
			\\
			&\leq C(c,\sigma)T_* \left(\|\bar{v}\|^2_{L^2(0,T_*;\dot{H}^\frac{3}{2})} + \|\bar{v}\|^2_{L^2(0,T_*;L^\infty)}\right) \|B-\bar{B}\|^2_{L^\infty(0,T_*;H^{s'})}.
		\end{align*}
		Combining all the above estimates and using Step 1, we find that for sufficiently small $T_*$
		\begin{align*}
			A(v-\bar{v},E-\bar{E},B-\bar{B}) &:= \|v-\bar{v}\|^2_{L^\infty(0,T_*;L^2)} + \nu \|v-\bar{v}\|^2_{L^2(0,T_*;\dot{H}^\frac{3}{2})} +	\|(E-\bar{E},B-\bar{B})\|^2_{L^\infty(0,T_*;H^{s'})} \\
			&\leq \frac{1}{2} A(v-\bar{v},E-\bar{E},B-\bar{B}),
		\end{align*}
		which yields $v = \bar{v},E = \bar{E}$ and $B = \bar{B}$ in $(0,T_*)$. By repeating this process, we obtain the conclusion in the whole time interval $(0,T)$. Finally, we note that only the estimate of $\bar{J}_{52}$ needs $s' < s$ and other ones hold for $s' = s$ as well.
	\end{proof}
	
	\begin{proof}[Proof of Theorem \ref{theo1-3d}-$(ii)$] We will follow the idea in the proof of \cite[Corollary 1.3]{Arsenio-Gallagher_2020}, where the authors considered the case $\nu > 0$, $\alpha = 1$ and $d = 2$, and proved that up to an extraction of a subsequence $(v^c,B^c) \to (v,B)$ as $c \to \infty$ in the sense of distributions. We aim to apply the same idea to the case $\nu > 0$, $\alpha = \frac{3}{2}$ and $d = 3$. It suffices to focus on the case $\delta = 0$ and $s \in (0,\frac{3}{2})$. It can be seen from \eqref{NSM} with $\alpha = \frac{3}{2}$ that
		\begin{equation} \label{NSM-alpha-c}
			\left\{
			\begin{aligned}
				\partial_t v^c + v^c \cdot \nabla v^c + \nabla \pi^c &= -\nu (-\Delta)^\frac{3}{2}v^c + j^c \times B^c, 
				\\
				\frac{1}{c}\partial_t E^c - \nabla \times B^c &= -j^c,
				\\
				\partial_t B^c - \nabla \times (v^c \times B^c) &= -\frac{1}{\sigma} \nabla \times j^c,
				\\
				\textnormal{div}\, v^c = \textnormal{div}\, B^c &=  0.
			\end{aligned}
			\right.
		\end{equation}
		By applying Step 1 in Part $(i)$, we know that $(v^c,E^c,B^c,j^c)$ is uniformly bounded in terms of $c$ for any $T \in (0,\infty)$ in the following spaces
		\begin{equation*}
			v^c \in L^\infty(0,T;L^2) \cap L^2(0,T;H^\frac{3}{2}), \quad (E^c,B^c) \in L^\infty(0,T;H^s) \quad \text{and} \quad j^c \in L^2(0,T;H^s),
		\end{equation*}
		which implies that there exists $(v,E,B,j)$ such that up to an extraction of a subsequence (use the same notation) as $c \to \infty$
		\begin{align*}
			&&(v^c,E^c,B^c) &\overset{\ast}{\rightharpoonup}  (v,E,B) &&\text{in} \quad L^\infty_t(L^2_x \times H^s_x \times H^s_x),&&
			\\
			&&(v^c,j^c) &\rightharpoonup (v,j) &&\text{in} \quad L^2_t(H^\frac{3}{2}_x \times H^s_x).&&
		\end{align*}
		In addition, we find from \eqref{NSM-alpha-c} that
		\begin{align*}
			(\partial_t v^c,\partial_tB^c) \quad \text{is uniformly bounded in} \quad L^2_t(H^{-\frac{3}{2}}_{\textnormal{loc},x} \times H^{-1}_{\textnormal{loc},,x})
		\end{align*}
		and by using the Aubin-Lions lemma as $c \to \infty$
		\begin{equation*}
			(v^c,B^c) \to (v,B) \quad \text{ in}\quad L^2_tL^2_{\textnormal{loc},x}.
		\end{equation*}
		As in Step 17b in the proof of Theorem \ref{theo1}, for $\phi,\varphi \in C^\infty_0([0,T) \times \mathbb{R}^3;\mathbb{R}^3)$ with $\textnormal{div}\,\phi = 0$, the weak form of \eqref{NSM-alpha-c} is given by (similar to those of $a)$, $b)$ and $c)$)
		\begin{align*}
			a')\quad &\int^T_0 \int_{\mathbb{R}^3} v^c \cdot \partial_t\phi + (v^c \otimes v^c) : \nabla \phi - \nu v^c \cdot (-\Delta)^\frac{3}{2} \phi + (j^c \times B^c) \cdot \phi \,dxdt = -\int_{\mathbb{R}^3} v^c(0) \cdot \phi(0) \,dx,
			\\
			b')\quad &\int^T_0 \int_{\mathbb{R}^3} \frac{1}{c}E^c \cdot \partial_t\varphi + B^c \cdot (\nabla \times \varphi) - j^c \cdot \varphi \,dxdt = -\int_{\mathbb{R}^3} \frac{1}{c}E^c(0) \cdot \varphi(0) \,dx,
			\\
			c')\quad &\int^T_0 \int_{\mathbb{R}^3} B^c \cdot \partial_t\varphi + [(v^c \times B^c) - \frac{1}{\sigma} j^c] \cdot (\nabla \times \varphi) \,dxdt  = - \int_{\mathbb{R}^3} B^c(0) \cdot \varphi(0) \,dx.
		\end{align*}
		Therefore, we can pass to the limit by using $(v^c_0,E^c_0,B^c_0) \rightharpoonup (\bar{v}_0,\bar{E}_0,\bar{B}_0)$ in $L^2 \times H^s \times H^s$ and the above strong convergences as $c \to \infty$ to obtain that \eqref{NSM-alpha-c} converges in the sense of distributions to 
		\begin{equation*} 
			\left\{
			\begin{aligned}
				\partial_t v + v \cdot \nabla v + \nabla \pi &=  -\nu (-\Delta)^\frac{3}{2} v + j \times B, 
				\\
				\partial_t B - \nabla \times (v \times B) &= -\frac{1}{\sigma} \nabla \times j,
				\\
				\textnormal{div}\, v = \textnormal{div}\, B &= 0,
			\end{aligned}
			\right.
		\end{equation*}
		where $j = \nabla \times B$ and $(v,B)_{|_{t=0}} = (\bar{v}_0,\bar{B}_0)$. Thus, the proof is finished since $\nabla \times (\nabla \times B) = - \Delta B$.
	\end{proof}
	
	%
	\section{Proof of Theorem \ref{theo-MH}} \label{sec:theo-MH}
	%
	
	In this section, we provide a proof of Theorem \ref{theo-MH} as follows.
	
	\begin{proof}[Proof of Theorem \ref{theo-MH}] 

		We first redefine $F^n$ as in Step 1 in the proofs of Theorems \ref{theo1} and \ref{theo1-3d} as follows for $s > \frac{3}{2}$ 
		\begin{equation*}
			F^n : X^s_n := H^s_{n,0} \times (H^s_n \cap \dot{H}^{-1}) \times (H^s_{n,0} \cap \dot{H}^{-1}) \to X^s_n \quad \text{with} \quad \Gamma^{\sigma,n} \mapsto F^n(\Gamma^{\sigma,n}),
		\end{equation*}
		where $\Gamma^{\sigma,n} := (v^{\sigma,n},E^{\sigma,n},B^{\sigma,n})$. Here, the norm in $X^s_n$ is given by
		\begin{equation*}
			\|(f_1,f_2,f_3)\|^2_{X^s} :=  \|(f_1,f_2,f_3)\|^2_{H^s} + \|(f_2,f_3)\|^2_{\dot{H}^{-1}}.
		\end{equation*}
		Then, we can check that $F^n$ is well-defined and locally Lipschitz on $X^s_n$. In addition, we note that $\dot{H}^{-1}(\mathbb{R}^3)$ is a Hilbert space (see \cite{Bahouri-Chemin-Danchin_2011}). Thus, there exists a unique solution $\Gamma^{\sigma,n} \in C^1([0,T^n_*);X^s_n)$ for some $T^n_* > 0$. Assume that $T^n_* < \infty$. As in Step 3c in the proof of Theorem \ref{theo1-3d}, we find that for $t \in (0,T^n_*)$ and $\Gamma^{\sigma}_0 := (v^{\sigma}_0,E^{\sigma}_0,B^{\sigma}_0)$ 
		\begin{align} \label{vEB_n_L2}
			&\|\Gamma^{\sigma,n}(t)\|^2_{L^2} + \int^t_0 \nu \|v^{\sigma,n}\|^2_{\dot{H}^\frac{3}{2}} + \frac{1}{\sigma} \|j^{\sigma,n}\|^2_{L^2} \,d\tau \leq \|\Gamma^{\sigma}_0\|^2_{L^2},
			\\
			\label{vEB_n_L2Hs}
			& \|\Gamma^{\sigma,n}(t)\|^2_{H^s} + \int^t_0 \nu \|v^{\sigma,n}\|^2_{H^{s+\frac{3}{2}}} + \frac{1}{\sigma} \|j^{\sigma,n}\|^2_{H^s} \,d\tau \leq (T^n_*,\nu,\sigma,s, \Gamma^{\sigma}_0).
		\end{align}
		We now focus on the $\dot{H}^{-1}$ estimate of $(E^{\sigma,n},B^{\sigma,n})$ as follows
		\begin{equation*}
			\frac{d}{dt} \|(E^{\sigma,n},B^{\sigma,n})\|^2_{\dot{H}^{-1}} + \frac{1}{\sigma} \|j^{\sigma,n}\|^2_{\dot{H}^{-1}} \leq 
			\sigma \|v^{\sigma,n} \times B^{\sigma,n} \|^2_{\dot{H}^{-1}},
		\end{equation*}
		where by using the embedding $L^{p_0}(\mathbb{R}^3) \hookrightarrow \dot{H}^{s_0}(\mathbb{R}^3)$ (see \cite{Bahouri-Chemin-Danchin_2011}) for $p_0 \in (1,2]$ and $s_0 = \frac{3}{2} - \frac{3}{p_0}$ 
		with $(s_0,p_0) = (-1,\frac{6}{5})$ 
		\begin{align*}
			\|v^{\sigma,n} \times B^{\sigma,n} \|^2_{\dot{H}^{-1}} &\leq 
			C \|v^{\sigma,n} \times B^{\sigma,n} \|^2_{L^\frac{6}{5}} 
			\\
			&\leq C\|v^{\sigma,n}\|^2_{L^3} \|B^{\sigma,n}\|^2_{L^2}
			\\
			&\leq C \|v^{\sigma,n}\|^2_{\dot{H}^\frac{1}{2}} \|B^{\sigma,n}\|^2_{L^2}
			\\
			&\leq C\|v^{\sigma,n}\|^\frac{4}{3}_{L^2} \|v^{\sigma,n}\|^\frac{2}{3}_{\dot{H}^\frac{3}{2}} \|B^{\sigma,n}\|^2_{L^2},
		\end{align*}
		and \eqref{vEB_n_L2}, it follows that for $t \in (0,T^n_*)$
		\begin{align} \label{EB_n_Hsd}
			&\|(E^{\sigma,n},B^{\sigma,n})(t)\|^2_{\dot{H}^{-1}}+ \frac{1}{\sigma} \int^t_0 \|j^{\sigma,n}\|^2_{\dot{H}^{-1}} \,d\tau 
			\leq   C(t,\nu,\sigma,\Gamma^{\sigma}_0),
		\end{align}
		that yields $T^n_* = \infty$ by using further \eqref{vEB_n_L2Hs}. 
		Since $T^n_* = \infty$, we can repeat the above computations for any $T \in (0,\infty)$ to obtain similar bounds as in \eqref{vEB_n_L2Hs}-\eqref{EB_n_Hsd}, i.e., for $t \in (0,T)$ 
		\begin{equation*}
			\|\Gamma^{\sigma,n}(t)\|^2_{X^s} 
			+ \int^t_0 \|v^{\sigma,n}\|^2_{H^{s+\frac{3}{2}}} + \|j^{\sigma,n}\|^2_{H^s \cap \dot{H}^{-1}} \,d\tau 
			\leq  C(T,\nu,\sigma,s,\Gamma^{\sigma}_0),
		\end{equation*}
		which allows us to pass to the limit as $n \to \infty$ in the approximate system to obtain the limiting one \eqref{NSM_P}, with repalcing $\nu \Delta v$ by $-\nu (-\Delta)^\frac{3}{2} v$, in $L^2(0,T;H^{s'-\frac{3}{2}})$ for $\frac{3}{2} < s' < s$ (we omit the details, for example, see \cite{KLN_2024}) and
		\begin{align} \label{vEB_L2}
			&\|\Gamma^{\sigma}(t)\|^2_{L^2} + \int^t_0 \nu \|v^{\sigma}\|^2_{\dot{H}^\frac{3}{2}} + \frac{1}{\sigma} \|j^{\sigma}\|^2_{L^2} \,d\tau \leq \|\Gamma^{\sigma}_0\|^2_{L^2},
			\\ \label{EB_Hsd}
			&\|(E^{\sigma},B^{\sigma})(t)\|^2_{\dot{H}^{-1}} 
			+ \frac{1}{\sigma} \int^t_0 \|j^{\sigma}\|^2_{\dot{H}^{-1}} \,d\tau 
			\leq   C(t,\nu,\sigma,\Gamma^{\sigma}_0),
			\\ \label{vEB_L2Hs}
			&\|\Gamma^{\sigma}(t)\|^2_{X^s} 
			+ \int^t_0 \|v^{\sigma}\|^2_{H^{s+\frac{3}{2}}} + \|j^{\sigma}\|^2_{H^s \cap \dot{H}^{-1}} \,d\tau 
			\leq  C(T,\nu,\sigma,s,\Gamma^{\sigma}_0).
		\end{align}
		Furthermore, by defining $A^{\sigma}$ such that $\nabla \times A^{\sigma} = B^{\sigma}$ and $\textnormal{div}\, A^{\sigma} = 0$ ($A^{\sigma}$ are not unique), using \eqref{vEB_L2}-\eqref{EB_Hsd} and \cite[Proposition 1.36]{Bahouri-Chemin-Danchin_2011}, it follows that for $t \in (0,T)$ 
		\begin{align} \nonumber
			\mathcal{H}^{\sigma}(t) &:= \int_{\mathbb{R}^3} A^{\sigma}(t) \cdot B^{\sigma}(t)  \,dx 
			\leq C\|A^{\sigma}(t)\|_{\dot{H}^{\frac{1}{2}}} \|B^{\sigma}(t)\|_{\dot{H}^{-\frac{1}{2}}} \leq C \|B^{\sigma}(t)\|^2_{\dot{H}^{-\frac{1}{2}}}
			\\ \label{AB_csigma}
			&\leq C  \|B^{\sigma}(t)\|_{\dot{H}^{-1}} \|B^{\sigma}(t)\|_{L^2}
			\leq  C(T,\nu,\sigma,\Gamma^{\sigma}_0).
		\end{align}
		In addition,  \eqref{EB_Hsd}-\eqref{AB_csigma}, Tonelli and Fubini (twice and we need the estimate of $\|B^\sigma\|_{\dot{H}^{-1}}$ here\footnote{In fact, $\mathcal{H}^\sigma(t)$ is well-defined if $\|B^\sigma(t)\|_{\dot{H}^{-\frac{1}{2}}}$ is finite for $t \in (0,\infty)$, which is possible if we consider the $\dot{H}^{-\frac{1}{2}}$ estimate instead of the $\dot{H}^{-1}$ one in \eqref{EB_n_Hsd} for $(E^\sigma_0,B^\sigma_0) \in \dot{H}^{-\frac{1}{2}}$.}) Theorems (see \cite{Brezis_2011}, to compute the weak derivative in the first line below since $s' \in (\frac{3}{2},s)$), 
		and the limiting system for $\Gamma^{\sigma}$ yield
		\begin{align} \nonumber
			\frac{d}{dt} \mathcal{H}^{\sigma}(t) 
			&= \int_{\mathbb{R}^3} \partial_t A^{\sigma} \cdot B^{\sigma} + A^{\sigma} \cdot \partial_t B^{\sigma} \,dx = 2 \int_{\mathbb{R}^3} A^{\sigma} \cdot \partial_t B^{\sigma} \,dx 
			\\\nonumber
			&= -2 \int_{\mathbb{R}^3} B^{\sigma} \cdot cE^{\sigma} \,dx
			\\\nonumber
			&=-2\int_{\mathbb{R}^3} B^{\sigma} \cdot \left(\frac{1}{\sigma} j^{\sigma} -  (v^{\sigma} \times B^{\sigma})\right)\,dx
			\\  \label{AB_csigma_1}
			&
			= -2 \int_{\mathbb{R}^3} \frac{1}{\sigma} j^{\sigma} \cdot B^{\sigma}\,dx.
		\end{align}
		Integrating in time, we find from \eqref{vEB_L2} and \eqref{AB_csigma}-\eqref{AB_csigma_1} that for $\tau \in (0,T)$ (see \cite{Brezis_2011})
		\begin{align*}
			\left|\mathcal{H}^{\sigma}(\tau) - \mathcal{H}^{\sigma}(0)\right| &= \left|\int^{\tau}_0 \frac{d}{dt} \mathcal{H}^{\sigma}(t) \,dt\right| 
			\\
			&= \left|-2 \int^{\tau}_0 \int_{\mathbb{R}^3} \frac{1}{\sigma} j^{\sigma} \cdot B^{\sigma}\,dx \,dt \right|
			\\
			&\leq \sigma^{-\frac{1}{2}} \left(\frac{1}{\sigma} \int^{\tau}_0 \|j^{\sigma}\|^2_{L^2} \,dt + \int^{\tau}_0 \|B^{\sigma}\|^2_{L^2}\,dt \right)
			\\
			&
			\leq \sigma^{-\frac{1}{2}} (\tau + 1)\|\Gamma^{\sigma}_0\|^2_{L^2},
		\end{align*}
		which after taking $\sigma \to \infty$ implies that for a.e. $t \in (0,T)$
		\begin{equation*}
			\lim_{\sigma \to \infty} \int_{\mathbb{R}^3} A^{\sigma}(t) \cdot B^{\sigma}(t)  \,dx 
			= \int_{\mathbb{R}^3} A_0 \cdot B_0 \,dx,
		\end{equation*}
		since as in \eqref{AB_csigma}, by using $B^{\sigma}_0 \to B_0$ in $\dot{H}^{-1}$ as $\sigma \to \infty$ with 
		\begin{align*}
			\left|\int_{\mathbb{R}^3} A^{\sigma}_0 \cdot B^{\sigma}_0  - A_0 \cdot B_0 \,dx \right| &= \left|\int_{\mathbb{R}^3} (A^{\sigma}_0 + A_0) \cdot (B^{\sigma}_0 - B_0) \,dx\right|
			\\
			&\leq C \|B^{\sigma}_0 + B_0\|_{L^2}\|B^{\sigma}_0 - B_0\|_{\dot{H}^{-1}}
			\quad 
			\to 0 \quad \text{as} \quad \sigma \to \infty.
		\end{align*}
		Finally, it follows from the above limit and \eqref{AB_csigma} that if the initial magnetic helicity is positive then there exists an absolute positive constant $C$ such that
		\begin{equation*}
			\liminf_{t\to \infty} \liminf_{\sigma \to \infty} \|B^{\sigma}(t)\|^2_{\dot{H}^{-\frac{1}{2}}} \geq  C\liminf_{t\to \infty} \lim_{\sigma \to \infty} \int_{\mathbb{R}^3} A^{\sigma}(t) \cdot B^{\sigma}(t)  \,dx 
			= C\int_{\mathbb{R}^3} A_0 \cdot B_0 \,dx > 0.
		\end{equation*}
		Thus, the proof is complete.
		
	\end{proof}

	%
	\section{Proof of Theorem \ref{theo-inviscid}} \label{sec:inv}
	%
	
	In this subsection, we focus on giving the standard proof of Theorem \ref{theo-inviscid}, which shares similar ideas as those of Theorems \ref{theo1} and \ref{theo1-3d}.
	
	\begin{proof}[Proof of Theorem \ref{theo-inviscid}] The proof consists of several steps as follows.
		
		\textbf{Step 1: Local existence, $H^s$ estimate and uniform bound.} As the proofs of Theorems \ref{theo1} and \ref{theo1-3d}, we will use \eqref{NSM_app} with $\nu = 0$ as an approximate system. It can be seen from \eqref{NSM_app} with $\nu = 0$ that
		\begin{equation*}
			\frac{1}{2} \frac{d}{dt} \|(v^n,E^n,B^n)\|^2_{H^s} + \frac{1}{\sigma}\|j^n\|^2_{H^s} =: \sum^3_{k=1} J_k,
		\end{equation*}
		where for some $\epsilon \in (0,1)$, since $s > \frac{d}{2} + 1$
		\begin{align*}
			J_1 + J_3 &= \int_{\mathbb{R}^d} J^s(j^n \times B^n) \cdot J^s v^n + J^s j^n \cdot J^s (v^n \times B^n) \,dx
			\leq \frac{\epsilon}{\sigma}\|j^n\|^2_{H^s} + C(d,\epsilon,\sigma,s)\left(\|(v^n,B^n)\|^2_{H^s}\right)^2;
			\\
			J_2 &= - \int_{\mathbb{R}^d} [J^s(v^n \cdot \nabla v^n) - v^n \cdot \nabla J^s v^n] \cdot J^s v^n \,dx
			\leq C(s)\|v^n\|^3_{H^s},
		\end{align*}
		here we used the following well-known Kato-Ponce commutator estimate (see \cite{Kato-Ponce_1988}) 
		\begin{equation*} 
			\|J^r(fg) - f J^r g\|_{L^2} \leq C(d,r)\left(\|J^r f\|_{L^2} \|g\|_{L^\infty} +  \|\nabla f\|_{L^\infty}\|J^{r-1} g\|_{L^2}\right) \qquad \forall r > 0.
		\end{equation*} 
		By choosing $\epsilon = \frac{1}{2}$, we find that
		\begin{equation*}
			\frac{d}{dt}Y_{n,s}  + \frac{1}{\sigma}\|j^n\|^2_{H^s} \leq C(\sigma,s)Y^2_{n,s},
		\end{equation*}
		where $Y_{n,s}(t) := \|(v^n,E^n,B^n)(t)\|^2_{H^s} + 1$ for $t \in (0,T^n_*)$. It can be seen that the above estimate implies an uniform bound in terms of $n$ for $Y_{n,s}$ in $(0,T_0)$ for some $T_0 = T_0(d,\sigma,s,v_0,E_0,B_0) > 0$ (does not depend on $n$) and for $t \in (0,T_0)$
		\begin{equation*}
			\|(v^n,E^n,B^n)(t)\|^2_{H^s} + \int^t_0 \|j^n\|^2_{H^s}\,d\tau \leq
			C(T_0,d,\sigma,s,v_0,E_0,B_0).
		\end{equation*}
		
		\textbf{Step 2: Pass to the limit.} In this case, since $\nu = 0$ and $\delta = s > \frac{d}{2} + 1$, we need to modify the estimates of $I_{42}$ and $I_{53}$ in Step 16 in the proof of Theorem \ref{theo1} in the following way (for $I_{41}, I_{51}, I_{61}$ and $I_{63}$, we replace $\mathbb{R}^2$ by $\mathbb{R}^d$ with using the same estimates)
		\begin{align*}
			I_{42} &= -\int_{\mathbb{R}^d} T_m((v^n-v^m) \cdot \nabla v^n) \cdot (v^n-v^m)\,dx
			\leq \|\nabla v^n\|_{L^\infty}\|v^n-v^m\|^2_{L^2};
			\\
			I_{53} &= \int_{\mathbb{R}^d} T_m(j^m \times (B^n-B^m)) \cdot (v^n-v^m)\,dx \leq \|j^m\|_{L^\infty}\|(v^n-v^m,B^n-B^m)\|^2_{L^2},
		\end{align*}
		which shows that $(v^n,E^n,B^n)$ and $j^n$ are Cauchy sequences in $L^\infty(0,T_0;L^2(\mathbb{R}^d))$ and $L^2(0,T_0;L^2(\mathbb{R}^d))$ by using Step 1. Therefore, we can pass to the limit as in Step 17a in the proof of Theorem \ref{theo1} by replacing $\mathbb{R}^2$ by $\mathbb{R}^d$ with receiving the limit system \eqref{NSM_P} for $\nu = 0$. We skip further details.
		
		\textbf{Step 3: Uniqueness.} Assume that $(v,E,B,\pi)$ and $(\bar{v},\bar{E},\bar{B},\bar{\pi})$ are two solutions to \eqref{NSM} with $\nu = 0$ and the same initial data. As in Step 18 in the proof of Theorem \ref{theo1}, it follows that		
		\begin{align*}
			\frac{1}{2}\frac{d}{dt}\|(v-\bar{v},E-\bar{E},B-\bar{B})\|^2_{L^2} + \frac{1}{\sigma}\|j-\bar{j}\|^2_{L^2} =: \sum^3_{k=1} \bar{I}_k,
		\end{align*}
		where for some $\epsilon \in (0,1)$
		\begin{align*}
			\bar{I}_1 &= -\int_{\mathbb{R}^d} (v-\bar{v}) \cdot \nabla v \cdot (v-\bar{v}) \,dx \leq \|\nabla v\|_{L^\infty}\|v-\bar{v}\|^2_{L^2};
			\\
			\bar{I}_2 &= \int_{\mathbb{R}^d} (\bar{j} \times (B-\bar{B})) \cdot (v-\bar{v})\,dx \leq \|\bar{j}\|_{L^\infty}\|(v-\bar{v},B-\bar{B})\|^2_{L^2};
			\\
			\bar{I}_3 &= \int_{\mathbb{R}^d} (j-\bar{j}) \cdot (\bar{v} \times (B-\bar{B}))\,dx \leq \frac{\epsilon}{\sigma}\|j-\bar{j}\|^2_{L^2} + C(\epsilon,\sigma)\|\bar{v}\|^2_{L^\infty}\|B-\bar{B}\|^2_{L^2},
		\end{align*}
		which yields for $\epsilon = \frac{1}{2}$
		\begin{align*}
			\frac{d}{dt}\|(v-\bar{v},E-\bar{E},B-\bar{B})\|^2_{L^2} + \frac{1}{\sigma}\|j-\bar{j}\|^2_{L^2} \leq C(\sigma)\left(\|(\nabla v,\bar{j})\|_{L^\infty} + \|\bar{v}\|^2_{L^\infty}\right)\|(v-\bar{v},B-\bar{B})\|^2_{L^2}.
		\end{align*}
		Therefore, the uniqueness follows by using Step 1.
		
		\textbf{Step 4: Inviscid limit.} We should remark here in the two-dimensional case that given $T_0 > 0$ then for any $\nu > 0$, there exists a unique solution $(v^\nu,E^\nu,B^\nu)$ to \eqref{NSM} given in Theorem \ref{theo1} in $(0,T_0)$. If $d = 3$ then an application of Steps 1, 2 and 3 above gives us the local existence and uniqueness of $(v^\nu,E^\nu,B^\nu)$ to \eqref{NSM} with $\nu > 0$ and $\alpha = 1$ in the same time interval $(0,T_0)$. Therefore, $T_0$ does not depends on $\nu$. Let $(v^\nu,E^\nu,B^\nu,\pi^\nu)$ and $(v,E,B,\pi)$ be the corresponding solutions to \eqref{NSM} with $\nu > 0$ and $\nu = 0$ satisfying $(v^\nu,E^\nu,B^\nu)_{|_{t=0}} = (v,E,B)_{|_{t=0}} = (v_0,E_0,B_0)$. Similar to the proof of uniqueness in the previous step with replacing $(v,E,B,\pi,j)$ and $(\bar{v},\bar{E},\bar{B},\bar{\pi},\bar{j})$ by $(v^\nu,E^\nu,B^\nu,\pi^\nu,j^\nu)$ and $(v,E,B,\pi,j)$, respectively, there are two additional terms, one on the left-hand side of the energy equality related to the viscosity and the other one on the right-hand side denoted by $\bar{I}_4$.
		We will bound $\bar{I}_4$ and also need to modify the estimate of $\bar{I}_1$ as follows 
		\begin{align*}
			\bar{I}_1 &= -\int_{\mathbb{R}^d} (v^\nu-v) \cdot \nabla v^\nu \cdot (v^\nu-v) \,dx 
			=-\int_{\mathbb{R}^d} (v^\nu-v) \cdot \nabla v \cdot (v^\nu-v) \,dx 
			\leq \|\nabla v\|_{L^\infty}\|v^\nu-v\|^2_{L^2};
			\\
			\bar{I}_4 &:= \nu\int_{\mathbb{R}^d} \Delta v \cdot (v^\nu-v) \,dx \leq \nu^2 \|\Delta v\|^2_{L^2} + \|v^\nu-v\|^2_{L^2},
		\end{align*}
		in which we find that for $Y^\nu(t) := \|(v^\nu-v,E^\nu-E,B^\nu-B)(t)\|^2_{L^2}$ with $t \in (0,T_0)$
		\begin{equation*}
			\frac{d}{dt}Y^\nu + \nu \|\nabla(v^\nu-v)\|^2_{L^2} + \frac{1}{\sigma}\|j^\nu-j\|^2_{L^2} 
			\leq C(\sigma)\left(\|(\nabla v,|v|^2,j)\|_{L^\infty} + 1\right)Y^\nu + \nu^2 \|\Delta v\|^2_{L^2},
		\end{equation*}
		which yields 
		\begin{equation*}
			Y^\nu(t)
			\leq \nu^2 \int^t_0 \|\Delta v\|^2_{L^2} \,d\tau \exp\left\{C(\sigma)\int^t_0 \|(\nabla v,|v|^2,j)\|_{L^\infty}  + 1 \,d\tau \right\}.
		\end{equation*}
		By using Step 1, for $s' \in [0,s)$
		\begin{equation*}
			\|(v^\nu-v)(t)\|_{H^{s'}} \leq \|(v^\nu-v)(t)\|^\frac{s-s'}{s}_{L^2}\|(v^\nu-v)(t)\|^\frac{s'}{s}_{H^s} \leq \nu^\frac{s-s'}{s}C(T_0,d,\sigma,s,v_0,E_0,B_0),
		\end{equation*}
		which is similarly for $(E^\nu-E,B^\nu-B)$ and gives us the conclusion. In addition, the bound conthe right-hand side does not depend on $\nu$ since during the proof we do not use any bounds on $(v^\nu,E^\nu,B^\nu,j^\nu)$, but only ones on $(v,E,B,j)$.
		
		\textbf{Step 5: The limit $c \to \infty$.} 
		It can be seen from \eqref{NSM} with $\nu = 0$ that
		\begin{equation} \label{NSM-c}
			\left\{
			\begin{aligned}
				\partial_t v^c + v^c \cdot \nabla v^c + \nabla \pi^c &= j^c \times B^c, 
				\\
				\frac{1}{c}\partial_t E^c - \nabla \times B^c &= -j^c,
				\\
				\partial_t B^c - \nabla \times (v^c \times B^c) &= -\frac{1}{\sigma} \nabla \times j^c,
				\\
				\textnormal{div}\, v^c = \textnormal{div}\, B^c &= 0.
			\end{aligned}
			\right.
		\end{equation}
		By applying Step 1, we know that the local solution $(v^c,E^c,B^c,j^c)$ is uniformly bounded in terms of $c$ in the following spaces
		\begin{equation*}
			(v^c,E^c,B^c) \in L^\infty(0,T_0;H^s) \quad \text{and} \quad j^c \in L^2(0,T_0;H^s),
		\end{equation*}
		which implies that there exists $(v,E,B,j)$ such that up to an extraction of a subsequence (use the same notation) as $c \to \infty$
		\begin{align*}
			&&(v^c,E^c,B^c) &\overset{\ast}{\rightharpoonup}  (v,E,B) &&\text{in} \quad L^\infty_tH^s_x,&&
			\\
			&&j^c &\rightharpoonup j &&\text{in} \quad L^2_tH^s_x.&&
		\end{align*}
		In addition, we find from \eqref{NSM-c} that
		\begin{align*}
			(\partial_t v^c,\partial_tB^c) \quad \text{is uniformly bounded in} \quad L^2_tH^{s-1}_x
		\end{align*}
		and by using the Aubin-Lions lemma as $c \to \infty$
		\begin{equation*}
			(v^c,B^c) \to (v,B) \quad \text{(locally in space) in}\quad L^2_{t,x}.
		\end{equation*}
		As in Step 17b in the proof of Theorem \ref{theo1}, for $\phi,\varphi \in C^\infty_0([0,T_0) \times \mathbb{R}^d;\mathbb{R}^3)$ with $\textnormal{div}\,\phi = 0$, the weak form of \eqref{NSM-c} is given by (similar to those of $a)$, $b)$ and $c)$)
		\begin{align*}
			a'')\quad &\int^{T_0}_0 \int_{\mathbb{R}^d} v^c \cdot \partial_t\phi + (v^c \otimes v^c) : \nabla \phi + (j^c \times B^c) \cdot \phi \,dxdt = -\int_{\mathbb{R}^d} v^c(0) \cdot \phi(0) \,dx,
			\\
			b'')\quad &\int^{T_0}_0 \int_{\mathbb{R}^d} \frac{1}{c}E^c \cdot \partial_t\varphi + B^c \cdot (\nabla \times \varphi) - j^c \cdot \varphi \,dxdt = -\int_{\mathbb{R}^d} \frac{1}{c}E^c(0) \cdot \varphi(0) \,dx,
			\\
			c'')\quad &\int^{T_0}_0 \int_{\mathbb{R}^d} B^c \cdot \partial_t\varphi + [(v^c \times B^c) - \frac{1}{\sigma} j^c] \cdot (\nabla \times \varphi) \,dxdt  = - \int_{\mathbb{R}^d} B^c(0) \cdot \varphi(0) \,dx.
		\end{align*}
		Therefore, we can pass to the limit by using the weak convergence of $(v^c_0,E^c_0,B^c_0)$ to $(\bar{v}_0,\bar{E}_0,\bar{B}_0)$ in $H^s$ and the above strong convergences as $c \to \infty$ to obtain that \eqref{NSM-c} converges in the sense of distributions to 
		\begin{equation*} 
			\left\{
			\begin{aligned}
				\partial_t v + v \cdot \nabla v + \nabla \pi &=  j \times B, 
				\\
				\partial_t B - \nabla \times (v \times B) &= -\frac{1}{\sigma} \nabla \times j,
				\\
				\textnormal{div}\, v = \textnormal{div}\, B &= 0,
			\end{aligned}
			\right.
		\end{equation*}
		where $j = \nabla \times B$ and $(v,B)_{|_{t=0}} = (\bar{v}_0,\bar{B}_0)$. Thus, the proof is finished since $\nabla \times (\nabla \times B)  
		= - \Delta B$.
	\end{proof}
	
	%
	\section{Proof of Theorem \ref{theo3}} \label{sec:theo3}
	%
	
	In this section, we will provide a proof of Theorem \ref{theo3}. The proof shares similar ideas to those of the previous sections. However, some modifications are needed due to the appearance of new terms, which are related to the constant magnetic vector $B^*$.
	
	\begin{proof}[Proof of Theorem \ref{theo3}-(i)]
		The proof contains several steps as follows.
		
		\textbf{Step 1: Local existence.} As mentioned previously, since $B^*$ is a constant vector in $\mathbb{R}^3$ then $\nabla \times B^* = E^* = \nabla \pi^* = 0$ in \eqref{NSM*}. We will use an approximate system of \eqref{NSM*}, which is a slightly modification of \eqref{NSM_app}, where $j^n$ is replaced by $\bar{j}^n$ and $F^n_1$ is redefined as follows 
		\begin{align*}
			j^n_* &= \sigma T_n(v^n \times B^*), 
			\\
			\bar{j}^n &= j^n + j^n_* = \sigma (cE^n + T_n(v^n \times (B^n+B^*))),  
			\\
			F^n_1 &= -\nu v^n -\mathbb{P}(T_n(v^n \cdot \nabla v^n)) + \mathbb{P}(T_n(\bar{j}^n \times (B^n+B^*))).
		\end{align*}
		Therefore, similar to the proof of Theorem \ref{theo1}, there exists a unique solution $(v^n,E^n,B^n) \in C^1([0,T^n_*),V^s_n \times H^s_n \times V^s_n)$ for some $T^n_* > 0$ satisfying the following property: if $T^n_* < \infty$ then
		\begin{equation*}
			\lim_{t\to T^n_*} \|(v^n,E^n,B^n)(t)\|^2_{H^s} = \infty.
		\end{equation*}
		
		\textbf{Step 2: $H^s$ estimate.} Assume that $T^n_* < \infty$. The energy balance is given by
		\begin{equation*}
			\frac{1}{2}\frac{d}{dt}\|(v^n,E^n,B^n)\|^2_{L^2} + \nu \|v^n\|^2_{L^2} + \frac{1}{\sigma}\|\bar{j}^n\|^2_{L^2} = 0.
		\end{equation*}
		In addition, the $H^s$ estimate is
		\begin{equation*}
			\frac{1}{2}\frac{d}{dt}\|(v^n,E^n,B^n)\|^2_{H^s} + \nu\|v^n\|^2_{H^s} + \frac{1}{\sigma}\|\bar{j}^n\|^2_{H^s} =: \sum^5_{i=1}J_i,
		\end{equation*}
		where for some $\epsilon \in (0,1)$, since $s > \frac{d}{2} + 1$
		\begin{align*}
			J_1 &= \int_{\mathbb{R}^d} J^s(\bar{j}^n \times B^n) \cdot J^s v^n \,dx 
			+ \int_{\mathbb{R}^d} J^s(\bar{j}^n \times  B^*) \cdot J^s v^n\,dx =: J_{11} + J_{12},
			\\
			J_{11} &\leq \frac{\epsilon}{\sigma}\|\bar{j}^n\|^2_{H^s} +   
			C(\epsilon,\sigma,s)\|B^n\|^2_{H^s}\|v^n\|^2_{H^s},
			\\
			J_{12} &= \int_{\mathbb{R}^d} (J^s\bar{j}^n \times  B^*) \cdot J^s v^n\,dx;
			\\
			J_2 &= -\int_{\mathbb{R}^d} [J^s(v^n \cdot \nabla v^n) - v^n \cdot \nabla J^s v^n] \cdot J^s v^n \,dx \leq C(s)\|v^n\|^3_{H^s};
			\\
			J_3 &= \int_{\mathbb{R}^d} J^s \bar{j}^n \cdot  J^s(v^n \times B^n) \,dx + \int_{\mathbb{R}^d} J^s \bar{j}^n \cdot  J^s(v^n \times B^*) \,dx =: J_{31} + J_{32},
			\\
			J_{31} &\leq  \frac{\epsilon}{\sigma}\|\bar{j}^n\|^2_{H^s} + 
			C(\epsilon,\sigma,s)\|v^n\|^2_{H^s}\|B^n\|^2_{H^s},
			\\
			J_{32} &= \int_{\mathbb{R}^d} J^s \bar{j}^n \cdot  (J^sv^n \times B^*) \,dx = -J_{12};
			\\
			J_4 &= \int_{\mathbb{R}^d} J^s(\nabla \times B^n) \cdot J^s(c E^n) \,dx;
			\\
			J_5 &= -\int_{\mathbb{R}^d} J^s(\nabla \times E^n) \cdot J^s(c B^n) \,dx = -J_4.
		\end{align*}
		Therefore, by choosing $\epsilon = \frac{1}{4}$
		\begin{equation*}
			\frac{d}{dt}\|(v^n,E^n,B^n)\|^2_{H^s} + \nu\|v^n\|^2_{H^s} + \frac{1}{\sigma}\|\bar{j}^n\|^2_{H^s} \leq C(s)\|v^n\|^3_{H^s} +  C(s)\|v^n\|^2_{H^s}\|B^n\|^2_{H^s}.
		\end{equation*}
		
		\textbf{Step 3: Bootstrap argument.} By defining the following energy form for $t \geq 0$
		\begin{equation*} 
			E_n(t) := \esssup_{\tau \in [0,t]} \|(v^n,E^n,B^n)(\tau)\|^2_{H^s} + \int^t_0 \nu\|v^n\|^2_{H^s} +\frac{1}{\sigma}\|\bar{j}^n\|^2_{H^s} \,d\tau,
		\end{equation*}
		it follows that for some fixed positive constants $C_1 = C_1(\nu,s)$ and $C_2 = C_2(\nu,\sigma,s)$, and for $t \in (0,T^n_*)$
		\begin{equation} \label{BA}
			E_n(t) \leq E_n(0) + C_1E^\frac{3}{2}_n(t) + C_2E^2_n(t).
		\end{equation}
		To the end of this step, we aim to prove the following property. \textbf{Claim:} Let $S^n := \{t \in (0,T^n_*) : E_n(t) \leq 2\epsilon^2_0\}$. Then $S^n = (0,T^n_*)$ and $T^n_* = \infty$.
		
		\textit{3a) Hypothesis implies conclusion.} Assume that for some $t \in (0,T^n_*)$
		\begin{equation} \label{BA1}
			E_n(t) \leq \min\left\{\frac{1}{16C^2_1},\frac{1}{4C_2}\right\} =: \frac{1}{C^2_0}.
		\end{equation}
		Therefore, by choosing $\epsilon_0 > 0$ such that $2C_0\epsilon_0 \leq 1$, it follows from  \eqref{BA} and \eqref{BA1}  that 
		\begin{equation} \label{BA2}
			E_n(t) \leq 2E_n(0) \leq 2\epsilon^2_0 \leq \frac{1}{2C^2_0}.
		\end{equation}
		
		\textit{3b) Conclusion is stronger than hypothesis.} Assume that \eqref{BA2} holds for some $t_0 \in (0,T^n_*)$. For a given $\delta_0 > 0$, by the continuity in time of $(v^n,E^n,B^n)$ in $H^s$, there exists a small $t_{\delta_0}$ such that
		\begin{equation*}
			E_n(t) < E_n(t_0) + \delta_0 \leq \frac{1}{2C^2_0} + \delta_0 \qquad \forall t \in (t_0-t_{\delta_0},t_0+t_{\delta_0}),
		\end{equation*}
		which yields \eqref{BA1} if we choose $\delta_0 \leq \frac{1}{2C_0^2}$.
		
		\textit{3c) Conclusion is closed.} Let $t_m$ and $t$ in $(0,T^n_*)$ such that $t_m \to t$ as $m \to \infty$. If $E_n(t_m) \leq 2\epsilon^2_0$ for all $m \in \mathbb{N}$ then by the continuity in time of $(v^n,E^n,B^n)$ in $H^s$, $E_n(t) \leq 2\epsilon^2_0$ as well.
		
		\textit{3d) Base case.} By the continuity in time of $(v_n,E_n,B_n)$ in $H^s$, we can find some $T^n_{**} \in (0,T^n_*)$ 
		\begin{equation*} 
			E_n(t) \leq 2E_n(0) \leq 2\epsilon^2_0 \leq \frac{1}{2C^2_0} \qquad \forall t \in (0,T^n_{**}).
		\end{equation*}
		This implies that $S^n$ is a non-empty set. We then apply the abstract bootstrap principle (see \cite[Proposition 1.21]{Tao_2006}) to obtain the first part of the claim, while the second part follows immediately by using Step 1. Moreover, for $t \geq 0$
		\begin{equation} \label{BA3} 
			\|(v^n,E^n,B^n)(t)\|^2_{H^s} + \int^t_0 \nu\|v^n\|^2_{H^s} +\frac{1}{\sigma}\|\bar{j}^n\|^2_{H^s} \,d\tau \leq 2\epsilon^2_0,
		\end{equation}
		which will be used to obtain additional estimate of $(E^n,B^n)$ as follows. We now focus on the Maxwell part in the approximate system. By applying $\Lambda^{s'-1}$ for $s' \in [1,s]$ and testing the result by $-(\Lambda^{s'-1} \nabla \times B^n, \Lambda^{s'-1} \nabla \times E^n)$, we find that for $\tau > 0$
		\begin{align*}
			\int^{\tau}_0 \|\Lambda^{s'-1} \nabla \times B^n\|^2_{L^2} \,dt 
			&= \frac{1}{c} \int^{\tau}_0 \frac{d}{dt} \int_{\mathbb{R}^d} \Lambda^{s'-1} E^n \cdot \Lambda^{s'-1} \nabla \times B^n \,dxdt 
			+  \|\Lambda^{s'-1} \nabla \times E^n\|^2_{L^2(0,t;L^2)} 
			\\
			&\quad  + \int^{\tau}_0 \int_{\mathbb{R}^d} \Lambda^{s'-1} j^2 \cdot \Lambda^{s'-1}\nabla \times B^n \,dxdt.
		\end{align*} 
		It is needed to bound the second term on the right-hand side. In order to do that, we first apply $\Lambda^{s'-1} \nabla \times $ to the Maxwell system, and test the result by $c(\Lambda^{s'-1}\nabla \times E^n, \Lambda^{s'-1}\nabla \times B^n)$, which leads to by using \eqref{BA3} 
		\begin{equation*}
			\|(\Lambda^{s'-1} \nabla \times E^n,\Lambda^{s'-1} \nabla \times B^n)(t)\|^2_{L^2} + c^2\sigma \|\Lambda^{s'-1} \nabla \times E^n\|^2_{L^2(0,t;L^2)} \leq C(B^*,d,\nu,\sigma,s) \epsilon^2_0,
		\end{equation*}
		which yields the following estimate of $B$ for $t > 0$ by using $\textnormal{div}\, B = 0$ and \eqref{BA3} 
		\begin{equation*}
			\|B^n\|^2_{L^2(0,t;\dot{H}^{s'})} \leq (c^{-1} + c^{-2} + 1) C(B^*,d,\nu,\sigma,s) \epsilon^2_0.
		\end{equation*}
		In addition, for $s'' \in [0,s]$ and $t > 0$, by using \eqref{BA3} again, it follows that
		\begin{equation*}
			\|(E^n,B^n)(t)\|^2_{\dot{H}^{s''}} + c^2\sigma \|E^n\|^2_{L^2(0,t;\dot{H}^{s''})} \leq C(B^*,d,\nu,\sigma,s) \epsilon^2_0.
		\end{equation*}

		\textbf{Step 4: Cauchy sequence, pass to the limit and uniqueness.} Assume that $(v^n,E^n,B^n)$ and $(v^m,E^m,B^m)$ for $m,n \in \mathbb{R}$ with $m > n > 0$ are two solutions to the approximate system with the same initial data. Therefore, it follows that 
		\begin{equation*}
			\frac{1}{2}\frac{d}{dt} \|(v^n-v^m,E^n-E^m,B^n-B^m)\|^2_{L^2} + \nu\|v^n-v^m\|^2_{L^2} + \frac{1}{\sigma}\|\bar{j}^n-\bar{j}^m\|^2_{L^2} =: \sum^6_{k=4} I_k,
		\end{equation*}
		where for some $\epsilon \in (0,1)$, since $s > \frac{d}{2} + 1$ 
		\begin{align*}
			I_4 &= \int_{\mathbb{R}^d} (-T_n(v^n \cdot \nabla v^n) + T_m(v^m \cdot \nabla v^m)) \cdot (v^n-v^m)\,dx =: \sum^{3}_{k=1} I_{4k},
			\\
			I_{41} &= -\int_{\mathbb{R}^d} (T_n-T_m)(v^n \cdot \nabla v^n) \cdot (v^n-v^m)\,dx
			\leq C(s)n^{-(s-1)}\|v^n\|^2_{H^s}\|v^n-v^m\|_{L^2},
			\\
			I_{42} &= -\int_{\mathbb{R}^d} T_m((v^n-v^m) \cdot \nabla v^n) \cdot (v^n-v^m)\,dx
			\leq  
			4\epsilon\nu \|v^n-v^m\|^2_{L^2} + C(\epsilon,\nu)\|\nabla v^n\|^2_{L^\infty}\|v^n-v^m\|^2_{L^2},
			\\
			I_{43} &= -\int_{\mathbb{R}^d} T_m(v^m \cdot \nabla (v^n-v^m)) \cdot (v^n-v^m)\,dx = 0;
			\\
			I_5 &= \int_{\mathbb{R}^d} (T_n(\bar{j}^n \times (B^n+B^*)) - T_m(\bar{j}^m \times (B^m+B^*))) \cdot (v^n-v^m)\,dx =: \sum^{4}_{k=1} I_{5k},
			\\
			I_{51} &= \int_{\mathbb{R}^d} (T_n-T_m)(\bar{j}^n\times B^n) \cdot (v^n-v^m)\,dx \leq C(s)n^{-s}\|B^n\|_{H^s}\left(\|v^n-v^m\|^2_{L^2} + \|\bar{j}^n\|^2_{H^s}\right),
			\\
			I_{52} &= \int_{\mathbb{R}^d} T_m((\bar{j}^n-\bar{j}^m)\times B^n) \cdot (v^n-v^m)\,dx,
			\\
			I_{53} &= \int_{\mathbb{R}^d} T_m(\bar{j}^m \times (B^n-B^m)) \cdot (v^n-v^m)\,dx \leq  
			4\epsilon\nu \|v^n-v^m\|^2_{L^2} + C(\epsilon,\nu)\|\bar{j}^m\|^2_{L^\infty}\|B^n-B^m\|^2_{L^2},
			\\
			I_{54} &= \int_{\mathbb{R}^d} (T_n(\bar{j}^n \times B^*) - T_m(\bar{j}^m \times B^*)) \cdot (v^n-v^m)\,dx =: I_{541} + I_{542},
			\\
			I_{541} &= \int_{\mathbb{R}^d} (T_n-T_m)(\bar{j}^n \times B^*) \cdot (v^n-v^m)\,dx \leq C(s) n^{-s}\|B^*\|_{L^\infty}\left(\|\bar{j}^n\|^2_{H^s} + \|v^n-v^m\|^2_{L^2}\right),
			\\
			I_{542} &= \int_{\mathbb{R}^d} T_m((\bar{j}^n-\bar{j}^m) \times B^*) \cdot (v^n-v^m)\,dx; 
			\\
			I_6 &= -\int_{\mathbb{R}^d} (\bar{j}^n - \bar{j}^m) \cdot (-T_n(v^n \times (B^n+B^*)) + T_m(v^m \times (B^m+B^*))) \,dx =: \sum^{4}_{k=1} I_{6k},
			\\
			I_{61} &= \int_{\mathbb{R}^d} (\bar{j}^n-\bar{j}^m) \cdot (T_n-T_m)(v^n \times B^n) \,dx 
			\leq \frac{\epsilon}{\sigma}\|\bar{j}^n-\bar{j}^m\|^2_{L^2} + C(\epsilon,\sigma,s) n^{-2s}\|v^n\|^2_{H^s}\|B^n\|^2_{H^s},
			\\
			I_{62} &= \int_{\mathbb{R}^d} (\bar{j}^n-\bar{j}^m) \cdot T_m((v^n-v^m) \times B^n)\,dx = - I_{52},
			\\
			I_{63} &= \int_{\mathbb{R}^d} (\bar{j}^n-\bar{j}^m) \cdot T_m(v^m \times (B^n-B^m))\,dx \leq \frac{\epsilon}{\sigma}\|\bar{j}^n-\bar{j}^m\|^2_{L^2} + C(\epsilon,\sigma)\|v^m\|^2_{H^s}\|B^n-B^m\|^2_{L^2},
			\\
			I_{64} &= -\int_{\mathbb{R}^d} (\bar{j}^n - \bar{j}^m) \cdot (-T_n(v^n \times B^*) + T_m(v^m \times B^*))\,dx =: I_{641} + I_{642},
			\\
			I_{641} &= -\int_{\mathbb{R}^d} (\bar{j}^n - \bar{j}^m) \cdot (T_m-T_n)(v^n \times B^*)\,dx
			\leq C(s)n^{-s}\|B^*\|_{L^\infty}\left(\|\bar{j}^n - \bar{j}^m\|^2_{L^2} + \|v^m\|^2_{H^s}\right),
			\\
			I_{642} &= \int_{\mathbb{R}^d} (\bar{j}^n - \bar{j}^m) \cdot T_m((v^n-v^m) \times B^*)\,dx = -I_{542}.
		\end{align*}
		As Step 16 in the proof of Theorem \ref{theo1}, by choosing $\epsilon = \frac{1}{8}$, it follows that $(v^n,E^n,B^n)$ and $(v^n,\bar{j}^n)$ are Cauchy sequences in $L^\infty(0,\infty;L^2(\mathbb{R}^d))$ and $L^2(0,\infty;L^2(\mathbb{R}^d))$, respectively. Therefore, we can pass to the limit as in Step 17a in the proof of Theorem \ref{theo1} to obtain a limiting system, which is similar to \eqref{NSM_P} with replacing $\mathbb{P}(j \times B)$ and $j$ by $\mathbb{P}(\bar{j} \times (B+B^*))$ and $\bar{j}$, respectively, where $\bar{j} = \sigma(cE + v \times (B+B^*))$, i.e.,
		\begin{equation} \label{NSM_P1}
			\left\{
			\begin{aligned}
				\partial_t v + \mathbb{P}(v \cdot \nabla v) &= \nu \Delta v +  \mathbb{P}(\bar{j} \times (B+B^*)), 
				\\
				\frac{1}{c}\partial_t E - \nabla \times B  &=  -\bar{j}
				\\
				\frac{1}{c}\partial_t B + \nabla \times E &= 0
				\\
				\bar{j} &= \sigma(cE + v \times (B+B^*)),
				\\
				\textnormal{div}\, v = \textnormal{div}\, B &= 0.
			\end{aligned}
			\right.
		\end{equation}
		Moreover, the limiting solution $(v,E,B)$ satisfies for $t > 0$, $s'' \in [0,s]$ and $s' \in [1,s]$
		\begin{align} \label{BA3'} 
			&\|(v,E,B)(t)\|^2_{H^s} + \int^t_0 \nu\|v\|^2_{H^s} +\frac{1}{\sigma}\|\bar{j}\|^2_{H^s} \,d\tau \leq 2\epsilon^2_0,
			\\ \label{BA3''}
			& \int^t_0 \|E\|^2_{\dot{H}^{s''}} + \|B\|^2_{\dot{H}^{s'}} \,d\tau \leq \epsilon^2_0 
			\begin{cases}
				C(B^*,c,d,\nu,\sigma,s) &\text{if} \quad c \in (0,1),
				\\
				C(B^*,d,\nu,\sigma,s) &\text{if} \quad c  \geq 1.
			\end{cases}
		\end{align}
		We can also prove the uniqueness as in Step 3 in the proof of Theorem \ref{theo-inviscid}. We omit further details.

		\textbf{Step 5: Large-time behavior.} It can be seen from the Ohm's law in \eqref{NSM_P1} that for some $\epsilon \in (0,1)$
		\begin{align*}
			\sigma c \int^\infty_0 \|E\|^2_{L^2} \,d\tau &= \int^\infty_0 \int_{\mathbb{R}^d} (\bar{j} -\sigma(v \times (B+B^*))) \cdot E \,dxd\tau
			\\
			&\leq \frac{1}{c}C(\epsilon,\sigma) \int^\infty_0 (1 + \|(B,B^*)\|^2_{L^\infty} )\|(v,\bar{j})\|^2_{L^2} \,d\tau + 3\epsilon\sigma c \int^\infty_0 \|E\|^2_{L^2}\,d\tau,
		\end{align*}
		which yields by using \eqref{BA3'} and choosing $\epsilon = \frac{1}{6}$
		\begin{equation} \label{LTB1}
			\int^\infty_0 \|(v,E)\|^2_{L^2} \,d\tau \leq \frac{1}{c^2}C(\epsilon_*,\nu,\sigma,s)(\epsilon^2_0 + \epsilon^4_0) + C(\nu)\epsilon^2_0.
		\end{equation}
		In addition, we observe that $(v,E) \in C([0,T];L^2)$\footnote{It is after possibly being redefined on a set of measure zero.} since $(\partial_t v, \partial_t E) \in L^2(0,T;H^{-1})$ for any $T \in (0,\infty)$ (see \cite{Evans_2010,Temam_2001}), which implies by using \eqref{BA3'}  that
		\begin{equation*}
			\frac{1}{2}\frac{d}{dt} \|(v,E)\|^2_{L^2} + \nu\|v\|^2_{L^2} + \frac{1}{\sigma}\|\bar{j}\|^2_{L^2} = \int_{\mathbb{R}^d} (\nabla \times B) \cdot cE \,dx \leq 2c\epsilon^2_0.
		\end{equation*}
		Therefore, for $0 \leq t' < t < \infty$
		\begin{equation} \label{LTB2}
			\|(v,E)(t)\|^2_{L^2} - \|(v,E)(t')\|^2_{L^2} \leq 4c\epsilon^2_0 (t-t').
		\end{equation}
		By using \eqref{LTB1}-\eqref{LTB2}, it follows that $\|(v,E)(t)\|_{L^2} \to 0$ as $t \to \infty$ (see \cite[Lemma 2.3]{Lai-Wu-Zhong_2021}). As a consequence, we find that 
		\begin{align*}
			\|\bar{j}(t)\|^2_{L^2} &= \sigma\int_{\mathbb{R}^d} \bar{j}(t) \cdot (cE + v \times (B+B^*))(t) \,dx 
			\\
			&\leq  \frac{1}{2}\|\bar{j}(t)\|^2_{L^2} + \sigma^2 c^2\|E(t)\|^2_{L^2} + C(s)\sigma^2\|v(t)\|^2_{L^2}
			\|B(t)\|^2_{H^s} + C\sigma^2\|B^*\|^2_{L^\infty}\|v(t)\|^2_{L^2},
		\end{align*}
		which yields $\|\bar{j}(t)\|_{L^2} \to 0$ as $t \to \infty$. By using again \eqref{BA3'} and the above $L^2$ decay properties, for $s' \in [0,s)$ and $f \in \{v,E,\bar{j}\}$
		\begin{equation*}
			\|f(t)\|_{H^{s'}} \leq C(s)\|f(t)\|^\frac{s-s'}{s}_{L^2}\|f(t)\|^\frac{s'}{s}_{H^s} \quad \to 0 \quad \text{as} \quad t \to \infty,
		\end{equation*}
		where if $f \equiv \bar{j}$ then we also used the following estimate
		\begin{align*}
			\|\bar{j}(t)\|_{H^s} &\leq C(c,\sigma,s)(\|E(t)\|_{H^s} + \|v(t)\|_{H^s}\|B(t)\|_{H^s} + \|B^*\|_{L^\infty}\|v(t)\|_{H^s}) \leq C(c,\epsilon_0,\epsilon_*,\sigma,s).
		\end{align*}
		In addition, for $s' \in [0,s)$
		\begin{equation*}
			\|j(t)\|_{H^{s'}} \leq \|\bar{j}(t)\|_{H^{s'}} + \|j_*(t)\|_{H^{s'}} \quad\to 0 \quad \text{as} \quad t \to \infty.
		\end{equation*}
		As a consequence, for $f \in \{E,B,\bar{j}\}$ the following quantities for $s' \in [0,s)$
		\begin{equation*}
			\left|\int_{\mathbb{R}^d} J^{s'}(v(t)) \cdot J^{s'}(f(t)) \,dx\right| \leq \|v(t)\|_{H^{s'}}\|f(t)\|_{H^{s'}} \quad \to 0 \quad \text{as} \quad t \to \infty.
		\end{equation*}
		Furthermore, similar to the case of $(v,E)$ above, we also have $B \in C([0,T],L^2)$ for any $T \in (0,\infty)$, it follows that
		\begin{equation*}
			\frac{1}{2}\frac{d}{dt} \|(v,E,B)\|^2_{L^2} + \nu\|v\|^2_{L^2} + \frac{1}{\sigma}\|\bar{j}\|^2_{L^2} = 0,
		\end{equation*}
		which implies that $0 < f(t) := \|(v,E,B)(t)\|^2_{L^2} < \epsilon^2_0$ and $f(t)$ is a strictly decreasing function. By using the $L^2$ decay in time property of $(v,E)$, we find that $\|B(t)\|_{L^2} \to b_0$ as $t \to \infty$ for some constant $b_0 \in [0,\epsilon_0)$. Since $\partial_t B = -c\nabla \times E$ then $\|\partial_tB(t)\|_{H^{r-1}} = c\|\nabla \times E(t)\|_{H^{r-1}} \to 0$ as $t \to \infty$ for $r \in [1,s)$. In the sequel, we aim to prove that 
		\begin{equation} \label{d_t_E}
			\|\partial_t E(t)\|_{L^2} \to 0 \quad \text{as}\quad t \to \infty.
		\end{equation}
		Indeed, for any $T \in (0,\infty)$ we find from \eqref{BA3'} that 
		\begin{equation*}
			\left\{
				\begin{aligned}
					&& \bar{j} &= \sigma(cE + v \times (B+B^*)) &&\in L^2(0,\infty;H^s),&&
					\\
					&&\partial_t v &= -\mathbb{P}(v \cdot \nabla v) - \nu v + \mathbb{P}(\bar{j} \times (B+B^*)) &&\in L^2(0,\infty;H^{s-1}),&&
					\\
					&&\frac{1}{c}\partial_t E &=  \nabla \times B - \bar{j} &&\in L^2(0,T;H^{s-1}),&&
					\\
					&&\frac{1}{c}\partial_t B &= - \nabla \times E &&\in L^2(0,T;H^{s-1}),&&
					\\
					&&\frac{1}{c}\partial_{tt} E &= \nabla \times \partial_t B - c\sigma \partial_t E - \sigma(\partial_t v \times (B + B^*) + v \times \partial_t B) &&\in L^2(0,T;H^{s-2}).&&
				\end{aligned}
			\right.
		\end{equation*}	
		Furthermore, since $B \in L^2(0,T;H^s)$ and $\partial_t B \in L^2(0,T;H^{s-1})$ with $s - 1 > \frac{d}{2} \geq 1$ for any $T \in (0,\infty)$, it follows from \cite[Lemma 1.2, Chapter 3]{Temam_2001} that $B \in C([0,T];H^1)$ and since $v,E,B \in C([0,T];L^2)$
		\begin{equation} \label{d_t_E_1}
			\partial_t E(t) = c \nabla \times B(t) - c^2\sigma E(t) - c\sigma v(t) \times (B(t) + B^*) \quad \in C([0,T];L^2),
		\end{equation}
		which gives us the meaning for the value of $\partial_t E$ at $t = 0$ and suggests us to take
		\begin{equation} \label{d_t_E_2}
			\partial_t E_{|_{t=0}} = \left(c \nabla \times B - c^2\sigma E - c\sigma v \times (B + B^*)\right)_{|_{t=0}}.
		\end{equation}
		In addition, it can be seen that 
		\begin{equation*}
			\frac{1}{2}\frac{d}{dt} \|\partial_t E\|^2_{L^2} + c^2\sigma \|\partial_t E\|^2_{L^2} =: \sum^4_{k=1} R_k,
		\end{equation*}
		where for some $\epsilon \in (0,1)$, since $s > \frac{d}{2} + 1$
		\begin{align*}
			R_1 &= c\int_{\mathbb{R}^d} \nabla \times \partial_t B \cdot \partial_t E \,dx = -c\int_{\mathbb{R}^d} \nabla \times (\nabla \times cE) \cdot \partial_t E \,dx 
			\\
			&=  -c\int_{\mathbb{R}^d} \nabla \times (\nabla \times (\frac{1}{\sigma} \bar{j} - v \times (B + B^*))) \cdot \partial_t E \,dx
			\\
			&\leq 3\epsilon c^2\sigma \|\partial_t E\|^2_{L^2} + C(\epsilon,\sigma,s)\left(\|\bar{j}\|^2_{H^s} + \|v\|^2_{H^s}(\|B\|^2_{H^s} + \|B^*\|^2_{L^\infty})\right);
			\\
			R_2 &= -c\sigma \int_{\mathbb{R}^d} (\partial_t v \times B) \cdot \partial_t E \,dx
			\\
			&\leq \epsilon c^2\sigma \|\partial_t E\|^2_{L^2} + C(\epsilon,\sigma,s) \|B\|^2_{H^s} \|\partial_t v\|^2_{L^2}
			\\
			&\leq \epsilon c^2\sigma \|\partial_t E\|^2_{L^2} + C(\epsilon,\nu,\sigma,s)\|B\|^2_{H^s} \left(\|v\|^4_{H^s} +  \|v\|^2_{H^s} + \|\bar{j}\|^2_{H^s}(\|B\|^2_{H^s} + \|B^*\|^2_{L^\infty})\right);
			\\
			R_3 &= -c\sigma \int_{\mathbb{R}^d} (\partial_t v \times B^*) \cdot \partial_t E \,dx
			\\
			&\leq \epsilon c^2\sigma \|\partial_t E\|^2_{L^2} + C(\epsilon,\nu,\sigma,s)\|B^*\|^2_{L^\infty} \left(\|v\|^4_{H^s} +  \|v\|^2_{H^s} + \|\bar{j}\|^2_{H^s}(\|B\|^2_{H^s} + \|B^*\|^2_{L^\infty})\right);
			\\
			R_4 &= -c\sigma \int_{\mathbb{R}^d} (v \times \partial_t B) \cdot \partial_t E \,dx = c\sigma \int_{\mathbb{R}^d} (v \times (\nabla \times cE)) \cdot \partial_t E \,dx 
			\\
			&= c\sigma \int_{\mathbb{R}^d} (v \times (\nabla \times (\frac{1}{\sigma}\bar{j} - v \times (B + B^*)))) \cdot \partial_t E \,dx 
			\\
			&\leq 3\epsilon c^2\sigma \|\partial_t E\|^2_{L^2} + C(\epsilon,\sigma,s)\left(\|v\|^2_{H^s}\|\bar{j}\|^2_{H^s} + \|v\|^4_{H^s}(\|B\|^2_{H^s} + \|B^*\|^2_{L^\infty})\right),
		\end{align*}
		here in the estimates of $R_2$ and $R_3$, we employed the following fact
		\begin{equation*}
			\|\partial_t v(t)\|^2_{H^{s-1}} \leq C(\nu)\left(\|v(t)\|^4_{H^s} +  \|v(t)\|^2_{H^s} + \|\bar{j}(t)\|^2_{H^s}(\|B(t)\|^2_{H^s} + \|B^*\|^2_{L^\infty})\right) \qquad \text{for} \quad t > 0.
		\end{equation*}
		Therefore, by choosing $\epsilon = \frac{1}{16}$ and using \eqref{BA3'}, \eqref{d_t_E_1}-\eqref{d_t_E_2}, it follows that for $0 \leq t' < t < \infty$
		\begin{align*}
			\|\partial_t E(t)\|^2_{L^2} - \|\partial_t E(t')\|^2_{L^2} &\leq C(c,\epsilon_0,\epsilon_*,\sigma,s)(t-t'),
			\\
			\int^t_0 \|\partial_t E\|^2_{L^2} \,d\tau &\leq 
			C(c,\epsilon_0,\epsilon_*,\sigma,s),
		\end{align*}
		where we also used the following estimate
		\begin{align*}
			\|\bar{j}(t)\|^2_{H^s} &\leq C(c,\sigma,s)\left(\|E(t)\|^2_{H^s} + \|v(t)\|^2_{H^s}(\|B(t)\|^2_{H^s} + \|B^*\|^2_{L^\infty})\right) \leq C(c,\epsilon_*,\sigma,s)(\epsilon^2_0 + \epsilon^4_0).
		\end{align*}
		Thus, \eqref{d_t_E} follows by using \cite[Lemma 2.3]{Lai-Wu-Zhong_2021} again. Combining \eqref{d_t_E} and the decay in time of $\|\bar{j}\|_{L^2}$, we find that $\|\nabla \times B(t)\|_{L^2} = \|\nabla B(t)\|_{L^2} \to 0$ as $t \to \infty$. Therefore, for any $s' \in [0,s-1)$ and for some suitable $s'' > \frac{d}{2}$, by using \eqref{BA3'}, interpolation inequalities and Lemma \ref{lem-Agmon}, it follows that as $t \to \infty$
		\begin{align*}
			\|B(t)\|_{\dot{H}^{s'+1}} &\leq C(s') \|\nabla B(t)\|^\frac{s-1-s'}{s-1}_{L^2} \|\nabla B(t)\|^\frac{s'}{s-1}_{\dot{H}^{s-1}}  \quad \to 0,
			\\
			\|B(t)\|_{L^\infty} &\leq C(d,s'') \|B(t)\|^{1-\frac{d}{2s''}}_{L^2} \|B(t)\|^\frac{d}{2s''}_{\dot{H}^{s''}}  \qquad \to 0,
			\\
			\|B(t)\|_{L^p} &\leq \|B(t)\|^\frac{2}{p} \|B(t)\|^\frac{p-2}{p}_{L^\infty} \quad \to 0.
		\end{align*}
		As a consequence, $\|B(t)\|_{L^q_{\textnormal{loc}}} \to 0$ as $t \to \infty$ for $q \in [1,\infty]$. Finally, the convergences in time of $(\partial_t v,\partial_t E)$ follows from those of $(v,E,B,\bar{j})$, \eqref{BA3'} and the following estimates for $r \in [0,s-1)$
		\begin{align*}
			\|\partial_t v(t)\|_{H^r} &\leq \|v(t) \cdot \nabla v(t)\|_{H^r} + \nu \|v(t)\|_{H^r} + \|\bar{j}(t) \times (B(t) + B^*)\|_{H^r},
			\\
			\|\partial_t E(t)\|_{H^r} &\leq \|\nabla \times B(t)\|_{H^r} + \|\bar{j}(t)\|_{H^r}.
		\end{align*}
	\end{proof}
	
	\begin{proof}[Proof of Theorem \ref{theo3}-(ii)] We now focus on the limit as $c \to \infty$. It can be seen from  \eqref{NSM*} with  $\alpha = 0$ that 
		\begin{equation} \label{NSM-GO-c}
			\left\{
			\begin{aligned}				
				\partial_t v^c + v^c \cdot \nabla v^c + \nabla \pi^c &= -\nu v^c + \bar{j}^c \times (B^c+B^*), 
				\\
				\frac{1}{c}\partial_t E^c - \nabla \times B^c &= -\bar{j}^c,
				\\
				\partial_t B^c - \nabla \times (v^c \times (B^c+B^*)) &= -\frac{1}{\sigma} \nabla \times \bar{j}^c,
				\\
				\textnormal{div}\, v^c = \textnormal{div}\, B^c &= 0.
			\end{aligned}
			\right.
		\end{equation}
		By using the smallness condition on the initial data $(v^c_0,E^c_0,B^c_0)$, an application of Part $(i)$ gives us the existence of a sequence of global solutions $(v^c,E^c,B^c)$ to \eqref{NSM-GO-c} with some uniform bounds in terms of $c$. Therefore, this step can be done as Step 5 in the proof of Theorem \ref{theo-inviscid}, which by using the weak convergence of $(v^c_0,E^c_0,B^c_0)$ in $H^s$ and some mentioned vector identities implies that \eqref{NSM-GO-c} converges to \eqref{HMHD*} with $\kappa = 0$ in the sense of distributions as $c \to \infty$. Here, we note in addition that $\nabla \times (B + B^*) = \nabla \times B = \bar{j}$, where $B$ and $\bar{j}$ are the corresponding limits of  $B^c$ and $\bar{j}^c$ as $c \to \infty$, respectively. We skip further details and end the proof.
	\end{proof}
	
	%
	\section{Proof of Theorem \ref{pro-HMHD}} \label{sec:HMHD}
	%
	
	In this section, we will provide a simple proof of Theorem \ref{pro-HMHD}, which shares a similar idea as that of Theorem \ref{theo3}.
	
	\begin{proof}[Proof of Theorem \ref{pro-HMHD}]
		The proof is quite similar to that of Theorem \ref{theo3}, which contains several steps as follows. In this case, since $B^*$ is a constant vector in $\mathbb{R}^3$, we have $j^* = \nabla \times B^* = 0$. Thus, \eqref{HMHD*} is reduced to the following system
		\begin{equation*} 
			\left\{
			\begin{aligned}
				\partial_t v + v \cdot \nabla v + \nabla \pi &= -\nu v + j \times (B+B^*),
				\\
				\partial_t B - \nabla \times (v \times (B+B^*)) &= \frac{1}{\sigma}\Delta B - \frac{\kappa}{\sigma} \nabla \times (j \times (B+B^*)),
				\\
				\textnormal{div}\, v = \textnormal{div}\, B &= 0,
			\end{aligned}
			\right.
		\end{equation*} 
		which can be further equivalently rewritten as follows for $p^* := \pi + \frac{1}{2}|B+B^*|^2$
		\begin{equation} \label{HMHD**}
			\left\{
			\begin{aligned}
				\partial_t v + v \cdot \nabla v &= - \nabla p^* -\nu v + B \cdot \nabla B + B^* \cdot \nabla B,
				\\
				\partial_t B + v \cdot \nabla B &= \frac{1}{\sigma} \Delta B + B \cdot \nabla v + B^* \cdot \nabla v + \frac{\kappa}{\sigma}(j \cdot \nabla B - B \cdot \nabla j - B^* \cdot \nabla j),
				\\
				\textnormal{div}\, v = \textnormal{div}\, B &= 0,
			\end{aligned}
			\right.
		\end{equation}
		by using the following vector indentities for $f \in \{v,j\}$
		\begin{align*}
			(\nabla \times (B+B^*)) \times (B+B^*) &= (B+B^*) \cdot \nabla (B+B^*) - \frac{1}{2} \nabla |B+B^*|^2,
			\\
			\nabla \times (f \times (B+B^*)) &= - f \cdot \nabla (B+B^*) + (B+B^*) \cdot \nabla f.
		\end{align*}
		\textbf{Step 1: Local existence.} We will consider the following approximate system to \eqref{HMHD**}
		\begin{equation} \label{HMHD_app}
			\frac{d}{dt} (v^n,B^n) = (F^n_1,F^n_2)(v^n,B^n),
			\quad
			\textnormal{div}\, v^n = \textnormal{div}\, B^n = 0,
			\quad 
			(v^n,B^n)_{|_{t=0}} = T_n(v_0,B_0),
		\end{equation}
		where 
		\begin{align*}
			F^n_1 &= -T_n(\mathbb{P}(v^n \cdot \nabla v^n)) - \nu v^n + T_n(\mathbb{P}(B^n \cdot \nabla B^n)) + T_n(\mathbb{P}(B^* \cdot \nabla B^n)),
			\\
			F^n_2 &= -T_n(v^n \cdot \nabla B^n) + T_n(B^n \cdot \nabla v^n) + T_n(B^* \cdot \nabla v^n) + \frac{1}{\sigma}\Delta B^n + \frac{k}{\sigma}T_n(j^n \cdot \nabla B^n - B^n \cdot \nabla j^n - B^* \cdot \nabla j^n).
		\end{align*}
		By defining $F^n : V^s_n \times H^s_n \to V^s_n \times H^s_n$ with $F^n := (F^n_1,F^n_2)$, the existence of a unique local solultion $(v^n,B^n) \in C^1([0,T^n_*); V^s_n \times H^s_n)$ to  \eqref{HMHD_app} follows as previously. 
		
		\textbf{Step 2: $H^s$ estimate and global uniform  bound.}
		It can be seen from \eqref{HMHD_app} that 
		\begin{equation*}
			\frac{1}{2}\frac{d}{dt}\|(v^n,B^n)\|^2_{H^s} + \nu \|v^n\|^2_{H^s} + \frac{1}{\sigma}\|\nabla B^n\|^2_{H^s} =: \sum^9_{k = 1} I_k,
		\end{equation*}
		where for some $\epsilon \in (0,1)$, since $s > \frac{d}{2} + 1$ 
		\begin{align*}
			I_1 &= - \int_{\mathbb{R}^d} [J^s(\mathbb{P}(T_n(v^n \cdot \nabla v^n)) 
			- v^n \cdot \nabla J^sv^n] 
			\cdot J^s v^n \,dx \leq C(s)\|v^n\|^3_{H^s};
			\\
			I_2 &= \int_{\mathbb{R}^d} J^s(\mathbb{P}(T_n(B^n \cdot \nabla B^n))   
			\cdot J^s v^n \,dx \leq \frac{\epsilon}{\sigma}\|\nabla B^n\|_{H^s} + C(\epsilon,\sigma,s)\|v^n\|^2_{H^s}\|B^n\|^2_{H^s};
			\\
			I_3 &= \int_{\mathbb{R}^d} J^s(\mathbb{P}(T_n(B^* \cdot \nabla B^n)) \cdot J^s v^n \,dx = \int_{\mathbb{R}^d} B^* \cdot \nabla J^s B^n \cdot J^s v^n \,dx; 
			\\
			I_4 &= -\int_{\mathbb{R}^d} J^s(T_n(v^n \cdot \nabla B^n)) \cdot J^s B^n \,dx \leq \frac{\epsilon}{\sigma}\|\nabla B^n\|_{H^s} + C(\epsilon,\sigma,s)\|v^n\|^2_{H^s}\|B^n\|^2_{H^s};
			\\
			I_5 &= \int_{\mathbb{R}^d} J^s(T_n(B^n \cdot \nabla v^n)) 
			\cdot J^s B^n \,dx \leq \frac{\epsilon}{\sigma}\|\nabla B^n\|_{H^s} + C(\epsilon,\sigma,s)\|v^n\|^2_{H^s}\|B^n\|^2_{H^s};
			\\
			I_6 &= \int_{\mathbb{R}^d} J^s(T_n(B^* \cdot \nabla v^n)) \cdot J^s B^n \,dx = \int_{\mathbb{R}^d} B^* \cdot \nabla J^sv^n \cdot J^s B^n \,dx = -I_3;
			\\
			I_7 &= \frac{\kappa}{\sigma} \int_{\mathbb{R}^d} J^s(T_n(j^n \cdot \nabla B^n)) \cdot J^s B^n \,dx \leq \frac{\kappa}{\sigma}C(s)\|\nabla B^n\|^2_{H^s}\|B^n\|_{H^s};
			\\
			I_8 &= -\frac{\kappa}{\sigma} \int_{\mathbb{R}^d} J^s(T_n(B^n \cdot \nabla j^n)) \cdot J^s B^n \,dx \leq \frac{\kappa}{\sigma}C(s)\|\nabla B^n\|^2_{H^s}\|B^n\|_{H^s};
			\\
			I_9 &= -\frac{\kappa}{\sigma} \int_{\mathbb{R}^d} J^s(T_n(B^* \cdot \nabla j^n)) \cdot J^s B^n \,dx = -\frac{\kappa}{\sigma} \int_{\mathbb{R}^d} J^s(-j^n \cdot \nabla B^* + B^* \cdot \nabla j^n) \cdot J^s B^n \,dx
			\\
			&= -\frac{\kappa}{\sigma} \int_{\mathbb{R}^d} J^s(\nabla \times(j^n \times B^*)) \cdot J^s B^n \,dx 
			= -\frac{\kappa}{\sigma} \int_{\mathbb{R}^d} J^s(j^n \times B^*) \cdot J^s j^n \,dx = 0.
		\end{align*}
		By chossing $\epsilon = \frac{1}{6}$
		\begin{equation*}
			\frac{d}{dt}\|(v^n,B^n)\|^2_{H^s} + \nu \|v^n\|^2_{H^s} + \frac{1}{\sigma}\|\nabla B^n\|^2_{H^s} \leq C(s)\|v^n\|^3_{H^s} + C(\sigma,s)\|v^n\|^2_{H^s}\|B^n\|^2_{H^s} + C(\kappa,\sigma,s)\|B^n\|_{H^s}\|\nabla B^n\|^2_{H^s}.
		\end{equation*}
		Similar to the previous parts, under the smallness assumption on initial data, it follows that for $t \geq 0$
		\begin{equation*}
			\|(v^n,B^n)(t)\|^2_{H^s} + \int^t_0 \nu \|v^n\|^2_{H^s} + \frac{1}{\sigma}\|\nabla B^n\|^2_{H^s} \,d\tau \leq 2\epsilon^2_0.
		\end{equation*}
		
		\textbf{Step 3: Cauchy sequence, pass to the limit and uniqueness.} Thanks to the uniform bound obtained in the prevous step, we then can prove that $(v^n,B^n)$ and $(v^n,\nabla B^n)$ are Cauchy sequences in $L^\infty(0,\infty;L^2(\mathbb{R}^d))$ and $L^2(0,\infty;L^2(\mathbb{R}^d))$, respectively. That allows us to pass to the limit and to obtain 
		\begin{equation} \label{HMHD*_bound}
			\|(v,B)(t)\|^2_{H^s} + \int^t_0 \nu \|v\|^2_{H^s} + \frac{1}{\sigma}\|\nabla B\|^2_{H^s} \,d\tau \leq 2\epsilon^2_0.
		\end{equation}
		The proof of the uniqueness is standard.  This step is similar to the previous parts then we omit further details.
		
		\textbf{Step 4a: Large-time behavior: implicit rate.}  It can be seen from \eqref{HMHD**} that
		\begin{align*}
			\frac{1}{2}\frac{d}{dt}\|v\|^2_{L^2} + \nu \|v\|^2_{L^2} &=: R_5, 
			\\
			\frac{1}{2}\frac{d}{dt}\|B\|^2_{\dot{H}^1} + \frac{1}{\sigma} \|B\|^2_{\dot{H}^2} &=: \sum^{11}_{k=6} R_k,
		\end{align*}
		where for some $\epsilon \in (0,1)$, since $s > \frac{d}{2} + 1$
		\begin{align*}
			R_5 &= \int_{\mathbb{R}^d} (B+B^*) \cdot \nabla B \cdot v \,dx \leq C(s)\left(\|B\|_{H^s} + \|B^*\|_{L^\infty}\right)\left(\|v\|^2_{L^2} + \|B\|^2_{\dot{H}^1}\right);
			\\
			R_6 &= \int_{\mathbb{R}^d} v \cdot \nabla B \cdot \Delta B \,dx \leq \frac{\epsilon}{\sigma} \|B\|^2_{\dot{H}^2} + C(\epsilon,\sigma,s) \|v\|^2_{H^s}\|B\|^2_{\dot{H}^1};
			\\
			R_7 &= -\int_{\mathbb{R}^d} B \cdot \nabla v \cdot \Delta B \,dx \leq \frac{\epsilon}{\sigma} \|B\|^2_{\dot{H}^2} + C(\epsilon,\sigma,s) \|v\|^2_{H^s}\|B\|^2_{H^s};
			\\
			R_8 &= -\int_{\mathbb{R}^d} B^* \cdot \nabla v \cdot \Delta B \,dx \leq \frac{\epsilon}{\sigma} \|B\|^2_{\dot{H}^2} + C(\epsilon,\sigma,s) \|v\|^2_{H^s}\|B^*\|^2_{L^\infty};
			\\
			R_9 &= -\frac{\kappa}{\sigma} \int_{\mathbb{R}^d} j \cdot \nabla B \cdot \Delta B \,dx \leq \frac{\epsilon}{\sigma} \|B\|^2_{\dot{H}^2} + C(\epsilon,\sigma,s) \|B\|^2_{H^s}\|B\|^2_{\dot{H}^1};
			\\
			R_{10} &= \frac{\kappa}{\sigma} \int_{\mathbb{R}^d} B \cdot \nabla j \cdot \Delta B \,dx \leq \frac{\epsilon}{\sigma} \|B\|^2_{\dot{H}^2} + C(\epsilon,\sigma,s) \|B\|^2_{H^s}\|B\|^2_{\dot{H}^2}; 
			\\
			R_{11} &= \frac{\kappa}{\sigma} \int_{\mathbb{R}^d} B^* \cdot \nabla j \cdot \Delta B \,dx \leq \frac{\epsilon}{\sigma} \|B\|^2_{\dot{H}^2} + C(\epsilon,\sigma,s) \|B^*\|^2_{L^\infty}\|B\|^2_{\dot{H}^2}.
		\end{align*}
		Therefore, similar to Step 5 in the proof of Theorem \ref{theo3}, by choosing $\epsilon = \frac{1}{12}$, and using the energy estimate and \eqref{HMHD*_bound}, we have $\|(v,\nabla B)(t)\|_{L^2} \to 0$ as $t \to \infty$. In addition, for $s' \in [0,s)$ and $s'' \in [1,s)$, interpolation inequalities and Lemma \ref{lem-Agmon} yield $\|v(t)\|_{H^{s'}}$, $\|B(t)\|_{L^p}$ for $p \in (2,\infty]$, $\|B(t)\|_{L^q_{\textnormal{loc}}}$ for $q \in [1,\infty]$ and $\|B(t)\|_{\dot{H}^{s''}} \to 0$  as $t \to \infty$. The convergence in time of $(\partial_t v, \partial_t B)$ follows from that of $(v,B)$ and the following estimates for $r \in [0,s-2)$
		\begin{align*}
			\|\partial_t v(t)\|_{H^r} &\leq \|v(t) \cdot \nabla v(t)\|_{H^r} + \nu \|v(t)\|_{H^r} + \|B(t) \cdot \nabla B(t)\|_{H^r} + \|B^* \cdot \nabla B(t)\|_{H^r},
			\\
			\|\partial_t B(t)\|_{H^r} &\leq \|v(t) \cdot \nabla B(t)\|_{H^r} + \frac{1}{\sigma} \|\Delta B(t)\|_{H^r} + \|B(t) \cdot \nabla v(t)\|_{H^r} + \|B^* \cdot \nabla v(t)\|_{H^r} 
			\\
			&\quad + \frac{\kappa}{\sigma}\left(\|j(t) \cdot \nabla B(t)\|_{H^r} + \|B(t) \cdot \nabla j(t)\|_{H^r} + \|B^* \cdot \nabla j(t)\|_{H^r}\right).
		\end{align*}
		We only show how to deal with the most difficult term as follows, other ones can be done similarly. Indeed,  since $s > \frac{d}{2} + 1$ then for some suitable $s' \in (\frac{d}{2} + 1,s)$ and $r \geq 0$ with $r + 1 \leq s' - 1$
		\begin{align*}
			\|j(t) \cdot \nabla B(t)\|_{H^r} &\leq C(s)\|j(t) \otimes B(t)\|_{H^{r+1}} \leq C(s) \|j(t) \otimes B(t)\|_{H^{s'-1}} 
			\\
			&\leq C(s) \|j(t)\|_{H^{s'-1}} \|B(t)\|_{H^{s'-1}} \\
			&\leq C(s) \left(\|\nabla B(t)\|_{L^2} + \|B(t)\|_{\dot{H}^{s'}}\right)\|B(t)\|_{H^s} \quad \to 0 \quad \text{as} \quad t \to \infty.
		\end{align*}

		\textbf{Step 4b: Large-time behavior: explicit rate.} If in addition $(v_0,B_0) \in L^1$ then an explicit rate of convergence in suitable norms can be established. More precisely, we can follow closely the ideas in \cite{Schonbek_1985,Schonbek_1986} (for the Navier-Stokes equations) by applying the Fourier-splitting method  to obtain the $L^2$ decay in time of $(v,B)$. Indeed, there is a new difficulty, which is related to the perturbation terms (see $(S_3,S_6,S_9)$ below) since at some point we need to control $L^\infty$ norm of such a bab term $\mathcal{F}(B^* \otimes B)$ and it seems leading to the $L^1$ estimate of $B$, which has not been obtained yet. Thus, the techniques in \cite{Schonbek_1985,Schonbek_1986} can not be applied directly and new ideas should be suggested. To overcome this new issue, we will estimate more carefully the bad term, especially using the velocity damping kernel, which allows us to gain more good factors. For fixed $\nu > 0$, by defining for $(x,t) \in \mathbb{R}^d \times (0,\infty)$ and for some $m \in \mathbb{N}$ will be chosen later
		\begin{equation*}
			(v_\nu,B_\nu)(x,t) := \frac{m}{\nu}(v,B)\left(x,\frac{mt}{\nu}\right), \quad p^*_\nu(x,t) := \frac{m^2}{\nu^2}p^*\left(x,\frac{mt}{\nu}\right), \quad B^*_\nu := \frac{m}{\nu}B^* \quad \text{and}\quad j_\nu := \nabla \times B_\nu,
		\end{equation*}
		we reduce \eqref{HMHD**} to 
		\begin{equation} \label{HMHD*nu} 
			\left\{
			\begin{aligned}
				\partial_t v_\nu + v_\nu \cdot \nabla v_\nu &= -mv_\nu + B_\nu \cdot \nabla B_\nu + B^*_\nu \cdot \nabla B_\nu - \nabla p^*_\nu, 
				\\
				\partial_t B_\nu + v_\nu \cdot \nabla B_\nu  &= B_\nu \cdot \nabla v_\nu + B^*_\nu \cdot \nabla v_\nu + \frac{m}{\nu\sigma}\Delta B_\nu +  \frac{\kappa}{\sigma} (j_\nu \cdot \nabla B_\nu - B_\nu \cdot \nabla j_\nu - B^*_\nu \cdot \nabla j_\nu),
				\\
				\textnormal{div}\, v_\nu = \textnormal{div}\, B_\nu &= 0,
			\end{aligned}
			\right.
		\end{equation}
		with the initial data is given by $(v_\nu,B_\nu)_{|_{t=0}} = m\nu^{-1}(v_0,B_0)$. From the previous step, we know the existence and uniqueness of solutions $(v_\nu,B_\nu)$ to \eqref{HMHD*nu} satisfying $(v_\nu,B_\nu) \in L^\infty(0,\infty;H^s(\mathbb{R}^d))$, $(v_\nu,\nabla B_\nu) \in L^2(0,\infty;H^s(\mathbb{R}^d))$ for $s > \frac{d}{2} + 1$ and the estimate \eqref{HMHD*_bound} with $C(\nu,m)\epsilon^2_0$ instead of $2\epsilon^2_0$.  
		It can be seen from the energy balance of \eqref{HMHD*nu} that
		\begin{equation*}
			\frac{d}{dt}\left(h_\nu(t) := \int_{\mathbb{R}^d} |\mathcal{F}(v_\nu)(t)|^2 + |\mathcal{F}(B_\nu)(t)|^2\,d\xi\right) = - 2m \int_{\mathbb{R}^d} |\mathcal{F}(v_\nu)(t)|^2 \,d\xi - \frac{2m}{\nu\sigma} \int_{\mathbb{R}^d} |\xi|^2|\mathcal{F}(B_\nu)(t)|^2\,d\xi.
		\end{equation*}
		As in \cite{Schonbek_1986}, for some $\beta > 0$ to be determined later, we define for $t > 0$
		\begin{align*}
			S(t) := \{\xi \in \mathbb{R}^d : |\xi| \leq g(t)\} \quad \text{and} \quad \tilde{g}(t) := \exp\left\{\beta\int^t_0 g^2 \,d\tau\right\}  \quad \text{with}\quad g^2(t) := \frac{m}{\beta(e+t)\log(e+t)}.
		\end{align*}
		For $S^c(t) := \mathbb{R}^d \setminus S(t)$, by choosing $\beta = \frac{2m}{\nu\sigma}$ and using the fact that $\beta g^2 \leq m$, it follows that
		\begin{align*}
			\frac{d}{dt}\left(\tilde{g}(t)h_\nu(t)\right) &= \frac{d}{dt}(\tilde{g}(t)) h_\nu(t) + \tilde{g}(t)\frac{d}{dt}h_\nu(t)
			\\
			&\leq \beta g^2(t)\tilde{g}(t)h_\nu(t) + \tilde{g}(t) \left(-2\int_{S^c(t)} |\mathcal{F}(v_\nu)(t)|^2 \,d\xi - \frac{2}{\nu\sigma}g^2(t) \int_{S^c(t)} |\mathcal{F}(B_\nu)(t)|^2\,d\xi\right)
			\\
			&\leq \beta g^2(t) \tilde{g}(t) \left(\int_{S(t)} |\mathcal{F}(v_\nu)(t)|^2 + |\mathcal{F}(B_\nu)(t)|^2\,d\xi =: S_{v_\nu}(t) + S_{B_\nu}(t)\right).
		\end{align*}
		It remains to control the integral on the right-hand side. It can be seen that
		\begin{align*}
			\partial_t \mathcal{F}(v_\nu) + m\mathcal{F}(v_\nu) &= \mathcal{F}(\mathbb{P}(-v_\nu \cdot \nabla v_\nu + B_\nu \cdot \nabla B_\nu + B^*_\nu \cdot \nabla B_\nu)),
			\\
			\partial_t \mathcal{F}(B_\nu) + \frac{m}{\nu\sigma}|\xi^2| \mathcal{F}(B_\nu) &=  \mathcal{F}(-v_\nu \cdot \nabla B_\nu + B_\nu \cdot \nabla v_\nu + B^*_\nu \cdot \nabla v_\nu +  \frac{\kappa}{\sigma}(j_\nu \cdot \nabla B_\nu - B_\nu \cdot \nabla j_\nu - B^*_\nu \cdot \nabla j_\nu)),
		\end{align*}
		it implies that for $t > 0$ 
		\begin{align*}
			S_{v_\nu}(t) &\leq Cg^d(t) \|v_0\|^2_{L^1} + \int_{S(t)} \left(\int^t_0 \exp\{-m(t-\tau)\}|\mathcal{F}(\mathbb{P}(-v_\nu \cdot \nabla v_\nu + B_\nu \cdot \nabla B_\nu + B^*_\nu \cdot \nabla B_\nu))| \,d\tau \right)^2 d\xi 
			\\
			&\leq Cg^d(t)\|v_0\|^2_{L^1} +C\sum^3_{k=1} S_k.
		\end{align*}
		We are going to estimate each term on the right-hand side by using \eqref{HMHD*_bound} as follows 
		\begin{align*}
			S_1 &:= \int_{S(t)}\left(\int^t_0 \exp\{-m(t-\tau)\} |\mathcal{F}(\mathbb{P}(v_\nu \cdot \nabla v_\nu))| \,d\tau\right)^2\,d\xi 
			\\
			&\leq  \int_{S(t)} \int^t_0 \exp\{-2m(t-\tau)\} \,d\tau \int^t_0 |\mathcal{F}(v_\nu \cdot \nabla v_\nu)|^2 \,d\tau d\xi
			\\
			&\leq C(m) g^2(t) \int^t_0 \int_{S(t)} |\mathcal{F}(v_\nu \otimes v_\nu)|^2 \,d\xi d\tau 
			\\
			&\leq C(m) g^{d+2}(t) \int^t_0 \|\mathcal{F}(v_\nu \otimes v_\nu)\|^2_{L^\infty}  \,d\tau 
			\\
			&\leq C(m) g^{d+2}(t) \int^t_0 \|v_\nu \otimes v_\nu\|^2_{L^1}  \,d\tau 
			\\
			&\leq C(\epsilon_0,\nu,m) g^{d+2}(t);
			\\
			S_2 &:= \int_{S(t)}\left(\int^t_0 \exp\{-m(t-\tau)\} |\mathcal{F}(\mathbb{P}(B_\nu \cdot \nabla B_\nu))| \,d\tau\right)^2 d\xi \leq C(\epsilon_0,\nu,m) tg^{d+2}(t);
			\\
			S_3 &:= \int_{S(t)}\left(\int^t_0 \exp\{-m(t-\tau)\} |\mathcal{F}(\mathbb{P}(B^*_\nu \cdot \nabla B_\nu))| \,d\tau\right)^2 d\xi
			\\
			&\leq \int_{S(t)} \int^t_0 \exp\{-2m(t-\tau)\}\,d\tau \int^t_0 |\mathcal{F}(B^*_\nu \cdot \nabla B_\nu)|^2 \,d\tau d\xi
			\\
			&\leq C(m) \|B^*\|^2_{L^\infty} g^2(t) \int^t_0 \int_{\mathbb{R}^d}  |\mathcal{F}(B_\nu)|^2 \,d\xi d\tau 
			\\
			&\leq C(\epsilon_0,\epsilon_*,\nu,m) t g^2(t).
		\end{align*}
		Similarly, we find that 
		\begin{align*}
			S_{B_\nu}(t) &\leq Cg^d(t) \|B_0\|^2_{L^1}  + C\int_{S(t)} \left(\int^t_0 \exp\left\{-\frac{m|\xi|^2}{\nu \sigma}(t-\tau)\right\} |\mathcal{F}(-v_\nu \cdot \nabla B_\nu + B_\nu \cdot \nabla v_\nu + B^*_\nu \cdot \nabla v_\nu)| \,d\tau \right)^2d\xi 
			\\
			&\quad +
			C(\kappa,\sigma)\int_{S(t)} \left(\int^t_0 \exp\left\{-\frac{m|\xi|^2}{\nu \sigma}(t-\tau)\right\} |\mathcal{F}(j_\nu \cdot \nabla B_\nu - B_\nu \cdot \nabla j_\nu - B^*_\nu \cdot \nabla j_\nu)| \,d\tau \right)^2 d\xi
			\\
			&\leq Cg^d(t) \|B_0\|^2_{L^1} + C(\kappa,\sigma)\sum^9_{k=4} S_k,
		\end{align*}
		where each term on the right-hand side is bounded by
		\begin{align*}
			S_4 &:= \int_{S(t)} \left(\int^t_0 \exp\left\{-\frac{m|\xi|^2}{\nu \sigma}(t-\tau)\right\} |\mathcal{F}(v_\nu \cdot \nabla B_\nu)| \, d\tau\right)^2 d\xi \leq C(\epsilon_0,\nu,m) tg^{d+2}(t);
			\\
			S_5 &:= \int_{S(t)} \left(\int^t_0 \exp\left\{-\frac{m|\xi|^2}{\nu \sigma}(t-\tau)\right\} |\mathcal{F}(B_\nu \cdot \nabla v_\nu)| \, d\tau\right)^2 d\xi \leq C(\epsilon_0,\nu,m) tg^{d+2}(t);
			\\
			S_6 &:= \int_{S(t)} \left(\int^t_0 \exp\left\{-\frac{m|\xi|^2}{\nu \sigma}(t-\tau)\right\} |\mathcal{F}(B^*_\nu \cdot \nabla v_\nu)| \, d\tau\right)^2 d\xi 
			\leq C(\epsilon_0,\epsilon_*,\nu,m) tg^2(t);
			\\
			S_7 &:= C(\kappa,\sigma)\int_{S(t)} \left(\int^t_0 \exp\left\{-\frac{m|\xi|^2}{\nu \sigma}(t-\tau)\right\} |\mathcal{F}(j_\nu \cdot \nabla B_\nu)| \, d\tau\right)^2 d\xi 
			\\
			&\leq C(\kappa,\sigma,m) t g^{2+d}(t) \int^t_0 \|j_\nu \otimes B_\nu\|^2_{L^1}  \,d\tau 
			\\
			&\leq C(\kappa,\sigma,m) t g^{2+d}(t) \int^t_0 \|\nabla B\|^2_{L^2} \|B_\nu\|^2_{L^2}  \,d\tau 
			\\
			&\leq C(\epsilon_0,\kappa,\nu,m,\sigma) tg^{d+2}(t);
			\\
			S_8 &:= C(\kappa,\sigma)\int_{S(t)} \left(\int^t_0 \exp\left\{-\frac{m|\xi|^2}{\nu \sigma}(t-\tau)\right\} |\mathcal{F}(B_\nu \cdot \nabla j_\nu)| \, d\tau\right)^2 d\xi \leq C(\epsilon_0,\kappa,\nu,m,\sigma) tg^{d+2}(t);
			\\
			S_9 &:= C(\kappa,\sigma)\int_{S(t)} \left(\int^t_0 \exp\left\{-\frac{m|\xi|^2}{\nu \sigma}(t-\tau)\right\} |\mathcal{F}(B^*_\nu \cdot \nabla j_\nu)| \, d\tau\right)^2 d\xi  
			\leq C(\epsilon_0,\epsilon_*,\kappa,\nu,m,\sigma) tg^2(t). 
		\end{align*}
		Therefore, 
		\begin{equation} \label{HMHD-L2-decay1}
			\frac{d}{dt}\left(\tilde{g}(t) h_\nu(t)\right) 
			\\
			\leq C(\epsilon_0,\epsilon_*,\kappa,\nu,m,\sigma,v_0,B_0) \tilde{g}(t) (e+t)g^4(t).
		\end{equation}
		Moreover, it can be easily checked that
		\begin{align*}
			\tilde{g}(t) = \exp\left\{\beta \int^t_0 g^2\,d\tau \right\} = \log^m(e+t),
		\end{align*}
		which by choosing $m = 2$ yields for $s > 0$
		\begin{equation*}
			G := \int^s_0 \tilde{g}(t) (e+t)g^4(t) \,dt = \int^s_0 \frac{1}{(e+t) \log^{2-m}(e+t)}\,dt \leq \log(e+s). 
		\end{equation*}
		Thus, \eqref{HMHD-L2-decay1} implies that for $t > 0$
		\begin{equation} \label{HMHD-L2-decay2}
			\|(v_\nu,B_\nu)(t)\|^2_{L^2} \leq  C(\epsilon_0,\epsilon_*,\kappa,\nu,m,\sigma,v_0,B_0) \log^{-1}(e+t).
		\end{equation} 
		By using \eqref{HMHD-L2-decay2}, we observe that for $C = C(\epsilon_0,\epsilon_*,\kappa,\nu,m,\sigma,v_0,B_0)$
		\begin{equation*}
			\int^t_0 \|(v_\nu,B_\nu)\|^2_{L^2} \,d\tau \leq C \int^t_0 \log^{-1}(e+\tau) \,d\tau = C\int^{\log(e+t)}_1 e^u u^{-1} \,du 
			\leq C(e+t)\log^{-1}(e+t),
		\end{equation*}
		which will improve the estimates of $S_3$, $S_6$ and $S_9$ as follows
		\begin{equation*}
			S_3,S_6,S_9 \leq C(\epsilon_0,\epsilon_*,\kappa,\nu,m,\sigma) (e+t) g^2(t) \log^{-1}(e+t),
		\end{equation*}
		that leads to a multiplication by the factor $\log^{-1}(e+t)$ to the right-hand side of \eqref{HMHD-L2-decay1}. Therefore, by changing the estimate of $G$ with choosing again $m = 3$ as follows
		\begin{equation*}
			\tilde{G} := \int^s_0 \tilde{g}(t) (e+t)g^4(t) \log^{-1}(e+t)\,dt = \int^s_0 \frac{1}{(e+t) \log^{3-m}(e+t)}\,dt \leq  \log(e+s),
		\end{equation*}
		the estimate \eqref{HMHD-L2-decay2} can be replaced by
		\begin{equation} \label{HMHD-L2-decay3}
			\|(v_\nu,B_\nu)(t)\|^2_{L^2} \leq  C(\epsilon_0,\epsilon_*,\kappa,\nu,m,\sigma,v_0,B_0) \log^{-2}(e+t).
		\end{equation}
		By repeating this iteration, it can be seen that \eqref{HMHD-L2-decay3} can be improved  for each $m \in \mathbb{N}, m \geq 3$
		\begin{equation*} 
			\|(v_\nu,B_\nu)(t)\|^2_{L^2} \leq  C(\epsilon_0,\epsilon_*,\kappa,\nu,m,\sigma,v_0,B_0) \log^{-(m-1)}(e+t).
		\end{equation*}
		That finishes the proof by combining the above inequality, \eqref{HMHD-L2-decay2}, a change of variables from $(v_\nu,B_\nu)$ to $(v,B)$ and interpolation inequalities.
	\end{proof}
	
	\begin{remark} \label{re-3D} (The case $d = 3$)
		In Step 4 above, we condsidered both cases $d = 2$ and $d = 3$ at the same time. However, in the three-dimensional case, it would be expected to obtain a faster $L^2$ decay rate such as $(t+1)^{-\frac{3}{4}}$, which is known in the case of either the Navier-Stokes \cite{Schonbek_1985,Schonbek_1986} or the Hall-MHD equations \cite{Chae-Schonbek_2013}. We now give a remark on this case, where it seems to be difficult to obtain polynomial decay in time compared to the case of the Navier-Stokes and Hall-MHD equations (see \cite{Chae-Schonbek_2013,Schonbek_1985,Schonbek_1986}), unless new estimates of $S_3,S_6$ and $S_9$ are provided. Similarly, for fixed $\nu > 0$, by defining for $(x,t) \in \mathbb{R}^3 \times (0,\infty)$
		\begin{equation*}
			(v_\nu,B_\nu)(x,t) := \frac{3}{\nu}(v,B)\left(x,\frac{3t}{\nu}\right), \quad p_\nu(x,t) := \frac{9}{\nu^2}p\left(x,\frac{3t}{2\nu}\right), \quad B^*_\nu := \frac{3}{\nu} B^* \quad \text{and}\quad j_\nu := \nabla \times B_\nu,
		\end{equation*}
		we reduce \eqref{HMHD**} to 
		\begin{equation} \label{HMHD*nu2} 
			\left\{
			\begin{aligned}
				\partial_t v_\nu + v_\nu \cdot \nabla v_\nu &= -3v_\nu + B_\nu \cdot \nabla B_\nu + B^*_\nu \cdot \nabla B_\nu + \nabla p_\nu, 
				\\
				\partial_t B_\nu + v_\nu \cdot \nabla B_\nu  &= B_\nu \cdot \nabla v_\nu + B^*_\nu \cdot \nabla v_\nu + \frac{3}{\nu\sigma}\Delta B_\nu +  \frac{\kappa}{\sigma} (j_\nu \cdot \nabla B_\nu - B_\nu \cdot \nabla j_\nu - B^*_\nu \cdot \nabla j_\nu),
				\\
				\textnormal{div}\, v_\nu = \textnormal{div}\, B_\nu &= 0,
			\end{aligned}
			\right.
		\end{equation}
		with the initial data is given by $(v_\nu,B_\nu)_{|_{t=0}} = 3\nu^{-1}(v_0,B_0)$. From Step 3 above, we know that $(v_\nu,B_\nu)$ to \eqref{HMHD*nu} satisfying $(v_\nu,B_\nu) \in L^\infty(0,\infty;H^s(\mathbb{R}^3))$, $(v_\nu,\nabla B_\nu) \in L^2(0,\infty;H^s(\mathbb{R}^3))$ for $s > \frac{5}{2}$ and the estimate \eqref{HMHD*_bound} with $C(\nu)\epsilon^2_0$ instead of $2\epsilon^2_0$.  Therefore, the energy balance of \eqref{HMHD*nu2} is given by
		\begin{equation*}
			\frac{d}{dt}\left(h_\nu(t) := \int_{\mathbb{R}^3} |\mathcal{F}(v_\nu)(t)|^2 + |\mathcal{F}(B_\nu)(t)|^2\,d\xi\right) = - 6 \int_{\mathbb{R}^3} |\mathcal{F}(v_\nu)(t)|^2 \,d\xi - \frac{6}{\nu\sigma} \int_{\mathbb{R}^3} |\xi|^2|\mathcal{F}(B_\nu)(t)|^2\,d\xi =: F_1 + F_2.
		\end{equation*}
		For some $\beta > 0$ to be determined later, by defining
		\begin{align*}
			S(t) := \{\xi \in \mathbb{R}^3 : |\xi| \leq g(t)\} \quad \text{with}\quad g^2(t) := \frac{3}{\beta(t+1)} \quad\text{and}\quad  S^c(t) := \mathbb{R}^3 \setminus S(t),
		\end{align*}
		and using $\beta g^2 \leq 3$, we find that
		\begin{align*}
			F_1 &\leq -3\int_{S(t)} |\mathcal{F}(v_\nu)(t)|^2 \,d\xi -3 \int_{S^c(t)} |\mathcal{F}(v_\nu)(t)|^2 \,d\xi  
			\\
			&\leq -3\int_{S(t)} |\mathcal{F}(v_\nu)(t)|^2 \,d\xi -\frac{3}{t+1} \int_{S^c(t)} |\mathcal{F}(v_\nu)(t)|^2 \,d\xi  \pm \frac{3}{t+1}\int_{S(t)} |\mathcal{F}(v_\nu)(t)|^2 \,d\xi 
			\\
			&\leq 
			-\frac{3}{t+1} \int_{\mathbb{R}^3} |\mathcal{F}(v_\nu)(t)|^2 \,d\xi  
			+ \frac{3}{t+1}\int_{S(t)} |\mathcal{F}(v_\nu)(t)|^2 \,d\xi, 
		\end{align*}
		and by choosing $\beta := \frac{3}{\nu\sigma}$, we obtain
		\begin{align*}
			F_2 &\leq - \frac{3}{\nu\sigma} \int_{S(t)} |\xi|^2|\mathcal{F}(B_\nu)(t)|^2\,d\xi - \frac{3}{\nu\sigma} \int_{S^c(t)} |\xi|^2|\mathcal{F}(B_\nu)(t)|^2\,d\xi
			\\
			&\leq - \frac{3}{\nu\sigma} \int_{S(t)} |\xi|^2|\mathcal{F}(B_\nu)(t)|^2\,d\xi - \frac{3}{t+1} \int_{S^c(t)} |\mathcal{F}(B_\nu)|^2 \,d\xi \pm \frac{3}{t+1}\int_{S(t)} |\mathcal{F}(B_\nu)(t)|^2 \,d\xi 
			\\
			&\leq
			- \frac{3}{t+1} \int_{\mathbb{R}^3} |\mathcal{F}(B_\nu)(t)|^2 \,d\xi + \frac{3}{t+1}\int_{S(t)} |\mathcal{F}(B_\nu)(t)|^2 \,d\xi.
		\end{align*}
		Therefore,
		\begin{equation*}
			\frac{d}{dt} h_\nu(t) + \frac{3}{t+1}h_\nu(t) \leq \frac{3}{t+1}\int_{S(t)} |\mathcal{F}(v_\nu)(t)|^2 + |\mathcal{F}(B_\nu)(t)|^2 \,d\xi 
		\end{equation*}
		and by multiplying $(t+1)^3$ both sides
		\begin{equation*}
			\frac{d}{dt}(h_\nu(t)(t+1)^3) = (t+1)^3\frac{d}{dt} h_\nu(t) + 3(t+1)^2h_\nu(t) \leq 3(t+1)^2 \int_{S(t)} |\mathcal{F}(v_\nu)(t)|^2 + |\mathcal{F}(B_\nu)(t)|^2 \,d\xi.
		\end{equation*}
		It remains to bound the integral on the right-hand side. Similar to the previous case, it follows that 
		\begin{align*}
			\partial_t \mathcal{F}(v_\nu) + 3\mathcal{F}(v_\nu) &= \mathcal{F}(\mathbb{P}(-v_\nu \cdot \nabla v_\nu + B_\nu \cdot \nabla B_\nu + B^*_\nu \cdot \nabla B_\nu)),
			\\
			\partial_t \mathcal{F}(B_\nu) + \frac{3}{\nu\sigma}|\xi^2| \mathcal{F}(B_\nu) &=  \mathcal{F}(-v_\nu \cdot \nabla B_\nu + B_\nu \cdot \nabla v_\nu + B^*_\nu \cdot \nabla v_\nu +  \frac{\kappa}{\sigma}(j_\nu \cdot \nabla B_\nu - B_\nu \cdot \nabla j_\nu - B^*_\nu \cdot \nabla j_\nu)),
		\end{align*}
		which by using the same notation for $S_i$ with $i \in \{1,...,9\}$ with a small modification in the exponential factors and assuming $(v_0,B_0) \in L^1$ yields
		\begin{align*}
			\int_{S(t)} |\mathcal{F}(v_\nu)(t)|^2 + |\mathcal{F}(B_\nu)(t)|^2 \,d\xi &\leq Cg^3(t)\|(v_0,B_0)\|^2_{L^1} + C(\kappa,\sigma)\sum^9_{k=1} S_k,
		\end{align*}
		where the estimates of $S_i$ for $i \notin \{3,6,9\}$ can be given as in Step 4 above, for the remaining terms, if we use again those estimates, i.e., $S_3, S_6, S_9 \leq Ctg^2 \leq C$ for some positive constant $C$ depending on the parameters, then we will find that
		\begin{equation*}
			\frac{d}{dt}(h_\nu(t)(t+1)^3) \leq C(t+1)^2,
		\end{equation*}
		which unfortunately does not provide us a decay in time after integrating in time. So as mentioned previously, new ideas should be suggested to overcome this issue. For instance, 
		\begin{align*}
			S_3 &\leq C g^{2+2\gamma_0}(t) \int^t_0 \int_{S(t)}  |\mathcal{F}(B^*_\nu \otimes B_\nu)|^2 |\xi|^{-2\gamma_0} \,d\xi d\tau  \qquad \text{for some}\quad \gamma_0 \in \left(0,\frac{3}{2}\right)
			\\
			&\leq C \|B^*\|^2_{L^\infty} g^{2+2\gamma_0}(t) \int^t_0 \int_{\mathbb{R}^3}  |\mathcal{F}(B_\nu)|^2 |\xi|^{-2\gamma_0} \,d\xi d\tau 
			\\
			&= C \|B^*\|^2_{L^\infty} g^{2+2\gamma_0}(t) \int^t_0 \|B_\nu\|^2_{\dot{H}^{-\gamma_0}} \,d\tau 
			\\
			&\leq C(\epsilon_*,\gamma_0,\nu) g^{2+2\gamma_0}(t) \int^t_0 \|B_\nu\|^2_{L^\frac{6}{3+2\gamma_0}} \,d\tau, 
		\end{align*}
		which leads to the study of $L^p$ estimate of $B_\nu$ for some $p \in (1,2)$. We hope that the above time integral can be controlled nicely, for example, $S_3 \leq Ct^{\alpha_0} g^{2+2\gamma_0}$ for some constant $\alpha_0$ such that $\alpha_0 < 1 + \gamma_0$. It seems hardly to be the case since the standard dinemsional analysis shows that this integral has $3+2\gamma_0$ dimensions. However, if it is the case then that will lead to a polynomial decay rate as $(t+1)^{-(1+\gamma_0 - \alpha_0)}$ at the first level. We leave it as an open question for the interested reader.
	\end{remark}
	
	%
	\section*{Acknowledgements} 
	%
	
	K. Kang’s work is supported by RS-2024-00336346. J. Lee’s work is supported by NRF-2021R1A2C1092830. D. D. Nguyen’s work is supported by NRF-2019R1A2C1084685 and NRF-2021R1A2C1092830. 
	
	%
	\section{Appendix} \label{sec:app}
	%
	
	\subsection{Appendix A: Besov spaces}
	
	Let us quickly recall the definitions of the standard nonhomogeneous and homogeneous Besov spaces, see more details in \cite{Bahouri-Chemin-Danchin_2011}. There exist two smooth radial functions $\chi,\varphi : \mathbb{R}^d \to [0,1]$ for $d \geq 1$ such that 
	\begin{align*}
		&\text{supp}(\chi) \subset \left\{\xi \in \mathbb{R}^d : |\xi| \leq \frac{4}{3}\right\}, && \chi = 1 \quad \text{in} \quad \left\{\xi \in \mathbb{R}^d : |\xi| \leq \frac{3}{4}\right\},&&
		\\
		&\text{supp}(\varphi) \subset \left\{\xi \in \mathbb{R}^d : \frac{3}{4} \leq |\xi| \leq \frac{8}{3}\right\}, &&\varphi(\xi) := \chi\left({\frac{\xi}{2}}\right) - \chi(\xi),&&
		\\
		&\chi(\xi) + \sum_{0 \leq j \in \mathbb{Z}} \varphi(2^{-j}\xi) = 1 \quad \forall \xi \in \mathbb{R}^d, &&\sum_{j \in \mathbb{Z}} \varphi(2^{-j}\xi) = 1 \quad \forall \xi \in \mathbb{R}^d \setminus \{0\}.&&
	\end{align*}
	Defining $\tilde{h} := \mathcal{F}^{-1}(\chi)$ and $h := \mathcal{F}^{-1}(\varphi)$, where $\mathcal{F}^{-1}$ denotes the usual inverse Fourier transform. The nonhomogeneous and homogeneous dyadic blocks are defined by 
	\begin{equation*}
		\Delta_j f := 
		\begin{cases}
			0 &\text{if} \quad j \leq -2,
			\\ 
			\tilde{h}*f &\text{if} \quad j = -1,
			\\
			2^{jd} h(2^j\cdot)*f &\text{if} \quad j \geq 0,
		\end{cases}
		\qquad \text{and} \qquad 
		\dot{\Delta}_j f := 2^{jd} h(2^j\cdot)*f  \quad \forall j \in \mathbb{Z},
	\end{equation*}
	where $*$ stands for the usual convolution operator. Then formally the nonhomogeneous and homogeneous low-frequency cut-off operators are set for any $k \in \mathbb{Z}$ by (see also \cite{Bahouri_2019})
	\begin{equation*}
		S_k f := \sum_{-1 \leq j \leq k-1} \Delta_j f \qquad \text{and} \qquad \dot{S}_k f := 2^{kd} \tilde{h}(2^k\cdot)*f = \sum_{j \leq k-1, j \in \mathbb{Z}} \dot{\Delta}_j f.
	\end{equation*}
	For $s \in \mathbb{R}$ and $p,q \in [1,\infty]$, the nonhomogeneous and homogeneous Besov spaces are established as follows
	\begin{align*}
		B^s_{p,q}(\mathbb{R}^d) &:= \left\{f \in \mathcal{S}'(\mathbb{R}^d) : \|f\|_{B^s_{p,q}(\mathbb{R}^d)} := \|2^{sj}\|\Delta_j f\|_{L^p(\mathbb{R}^d)}\|_{\ell^q(\mathbb{Z})} < \infty\right\},
		\\
		\dot{B}^s_{p,q}(\mathbb{R}^d) &:= \left\{f \in \mathcal{S}'_h(\mathbb{R}^d) : \|f\|_{\dot{B}^s_{p,q}(\mathbb{R}^d)} := \|2^{sj}\|\dot{\Delta}_j f\|_{L^p(\mathbb{R}^d)}\|_{\ell^q(\mathbb{Z})} < \infty\right\},
	\end{align*}
	where $\mathcal{S}'(\mathbb{R}^d)$ denotes the dual space of the usual Schwartz class $\mathcal{S}(\mathbb{R}^d)$, the so-called the space of tempered distributions and 
	\begin{equation*}
		\mathcal{S}'_h(\mathbb{R}^d) := \left\{f \in \mathcal{S}'(\mathbb{R}^d): \lim_{\lambda \to \infty} \|g(\lambda D) f\|_{L^\infty} = 0 \quad \forall g \in C^\infty_0(\mathbb{R}^d)\right\},
	\end{equation*}
	here for any measurable function $g$ on $\mathbb{R}^d$ with at most polynomial growth at infinity, $g(D) f := \mathcal{F}^{-1}(g(\xi)\mathcal{F}(f)(\xi))$. It is also convenient to use the identities $B^s_{2,2}(\mathbb{R}^d) \approx H^s(\mathbb{R}^d)$  and $\dot{B}^s_{2,2}(\mathbb{R}^d) \approx \dot{H}^s(\mathbb{R}^d)$ for $s \in \mathbb{R}$. In addition, the Littlewood–Paley decompositions are given by 
	\begin{equation*}
		f = \sum_{-1 \leq j\in \mathbb{Z}} \Delta_j f \qquad \text{in} \quad \mathcal{S}'(\mathbb{R}^d) \qquad \text{and} \qquad f = \sum_{j\in \mathbb{Z}} \dot{\Delta}_j f \qquad \text{in}\quad  \mathcal{S}'(\mathbb{R}^d) \quad \forall f \in \mathcal{S}'_h(\mathbb{R}^d).
	\end{equation*} 
	We also recall a product rule in homogeneous Besov spaces (see \cite[Corollary 2.55]{Bahouri-Chemin-Danchin_2011}) for $s_1,s_2 \in (-\frac{d}{2},\frac{d}{2})$ and $s_1 + s_2 > 0$
	\begin{equation} \label{paraproduct}
		\|fg\|_{\dot{B}^{s_1+s_2-\frac{d}{2}}_{2,1}(\mathbb{R}^d)} \leq C(d,s_1,s_2) \|f\|_{\dot{H}^{s_1}(\mathbb{R}^d)}\|g\|_{\dot{H}^{s_2}(\mathbb{R}^d)}. 
	\end{equation}
	An application of \eqref{paraproduct} is the following Sobolev product estimate for $s_1,s_2 \in (0,\frac{d}{2})$
	\begin{equation} \label{So-paraproduct}
		\|fg\|_{H^{s_1+s_2-\frac{d}{2}}(\mathbb{R}^d)} \leq C(d,s_1,s_2) \|f\|_{H^{s_1}(\mathbb{R}^d)}\|g\|_{H^{s_2}(\mathbb{R}^d)}. 
	\end{equation}
	Indeed, if $s_1+s_2 \in (0,\frac{d}{2})$ then for $h \in H^{\frac{d}{2}-(s_1+s_2)}$
	\begin{equation*}
		\int_{\mathbb{R}^d} fg h\,dx \leq \|f\|_{L^\frac{2d}{d-2s_1}}\|g\|_{L^\frac{2d}{d-2s_2}}\|h\|_{L^\frac{2d}{d-2\left(\frac{d}{2} -(s_1+s_2)\right)}} \leq C(d,s_1,s_2) \|f\|_{H^{s_1}(\mathbb{R}^d)}\|g\|_{H^{s_2}(\mathbb{R}^d)} \|h\|_{H^{\frac{d}{2}-(s_1+s_2)}}.
	\end{equation*}
	On the other hand, if $s_1+s_2 \geq \frac{d}{2}$ then \eqref{paraproduct} yields
	\begin{align*}
		\|fg\|_{L^2} &\leq \|f\|_{L^\frac{2d}{d-2s_1}}\|g\|_{L^\frac{2d}{d-2\left(\frac{d}{2}-s_1\right)}} \leq C(d,s_1)\|f\|_{\dot{H}^{s_1}} \|g\|_{\dot{H}^{\frac{d}{2}-s_1}} \leq C(d,s_1)\|f\|_{H^{s_1}} \|g\|_{H^{s_2}},
		\\
		\|fg\|_{\dot{H}^{s_1+s_2-\frac{d}{2}}} &\leq C(d,s_1,s_2)\|fg\|_{\dot{B}^{s_1+s_2-\frac{d}{2}}_{2,2}} \leq C(d,s_1,s_2)\|fg\|_{\dot{B}^{s_1+s_2-\frac{d}{2}}_{2,1}} \leq C(d,s_1,s_2)\|f\|_{H^{s_1}} \|g\|_{H^{s_2}}.
	\end{align*}
	It is also convenient to recall the time-space Besov spaces. For $T > 0$, $s \in \mathbb{R}$, $r_0,p_0,q_0 \in [1,\infty]$, the Chemin-Lerner spaces $\tilde{L}^{r_0}(0,T;\dot{B}^s_{p_0,q_0}(\mathbb{R}^d))$ and $\tilde{L}^{r_0}(0,T;B^s_{p_0,q_0}(\mathbb{R}^d))$ were introduced in \cite{Chemin-Lerner_1995} (see \cite{Bahouri-Chemin-Danchin_2011} for more details) and are given as follows
	\begin{align*}
		\tilde{L}^{r_0}(0,T;\dot{B}^s_{p_0,q_0}(\mathbb{R}^d)) &:= \left\{f \in \mathcal{S}'_0(\mathbb{R}^d) :  \|f\|_{\tilde{L}^{r_0}(0,T;\dot{B}^s_{p_0,q_0}(\mathbb{R}^d))} := \|2^{sq}\|\dot{\Delta}_q f\|_{L^{r_0}(0,T;L^{p_0}(\mathbb{R}^d))}\|_{\ell^{q_0}(\mathbb{Z})}  < \infty \right\},
		\\
		\mathcal{S}'_0 (\mathbb{R}^d) &:= \left\{f \in \mathcal{S}'(\mathbb{R}^d) : \lim_{k \to -\infty} \|\dot{S}_k f\|_{L^{r_0}(0,T;L^{p_0}(\mathbb{R}^d))} = 0 \right\},
		\\
		\tilde{L}^{r_0}(0,T;B^s_{p_0,q_0}(\mathbb{R}^d)) &:= \left\{ f \in \mathcal{S}'(\mathbb{R}^d) : \|f\|_{\tilde{L}^{r_0}(0,T;B^s_{p_0,q_0}(\mathbb{R}^d))} := \|2^{sq}\|\Delta_q f\|_{L^{r_0}(0,T;L^{p_0}(\mathbb{R}^d))}\|_{\ell^{q_0}(\mathbb{Z})} < \infty \right\}.
	\end{align*}
	By using Minkowski inequality for integrals, the following relations hold 
	\begin{align*}
		\tilde{L}^{r_0}(0,T;\dot{B}^s_{p_0,q_0}(\mathbb{R}^d)) \subset L^{r_0}(0,T;\dot{B}^s_{p_0,q_0}(\mathbb{R}^d)) \quad \text{and} \quad \tilde{L}^{r_0}(0,T;B^s_{p_0,q_0}(\mathbb{R}^d)) \subset L^{r_0}(0,T;B^s_{p_0,q_0}(\mathbb{R}^d)) \quad \text{if} \quad r_0 \geq q_0,
		\\
		L^{r_0}(0,T;\dot{B}^s_{p_0,q_0}(\mathbb{R}^d)) \subset \tilde{L}^{r_0}(0,T;\dot{B}^s_{p_0,q_0}(\mathbb{R}^d)) \quad \text{and} \quad 
		L^{r_0}(0,T;B^s_{p_0,q_0}(\mathbb{R}^d)) \subset \tilde{L}^{r_0}(0,T;B^s_{p_0,q_0}(\mathbb{R}^d))
		\quad \text{if} \quad r_0 \leq q_0.
	\end{align*}

	\subsection{Appendix B: Homogeneous Sobolev inequalities and proof of \eqref{L-infty}}
	
	There is a proof of \eqref{L-infty} in \cite{Kukavica-Wang-Ziane_2016} in a more general $L^p$ framework. In Hilbert spaces, the proof is much more simpler. However, we do not find a specific reference for the proof of the three-dimensional case, especially for the homogeneous Sobolev norm version, so for the sake of completeness, we provide a standard proof of \eqref{L-infty} and its three-dimensional version as follows. Since we used both versions in the previous proofs. We note that for the nonhomogeneous Sobolev norm version, it is a consequence of a result in \cite{Brezis-Mironescu_2018}.
	
	\begin{lemma} \label{lem-Agmon} Assume that $f \in 
	H^s(\mathbb{R}^d)$ with $s > \frac{d}{2}$ and $d \in \{2,3\}$ then
		\begin{equation} \label{Agmon}
			\|f\|_{L^\infty} \leq C(s)
			\begin{cases}
				\|f\|^{\frac{s-1}{s}}_{L^2}\|f\|^\frac{1}{s}_{\dot{H}^s} &\text{if} \quad d = 2,
				\\
				\|f\|^{\frac{2s-3}{2s}}_{L^2}\|f\|^\frac{3}{2s}_{\dot{H}^s} &\text{if} \quad d = 3.
			\end{cases} 
		\end{equation}
	\end{lemma}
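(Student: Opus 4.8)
The plan is to prove the Agmon-type inequality \eqref{Agmon} via the Fourier representation of $f$ in $L^\infty$ combined with a frequency-splitting argument, since $s > \frac{d}{2}$ guarantees $\mathcal{F}(f) \in L^1(\mathbb{R}^d)$ by Cauchy–Schwarz. First I would start from the pointwise bound
\begin{equation*}
	\|f\|_{L^\infty} \leq C \|\mathcal{F}(f)\|_{L^1(\mathbb{R}^d)} = C \int_{\mathbb{R}^d} |\mathcal{F}(f)(\xi)| \,d\xi,
\end{equation*}
which follows from the Fourier inversion formula (with a harmless normalization constant depending only on $d$). Then, for a threshold $R > 0$ to be optimized, I would split the integral into the low-frequency region $\{|\xi| \leq R\}$ and the high-frequency region $\{|\xi| > R\}$.

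On each region I would apply Cauchy–Schwarz, inserting the appropriate weight. On the low-frequency piece,
\begin{equation*}
	\int_{|\xi| \leq R} |\mathcal{F}(f)| \,d\xi \leq \left(\int_{|\xi| \leq R} \,d\xi\right)^{\frac{1}{2}} \left(\int_{|\xi| \leq R} |\mathcal{F}(f)|^2 \,d\xi\right)^{\frac{1}{2}} \leq C(d) R^{\frac{d}{2}} \|f\|_{L^2},
\end{equation*}
using that the volume of the ball of radius $R$ is $C(d)R^d$ and Plancherel's theorem. On the high-frequency piece I would insert the weight $|\xi|^s |\xi|^{-s}$ and again use Cauchy–Schwarz,
\begin{equation*}
	\int_{|\xi| > R} |\mathcal{F}(f)| \,d\xi \leq \left(\int_{|\xi| > R} |\xi|^{-2s} \,d\xi\right)^{\frac{1}{2}} \left(\int_{|\xi| > R} |\xi|^{2s} |\mathcal{F}(f)|^2 \,d\xi\right)^{\frac{1}{2}} \leq C(d,s) R^{\frac{d}{2}-s} \|f\|_{\dot{H}^s},
\end{equation*}
where the convergence of $\int_{|\xi|>R} |\xi|^{-2s}\,d\xi = C(d,s) R^{d-2s}$ requires precisely $2s > d$, i.e. $s > \frac{d}{2}$, and the second factor equals $\|f\|_{\dot H^s}$ by Plancherel.

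Combining the two estimates gives $\|f\|_{L^\infty} \leq C(d,s)\bigl(R^{\frac{d}{2}} \|f\|_{L^2} + R^{\frac{d}{2}-s} \|f\|_{\dot{H}^s}\bigr)$, and then I would optimize in $R$ by choosing $R = \bigl(\|f\|_{\dot{H}^s}/\|f\|_{L^2}\bigr)^{1/s}$ to balance the two terms. Plugging this choice in yields the exponents $\frac{s-1}{s}$ and $\frac{1}{s}$ in dimension $d=2$, and $\frac{2s-3}{2s}$ and $\frac{3}{2s}$ in dimension $d=3$, which are exactly the claimed powers. Indeed $\frac{d}{2}\cdot\frac{1}{s}$ is the exponent on $\|f\|_{\dot H^s}$ and $1-\frac{d}{2s}$ the exponent on $\|f\|_{L^2}$, matching both cases. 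I do not anticipate any serious obstacle here: the argument is a textbook frequency decomposition, and the only point requiring care is tracking that the integrability of $|\xi|^{-2s}$ at infinity is exactly the hypothesis $s>\frac d2$, and treating the degenerate case $\|f\|_{\dot H^s}=0$ (where $f$ is constant, hence in $L^2(\mathbb R^d)$ only if $f\equiv 0$) separately or by a limiting convention. The inequality is homogeneous of degree one in $f$, so one may also normalize $\|f\|_{L^2}=1$ to streamline the optimization step.
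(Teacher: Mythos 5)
Your proposal is correct and is essentially the paper's own proof: the same Fourier inversion with a frequency split at a threshold, Cauchy--Schwarz on each region (using $s>\frac{d}{2}$ for the high-frequency tail), and the same optimized choice of threshold $M=\|f\|_{L^2}^{-1/s}\|f\|_{\dot H^s}^{1/s}$. Nothing further is needed.
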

	
	\begin{proof}[Proof of Lemma \ref{lem-Agmon}]
		It can be seen that for $x \in \mathbb{R}^d$ 
		\begin{align*}
			f(x) = \int_{|\xi| \leq M} \exp\{ix \cdot \xi\} \mathcal{F}(f)(\xi) \,d\xi + \int_{|\xi| > M} \exp\{ix \cdot \xi\} \mathcal{F}(f)(\xi) \,d\xi =: F_1 + F_2,
		\end{align*}
		where $M$ is a positive constant to be determined later and 
		\begin{equation*}
			|f(x)| \leq |F_1| + |F_2| \leq 
			\begin{cases}
				CM\|f\|_{L^2} + C(s)M^{1-s}\|f\|_{\dot{H}^s} &\text{if} \quad d = 2,
				\\
				CM^\frac{3}{2}\|f\|_{L^2} + C(s)M^{\frac{3}{2}-s}\|f\|_{\dot{H}^s} &\text{if} \quad d = 3.
			\end{cases}
		\end{equation*}
		Thus, \eqref{Agmon} follows by choosing $M = \|f\|^{-\frac{1}{s}}_{L^2}\|f\|^\frac{1}{s}_{\dot{H}^s}$. 
	\end{proof}
	
	\subsection{Appendix C: A logarithmic Gronwall inequality}
	
	In this subsection, we provide a simple proof of a lograrithmic Gronwall inequality, which is used several times before.
	
	\begin{lemma} \label{lem-gronwall}
		Assume that $h_1,h_2,y \geq 0$ satisfying $h_1,h_2 \in L^1_{\textnormal{loc}}(0,\infty)$, $y(0) \geq 0$ and for $\alpha \geq 1$
		\begin{equation} \label{log-gron}
			\frac{d}{dt} y(t) \leq h_1(t) y(t) + h_2(t)\log(\alpha+y(t))y(t) \qquad \text{for}\quad t > 0,
		\end{equation}
		then 
		\begin{equation} \label{log-gron1}
			y(t_2) \leq \exp\left\{\left(\log(\alpha+y(t_1)) + \int^{t_2}_{t_1} h_1 \,d\tau \right)  \exp\left\{\int^{t_2}_{t_1} h_2 \,d \tau \right\}\right\} \qquad \text{for} \quad 0 \leq t_1 \leq t_2 < \infty.
		\end{equation}
	\end{lemma}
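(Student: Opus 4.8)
The plan is to reduce the logarithmic Gronwall inequality \eqref{log-gron} to a standard linear Gronwall inequality by an appropriate change of the unknown. Since the right-hand side involves $\log(\alpha+y)$, the natural substitution is $z(t) := \log(\alpha + y(t))$, which should linearize the logarithmic factor and turn the nonlinear differential inequality into one that is linear in $z$.

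First I would compute the derivative of $z$. Using the chain rule,
\begin{equation*}
	\frac{d}{dt} z(t) = \frac{y'(t)}{\alpha + y(t)}.
\end{equation*}
Plugging in \eqref{log-gron} and using $y \leq \alpha + y$ so that $\frac{y}{\alpha+y} \leq 1$, I obtain
\begin{equation*}
	\frac{d}{dt} z(t) \leq \frac{h_1(t) y(t) + h_2(t) \log(\alpha+y(t)) y(t)}{\alpha + y(t)} \leq h_1(t) + h_2(t) \log(\alpha + y(t)) = h_1(t) + h_2(t) z(t).
\end{equation*}
This is a linear differential inequality for $z$ with the source term $h_1$ and the coefficient $h_2$, precisely the form to which the classical Gronwall lemma applies. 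I would then apply the standard integrating-factor argument: multiplying by $\exp\{-\int^t_{t_1} h_2\,d\tau\}$ and integrating from $t_1$ to $t_2$ gives
\begin{equation*}
	z(t_2) \leq \left(z(t_1) + \int^{t_2}_{t_1} h_1\,d\tau\right) \exp\left\{\int^{t_2}_{t_1} h_2\,d\tau\right\},
\end{equation*}
where I have also used $h_1 \geq 0$ to bound $\int^{t_2}_{t_1} h_1 \exp\{-\int^\tau_{t_1} h_2\}\,d\tau \leq \int^{t_2}_{t_1} h_1\,d\tau$ (since $h_2 \geq 0$ makes the exponential factor at most one). Recalling $z = \log(\alpha + y)$ and exponentiating both sides yields exactly \eqref{log-gron1}, after discarding the nonnegative constant $\alpha$ in the bound $y(t_2) \leq \alpha + y(t_2) = \exp\{z(t_2)\}$.

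The only mild technical point — rather than a genuine obstacle — is justifying the manipulations when $y$ is merely assumed to satisfy a differential inequality with $h_1, h_2 \in L^1_{\textnormal{loc}}$; here $z$ is absolutely continuous wherever $y$ is, and the integrating-factor computation holds almost everywhere, so integrating the inequality is legitimate. I would phrase the argument directly in integral form to avoid any regularity fuss: set $w(t) := z(t) \exp\{-\int^t_{t_1} h_2\,d\tau\}$ and verify $w'(t) \leq h_1(t)\exp\{-\int^t_{t_1} h_2\,d\tau\} \leq h_1(t)$ a.e., then integrate. Everything else is routine, and the substitution $z = \log(\alpha+y)$ is the single idea carrying the proof.
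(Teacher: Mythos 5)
Your proof is correct and follows essentially the same route as the paper: the substitution $z=\log(\alpha+y)$, the bound $y/(\alpha+y)\leq 1$ to obtain the linear inequality $z'\leq h_1+h_2 z$, and the integrating-factor Gronwall step (using $h_2\geq 0$ to drop the exponential factor in the source integral) before exponentiating back. No gaps; your extra care about absolute continuity and arguing in integral form is a harmless refinement of what the paper does implicitly.
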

	
	\begin{proof}[Proof of Lemma \ref{lem-gronwall}] By setting $v(t) := \log(\alpha+y(t))$, it can be seen from \eqref{log-gron} that
		\begin{equation*}
			\frac{d}{dt} v(t) \leq h_1(t) + h_2(t) v(t).
		\end{equation*}
		Therefore, for $0 \leq t_1 \leq t_2 < \infty$
		\begin{equation*}
			v(t_2)\exp\left\{-\int^{t_2}_0 h_2 \,d \tau \right\} \leq v(t_1) \exp\left\{-\int^{t_1}_0 h_2 \,d \tau \right\} + \int^{t_2}_{t_1} h_1 \exp\left\{-\int^\tau_0 h_1 \,ds\right\}\,d\tau,
		\end{equation*}
		which implies that
		\begin{equation*}
			\log(\alpha+y(t_2)) \leq 
			\left(\log(\alpha+y(t_1)) + \int^{t_2}_{t_1} h_1 \,d\tau \right)  \exp\left\{\int^{t_2}_{t_1} h_2 \,d \tau \right\}
		\end{equation*}
		and \eqref{log-gron1} follows.	
	\end{proof}
	
	\subsection{Appendix D: Parabolic regularity and proof of \eqref{Maximal_Regularity}}
	
	For the sake of completeness, we will provide here a proof of \eqref{Maximal_Regularity}, which is a special case of a more general situation below. Let us consider a fractional heat equation given in the following form for suitable force $f$, initial data $w_0$, $\alpha  \in [0,\infty)$, $\nu \in (0,\infty),T \in (0,\infty]$ and $d \geq 1$
	\begin{equation} \label{F_heat}
		\partial_t w + \nu(-\Delta)^\alpha w = f \qquad \text{in}\quad  (0,T) \times \mathbb{R}^d \qquad \text{and} \qquad w_{|_{t= 0}} = w_0.
	\end{equation}
	It is well-known that the solution to \eqref{F_heat} can be represented by the following Duhamel formula
	\begin{equation*}
		w(t) = \exp\{t\nu(-\Delta)^\alpha\} w_0 + \int^t_0 \exp\{(t-\tau)\nu(-\Delta)^\alpha\} f(\tau) \,d\tau \qquad \text{for} \quad t \in (0,T),
	\end{equation*}
	where we have been used the notation 
	\begin{equation*}
		\exp\{t \nu(-\Delta)^\alpha\} f := \mathcal{F}^{-1}(\exp\{-\nu t|\xi|^{2\alpha}\}\mathcal{F}(f)(\xi)).
	\end{equation*}
	In the sequel, we aim to prove the following result, which mostly follows the ideas in \cite[Proposition 3.1]{Arsenio-Gallagher_2020}, where the authors focused  on the case $\alpha = 1$. We note that for a similar result in form of Chemin-Lerner spaces, see \cite[Proposition 2]{Chae-Lee_2004}.

	\begin{proposition} \label{pro-F_heat} 
		Let $d \geq 1$ and $w$ be a solution to \eqref{F_heat} with $w_{|_{t= 0}} = w_0$, $\alpha \in [0,\infty)$, and $\nu \in (0,\infty)$. Assume that $\delta_0 \in \mathbb{R}$, $p \in [1,\infty]$, $1 < r \leq m < \infty$, $1 \leq q \leq m$, $T \in (0,\infty]$, $w_0 \in \dot{B}^{\delta_0 + 2\alpha}_{p,q}(\mathbb{R}^d))$ and $f \in L^r(0,T;\dot{B}^{\delta_0 + \frac{2\alpha}{r}}_{p,q}(\mathbb{R}^d)$. Then there are some positive constants $C_1 = C_1(\alpha,d,\delta_0,m,\nu,p,q,r)$ and $C_2 = C_2(\alpha,d,\delta_0,m,\nu,p,q)$ such that
		\begin{equation} \label{F_heat_0}
			\|w\|_{L^m(0,T;\dot{B}^{\delta_0 + 2\alpha + \frac{2\alpha}{m}}_{p,q}(\mathbb{R}^d))} \leq C_1 \|f\|_{L^r(0,T;\dot{B}^{\delta_0 + \frac{2\alpha}{r}}_{p,q}(\mathbb{R}^d))} + C_2 \|w_0\|_{\dot{B}^{\delta_0 + 2\alpha}_{p,q}(\mathbb{R}^d)}.
		\end{equation}
	\end{proposition}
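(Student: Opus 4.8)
\textbf{Proof proposal for Proposition \ref{pro-F_heat}.}

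The plan is to reduce the inhomogeneous estimate \eqref{F_heat_0} to a frequency-localized (dyadic) statement and then assemble the pieces using the $\ell^q$ structure of the Besov norm together with a time-convolution inequality. First I would apply the homogeneous dyadic block $\dot{\Delta}_j$ to the Duhamel formula, using that $\dot{\Delta}_j$ commutes with both the semigroup $\exp\{t\nu(-\Delta)^\alpha\}$ and the fractional Laplacian (all being Fourier multipliers). On the support of $\varphi(2^{-j}\cdot)$ one has $|\xi| \approx 2^j$, so the key pointwise kernel bound is the Bernstein-type decay
\begin{equation*}
	\|\exp\{t\nu(-\Delta)^\alpha\}\dot{\Delta}_j g\|_{L^p} \leq C\exp\{-c\nu t 2^{2\alpha j}\}\|\dot{\Delta}_j g\|_{L^p},
\end{equation*}
valid for some absolute constants $c,C>0$ depending only on the profile $\varphi$ and on $\alpha,d$. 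This is the analytic heart of the argument and I would establish it first, either by writing the localized kernel as an inverse Fourier transform of $\exp\{-\nu t|\xi|^{2\alpha}\}\varphi(2^{-j}\xi)$ and rescaling, or by quoting the standard spectral-localization lemma for $(-\Delta)^\alpha$ in \cite{Bahouri-Chemin-Danchin_2011}.

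Next I would treat the two terms on the right of the Duhamel formula separately. For the initial-data term, the exponential decay gives
\begin{equation*}
	\|\exp\{t\nu(-\Delta)^\alpha\}\dot{\Delta}_j w_0\|_{L^m(0,T;L^p)} \leq C 2^{-\frac{2\alpha j}{m}}\nu^{-\frac{1}{m}}\|\dot{\Delta}_j w_0\|_{L^p},
\end{equation*}
after computing $\|\exp\{-c\nu t 2^{2\alpha j}\}\|_{L^m_t(0,\infty)}$ explicitly; multiplying by $2^{(\delta_0+2\alpha+2\alpha/m)j}$ and taking the $\ell^q$ norm in $j$ reproduces exactly $C_2\|w_0\|_{\dot{B}^{\delta_0+2\alpha}_{p,q}}$, which fixes the gain of $2\alpha/m$ derivatives and explains the particular regularity index on the left. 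For the forcing term, I would apply $\dot{\Delta}_j$ inside the time integral, bound it by the convolution in time of $\exp\{-c\nu t 2^{2\alpha j}\}$ with $\|\dot{\Delta}_j f(\tau)\|_{L^p}$, and invoke Young's convolution inequality in the time variable with exponents satisfying $1+\frac{1}{m}=\frac{1}{r}+\frac{1}{a}$, where the kernel lies in $L^a_t$ with norm $\sim (\nu 2^{2\alpha j})^{-1/a}$. This produces the factor $2^{-\frac{2\alpha j}{a}}=2^{-2\alpha j(1+\frac{1}{m}-\frac{1}{r})j}$, and a short bookkeeping of the exponents confirms that the residual power of $2^j$ matches $\delta_0+2\alpha+\frac{2\alpha}{m}$ on the left against $\delta_0+\frac{2\alpha}{r}$ on the right.

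The remaining point is to pass from these per-frequency bounds to the full Besov norms while keeping the $\ell^q$ summation outside. Here I would be careful about the order of the time-$L^m$ norm and the $\ell^q_j$ sum: because the estimate is stated in genuine Lebesgue–Besov spaces $L^m(0,T;\dot{B}^{\cdot}_{p,q})$ rather than Chemin–Lerner spaces, I need $q \le m$ so that Minkowski's inequality lets me exchange $\|\cdot\|_{\ell^q_j}$ and $\|\cdot\|_{L^m_t}$ in the right direction; this is precisely why the hypotheses impose $1\le q\le m$ and $1<r\le m$. I expect \emph{this interchange of norms together with the sharp tracking of the Young exponent $a$} to be the main obstacle, since a careless ordering would force Chemin–Lerner spaces and break the clean $L^m$–$L^r$ statement. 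Once the exponents are verified and Minkowski is applied, summing the dyadic contributions yields \eqref{F_heat_0} with the asserted constants, and the special case \eqref{Maximal_Regularity} follows by taking $d=3$, $\alpha=\frac{3}{2}$, $p=2$, $q=1$, $m=r=2$, and $\delta_0=s-3$.
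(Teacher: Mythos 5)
Your frequency-localized argument (dyadic blocks, the exponential decay of Lemma \ref{lem-F_heat}, Young's inequality in time, Minkowski) is correct as far as it goes, and it is exactly Step 1 of the paper's proof. But it does not prove the proposition as stated: the per-frequency estimate
\begin{equation*}
	\|\dot{\Delta}_j w\|_{L^m(0,T;L^p)} \leq C\, 2^{-2\alpha j\left(1+\frac{1}{m}-\frac{1}{r}\right)}\|\dot{\Delta}_j f\|_{L^r(0,T;L^p)} + C\, 2^{-\frac{2\alpha j}{m}}\|\dot{\Delta}_j w_0\|_{L^p}
\end{equation*}
produces, after weighting and $\ell^q_j$ summation, the Chemin--Lerner norm $\|f\|_{\tilde{L}^r(0,T;\dot{B}^{\delta_0+2\alpha/r}_{p,q})}$ on the right-hand side, not the norm $\|f\|_{L^r(0,T;\dot{B}^{\delta_0+2\alpha/r}_{p,q})}$ assumed in the hypothesis. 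You correctly note that exchanging $\ell^q_j$ and $L^m_t$ on the \emph{left} needs $q\leq m$, but you overlook the same issue on the \emph{right}: Minkowski gives $\|f\|_{\tilde{L}^r \dot{B}_{p,q}} \leq \|f\|_{L^r \dot{B}_{p,q}}$ only when $r \leq q$, and for $q < r$ the inclusion goes strictly the other way. So your argument proves \eqref{F_heat_0} only in the range $r \leq q \leq m$ (this is precisely the restricted estimate \eqref{F_heat_1} in the paper). The proposition, however, allows $1 \leq q < r$, and the application \eqref{Maximal_Regularity} that motivates it uses exactly $q=1$, $r=m=2$ --- the case where your argument breaks down. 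Your closing claim that the special case follows from your estimate is therefore unjustified.

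The missing content is how to reach $q < r$, and this is the bulk of the paper's proof. The paper first treats $w_0=0$, $m=r$, $q=1$ by a duality argument: pairing $\|w(t)\|_{\dot{B}^{\delta_0+2\alpha+2\alpha/r}_{p,1}}$ against $g \in L^{r'}(0,T)$, the time convolution with the kernels $\nu 2^{2\alpha k}\exp\{-C\nu(t-\tau)2^{2\alpha k}\}$ is dominated \emph{uniformly in $k$} by a Hardy--Littlewood-type maximal function of $g$, and boundedness of the maximal operator on $L^{r'}$ (this is where the hypothesis $r>1$ is actually used --- note your Young-inequality argument never needs $r>1$, a sign it cannot be the whole story) closes that case. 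One then interpolates (Bergh--L\"ofstr\"om) between this case and the $q=r=1$ case of \eqref{F_heat_1} to get all $1<r\leq m$ with $q=1$, combines with the $q=m$ case of \eqref{F_heat_1} to cover $1\leq q\leq m$, and finally reinstates $w_0 \neq 0$ by superposition. Without some substitute for this duality/maximal-function step (or another mechanism converting the Chemin--Lerner norm of $f$ into the genuine $L^r_t$ Besov norm when $q<r$), your proposal has a genuine gap.
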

	Once the above proposition is established, \eqref{Maximal_Regularity} follows by choosing $\alpha = \frac{3}{2}$, $\delta_0 = s - 3$, $m = r = p = 2$, $q = 1$ and $w = v^2$ with $w_0 = v^2_0 = 0$. Before going to the proof of  Proposition \ref{pro-F_heat}, we need to establish the following technical lemma, which follows the ideas in \cite[Lemma 2.4]{Bahouri-Chemin-Danchin_2011}, \cite[Lemma 2.1]{Chemin_1999}, where the authors considered  the case $\alpha = 1$. See also \cite[Lemma 1]{Chae-Lee_2004} with a similar proof in the case $d = 3$ and $\alpha \geq 0$.
	
	\begin{lemma} \label{lem-F_heat}
		Let $d \geq 1$ and $\mathcal{C}(c_1,c_2)$ be an annulus with the smaller radius $c_1 > 0$ and the bigger radius $c_2 > 0$. There exist positive constants $C_3 = C_3(\alpha,c_1,c_2,d)$ and $C_4 = C_4(\alpha,c_1,d)$ such that for any $\alpha \in [0,\infty)$, $p \in [1,\infty]$ and any pair $(t, \lambda)$ of positive real numbers the following property holds. If $\textnormal{supp}(\mathcal{F}(u)) \subset \lambda \mathcal{C}$ then 
		\begin{equation} \label{F_heat_estimate}
			\|\exp\{t\nu(-\Delta)^\alpha\} u\|_{L^p(\mathbb{R}^d)} \leq C_4 \exp\{-C_3\nu t\lambda^{2\alpha}\}\|u\|_{L^p(\mathbb{R}^d)}.
		\end{equation}
	\end{lemma}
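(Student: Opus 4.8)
The plan is to prove the frequency-localized heat-kernel decay estimate \eqref{F_heat_estimate} by reducing it to a statement about the $L^1$ norm of the inverse Fourier transform of the kernel restricted to the annulus, and then exploiting scaling. First I would observe that since $\textnormal{supp}(\mathcal{F}(u)) \subset \lambda \mathcal{C}$, I can insert a smooth cutoff $\phi$ supported on a slightly larger annulus $\mathcal{C}'=\mathcal{C}(c_1/2, 2c_2)$ with $\phi \equiv 1$ on $\mathcal{C}$, so that $\mathcal{F}(u)(\xi) = \phi(\lambda^{-1}\xi)\mathcal{F}(u)(\xi)$. Then
\begin{equation*}
	\exp\{t\nu(-\Delta)^\alpha\} u = \mathcal{F}^{-1}\!\left(\exp\{-\nu t|\xi|^{2\alpha}\}\phi(\lambda^{-1}\xi)\right) * u =: K_{t,\lambda} * u,
\end{equation*}
and by Young's convolution inequality $\|K_{t,\lambda}*u\|_{L^p} \leq \|K_{t,\lambda}\|_{L^1}\|u\|_{L^p}$ for every $p\in[1,\infty]$. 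The whole problem therefore collapses to bounding $\|K_{t,\lambda}\|_{L^1}$ independently of $p$.

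Next I would compute $\|K_{t,\lambda}\|_{L^1}$ by rescaling. Writing $\xi = \lambda\eta$ gives
\begin{equation*}
	K_{t,\lambda}(x) = \lambda^d \int_{\mathbb{R}^d} e^{i\lambda x\cdot\eta}\, e^{-\nu t \lambda^{2\alpha}|\eta|^{2\alpha}}\phi(\eta)\,d\eta,
\end{equation*}
so after the substitution $y=\lambda x$ one finds $\|K_{t,\lambda}\|_{L^1(dx)} = \|g_s\|_{L^1(dy)}$ where $g_s(y) := \int e^{iy\cdot\eta} e^{-s|\eta|^{2\alpha}}\phi(\eta)\,d\eta$ and $s := \nu t\lambda^{2\alpha}$ is the natural dimensionless parameter. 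Thus it suffices to show $\|g_s\|_{L^1} \le C_4 e^{-C_3 s}$ uniformly in $s>0$. The exponential gain in $s$ comes from the fact that $\phi$ is supported away from the origin, so $|\eta|^{2\alpha}\ge c_1^{2\alpha}/2^{2\alpha}$ on the support: factoring $e^{-s|\eta|^{2\alpha}} = e^{-s c_1^{2\alpha}2^{-2\alpha}}\, e^{-s(|\eta|^{2\alpha}-c_1^{2\alpha}2^{-2\alpha})}$ extracts the claimed exponential prefactor with $C_3 = c_1^{2\alpha}2^{-2\alpha}$, and I would absorb the residual factor into the $L^1$ norm of the remaining symbol.

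To control $\|g_s\|_{L^1}$ I would split the integral over $y$ into the region $|y|\le 1$ and $|y|>1$. On $|y|\le 1$ a crude bound $|g_s(y)|\le \int_{\mathcal{C}'} e^{-s(|\eta|^{2\alpha}-\dots)}\,d\eta \le C$ suffices since the domain has finite measure. On $|y|>1$ I would integrate by parts $N$ times in $\eta$ (for $N>d$), using $(i y\cdot\nabla_\eta) e^{iy\cdot\eta} = |y|^2 e^{iy\cdot\eta}$ schematically, to gain a factor $|y|^{-N}$; the derivatives falling on the smooth compactly supported symbol $e^{-s(|\eta|^{2\alpha}-\dots)}\phi(\eta)$ produce bounds that are uniform in $s$ (since $e^{-s(\cdot)}\le 1$ and its $\eta$-derivatives stay bounded on the annulus where $|\eta|$ is bounded above and below). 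This yields $|g_s(y)|\le C_N|y|^{-N}$, integrable over $|y|>1$, giving $\|g_s\|_{L^1}\le C_4$ with $C_4$ depending only on $\alpha,c_1,d$. The main obstacle — and the only subtle point — is verifying that the $\eta$-derivatives of $e^{-s|\eta|^{2\alpha}}$ remain bounded uniformly in $s$ on the annulus; for noninteger $2\alpha$ the symbol $|\eta|^{2\alpha}$ is smooth away from the origin, and since $\phi$ is supported in $\mathcal{C}'$ which excludes a neighborhood of $0$, all derivatives are bounded there, and the chain-rule factors of $s$ are killed by the decaying exponential $e^{-s(\cdot)}$ (each power of $s$ pairs with a strictly negative exponent, keeping $s^k e^{-cs}$ bounded). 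Combining the two regions and the extracted prefactor delivers \eqref{F_heat_estimate}.
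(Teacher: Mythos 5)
Your overall strategy coincides with the paper's: insert a smooth annular cutoff $\phi$, write the frequency-localized semigroup as convolution with a kernel, apply Young's inequality to reduce \eqref{F_heat_estimate} to an $L^1$ bound on that kernel, rescale to the dimensionless parameter $s=\nu t\lambda^{2\alpha}$ (the paper instead rescales $u$ to reduce to $\lambda=1$, which is equivalent), and gain integrability in the space variable by repeated integration by parts in $\eta$ (the paper uses $(\textnormal{Id}-\Delta_\xi)^d$ to produce the weight $(1+|x|^2)^{-d}$ rather than your splitting $|y|\le 1$, $|y|>1$; these are interchangeable). However, the step that you yourself flag as ``the only subtle point'' is justified incorrectly, and as written it fails. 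The problem is the factorization $e^{-s|\eta|^{2\alpha}} = e^{-s c_1^{2\alpha}2^{-2\alpha}}\,e^{-s\psi(\eta)}$ with $\psi(\eta):=|\eta|^{2\alpha}-c_1^{2\alpha}2^{-2\alpha}$. On $\textnormal{supp}(\phi)=\mathcal{C}(c_1/2,2c_2)$ one only has $\psi\ge 0$, with equality on the inner sphere $|\eta|=c_1/2$; $\psi$ is \emph{not} bounded below by a positive constant there. Hence, when $k$ derivatives fall on the residual exponential, the resulting terms of size $s^k|\nabla\psi|^k e^{-s\psi(\eta)}$ satisfy $\sup_{\eta\in\textnormal{supp}(\phi)} s^k e^{-s\psi(\eta)}=s^k$, attained as $|\eta|\to (c_1/2)^+$, which blows up as $s\to\infty$. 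So the claim that ``each power of $s$ pairs with a strictly negative exponent, keeping $s^k e^{-cs}$ bounded'' is false for this choice of factorization: the constant $c$ degenerates to $0$ at the inner edge of the support, and your constants $C_N$ in $|g_s(y)|\le C_N|y|^{-N}$ are not uniform in $s$. (One could try to rescue this by exploiting the infinite-order vanishing of $\phi$ at $|\eta|=c_1/2$ to absorb the powers of $s$, but you neither invoke nor quantify such vanishing.)

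The fix is a one-line modification, and it is precisely what the paper does: do not pre-extract the whole decay. Keep the full exponential $e^{-s|\eta|^{2\alpha}}$ in the symbol and, when estimating the derivative terms, use the pointwise inequality $\sigma^k e^{-\sigma}\le C_k\,e^{-\sigma/2}$ with $\sigma=s|\eta|^{2\alpha}$: since $|\eta|^{2\alpha}\ge (c_1/2)^{2\alpha}$ on $\textnormal{supp}(\phi)$, every term of the form $s^k(\partial^\beta|\eta|^{2\alpha})\cdots e^{-s|\eta|^{2\alpha}}$ is bounded by $C\,e^{-cs}$ with $c=c(\alpha,c_1)>0$, which simultaneously yields the uniform-in-$s$ bound on the integrand and the exponential prefactor in \eqref{F_heat_estimate}. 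Equivalently, extract only half of the decay, $e^{-s|\eta|^{2\alpha}}\le e^{-s(c_1/2)^{2\alpha}/2}\,e^{-s|\eta|^{2\alpha}/2}$, so that the residual exponent stays bounded below by a positive constant on the support. With this correction the remainder of your argument (Young's inequality, the rescaling to $g_s$, and the $N>d$ integrations by parts for $|y|>1$) goes through and recovers the lemma.
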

	
	\begin{proof}[Proof of Lemma \ref{lem-F_heat}] It can be seen that in the case $p = 2$, \eqref{F_heat_estimate} follows immediately by using the Plancherel’s identity. For $p \in [1,\infty]$, we will closely follow the idea in \cite[Lemma 2.4]{Bahouri-Chemin-Danchin_2011} by focusing mainly on the case $\lambda = 1$. Indeed, the case $\lambda \neq 1$ can be transformed to the case $\lambda = 1$ as follows. Assume that \eqref{F_heat_estimate} holds in the case $\lambda = 1$ for any $t' \in (0,\infty)$ and for any $f$ satisfying $\textnormal{supp}(\mathcal{F}(f)) \subset \mathcal{C}$, i.e., 
		\begin{equation} \label{F_heat_estimate_1}
			\|\exp\{t'\nu(-\Delta)^\alpha\} f\|_{L^p} \leq C_3 \exp\{-C_4\nu t'\}\|f\|_{L^p}.
		\end{equation}
		We now fix $(t,\lambda) \in (0,\infty)$. Let $u$ be a function such that $\textnormal{supp}(\mathcal{F}(u)) \subset \lambda \mathcal{C}$. We define for $x \in \mathbb{R}^d$
		\begin{equation*}
			v(x) := \frac{1}{\lambda^d}u\left(\frac{x}{\lambda}\right) \qquad \text{with} \qquad \mathcal{F}(v)(\xi) = \mathcal{F}(u)(\lambda \xi) \quad \forall \xi \in \mathbb{R}^d,
		\end{equation*}
		which yields $\textnormal{supp}(\mathcal{F}(v)) \subset \mathcal{C}$. An application of \eqref{F_heat_estimate_1} to the case $f = v$ and $t' = t\lambda^{2\alpha}$ gives us
		\begin{equation} \label{F_heat_estimate_2}
			\|\exp\{t\lambda^2\nu(-\Delta)^\alpha\} v\|_{L^p} \leq C_3 \exp\{-C_4\nu t\lambda^2\}\|v\|_{L^p} = \lambda^{\frac{d}{p}-d} C_3 \exp\{-C_4\nu t\lambda^{2\alpha}\}\|u\|_{L^p}.
		\end{equation}
		Furthermore, it can be verified that
		\begin{align*}
			\lambda^{d-\frac{d}{p}}\|\exp\{t\lambda^2\nu(-\Delta)^\alpha\} v\|_{L^p} 
			&= \lambda^{d-\frac{d}{p}}\|\mathcal{F}^{-1}( \exp\{-\nu t|\lambda \xi|^{2\alpha}\}\mathcal{F}(v)(\xi))(\cdot)\|_{L^p}
			\\
			&= \|\lambda^{d-\frac{d}{p}} \mathcal{F}^{-1}(\exp\{-\nu t|\lambda \xi|^{2\alpha}\}\mathcal{F}(u)(\lambda \xi))(\cdot)\|_{L^p}
			\\
			&= \|\lambda^{-\frac{d}{p}} \mathcal{F}^{-1}(\exp\{-\nu t|\xi|^{2\alpha}\}\mathcal{F}(u)(\xi))(\lambda^{-1}\cdot)\|_{L^p}
			\\
			&= \|\mathcal{F}^{-1}(\exp\{-\nu t|\xi|^{2\alpha}\}\mathcal{F}(u)(\xi))(\cdot)\|_{L^p} = \|\exp\{t\nu(-\Delta)^\alpha\} u\|_{L^p},
		\end{align*}
		which combines with \eqref{F_heat_estimate_2} leading to \eqref{F_heat_estimate}. Therefore, it remains to check \eqref{F_heat_estimate} in the case $\lambda = 1$.
		By choosing $\phi \in C^\infty_0(\mathbb{R}^d \setminus \{0\})$ with $0 \leq \phi \leq 1$, $\phi = 1$ in $\mathcal{C}(c_1,c_2)$ and $\phi = 0$ outside of $\mathcal{C}(\frac{1}{2}c_1,\frac{3}{2}c_2)$. 
		Since $\textnormal{supp}(\mathcal{F}(u)) \subset \mathcal{C}$ and $\phi = 1$ in $\mathcal{C}(c_1,c_2)$ then by using Young inequality for convolution
		\begin{equation*}
			\|\exp\{t\nu(-\Delta)^\alpha\} u\|_{L^p} 
			= \|\mathcal{F}^{-1}(\phi(\xi) \exp\{-\nu t|\xi|^{2\alpha}\}\mathcal{F}(u)(\xi))\|_{L^p} 
			\leq \|G(t,\cdot)\|_{L^1} \|u\|_{L^p},
		\end{equation*}
		where for $x \in \mathbb{R}^d$
		\begin{equation*}
			G(t,x) := (2\pi)^{-d} \int_{\mathbb{R}^d} \exp\{ix\cdot \xi\} \phi(\xi) \exp\{-\nu t|\xi|^{2\alpha}\} \,d\xi.
		\end{equation*}
		It remains to bound $\|G(t,\cdot)\|_{L^1}$. By using integration by parts, $G$ can be rewritten by
		\begin{equation*}
			G(t,x) = (2\pi)^{-d} (1 + |x|^2)^{-d} \int_{\mathbb{R}^d} \exp\{ix \cdot \xi\} (\textnormal{Id}-\Delta_\xi)^d \left(\phi(\xi) \exp\{-\nu t|\xi|^{2\alpha}\}\right) \,d\xi.
		\end{equation*}
		We need to control the second term inside of the above integral. It can be checked that
		\begin{align*}
			(\textnormal{Id}-\Delta_\xi)^d(\phi(\xi)\exp\{-\nu t |\xi|^{2\alpha}\}) = \sum_{0 \leq j \leq d} \sum_{0 \leq |\alpha_0| \leq 2j}
			C(\alpha_0,j)\partial^{|\alpha_0|}(\phi(\xi)) \partial^{2j-|\alpha_0|} (\exp\{-\nu t |\xi|^{2\alpha}\}).
		\end{align*}
		In addition, since $\textnormal{supp}(\phi) \subset \mathcal{C}(\frac{1}{2}c_1,\frac{3}{2}c_2)$ then for $\xi \in \textnormal{supp}(\phi)$, we find that  $|\partial^{|\alpha_0|} (\phi(\xi))| \leq
		C(c_1,c_2,d)$ and 
		\begin{align*}
			|\partial^{2j-|\alpha_0|} \exp\{-\nu t |\xi|^{2\alpha}\}| 
			&\leq C(\alpha,d) \sum_{0 \leq i \leq 2j-|\alpha_0|} (\nu t |\xi|^{2\alpha})^i |\xi|^{-2j + |\alpha_0|} \exp\{-\nu t |\xi|^{2\alpha}\}
			\\
			&\leq C(\alpha,d)  |\xi|^{-2j + |\alpha_0|}  (1 + \nu t |\xi|^{2\alpha})^{2d} \exp\{-\nu t |\xi|^{2\alpha}\} 
			\\
			&\leq C(\alpha,c_1,c_2,d)\exp\{-C(\alpha,c_1,d)\nu t\},
		\end{align*}
		where we also used another fact that $s\exp\{-s\} \leq \exp\{1\}\exp\{-\frac{1}{2}s\}$ for any $s \in \mathbb{R}, s \geq  0$, which leads to $(1 + s)^{2d}\exp\{-s\} \leq C(d)\exp\{-c(d)s\}$ as well. Therefore,
		\begin{align*}
			|G(t,x)| &\leq C(\alpha,c_1,c_2,d) (1 + |x|^2)^{-d} \exp\{-C(\alpha,c_1,d)\nu t\},
		\end{align*}
		which implies that
		\begin{equation*}
			\|G(t,\cdot)\|_{L^1} \leq C(\alpha,c_1,c_2,d) \exp\{-C(\alpha,c_1,d)\nu t\}.
		\end{equation*}
		Thus, the proof is complete.
	\end{proof}
	
	\begin{proof}[Proof of Proposition \ref{pro-F_heat}] The proof consists of the following steps.
		
		\textbf{Step 1: Parabolic regularity estimate.} We aim to obtain the following standard estimate 
		\begin{equation} \label{F_heat_1}
			\|w\|_{L^m(0,T;\dot{B}^{\delta_0 + 2\alpha + \frac{2\alpha}{m}}_{p,q}(\mathbb{R}^d))} \leq C_1 \|f\|_{L^r(0,T;\dot{B}^{\delta_0 + \frac{2\alpha}{r}}_{p,q}(\mathbb{R}^d))} + C_2\|w_0\|_{\dot{B}^{\delta_0 + 2\alpha}_{p,q}(\mathbb{R}^d)}.
		\end{equation}
		for the same range of parameters $\alpha,\delta_0,m,p,r$ and similar constants $C_1,C_2$ as in \eqref{F_heat_0}, but only for $r \leq q \leq m$. In order to prove \eqref{F_heat_1}, as the usual case $\alpha = 1$, we decompose $\dot{\Delta}_k w := \dot{\Delta}_k w_1 + \dot{\Delta}_k w_2$ for each $k \in \mathbb{Z}$, where
		\begin{align*}
			&&\partial_t \dot{\Delta}_k w_1 + \nu(-\Delta)^\alpha \dot{\Delta}_k w_1 &= 0,  &&\dot{\Delta}_k w_1(t=0) = \dot{\Delta}_k w_0,&&
			\\
			&&\partial_t \dot{\Delta}_k w_2 + \nu(-\Delta)^\alpha \dot{\Delta}_k w_2 &= \dot{\Delta}_k f,  &&\dot{\Delta}_k w_2(t=0) = 0,&&
		\end{align*}
		and estimate $w_1$ and $w_2$ in the desired norms. We begin with the bound of $w_1$ by using the fact $\dot{\Delta}_k w_1 = \exp\{t\nu(-\Delta)^\alpha (\dot{\Delta}_k w_0)\}$ in which Lemma \ref{lem-F_heat} yields
		\begin{equation*}
			\|\dot{\Delta}_k w_1\|_{L^p} \leq C_3 \exp\{-C_4\nu t 2^{2\alpha k}\} \|\dot{\Delta}_k w_0\|_{L^p},
		\end{equation*}
		and a direct calculation, which implies that for $m,p,q \in [1,\infty]$
		\begin{equation*}
			\|w_1\|_{\tilde{L}^m(0,T;\dot{B}^{\delta_0 + 2\alpha + \frac{2\alpha}{m}}_{p,q})} \leq C(\alpha,d,\nu,m) \left(\sum_{k \in \mathbb{Z}} 2^{(\delta_0 + 2\alpha)kq} 
			\|\dot{\Delta}_k w_0\|^q_{L^p}\right)^\frac{1}{q} = C(\alpha,d,\nu,m) \|w_0\|_{\dot{B}^{\delta_0 + 2\alpha}_{p,q}}.
		\end{equation*}
		Similarly, since
		\begin{equation*}
			\dot{\Delta}_k w_2(t) = \int^t_0 \exp\{(t-\tau)\nu(-\Delta)^\alpha \dot{\Delta}_k f \,d\tau,
		\end{equation*}
		by using Lemma \ref{lem-F_heat} and Minkowski inequality for integrals with $r,m,p \in [1,\infty]$ and $r \leq m$
		\begin{align*}
			\|\dot{\Delta}_k w_2\|_{L^m(0,T;L^p)} 
			&\leq C_3\left(\int^T_0 \left(\int^t_0 \exp\{-C_4 \nu (t-\tau) 2^{2\alpha k}\} \|\dot{\Delta}_k f\|_{L^p} d\tau\right)^m \,dt\right)^\frac{1}{m}
			\\
			&= C_3\left(\int^T_0 \left(\int^t_0 \exp\{-C_4 \nu (t-\tau) 2^{2\alpha k}\}  \mathbbm{1}_{t\geq\tau}(t) \|\dot{\Delta}_k f(\tau)\|_{L^p} \,d\tau\right)^m \,dt\right)^\frac{1}{m}
			\\
			&\leq C_3\int^T_0 \left(\int^T_{\tau} \exp\{-C_4 m\nu (t-\tau) 2^{2\alpha k}\}  \,dt\right)^\frac{1}{m}  \|\dot{\Delta}_k f(\tau)\|_{L^p} \,d\tau
			\\
			&\leq C(C_3,C_4,m,\nu) \int^T_0 2^{-\frac{2\alpha k}{m}} \exp\{-C_4 \nu \tau 2^{2\alpha k}\} \|\dot{\Delta}_k f(\tau)\|_{L^p} \,d\tau
			\\
			&\leq C(C_3,C_4,m,\nu) 2^{-\frac{2\alpha k}{m}}  \left(\int^T_0 \exp\left\{-C_4 \nu \tau 2^{2\alpha k} \frac{r}{r-1}\right\} \,d\tau\right)^\frac{r-1}{r} \|\dot{\Delta}_k f\|_{L^r(0,T;L^p)} 
			\\
			&\leq C(C_3,C_4,m,\nu,r) 2^{-\frac{2\alpha k}{m}}  2^{-{2\alpha k} \frac{r-1}{r}} \|\dot{\Delta}_k f\|_{L^r(0,T;L^p)},
		\end{align*}
		which leads to for $1 \leq r \leq m$ and for $m,p,q \in [1,\infty]$
		\begin{align*}
			\|w_2\|_{\tilde{L}^m(0,T;\dot{B}^{\delta_0 + 2\alpha + \frac{2\alpha}{m}}_{p,q})} 
			\leq C(\alpha,d,\nu,m,r) \|f\|_{\tilde{L}^r(0,T;\dot{B}^{\delta_0 + \frac{2\alpha}{r}}_{p,q})}.
		\end{align*}
		Therefore, for $m,r,p,q \in [1,\infty]$ with $1 \leq r \leq m$
		\begin{equation*}
			\|w\|_{\tilde{L}^m(0,T;\dot{B}^{\delta_0 + 2\alpha + \frac{2\alpha}{m}}_{p,q})} \leq C_1 \|f\|_{\tilde{L}^r(0,T;\dot{B}^{\delta_0 + \frac{2\alpha}{r}}_{p,q})} + C_2\|w_0\|_{\dot{B}^{\delta_0 + 2\alpha}_{p,q}}.
		\end{equation*}
		Furthermore, by using the properties of Chemin-Lerner spaces given in Appendix A, we find that
		\begin{align*}
			&&L^r(0,T;\dot{B}^{\delta_0 + \frac{2\alpha}{r}}_{p,q}) &\subset \tilde{L}^r(0,T;\dot{B}^{\delta_0 + \frac{2\alpha}{r}}_{p,q}) &&\text{if} \quad r \leq q,&&
			\\
			&&\tilde{L}^m(0,T;\dot{B}^{\delta_0 + 2\alpha + \frac{2\alpha}{m}}_{p,q}) &\subset L^m(0,T;\dot{B}^{\delta_0 + 2\alpha + \frac{2\alpha}{m}}_{p,q}) &&\text{if} \quad q \leq m.&&
		\end{align*}
		Thus, \eqref{F_heat_1} follows by the previous estimate.
		 
		\textbf{Step 2: The case $w_0 = 0$, $m = r$ and $q = 1$.} Similar to \cite[Proposition 3.1]{Arsenio-Gallagher_2020} by using the duality argument, for all $g \in L^{r'}(0,T)$ with $\frac{1}{r} + \frac{1}{r'} = 1$ and for $C = C(\alpha,d,\delta_0,\nu,p,r)$, it is enough to prove that 
		\begin{equation*}
			I := \sum_{k \in \mathbb{Z}} \int^T_0 g(t) 2^{k(\delta_0 + 2\alpha + \frac{2\alpha}{r})}\|\dot{\Delta}_k w(t)\|_{L^p} \,dt = \int^T_0 g(t) \|w(t)\|_{\dot{B}^{\delta_0 + 2\alpha + \frac{2\alpha}{r}}_{p,1}} \,dt \leq C \|f\|_{L^r(0,T;\dot{B}^{\delta_0 + \frac{2\alpha}{r}}_{p,1})} \|g\|_{L^{r'}(0,T)}.
		\end{equation*}
		It can be seen from the representation formula and Lemma \ref{lem-F_heat} that for $t \in (0,T)$ and $k \in \mathbb{Z}$
		\begin{align*}
			\|\dot{\Delta}_k w(t)\|_{L^p} \leq  \int^t_0 \|\exp\{(t-\tau)\nu(-\Delta)^\alpha\} \dot{\Delta}_k f(\tau)\|_{L^p} \,d\tau \leq  C(\alpha,d)\int^t_0 \exp\{-C(d)\nu(t-\tau)2^{2\alpha k}\} \|\dot{\Delta}_k f(\tau)\|_{L^p} \,d\tau,
		\end{align*}
		where we also used the property $\textnormal{supp}(\mathcal{F}(\dot{\Delta}_k w)) \subset \mathcal{C}(\frac{3}{4}2^k,\frac{8}{3}2^k)$, the annulus with the smaller radius $\frac{3}{4}2^k$ and the bigger radius $\frac{8}{3}2^k$, which yields
		\begin{align*}
			I &\leq C(\alpha,d)\sum_{k \in \mathbb{Z}} \int^T_0 \int^t_0  2^{2\alpha k} \exp\{-C(\alpha,d)\nu(t-\tau)2^{2\alpha k}\} |g(t)| 2^{(\delta_0 + \frac{2\alpha}{r})k} \|\dot{\Delta}_k f(\tau)\|_{L^p} \,d\tau dt
			\\
			&= C(\alpha,d,\nu)\sum_{k \in \mathbb{Z}} \int^T_0 \int^T_{\tau} \nu 2^{2\alpha k} 
			\exp\{-C(\alpha,d)\nu(t-\tau)2^{2\alpha k}\} |g(t)|  \,2^{(\delta_0 + \frac{2\alpha}{r})k} \|\dot{\Delta}_k f(\tau)\|_{L^p}  \,dt d\tau 
			\\
			&= C(\alpha,d,\nu)\sum_{k \in \mathbb{Z}} \int^T_0 \int^T_0 \nu 2^{2\alpha k} \mathbbm{1}_{t \geq \tau}(t)  \exp\{-C(\alpha,d)\nu(t-\tau)2^{2\alpha k}\} |g(t)|  \,2^{(\delta_0 + \frac{2\alpha}{r})k} \|\dot{\Delta}_k f(\tau)\|_{L^p}  \,dtd\tau 
			\\
			&\leq C(\alpha,d,\nu)\sum_{k \in \mathbb{Z}} \int^T_0  Mg(\tau) 2^{(\delta_0 + \frac{2\alpha}{r})k} \|\dot{\Delta}_k f(\tau)\|_{L^p}  \,d\tau
			\\
			&= C(\alpha,d,\nu) \int^T_0  Mg(\tau) \|f(\tau)\|_{\dot{B}^{\delta_0 + \frac{2\alpha}{r}}_{p,1}}  \,d\tau
			\\
			&\leq C(\alpha,d,\nu) \|Mg\|_{L^{r'}(0,T)} \|f\|_{L^r(0,T;\dot{B}^{\delta_0 + \frac{2\alpha}{r}}_{p,1})}
			\\
			&\leq C(\alpha,d,\nu,r') \|g\|_{L^{r'}(0,T)} \|f\|_{L^r(0,T;\dot{B}^{\delta_0 + \frac{2\alpha}{r}}_{p,1})},
		\end{align*}
		where 
		\begin{equation*}
			Mg(\tau) := 
			\begin{cases}
				\sup_{\rho > 0} \int^T_0 \rho \mathbbm{1}_{t \geq \tau}(t) \exp\{-C(\alpha,d)(t-\tau) \rho\}  |g(t)| \,dt &\text{if} \quad \tau \in (0,T),
				\\
				0 &\text{if} \quad \tau \notin (0,T).
			\end{cases}
		\end{equation*}
		It remains to check the last inequality in the previous estimate. Indeed, it can be seen that if $\tau \in (0,T)$ then $Mg(\tau)$ can be rewritten by
		\begin{equation*}
			Mg(\tau) = \sup_{\frac{1}{\rho} > 0} (K_{\frac{1}{\rho}} *  \tilde{g})(\tau) = \int_{\mathbb{R}} K_{\frac{1}{\rho}}(\tau-t)  \tilde{g}(t) \,dt,
		\end{equation*}
		where for $t \in \mathbb{R}$
		\begin{equation*}
			K(t) := \exp\{-C(\alpha,d)|t|\}, \quad K_{\frac{1}{\rho}}(t) := \rho K(t \rho) 
			\quad \text{and}\quad \tilde{g}(t) :=  \mathbbm{1}_{t \geq \tau}(t) \mathbbm{1}_{(0,T)}(t) |g(t)|.
		\end{equation*}
		In addition, we can verify that $K$ satisfies all conditions in \cite[Theorem 2.1.10]{Grafakos_2014}, which yields 
		\begin{equation*}
			Mg(\tau) = \sup_{\frac{1}{\rho} > 0} (K_{\frac{1}{\rho}} *  \tilde{g})(\tau) \leq \|K\|_{L^1(\mathbb{R})} \mathcal{M}(\tilde{g})(\tau) \leq C(\alpha,d) \mathcal{M}(\tilde{g})(\tau),
		\end{equation*}
		where the centered Hardy–Littlewood maximal function of $\tilde{g}$ is defined by
		\begin{equation*}
			\mathcal{M}(\tilde{g})(\tau) := \sup_{r > 0} \frac{1}{2r} \int_{B(\tau,r)} |\tilde{g}(t)|\,dt \qquad \text{with} \qquad B(\tau,r) := \{s \in \mathbb{R} : |s-\tau| < r\}. 
		\end{equation*}
		Finally, an application of \cite[Theorem 2.1.6]{Grafakos_2014}, which is on the boundedness of the maximal operator $\mathcal{M}$ from $L^{p_0}$ to $L^{p_0}$ for $p_0 \in (1,\infty)$, implies that
		\begin{equation*}
			\|Mg\|_{L^{r'}(\mathbb{R})} \leq C(\alpha,d) \|\mathcal{M}\tilde{g}\|_{L^{r'}(\mathbb{R})} \leq C(\alpha,d,r') \|\tilde{g}\|_{L^{r'}(\mathbb{R})} = C(\alpha,d,r') \|g\|_{L^{r'}(0,T)}.
		\end{equation*}
		
		\textbf{Step 3: The case $w_0 = 0$, $1 < r \leq m < \infty$ and $1 \leq q \leq m$.} We use exactly the argument in \cite{Arsenio-Gallagher_2020}. More precisely, it follows from \eqref{F_heat_0} for $m = r$, $q = 1$ and from \eqref{F_heat_1} for $q = r = 1$, respectively,  that
		\begin{align*}
			\|w\|_{L^m(0,T;\dot{B}^{\delta_0 + 2\alpha + \frac{2\alpha}{m}}_{p,1})} &\leq C_1 \|f\|_{L^m(0,T;\dot{B}^{\delta_0 + \frac{2\alpha}{m}}_{p,1})},
			\\
			\|w\|_{L^m(0,T;\dot{B}^{\delta_0 + 2\alpha + \frac{2\alpha}{m}}_{p,1})} &\leq C_1 \|f\|_{L^1(0,T;\dot{B}^{\delta_0 + 2\alpha}_{p,1})},
		\end{align*}
		 which combines with interpolation theory in \cite[Theorems 5.1.2 and 6.4.5]{Bergh-Lofstrom_1976} yielding for $1 < r \leq m < \infty$
		 \begin{equation*}
		 	\|w\|_{L^m(0,T;\dot{B}^{\delta_0 + 2\alpha + \frac{2\alpha}{m}}_{p,1})} \leq C_1 \|f\|_{L^r(0,T;\dot{B}^{\delta_0 + \frac{2\alpha}{r}}_{p,1})}.
		 \end{equation*}
		In addition, \eqref{F_heat_1} with $q = m$, which  gives us
		\begin{equation*}
			\|w\|_{L^m(0,T;\dot{B}^{\delta_0 + 2\alpha + \frac{2\alpha}{m}}_{p,m})} \leq C_1 \|f\|_{L^r(0,T;\dot{B}^{\delta_0 + \frac{2\alpha}{r}}_{p,m})}.
		\end{equation*}
		Thus, combining the two previous estimates finishes the proof of this step. Therefore, \eqref{F_heat_0} with $w_0 = 0$ follows.
		
		\textbf{Step 4: The case $w_0 \neq 0$, $1 < r \leq m < \infty$ and $1 \leq q \leq m$.} In this step, in order to prove \eqref{F_heat_0} in the case $w_0 \neq 0$,  we can repeat Step 1 (use the same estimate of $w_1$) with using \eqref{F_heat_0} for $w_0= 0$ (in the estimate of $w_2$) to obtain the desired result. We omit further details. Thus, the proof of the proposition now is finished.
	\end{proof}
	
	%
	\subsection{Appendix E: Remarks on the Maxwell equations}
	%
	
	In this subsection, we show that under suitable assumptions on the velocity, the existence and uniqueness of $L^2$ weak solutions to \eqref{M} can be provided. We first recall the following result.
	
	\begin{lemma} \label{lem_M0} \textnormal{(\cite[Lemma 2.2]{Giga-Ibrahim-Shen-Yoneda_2018}, \cite[Lemma 1.10]{Masmoudi_2010})} If $(E_0,B_0) \in H^s(\mathbb{R}^d)$ with $s \in \mathbb{R}$ 
		and  $j \in L^1_{\textnormal{loc}}(0,\infty;H^s(\mathbb{R}^d))$ then any solution $(E,B)$  to \eqref{M} satisfying for any $T \in (0,\infty)$
		\begin{equation*}
			\|(E,B)\|^2_{L^\infty(0,T;H^s)} \leq  2\|(E_0,B_0)\|^2_{H^s} + 2c^2\|j\|^2_{L^1(0,T;H^s)}.
		\end{equation*}
	\end{lemma}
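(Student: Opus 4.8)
The plan is to establish the bound through the standard energy method for the symmetric hyperbolic Maxwell system, performed in Fourier variables so that the argument is valid for every real $s$ and for merely $L^2$ weak solutions, with $j$ treated as a prescribed source in $L^1_{\mathrm{loc}}(0,\infty;H^s)$. Writing $m(\xi) := (1+|\xi|^2)^{s/2}$ and $\widehat{\cdot} = \mathcal{F}(\cdot)$, I would first rewrite \eqref{M} on the Fourier side as the family of ordinary differential equations
\begin{equation*}
\partial_t \widehat{E} - c\,(i\xi) \times \widehat{B} = -c\,\widehat{j}, \qquad \partial_t \widehat{B} + c\,(i\xi) \times \widehat{E} = 0,
\end{equation*}
indexed by $\xi \in \mathbb{R}^d$. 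The decisive structural fact is that the operator $w \mapsto (i\xi) \times w$ is Hermitian on $\mathbb{C}^3$, since its matrix is $i$ times a real skew-symmetric matrix. Consequently, taking the Hermitian inner products of the two equations with $\widehat{E}$ and $\widehat{B}$, respectively, and adding, the two curl contributions $\mathrm{Re}\,\langle c(i\xi)\times\widehat{B},\widehat{E}\rangle$ and $\mathrm{Re}\,\langle c(i\xi)\times\widehat{E},\widehat{B}\rangle$ are equal and cancel. This yields, pointwise in $\xi$,
\begin{equation*}
\frac{d}{dt}\Big(|\widehat{E}|^2 + |\widehat{B}|^2\Big) = -2c\,\mathrm{Re}\big(\widehat{j}\cdot\overline{\widehat{E}}\big) \le 2c\,|\widehat{j}|\,\sqrt{|\widehat{E}|^2+|\widehat{B}|^2}.
\end{equation*}

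Next, setting $Y(\xi,t) := m(\xi)\big(|\widehat{E}(\xi,t)|^2+|\widehat{B}(\xi,t)|^2\big)^{1/2}$, the previous inequality gives $\frac{d}{dt}Y^2 \le 2c\,m|\widehat{j}|\,Y$, hence $\frac{d}{dt}Y \le c\,m|\widehat{j}|$ (the vanishing of $Y$ is harmless; alternatively one regularises by replacing $Y$ with $(Y^2+\varepsilon)^{1/2}$ and lets $\varepsilon\to 0$). Since $m(\xi)$ is independent of $t$, integrating in time yields the pointwise bound $Y(\xi,t) \le Y(\xi,0) + c\,m(\xi)\int_0^t |\widehat{j}(\xi,\tau)|\,d\tau$. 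Taking the $L^2_\xi$ norm, using the triangle inequality, and then Minkowski's integral inequality on the source term, I would obtain
\begin{equation*}
\|(E,B)(t)\|_{H^s} = \|Y(\cdot,t)\|_{L^2_\xi} \le \|(E_0,B_0)\|_{H^s} + c\int_0^t \|m\,\widehat{j}(\cdot,\tau)\|_{L^2_\xi}\,d\tau = \|(E_0,B_0)\|_{H^s} + c\,\|j\|_{L^1(0,t;H^s)}.
\end{equation*}
Squaring, applying the elementary inequality $(a+b)^2 \le 2a^2+2b^2$, and taking the supremum over $t \in (0,T)$ then delivers exactly the claimed estimate.

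The computation is essentially routine once the curl cancellation is isolated, so the only genuine care needed is the justification for general (possibly negative) $s$ and for weak solutions, and carrying the whole argument through on the Fourier side is precisely what removes this difficulty: the pointwise-in-$\xi$ differential inequalities make sense without any smoothness of $(E,B)$ in $x$, the weight $m(\xi)$ enters as a harmless multiplier rather than through commutators, and no integration by parts or density argument in physical space is required. The one point I would double-check is the measurability and integrability needed to apply Minkowski's integral inequality to $m(\xi)\int_0^t|\widehat{j}(\xi,\tau)|\,d\tau$; this follows directly from $j \in L^1_{\mathrm{loc}}(0,\infty;H^s)$ and presents no real obstacle. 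Thus I expect the main (and essentially only) conceptual step to be recognising and exploiting the skew-symmetry of the curl operator, which underlies the energy cancellation.
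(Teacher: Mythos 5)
Your proof is correct. Note that the paper itself does not prove this lemma at all: it is quoted verbatim from the literature (Lemma 2.2 of Giga--Ibrahim--Shen--Yoneda and Lemma 1.10 of Masmoudi), so there is no in-paper argument to compare against; the standard proof in those references runs through the observation that the free Maxwell propagator is a unitary group on $H^s \times H^s$, followed by Duhamel's formula $(E,B)(t) = \mathcal{U}(t)(E_0,B_0) - c\int_0^t \mathcal{U}(t-\tau)(j(\tau),0)\,d\tau$ and the triangle inequality, which gives $\|(E,B)(t)\|_{H^s} \leq \|(E_0,B_0)\|_{H^s} + c\|j\|_{L^1(0,t;H^s)}$ and hence the stated bound after squaring. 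Your Fourier-side energy argument is the differential form of exactly this unitarity: the Hermitian character of $w \mapsto (i\xi)\times w$ is what makes $\mathcal{U}$ an isometry, and your pointwise-in-$\xi$ inequality $Y(\xi,t) \leq Y(\xi,0) + c\,m(\xi)\int_0^t|\widehat{j}|\,d\tau$ followed by Minkowski's integral inequality reproduces the Duhamel estimate. What your version buys is that it is entirely self-contained (no semigroup machinery), works uniformly for all real $s$ since the weight $m(\xi)$ is a passive multiplier, and avoids any integration by parts in $x$, so it applies to weak solutions with no smoothness; the only hypothesis you implicitly use is that the solution lies in a class (e.g. $L^\infty_{\textnormal{loc}}(0,T;H^s)$) in which the pointwise ODEs hold for a.e.\ $\xi$ and $\widehat{E}(\xi,\cdot),\widehat{B}(\xi,\cdot)$ are absolutely continuous in time, which is the intended meaning of ``solution'' here. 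Your handling of the division by $Y$ via the regularisation $(Y^2+\varepsilon)^{1/2}$ and your bookkeeping of the factor $c$ (from rewriting $\frac{1}{c}\partial_t E - \nabla\times B = -j$ as $\partial_t E = c\nabla\times B - cj$), which produces the $2c^2$ in the final bound, are both correct.
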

	
	\begin{lemma} \label{lem-M} Let $d \in \{2,3\}$ and $E,B,v : \mathbb{R}^d \times (0,\infty) \to \mathbb{R}^3$ satisfying \eqref{M} with $(E,B)_{|_{t=0}} = (E_0,B_0)$.
		\begin{enumerate} 
			\item[(i)] \textnormal{(Global well-posedness)} If $v \in L^\infty_{\textnormal{loc}}(0,\infty;L^2(\mathbb{R}^d)) \cap  L^2_{\textnormal{loc}}(0,\infty;H^\frac{d}{2}(\mathbb{R}^d) \cap L^\infty(\mathbb{R}^d))$ and $(E_0,B_0) \in H^s(\mathbb{R}^d)$ with $s \in [0,\frac{d}{2})$ then there exists a unique global weak solution $(E,B)$ to \eqref{M} satisfying $(E,B) \in L^\infty(0,T;H^s)$ and $E \in L^2(0,T;H^s)$ for any $T \in (0,\infty)$.
			
			\item[(ii)] \textnormal{(The limit as $c \to \infty$)} Let $v$ be given as in Part $(i)$. Let $c > 0$ and $(E^c_0,B^c_0) \in H^s(\mathbb{R}^d)$ with $s \in [0,\frac{d}{2})$ satisfying $\textnormal{div}\, B^c_0 = 0$ and as $c \to \infty$
			\begin{equation*}
				(E^c_0,B^c_0) \rightharpoonup (\bar{E}_0,\bar{B}_0) \qquad \text{in} \quad H^s
			\end{equation*}
			for some $(\bar{E}_0,\bar{B}_0)$ with $\textnormal{div}\, \bar{B}_0 = 0$. Then there exists a sequence of global solutions $(E^c,B^c)$ to \eqref{M} with $(E^c,B^c)_{|_{t=0}} = (E^c_0,B^c_0)$ given as in Part (i). In addition,  up to an extraction of a subsequence, $B^c$ converges to $B$ in the sense of distributions as $c \to \infty$, where
			$B$ satisfies 
			\begin{equation*} 
				\partial_t B - \nabla \times (v \times B) = \frac{1}{\sigma} \Delta B, \qquad \textnormal{div}\, B = 0 \qquad  \text{and}\qquad  B_{|_{t=0}} = \bar{B}_0.
			\end{equation*}
		\end{enumerate}
	\end{lemma}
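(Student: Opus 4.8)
The plan is to treat \eqref{M} as a \emph{linear} system in the unknown $(E,B)$, with $v$ a given datum; this is what removes the usual compactness difficulty arising from the hyperbolicity of the Maxwell equations. For Part $(i)$ I would first set up a Fourier-truncated approximate system in the spirit of \eqref{NSM_app}, for which the Picard theorem yields a unique local solution $(E^n,B^n)$, and then spend all the effort on producing $n$- and $c$-uniform a priori bounds. The basic energy identity comes from testing the two equations against $(E,B)$ and using the antisymmetry $\int(\nabla\times B)\cdot E\,dx=\int B\cdot(\nabla\times E)\,dx$, so that the curl terms cancel and the damping term coming from Ohm's law survives:
\begin{equation*}
\frac{d}{dt}\|(E,B)\|_{L^2}^2+\sigma c^2\|E\|_{L^2}^2\le \sigma\|v\|_{L^\infty}^2\|B\|_{L^2}^2 .
\end{equation*}
Since $v\in L^2_{\textnormal{loc}}(0,\infty;L^\infty)$, Gronwall closes the $L^2$ estimate.

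The heart of Part $(i)$ is the corresponding $\dot H^s$ estimate. Applying $\Lambda^s$, testing against $(\Lambda^s E,\Lambda^s B)$ and again cancelling the curl contributions gives
\begin{equation*}
\frac{d}{dt}\|(E,B)\|_{\dot H^s}^2+\sigma c^2\|E\|_{\dot H^s}^2\le \sigma\|v\times B\|_{\dot H^s}^2 ,
\end{equation*}
so everything reduces to controlling $\|v\times B\|_{\dot H^s}$. Here I would use the Kato--Ponce product rule together with the Sobolev embeddings $\dot H^{\frac{d}{2}-s}\hookrightarrow L^{d/s}$ and $\dot H^{s}\hookrightarrow L^{2d/(d-2s)}$, whose Hölder exponents are conjugate precisely because $s<\frac{d}{2}$, to obtain $\|v\times B\|_{\dot H^s}\le C(\|v\|_{\dot H^{d/2}}+\|v\|_{L^\infty})\|B\|_{\dot H^s}$ (the endpoint $s=0$ being trivial, with $\|v\|_{L^\infty}$ alone). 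As $v\in L^2_{\textnormal{loc}}(0,\infty;H^{d/2}\cap L^\infty)$, the right-hand side is a time-integrable multiple of $\|B\|_{\dot H^s}^2$, and Gronwall yields $(E,B)\in L^\infty_t H^s$ while the damping term furnishes $E\in L^2_t H^s$; Lemma \ref{lem_M0} provides a complementary bound. Because the system is linear in $(E,B)$, passing to the limit $n\to\infty$ needs only weak convergence — $v\times B^n\rightharpoonup v\times B$ since $v$ is fixed — and uniqueness follows from the same energy identity applied to the difference of two solutions with zero data.

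For Part $(ii)$ the point is that the estimates above are $c$-independent: the bound on $\|(E^c,B^c)\|_{L^\infty_t H^s}$ does not see $c$, while the damping term forces $\sigma c^2\|E^c\|_{L^2_t H^s}^2\le C$, i.e. $\|E^c\|_{L^2_t H^s}\le C/c\to 0$. To identify the limit equation I would eliminate $E^c$ exactly as in the \eqref{NSM} setting: combining Faraday's law $\partial_t B^c=-c\,\nabla\times E^c$ with Ohm's law $cE^c=\frac{1}{\sigma}j^c-v\times B^c$ gives $\partial_t B^c-\nabla\times(v\times B^c)=-\frac{1}{\sigma}\nabla\times j^c$, with $j^c=\nabla\times B^c-\frac{1}{c}\partial_t E^c$. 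From $\partial_t B^c=-c\,\nabla\times E^c$ one gets $\|\partial_t B^c\|_{L^2_t H^{s-1}}=c\|\nabla\times E^c\|_{L^2_t H^{s-1}}\le c\|E^c\|_{L^2_t H^s}\le C$ uniformly in $c$, so the Aubin--Lions lemma gives, up to a subsequence, strong local convergence $B^c\to B$ in $L^2_t H^{s'}_{\textnormal{loc}}$ for $s'<s$. Writing the weak formulation of the $B^c$-equation against a divergence-free test field $\varphi$, the term $\frac{1}{\sigma}(\nabla\times B^c)\cdot(\nabla\times\varphi)$ converges to $-\frac{1}{\sigma}\Delta B\cdot\varphi$ (using $\textnormal{div}\,B=0$), the convection term passes to the limit by the strong convergence of $B^c$ and the weak continuity of multiplication by the fixed $v$, and the remaining contribution coming from $\frac{1}{\sigma c}\partial_t E^c$ vanishes after an integration by parts in time since $\|E^c\|_{L^2_t H^s}\to 0$ and $c^{-1}\to 0$. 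This yields $\partial_t B-\nabla\times(v\times B)=\frac{1}{\sigma}\Delta B$ with $B_{|_{t=0}}=\bar B_0$, as claimed.

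The routine parts are the Picard/energy machinery and the linearity-based passage to the limit. The one genuinely delicate estimate is the fractional product bound on $\|v\times B\|_{\dot H^s}$: it is exactly the balance of Lebesgue exponents that pins down the admissible range $s\in[0,\frac{d}{2})$ and explains why the regularity of the given velocity is taken in $H^{d/2}\cap L^\infty$. For Part $(ii)$, the main care is to check that every constant entering the a priori bounds is independent of $c$ and that the quadratic damping really forces $E^c$ to vanish in $L^2_t H^s$ at rate $c^{-1}$; granting this, the compactness and weak-limit steps are standard.
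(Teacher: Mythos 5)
Your proposal is correct, and for existence (Part (i)) and the limit $c\to\infty$ (Part (ii)) it follows essentially the same route as the paper: a Fourier-truncated approximate system solved by Picard, the energy/Gronwall scheme built on exactly the product estimate $\|v\times B\|_{\dot H^s}\le C(\|v\|_{\dot H^{d/2}}+\|v\|_{L^\infty})\|B\|_{\dot H^s}$ (the same Kato--Ponce/Sobolev balance that appears in Step 2 of the proof of Theorem \ref{theo1-3d}), $c$-independent bounds, Aubin--Lions, and passage to the limit in the weak formulation. Two points differ genuinely. First, uniqueness: you run the energy method directly on the difference of two solutions, exploiting linearity of \eqref{M} in $(E,B)$ for fixed $v$; the paper instead invokes Lemma \ref{lem_M0} together with the small-time contraction iteration recycled from Step 18 of the proof of Theorem \ref{theo1} (for $d=2$) and Step 15 of the proof of Theorem \ref{theo1-3d} (for $d=3$). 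Your route is more elementary, but note that for weak solutions that may only lie in $L^\infty_tL^2$ the differential identity for $\|(E-\bar E,B-\bar B)\|^2_{L^2}$ is not automatic: one should first mollify in space and then pass to the limit -- harmless here, since $\partial_t$ and $\nabla\times$ commute exactly with mollification and the only product term involves the fixed $v$, but it deserves a line; the paper's Lemma \ref{lem_M0} applies to weak solutions directly. Second, your bookkeeping of the Ohmic damping as $\sigma c^2\|E\|^2$ instead of $\frac{1}{\sigma}\|j\|^2$ yields the quantitative decay $\|E^c\|_{L^2_tH^s}\le C/c$, which in turn gives the global uniform bound $\|\partial_t B^c\|_{L^2_tH^{s-1}}=c\|\nabla\times E^c\|_{L^2_tH^{s-1}}\le C$; this is a sharper and cleaner input to Aubin--Lions than the paper's bound obtained from rewriting $\partial_t B^c=\nabla\times\bigl(v\times B^c-\frac{1}{\sigma}j^c\bigr)$, and it makes the vanishing of the $\frac{1}{\sigma c}\partial_t E^c$ contribution transparent. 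Your identification of the limit equation (substituting $j^c=\nabla\times B^c-\frac{1}{c}\partial_t E^c$ and integrating by parts in time) is equivalent to the paper's identification of $j=\nabla\times B$ from the weak form of Amp\`ere's law, so both approaches land on $\partial_t B-\nabla\times(v\times B)=\frac{1}{\sigma}\Delta B$.
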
 
	
	\begin{proof}[Proof of Lemma \ref{lem-M}] The proof is very simple, which shares the ideas as those of Theorems \ref{theo1} and \ref{theo1-3d} and can be done as follows. 
		
		\textbf{Step 1: The existence of Part $(i)$.} We first consider an approximate system to \eqref{M} by
		\begin{equation*}
			\frac{1}{c}\frac{d}{dt}(E^n,B^n) = F^n(E^n,B^n), \qquad \textnormal{div}\, B^n = 0 \qquad \text{and}\qquad (E^n,B^n)_{|_{t=0}} = T_n(E_0,B_0),
		\end{equation*}
		where $F^n = (F^n_1,F^n_2)$ with $F^n_1 = \nabla \times B^n - j^n$, $j^n = \sigma (c E^n + T_n(v \times B^n))$ and $F^n_2 = - \nabla \times E^n$. Furthermore, for $s \in [0,\frac{d}{2})$, $F^n : H^s_n \times V^s_n \to H^s_n \times V^s_n$ is well-defined and is a locally Lipschitz function as well. Therefore, there exists a unique solution $(E^n,B^n) \in C^1([0,T^n_*);H^s_n \times V^s_n)$ for some $T^n_* \in (0,\infty]$ satisfying in addition if $T^n* < \infty$ then 
		\begin{equation*}
			\lim_{t\to T^n_*} \|(E^n,B^n)\|^2_{H^s} = \infty.
		\end{equation*}
		Assume that $T^n_* < \infty$ then the energy balance 
		\begin{align*}
			\frac{d}{dt}\|(E^n,B^n)\|^2_{L^2} + \frac{1}{\sigma} \|j^n\|^2_{L^2} = \int_{\mathbb{R}^d} T_n(v \times B^n) \cdot j^n \,dx,
		\end{align*}
		which implies for $t \in (0,T^n_*)$
		\begin{equation*}
			\|(E^n,B^n)(t)\|^2_{L^2} \leq \|(E_0,B_0)\|^2_{L^2}\exp\left\{C(\sigma)\int^{T^n_*}_0 \|v\|^2_{L^\infty} \,d\tau\right\}.
		\end{equation*}
		Similarly, for $s \in (0,\frac{d}{2})$
		\begin{equation*}
			\|(E^n,B^n)(t)\|^2_{\dot{{H}^s}} \leq \|(E_0,B_0)\|^2_{\dot{H}^s}\exp\left\{C(\sigma,s)\int^{T^n_*}_0 \|v\|^2_{\dot{H}^\frac{d}{2}} + \|v\|^2_{L^\infty} \,d\tau\right\}.
		\end{equation*}
		The above estimates give us a contradiction to the assumption $T^n_* < \infty$ and yield $T^n_* = \infty$. Replacing $T^n_*$ by any $T \in (0,\infty)$, we obtain uniform bounds (in terms of $n$) of $(E^n,B^n)$ in $L^\infty(0,T;H^s)$ and $E^n$ in $L^2(0,T;H^s)$. That leads to the existence of $(E,B)$ such that up to an extraction of a subsequence 
		\begin{align*}
			&&(E^n,B^n) &\overset{\ast}{\rightharpoonup} (E,B)  &&\text{in} \quad L^\infty(0,T;H^s(\mathbb{R}^d))
			\\
			&& E^n &\rightharpoonup  E  &&\text{in} \quad L^2(0,T;H^s(\mathbb{R}^d)). &&
		\end{align*}
		Moreover, as Step 17b in the proof of Theorem \ref{theo1}, by using the following weak formulation for  $\varphi \in C^\infty_0([0,T) \times \mathbb{R}^d;\mathbb{R}^3)$
		\begin{align*}
			\int^T_0 \int_{\mathbb{R}^d} \frac{1}{c}E^n \cdot \partial_t\varphi + B^n \cdot (\nabla \times \varphi) - \sigma (c E^n + T_n(v \times B^n)) \cdot \varphi\,dxdt = -\int_{\mathbb{R}^d} \frac{1}{c}E^n(0) \cdot \varphi(0) \,dx,
			\\
			\int^T_0 \int_{\mathbb{R}^d} \frac{1}{c}B^n \cdot \partial_t\varphi - E^n \cdot (\nabla \times \varphi) \,dxdt  = -\int_{\mathbb{R}^d} \frac{1}{c}B^n(0) \cdot \varphi(0) \,dx,
		\end{align*}
		we can pass to the limit as $n \to \infty$ easily by using the assumptions of $v$. 
		
		\textbf{Step 2: The uniqueness of Part $(i)$.} For two solutions $(E,B)$ and $(\bar{E},\bar{B})$ to \eqref{M} with the same initial data $(E_0,B_0)$ and for $s' \in [0,s)$, it follows from Lemma \ref{lem_M0} that 
		\begin{equation*}
			\|(E-\bar{E},B-\bar{B})\|^2_{L^\infty(0,T;H^{s'})} \leq c^2 \|j-\bar{j}\|^2_{L^1(0,T;H^{s'})} \leq \sum^6_{k=4} \bar{J}_k.
		\end{equation*}
		By repeating either Step 18 in the proof of Theorem \ref{theo1} for $d = 2$ or Step 15 in the proof of Theorem \ref{theo1-3d} for $d = 3$, we find that for $v = \bar{v}$ and for sufficiently small $T_* \in (0,T)$
		\begin{align*}
			\|(E-\bar{E},B-\bar{B})\|^2_{L^\infty(0,T_*;H^{s'})} \leq \frac{1}{2} \|(E-\bar{E},B-\bar{B})\|^2_{L^\infty(0,T_*;H^{s'})},
		\end{align*}
		which yields $E = \bar{E}$ and $B = \bar{B}$ in $(0,T_*)$. By repeating this process, we obtain the conclusion in the whole time interval $(0,T)$. 
		
		\textbf{Step 3: Proof of Part $(ii)$.} Since the main estimates in Step 1 are independent on $c$, then the proof follows as that of Part $(ii)$ in Theorem \ref{theo1-3d} by using the above weak formulation ($n$ is replaced by $c$ and without $T_n$), the fact $j^c = \sigma(cE^c + v \times B^c)$ and the weak convergence of $(E^c_0,B^c_0)$. Thus, the proof now is complete.
	\end{proof}
	
	%
	\subsection{Appendix F: Proof of Proposition \ref{pro-H}}
	%

	For the sake of completeness, we will give a proof of Proposition \ref{pro-H} below.
	
	\begin{proof}[Proof of Proposition \ref{pro-H}-The case $d = 2$] The proof consits of the following steps.
		
		\textbf{Step 1: Local and global existence.} As previous parts, an approximate system to \eqref{H} is given by 
	\begin{equation} \label{H_app} 
		\partial_t B^n = -\frac{1}{\sigma}(-\Delta)^\beta B^n - \frac{\kappa}{\sigma} \nabla \times T_n(j^n \times B^n),
		\quad \textnormal{div}\, B^n = 0, \quad B^n_{|_{t=0}} = T_n(B_0), \quad j^n := \nabla \times B^n,
	\end{equation}
	and there exists a unique solution $B^n$ to \eqref{H_app} with $B^n \in C^1([0,T^n_*);V^s_n)$ for some $T^n_* > 0$. It is sufficient to focus on the case $\beta = \frac{3}{2}$. The case $\beta > \frac{3}{2}$ can be done in the same way in which we will omit the details. It can be seen from \eqref{H_app} that for $s > 0$
		\begin{align*}
			\frac{1}{2}\frac{d}{dt}\|B^n\|^2_{L^2} + \frac{1}{\sigma}\|B^n\|^2_{\dot{H}^\frac{3}{2}} &= 0,
			\\
			\frac{1}{2}\frac{d}{dt}\|B^n\|^2_{\dot{H}^s} + \frac{1}{\sigma}\|B^n\|^2_{\dot{H}^{s+\frac{3}{2}}} &= -\frac{\kappa}{\sigma} \int_{\mathbb{R}^2} \Lambda^s(j^n \times B^n) \cdot \Lambda^s j^n \,dx =: H.
		\end{align*}
		$\bullet$ If $s \in (0,\frac{1}{2})$ then for some $\epsilon \in (0,1)$ the $\dot{H}^s$ estimate is closable as follows
		\begin{align*}
			H &= -\frac{\kappa}{\sigma} \int_{\mathbb{R}^2} (j^n \times B^n) \cdot \Lambda^{2s} j^n \,dx \leq \frac{\kappa}{\sigma}C(s)\|j^n\|_{L^4}\|B^n\|_{L^\frac{2}{1-s}}\|\Lambda^{2s}j^n\|_{L^\frac{2}{1-\left(\frac{3}{2}-(s+1)\right)}} 
			\\
			&\leq \frac{\epsilon}{\sigma}\|B^n\|^2_{\dot{H}^{s+\frac{3}{2}}} + C(\epsilon,\kappa,\sigma,s)\|B^n\|^2_{\dot{H}^\frac{3}{2}}\|B^n\|^2_{\dot{H}^s},
		\end{align*}
		$\bullet$ If $s = \frac{1}{2}$ then $2s + 1 = s+\frac{3}{2} = 2$ and
		\begin{align*}
			H &\leq \frac{\kappa}{\sigma}\|j^n\|_{L^4}\|B^n\|_{L^4}\|\Lambda^{2s}j^n\|_{L^2}\leq \frac{\epsilon}{\sigma}\|B^n\|^2_{\dot{H}^2} + C(\epsilon,\kappa,\sigma) \|B^n\|^2_{\dot{H}^\frac{3}{2}}\|B^n\|^2_{\dot{H}^\frac{1}{2}}.
		\end{align*}
		$\bullet$ If $s > \frac{1}{2}$ then
		\begin{equation*}
			\frac{1}{2}\frac{d}{dt}\|B^n\|^2_{H^s} + \frac{1}{\sigma}\|\Lambda^\frac{3}{2}B^n\|^2_{H^s} = -\frac{\kappa}{\sigma} \int_{\mathbb{R}^2} [J^s(j^n \times B^n) - J^s j^n \times B^n] \cdot J^s j^n \,dx =: H,
		\end{equation*}
		where for some $\epsilon \in (0,1)$, the Kato-Ponce commutator estimate gives
		\begin{align*}
			H &\leq C(s) \frac{\kappa}{\sigma}\left(\|j^n\|_{H^{s-1}}\|\nabla B^n\|_{L^\infty} + \|j^n\|_{L^\infty}\|B^n\|_{H^s}\right)\|B^n\|_{H^{s+1}}
			\\
			&\leq \frac{\epsilon}{\sigma}\|B^n\|^2_{H^{s+\frac{3}{2}}} + C(\epsilon,\kappa,\sigma,s) \|\nabla B^n\|^2_{L^\infty}\|B^n\|^2_{H^s},
		\end{align*}
		which by choosing $\epsilon  = \frac{1}{2}$ and using \eqref{BGW} with $d = 2$, $f = \nabla B^n$ and $s_0 = s - \frac{1}{2} > 0$ implies that 
		\begin{equation*}
			\frac{d}{dt}\|B^n\|^2_{H^s} + \frac{1}{\sigma}\|B^n\|^2_{H^{s+\frac{3}{2}}} \leq  \frac{1}{\sigma}\|B^n\|^2_{L^2} + \left[\frac{1}{2}C(\kappa,\sigma,s) \|B^n\|_{H^s}\|\nabla B^n\|_{H^1}\left(1+\log^\frac{1}{2}\left(\frac{\|B^n\|_{H^{s+\frac{3}{2}}}}{\|\nabla B^n \|_{H^1}}\right)\right)\right]^2.
		\end{equation*}
		By using the previous case to bound $\|B^n\|_{L^2_tH^2_x}$, the conclusion follows as Step 3 in the proof of Theorem \ref{theo1}. 
		
		\textbf{Step 2: Pass to the limit.} This step can be done as either Step 16a for $s > 1$ or Step 16b for $s \in [0,1]$ in the proof of Theorem \ref{theo1}. We omit further details.
		
		\textbf{Step 3: Uniqueness.} It is enough to consider the case $s = 0$. Let $B$ be the limit in Step 2. It can be seen that $B \in L^2(0,T;H^\frac{3}{2})$ and $\partial_t B \in L^2(0,T;H^{-\frac{3}{2}})$, which implies that $B \in C([0,T];L^2)$ (see \cite{Temam_2001}) after possibly being redefined on a set of measure zero. Assume that $B_1$ and $B_2$ are two solutions to \eqref{H} with the same initial data $B_0 \in L^2$ and $j_i = \nabla \times B_i$ for $i \in \{1,2\}$. Thus, we find that
		\begin{equation*}
			\frac{1}{2}\frac{d}{dt}\|B_1-B_2\|^2_{L^2} + \frac{1}{\sigma}\|B_1-B_2\|^2_{\dot{H}^\frac{3}{2}}= - \int_{\mathbb{R}^2} (j_1 \times B_1 - j_2 \times B_2) \cdot (j_1-j_2)\,dx =: H_1
		\end{equation*}
		where for some $\epsilon \in (0,1)$
		\begin{align*}
			H_1 
			&\leq \|j_1\|_{L^4}\|B_1-B_2\|_{L^2}\|j_1-j_2\|_{L^4}
			\leq \frac{\epsilon}{\sigma}\|B_1-B_2\|^2_{\dot{H}^\frac{3}{2}} + C(\epsilon,\sigma)\|B_1\|^2_{\dot{H}^\frac{3}{2}}\|B_1-B_2\|^2_{L^2},
		\end{align*}
		which yields $B_1 = B_2$ and ends the proof.
	\end{proof}
	
	\begin{proof}[Proof of Proposition \ref{pro-H}-The case $d = 3$] The proof is divided into several steps as follows.
		
		\textbf{Step 1: Local and global existence.}  Similar to the previous case, we will focus on \eqref{H_app} with $\beta = \frac{7}{4}$. In addition, for $s > 0$
		\begin{align*}
			\frac{1}{2}\frac{d}{dt}\|B^n\|^2_{L^2} + \frac{1}{\sigma}\|B^n\|^2_{\dot{H}^\frac{7}{4}} &= 0,
			\\
			\frac{1}{2}\frac{d}{dt}\|B^n\|^2_{\dot{H}^s} + \frac{1}{\sigma}\|B^n\|^2_{\dot{H}^{s+\frac{7}{4}}} &= -\frac{\kappa}{\sigma} \int_{\mathbb{R}^3} \Lambda^s(j^n \times B^n) \cdot \Lambda^s j^n \,dx =: H.
		\end{align*}
		
		$\bullet$ If $s \in (0,\frac{3}{4})$ then 
		\begin{align*}
			H &\leq \frac{\kappa}{\sigma}C(s)\|j^n\|_{L^4}\|B^n\|_{L^\frac{6}{3-2s}}\|\Lambda^{2s}j^n\|_{L^\frac{6}{3-2\left(\frac{3}{2}-\left(s+\frac{3}{4}\right)\right)}} 
			\\
			&\leq \frac{\epsilon}{\sigma}\|B^n\|^2_{\dot{H}^{s+\frac{7}{4}}} + C(\epsilon,\kappa,\sigma,s)\|B^n\|^2_{\dot{H}^\frac{7}{4}}\|B^n\|^2_{\dot{H}^s}.
		\end{align*}
		
		$\bullet$ If $s = \frac{3}{4}$ then $2s + 1 = s + \frac{7}{4}$ and 
		\begin{equation*}
			H \leq \|j^n\|_{L^4}\|B^n\|_{L^4}\|\Lambda^{2s}j^n\|_{L^2}
			\leq \frac{\epsilon}{\sigma}\|B^n\|^2_{\dot{H}^{s+\frac{7}{4}}} + C(\epsilon,\kappa,\sigma,s)\|B^n\|^2_{\dot{H}^\frac{7}{4}}\|B^n\|^2_{\dot{H}^\frac{3}{4}}.
		\end{equation*}
		
		$\bullet$ If $s > \frac{3}{4}$ then
		\begin{equation*}
			\frac{1}{2}\frac{d}{dt}\|B^n\|^2_{H^s} + \frac{1}{\sigma}\|\Lambda^\frac{7}{4}B^n\|^2_{H^s} = -\frac{\kappa}{\sigma} \int_{\mathbb{R}^3} [J^s(j^n \times B^n) - J^s j^n \times B^n] \cdot J^s j^n \,dx =: H,
		\end{equation*}
		where for some $\epsilon \in (0,1)$, the Kato-Ponce commutator estimate gives
		\begin{align*}
			H &\leq C(s) \frac{\kappa}{\sigma}\left(\|j^n\|_{H^{s-1}}\|\nabla B^n\|_{L^\infty} + \|j^n\|_{L^\infty}\|B^n\|_{H^s}\right)\|B^n\|_{H^{s+1}}
			\\
			&\leq \frac{\epsilon}{\sigma}\|B^n\|^2_{H^{s+\frac{7}{4}}} + C(\epsilon,\kappa,\sigma,s) \|\nabla B^n\|^2_{L^\infty}\|B^n\|^2_{H^s},
		\end{align*}
		which by choosing $\epsilon  = \frac{1}{2}$ and using \eqref{BGW} with $d = 3$, $f = \nabla B^n$ and $s_0 = s - \frac{1}{4} > \frac{1}{2}$ implies that 
		\begin{equation*}
			\frac{d}{dt}\|B^n\|^2_{H^s} + \frac{1}{\sigma}\|B^n\|^2_{H^{s+\frac{7}{4}}} \leq  \frac{1}{\sigma}\|B^n\|^2_{L^2} + \left[\frac{1}{2}C(\kappa,\sigma,s) \|B^n\|_{H^s}\|\nabla B^n\|_{H^\frac{3}{2}}\left(1+\log^\frac{1}{2}\left(\frac{\|B^n\|_{H^{s+\frac{7}{4}}}}{\|\nabla B^n \|_{H^\frac{3}{2}}}\right)\right)\right]^2.
		\end{equation*}
		Therefore, the conclusion follows.
		
		\textbf{Step 2: Pass to the limit and uniqueness.} This step follows as that of in the previous case. Indeed, the uniqueness in the case $s = 0$ is proceeded with
		\begin{align*}
			H_1 
			\leq \|j_1\|_{L^4}\|B_1-B_2\|_{L^2}\|j_1-j_2\|_{L^4}
			\leq \frac{\epsilon}{\sigma}\|B_1-B_2\|^2_{\dot{H}^\frac{7}{4}} + C(\epsilon,\sigma)\|B_1\|^2_{\dot{H}\frac{7}{4}}\|B_1-B_2\|^2_{L^2},
		\end{align*}
		which finishes the proof.
	\end{proof}
	
	%
	\addcontentsline{toc}{section}{References}

\end{document}